%% file: arxiv.tex
\documentclass[onefignum,onetabnum]{siamart250211}

\usepackage{fixltx2e}
\usepackage{amssymb,amsmath}
\usepackage{bbm}
\usepackage{booktabs}
\usepackage{cases,enumitem}
\usepackage[dvipsnames]{xcolor}
\usepackage{enumerate}
\usepackage{float}
\usepackage{graphicx}
\usepackage{hyperref}
\usepackage[numbers,sort]{natbib}
\usepackage{subfig}
\usepackage{mathabx}
\usepackage[normalem]{ulem}
\usepackage{tikz}
\usepackage{pgfplots}
\pgfplotsset{compat=1.18}
\usepackage[algo2e,vlined,noend]{algorithm2e}
\SetKwProg{Fn}{}{}{}
\SetKw{Return}{return}
\SetKw{Break}{break}
\SetKw{Not}{not}
\SetKw{Or}{or}
\SetCommentSty{texttt}
\SetAlFnt{\small}
\SetAlCapFnt{\small}
\SetAlCapNameFnt{\small}
\DontPrintSemicolon
\newcommand{\Hdr}[1]{\textnormal{\textbf{#1}}}
\usetikzlibrary{arrows.meta}
\hypersetup{
    colorlinks = true,
    urlcolor   = blue,
    citecolor  = black,
}

\newsiamremark{remark}{Remark}

\newcommand{\mH}{\mathcal{H}}

\undef\div
\DeclareMathOperator*{\div}{div}

\newcommand{\ph}{p_{\mathrm{h}}}
\newcommand{\barph}{\bar p_{\mathrm{h}}}
\newcommand{\underph}{\underbar p_{\mathrm{h}}}
\newcommand{\overbar}[1]{\mkern1.5mu\overline{\mkern-1.5mu#1\mkern-1.5mu}\mkern1.5mu}
\renewcommand{\underbar}[1]{\mkern1.5mu\underline{\mkern-1.5mu#1\mkern-1.5mu}\mkern1.5mu}
\newcommand{\pb}{p_{\mathrm{b}}}
\newcommand{\eh}{e_{\mathrm{h}}}
\newcommand{\eb}{e_{\mathrm{b}}}
\newcommand{\ah}{a_{\mathrm{h}}}
\newcommand{\ab}{a_{\mathrm{b}}}
\newcommand{\sh}{s_{\mathrm{h}}}

\newcommand{\hatph}{\hat p_{\mathrm{h}}}
\newcommand{\hatpb}{\hat p_{\mathrm{b}}}
\newcommand{\hateh}{\hat e_{\mathrm{h}}}
\newcommand{\hateb}{\hat e_{\mathrm{b}}}
\newcommand{\hatah}{\hat a_{\mathrm{h}}}
\newcommand{\hatab}{\hat a_{\mathrm{b}}}
\newcommand{\hatAb}{\hat A_{\mathrm{b}}}
\newcommand{\hatsh}{\hat s_{\mathrm{h}}}

\renewcommand{\L}{^{\mathrm L}}
\newcommand{\R}{^{\mathrm R}}
\renewcommand{\S}{^{\mathrm S}}
\renewcommand{\star}{^{\ast}}
\newcommand{\starL}{^{\ast\,\mathrm L}}
\newcommand{\starR}{^{\ast\,\mathrm R}}
\newcommand{\starRR}{^{\ast,\,\mathrm{RR}}}

\begin{document}

\title{Guaranteed wave-speed bounds for the compressible Euler
  equations with a composite equation of state%
  \thanks{Draft version of \today}}

\author{
  Nicolas Favrie\footnotemark[2]
  \and
  Matthias Maier\footnotemark[3]}

\maketitle

\renewcommand{\thefootnote}{\fnsymbol{footnote}}
\footnotetext[2]{%
  Aix Marseille Université, CNRS, IUSTI, Marseille, France.
  \url{nicolas.favrie@univ-amu.fr}}
\footnotetext[3]{%
  Dept. of Mathematics, Texas A\&M University, College Station, TX, USA. \url{maier@tamu.edu}}
\renewcommand{\thefootnote}{\arabic{footnote}}

\tableofcontents

\begin{abstract}
  We derive computable upper bounds on the maximum wave speed in the
  Riemann problem for the compressible Euler equations with a composite
  equation of state, in which the pressure is the sum of a convex
  hydrodynamical part and a barotropic correction. The correction may
  destroy the convexity of the equation of state, so that the Riemann
  solution can contain composite shock--rarefaction waves. The bounds are
  obtained by comparison with the auxiliary Riemann problem for the
  hydrodynamical equation of state alone. The bounds hold irrespective of
  whether the outer waves are elementary or composite and require no case
  distinction on the wave type.
\end{abstract}

\begin{keywords}
  Riemann problem, compressible Euler equations, composite equation of
  state, nonconvex equation of state, maximum wave speed, guaranteed upper
  bounds, Hugoniot curves, composite waves, invariant-domain-preserving
  schemes
\end{keywords}

\begin{AMS}
  35L65, 35L67, 35Q31, 76N15, 76L05, 65M08
\end{AMS}

\pagestyle{myheadings}
\thispagestyle{plain}
\markboth{Favrie \& Maier}{Guaranteed wave-speed bounds for composite equations of state}

\section{Introduction}
This paper is concerned with guaranteed and inexpensive estimates of the
maximum wave speed in the Riemann problem for the compressible Euler
equations with a composite equation of state. The pressure is the sum of a
hydrodynamical contribution with the classical thermodynamic convexity
properties and a barotropic contribution that depends on the density alone.
{Additive splittings of this kind occur, for instance, in
Mie--Gr\"uneisen-type closures, which combine a cold pressure with a
thermal part, and in separable energies of hyperelastic solids.} The
barotropic contribution may destroy the convexity of the total equation of
state. The Riemann solution may then contain composite shock--rarefaction
waves. Our goal is to bound the extremal speeds of the Riemann fan by
quantities computed from the auxiliary Riemann problem for the
hydrodynamical equation of state alone, without constructing the wave fan
of the full problem. We first recall why such estimates are needed and what
is available in the literature, and then outline our approach and the main
results.

\subsection{Riemann problems and wave-speed estimates}

Many numerical methods for hyperbolic systems of conservation laws require
an estimate of the largest local propagation speed. Such estimates enter,
for instance, the construction of HLL-type and local Lax--Friedrichs
numerical fluxes, the definition of artificial- or graph-viscosity
coefficients, and the evaluation of stable time steps through a CFL
condition. In these applications, the estimate must be large enough to
bound the speed of every physically relevant wave. An underestimated speed
may produce insufficient numerical dissipation and compromise stability,
positivity, or invariant-domain preservation. Conversely, an excessive
overestimate increases numerical diffusion and may impose an unnecessarily
restrictive time step.

This requirement is particularly explicit in Godunov-type methods. At
each cell interface, the left and right reconstructed states define a local
initial-value problem whose solution determines how information propagates
between neighboring cells \cite{Godunov1959,Toro2009}. Exact Godunov
schemes use the complete local solution, whereas approximate Riemann
solvers retain only part of its wave structure. For example, HLL-type
solvers require estimates of the leftmost and rightmost signal velocities,
while local Lax--Friedrichs and graph-viscosity methods use a bound on the
maximum absolute propagation speed
\cite{HartenLaxVanLeer1983,Davis1988,
GuermondPopov2016InvariantDomains}. The relevant quantity is therefore not
merely the spectral radius evaluated at the initial states, but the maximum
speed attained throughout the nonlinear wave pattern generated at the
interface.

The local problem underlying these estimates is the Riemann problem.
For a one-dimensional hyperbolic system of conservation laws,
\begin{equation}
  \label{eq:intro_conservation_law}
  \partial_t \boldsymbol{U} + \partial_x \boldsymbol{F}(\boldsymbol{U})  =  0,
\end{equation}
it is defined by the piecewise constant initial data
\begin{equation}
  \label{eq:intro_riemann_data}
      \boldsymbol{U}(x,0) =
      \begin{cases}
    \boldsymbol{U}_L, & x<0,\\
    \boldsymbol{U}_R, & x>0.
  \end{cases}
\end{equation}
The problem is invariant under the scaling $(x,t)\mapsto(\alpha x,\alpha
t)$, $\alpha>0$, and its solution is therefore self-similar,
$\boldsymbol{U}(x,t)=\boldsymbol{W}(x/t)$. It consists of a finite
collection of waves connecting the two initial states. Depending on the
system and on its constitutive relations, these waves may include shocks,
rarefaction waves, contact discontinuities, or composite wave patterns. The
extremal propagation speeds are the speeds of the leftmost and rightmost
edges of this wave fan.

Riemann problems arise in a wide range of hyperbolic continuum models,
including gas dynamics, multiphase flows, magnetohydrodynamics, and
hyperelasticity. General treatments can be found in
\cite{KulikovskiiPogorelovSemenov2000,Toro2009}, while Riemann problems and
nonlinear wave structures in elastic media are discussed extensively in
\cite{kulikovskii2021nonlinear}. In the present work, attention is focused
on the compressible Euler equations with a composite equation of state. The
difficulty is then to obtain a computationally inexpensive and guaranteed
estimate of the extremal speeds without constructing the complete Riemann
solution.

\subsection{Riemann problems for general and nonconvex equations of state}

For the compressible Euler equations equipped with a thermodynamically
admissible and convex equation of state, the Riemann solution has the
classical three-wave structure. The left and right initial states are
connected to two intermediate states by acoustic waves, each of which is
either a shock or a rarefaction, while the intermediate states are
separated by a contact discontinuity. The velocity and pressure are
continuous across the contact wave. This structure is well understood for
standard closures such as the ideal-gas, stiffened-gas, and Noble--Abel
stiffened-gas equations of state
\cite{Smith1979,KulikovskiiPogorelovSemenov2000,Toro2009}.

Even in this classical setting, the exact solution generally requires the
determination of an intermediate pressure through a nonlinear scalar
equation. For more general equations of state, including Mie--Gr\"uneisen,
JWL, real-fluid, and tabulated models, the construction becomes
substantially more involved. Shock curves require the solution of the
Rankine--Hugoniot energy relation together with repeated thermodynamic
inversions, whereas rarefaction curves generally require numerical
integration along isentropes. The complete Riemann solver may therefore
combine nonlinear root finding, numerical quadrature, and repeated
evaluations of the equation of state and its derivatives
\cite{MenikoffPlohr1989}.

The difficulty is even more pronounced in nonlinear solid mechanics. In
hyperelasticity, the analytical construction of a self-similar piston
solution is already highly involved when the material is initially at rest
and the loading preserves a sufficiently simple symmetry
\cite{NdanouFavrieGavrilyuk2017}. Even under these restrictive assumptions,
the solution requires the detailed analysis of several characteristic
families and may involve nonclassical transverse waves. The problem becomes
considerably more difficult when a nonzero initial deformation or velocity
field is prescribed, since the characteristic structure and the matching
conditions between the different waves can no longer be reduced in the same
way. Complete Riemann solutions may thus become analytically intractable,
even when highly structured configurations can still be solved exactly and
serve as reference solutions.

The classical wave structure also relies on suitable convexity properties
of the constitutive law. In gas dynamics, these properties may be expressed
in terms of the curvature of the pressure with respect to the specific
volume along isentropes. When this curvature changes sign, the acoustic
characteristic fields may lose genuine nonlinearity. A single
characteristic family may then contain a composite wave formed by
successive shock and rarefaction segments. The corresponding transition
states must satisfy additional tangency and admissibility conditions, and
the number and ordering of the elementary segments may depend on the
initial data
\cite{MenikoffPlohr1989,MullerVoss2006,KulikovskiiPogorelovSemenov2000}.

For general or nonconvex equations of state, the computational difficulty
is therefore not limited to determining the intermediate pressure. One may
also need to identify changes in convexity, locate transition states, and
construct the complete sequence of shock and rarefaction segments. This
makes an exact Riemann solver difficult to use systematically at every
interface of a multidimensional computation. It also complicates the
estimation of the extremal propagation speeds, since the fastest wave may
depend on an intermediate state that is not known without first resolving a
substantial part of the complete wave fan.

\subsection{Existing wave-speed estimates and their limitations}

The difficulty of estimating extremal wave speeds becomes particularly
apparent when composite waves are present. In one-dimensional nonlinear
elastodynamics, analytical solutions involving shocks, rarefactions, and
composite-wave structures have been constructed for general constitutive
laws in
\cite{BerjaminLombardChiavassaFavrie2017}. These solutions show that the
extremal propagation speed may depend on intermediate transition states
whose determination requires the complete construction of the composite
wave. Although such analytical solutions are valuable as references,
this level of resolution of the wave structure is generally too costly
to be used systematically within a numerical scheme.
{The same applies to exact solvers for nonclassical
Riemann problems in gas dynamics: the automatic wave-curve detection of
Sirianni, Guardone, and Re~\cite{SirianniGuardoneRe2025} is intended for
reference solutions rather than for flux evaluation. Numerical fluxes that
resolve composite waves, such as the flux formula of Marquina, Serna, and
Ib\'a\~nez~\cite{MarquinaSernaIbanez2019} for nonconvex relativistic
hydrodynamics, do not come with a bound on the propagation speeds.}

To avoid the cost of exact Riemann solvers, numerous approximate solvers
have been developed, including acoustic, Roe-type, HLL-type, local
Lax--Friedrichs, and AUSM-type methods; see, for instance,
\cite{Toro2009}. Most of these approaches replace the exact wave fan by a
reduced representation and therefore require estimates of the leftmost and
rightmost propagation speeds. Classical formulas, such as those proposed
by Davis \cite{Davis1988}, are inexpensive and widely used in practical
computations. However, estimates based only on the initial states or on a
linearized system are not necessarily rigorous bounds for the extremal
speeds of the nonlinear Riemann solution. The objective is
therefore not only to obtain an inexpensive estimate, but also to ensure
that it is a guaranteed and sufficiently sharp bound for the complete
Riemann fan.

For the Euler equations with ideal-gas and covolume equations of state,
Guermond and Popov~\cite{GuermondPopov2016WaveSpeed} derived an efficiently
computable, iterative upper estimate for the maximum wave speed. Direct
theoretical estimates for several hyperbolic systems were also investigated
by Toro, M{\"u}ller, and Siviglia \cite{ToroMullerSiviglia2020}, who showed
that some commonly used formulas do not systematically bound the exact wave
speeds. {Toro and Tokareva~\cite{ToroTokareva2026}
further showed that underestimated wave speeds in Rusanov-type schemes can
destroy monotonicity and reduce the stability limit, whereas overestimates
preserve monotonicity.} Guaranteed upper bounds are particularly important in
invariant-domain-preserving methods, where the graph-viscosity coefficient
is commonly chosen as an upper bound on the maximum speed of an associated
one-dimensional Riemann problem \cite{GuermondPopov2016InvariantDomains}.

Recent works have extended these ideas to arbitrary analytical or tabulated
equations of state by constructing local thermodynamic surrogate models
with suitable stability and convexity properties
\cite{ClaytonGuermondPopov2022,ClaytonEtAl2023}. These approaches avoid the
direct construction of the exact Riemann solution for the original equation
of state. The resulting estimates are guaranteed with respect to the
surrogate model rather than with respect to the original closure: they
bound the wave speeds of the surrogate Riemann problem and, by
construction, ensure that the update stays in the admissible set of the
original equation of state. They imply, however, neither a bound on the
extremal speeds of the exact Riemann fan of the original closure, nor an
entropy inequality for its entropy. Partial remedies are available; for
instance, Clayton and Tovar~\cite{ClaytonTovar2026} recover the minimum
principle on the physical entropy of the original equation of state by
introducing suitable discrete auxiliary states. Guermond and
Tovar~\cite{GuermondTovar2026} apply the same surrogate strategy to
radiation hydrodynamics with an equation of state that contains an additive
cold curve, and the resulting bound again holds for the surrogate only.

\subsection{Present approach and main contributions}

In the present work, we consider the compressible Euler equations closed
by a composite equation of state of the form
\begin{align*}
  p\;=\;\ph(\rho,\eh)\,+\,\pb(\rho),
  \qquad\quad
  \mathcal{E}\;=\;\frac{1}{2}\rho\,u^2\,+\,\eh\,+\,\eb(\rho),
\end{align*}
where the barotropic pressure and energy are related through
\begin{align*}
  \pb(\rho)\;:=\;\rho^2\,\partial_\rho\eb(\rho).
\end{align*}
The hydrodynamical contribution $\ph$ is assumed to satisfy the standard
thermodynamic stability and convexity conditions ensuring that its
associated Riemann problem is well posed and possesses the classical
shock--rarefaction structure. The barotropic contribution is assumed to
preserve hyperbolicity, but its curvature may modify the convexity of the
total isentropic pressure and may generate composite waves.
The additive structure of the pressure makes it possible to relate the
complete Riemann problem to an auxiliary problem governed by the
hydrodynamical equation of state $\ph$ alone. The auxiliary problem has
the classical three-wave structure, and its star pressure is determined by
a single scalar equation. Our strategy is to parametrize the wave curves
of the complete system by the hydrodynamical pressure and to compare them
with the wave curves of the auxiliary system. The influence of $\pb$ then
enters only through computable corrections, and the explicit construction
of the complete wave fan is avoided altogether.

The main contributions of this work are summarized as follows:
\begin{itemize}
  \item[--]
    Under the sole assumption that the barotropic sound speed is real, the
    complete system is hyperbolic and its outer wave curves, elementary or
    composite, can be parametrized by the hydrodynamical star pressure.
    The star state is characterized by a pair of hydrodynamical star
    pressures that differ by the barotropic pressure jump across the
    contact discontinuity.
  \item[--]
    We derive comparison principles between the wave curves of the
    complete and the auxiliary system: an ordering of Hugoniot curves and
    shock functions for shocks, and a bound on the velocity at the edge of
    the fan for rarefactions based on $a^2=\ah^2+\ab^2$. Both extend to
    compressive and expansive composite waves.
  \item[--]
    Theorem~\ref{thm:main} turns these principles into two-sided
    computable bounds on the pair of star pressures and, in turn, into
    guaranteed bounds on the extremal wave speeds. They require only two
    one-sided shock relations of the auxiliary problem and the evaluation
    of $\pb$ at the initial states, and they hold irrespective of whether
    the outer waves are elementary or composite.
  \item[--]
    Theorem~\ref{thm:main_iterative} provides a computable criterion
    certifying that both outer waves are regular, that
    is, elementary with a positive fundamental derivative. In this case a
    monotone iteration sharpens the bounds, and every iterate yields a
    valid wave-speed bound.
  \item[--]
    We assess the bounds numerically for a Noble--Abel stiffened gas
    combined with a localized barotropic contribution that destroys
    convexity, against exact Riemann solutions with elementary and with
    composite outer waves.
\end{itemize}
The resulting estimates are intended for numerical schemes requiring a
guaranteed upper bound on the local maximum signal velocity. In particular,
they provide sufficiently large viscosity coefficients for
invariant-domain-preserving discretizations, such as the first-order
graph-viscosity approximation of
\cite{GuermondPopov2014ViscousRegularization,
GuermondPopov2016InvariantDomains}, while avoiding the cost and complexity
of an exact Riemann solver for the complete equation of state.

\subsection{Paper organization}
The remainder of the paper is organized as follows.
Section~\ref{sec:preliminaries} introduces the Riemann problem, recalls the
classical solution strategy, and states the main results,
Theorems~\ref{thm:main} and~\ref{thm:main_iterative}.
Section~\ref{sec:structural} discusses the structural assumptions on the
hydrodynamical and barotropic contributions and establishes the comparison
principles for elementary shocks, elementary rarefactions, and composite
waves. Section~\ref{sec:matching} combines these principles with the
matching conditions at the contact discontinuity to prove the wave-speed
bounds, a regular-wave criterion, and an iterative refinement of the bounds on the
star pressures. Section~\ref{sec:numerics} assesses the bounds numerically
for a Noble--Abel stiffened gas with a localized barotropic contribution.
We discuss implications and future work in the conclusion
(Section~\ref{sec:conclusion}).

\section{Preliminaries and Notation}
\label{sec:preliminaries}

In this section we describe the precise setup of the Riemann problem under
consideration, and outline the solution strategy that we will follow in the
remainder of the paper.

Consider the Riemann problem \eqref{eq:intro_conservation_law},
\eqref{eq:intro_riemann_data} for the one-dimensional compressible Euler
equations,
\begin{align}
  \label{eq:riemann_problem}
  \begin{cases}
    \begin{aligned}
      \partial_t\rho + \partial_x (\rho u) &= 0, \\[0.25em]
      \partial_t(\rho u) + \partial_x\big(\rho u^2 + p\big)
      &= 0, \\[0.25em]
      \partial_t (\mathcal{E})  +
      \partial_x\big(u (\mathcal{E}+p)\big) &= 0,
    \end{aligned}
  \end{cases}
\end{align}
with a given pressure and total energy
\begin{align}
  \label{eq:composite_eos}
  p\;=\;\ph(\rho,\eh)\,+\,\pb(\rho),
  \qquad\quad
  \mathcal{E}\;=\;\frac{1}{2}\rho\,u^2\,+\,\eh\,+\,\eb(\rho),
\end{align}
and with given initial states $W\L=[\rho\L,u\L,p\L]$ on the left ($x<0$),
and correspondingly $W\R=[\rho\R,u\R,p\R]$ on the right ($x>0$). Here,
$\rho>0$ denotes the density, $u$ the velocity, and $\eh$ the
hydrodynamical internal energy. The total pressure $p$ is
composed of a hydrodynamical contribution $\ph(\rho,\eh)$ given by a
complete equation of state, and a barotropic contribution $\pb(\rho)$
depending on the density alone and related to the barotropic energy
$\eb(\rho)$ by $\pb(\rho)=\rho^2\,\partial_\rho\eb(\rho)$. Precise
structural assumptions on $\ph$ and $\pb$ are postponed to
Section~\ref{sec:structural}. Throughout, a state is described by the
vector $W=[\rho,u,p]$ of primitive variables, in which $p$ stands for the
total pressure \eqref{eq:composite_eos}, and subscripts $\mathrm{h}$ and
$\mathrm{b}$ consistently distinguish hydrodynamical and barotropic
quantities. A hat denotes a quantity expressed as a function of the
specific volume $v=1/\rho$, such as $\hatpb(v)=\pb(1/v)$.

Provided that the equation of state satisfies suitable convexity
conditions, the solution of \eqref{eq:riemann_problem} is self-similar,
$U(x,t)=W(x/t)$, and consists of four constant states separated by three
waves: a left-facing wave, a contact discontinuity traveling with the
intermediate velocity $u\star$, and a right-facing wave; see
Theorem~\ref{thm:riemann_fan} below. The outer waves are shocks or
rarefaction fans and, for a nonconvex total pressure $p$,
possibly composite waves; see
Section~\ref{sec:composite_waves}. We denote the leftmost and rightmost
wave speeds of the Riemann fan by $\lambda^-_1$ and $\lambda^+_3$,
respectively.

We are not concerned with computing the exact solution of
\eqref{eq:riemann_problem}. Our goal is instead to construct a computable
and guaranteed upper bound on the maximal wave speed
$\lambda_{\max}\,:=\,\max\big(|\lambda^-_1|,\,|\lambda^+_3|\big)$, which is
the quantity entering CFL conditions and, in particular, the
graph-viscosity coefficients of invariant-domain-preserving approximations
\cite{GuermondPopov2016InvariantDomains,GuermondPopov2016WaveSpeed,
ClaytonGuermondPopov2022,ClaytonEtAl2023}. For such computational schemes
an underestimate of $\lambda_{\max}$ can lead to a loss of the invariant
domain property, whereas an overestimate merely results in additional
numerical dissipation \cite{GuermondPopov2016WaveSpeed}. Guaranteed upper
bounds are likewise a prerequisite for approximate Riemann solvers of HLL
type \cite{HartenLaxVanLeer1983,Davis1988,Toro2009}.

\subsection{A review of the solution strategy for Riemann problems}
\label{sec:review}

The classical solution strategy \cite{Godunov1959,Toro2009} for the Riemann
problem rests on the fact that pressure and velocity are continuous across
the contact discontinuity, so that the two star states share the values
$p\star$ and $u\star$; see Theorem~\ref{thm:riemann_fan}. For each of the
two outer waves one introduces a \emph{pressure function} $f(p\star;W)$
that returns the velocity change across the wave in terms of the star
pressure $p\star$. Following \cite[Chaps.~3 and~4]{Toro2009}, it
is given for an \emph{elementary} left wave by
\begin{align}
  \label{eq:pressure_function}
  f(p\star;W\L)\;=\;
  \begin{cases}
    \begin{aligned}
      &f\S(p\star;W\L)\;:=\;\big(p\star-p\L\big)^{1/2}
      \big(v\L-v\star\big)^{1/2}
      &\;&\text{if }p\star>p\L,
      \\[0.25em]
      &f\R(p\star;W\L)\;:=\;\int_{\rho\L}^{\rho\star}
      \frac{a(\rho)}{\rho}\bigg|_{\sh}\,\mathrm{d}\rho
      &\;&\text{if }p\star\le p\L,
    \end{aligned}
  \end{cases}
\end{align}
where $v:=1/\rho$ denotes the specific volume and $\rho\star=1/v\star$ the
density behind the wave, which is determined for given $p\star$ by the
Rankine-Hugoniot conditions on the elementary shock branch $f\S$, and by
constant entropy along the isentrope through $W\L$ on the elementary
rarefaction branch $f\R$. Here, $a$ denotes the speed of sound. For a
composite wave the pressure function is more complicated and has to be
assembled from the contributions of the individual elementary waves, see
Section~\ref{sec:composite_waves}. Now, matching $u\star$ from the left and
from the right \cite[Chaps.~3 and~4]{Toro2009},
\begin{align}
  \label{eq:velocity_matching}
  u\star\;=\;u\L-f\big(p\star;W\L\big)\;=\;u\R+f\big(p\star;W\R\big),
\end{align}
shows that the star pressure is a root of the equation
\begin{align}
  \label{eq:pressure_equation}
  f\big(p\star;W\L\big)\,+\,f\big(p\star;W\R\big)
  \,+\,u\R-u\L\;=\;0.
\end{align}
Once $p\star$ is known, $u\star$ follows from either equality in
\eqref{eq:velocity_matching}, and, provided that both outer waves are
elementary, the extremal wave speeds are recovered from $p\star$ directly,
see \cite[Chaps.~3 and~4]{Toro2009},
\begin{align}
  \label{eq:lambda_one}
  \begin{cases}
    \begin{aligned}
      \lambda^-_1\;&=\;
      \begin{cases}
        \begin{aligned}
          &u\L-v\L\,Q\big(p\star;W\L\big) &\;&\text{if }p\star>p\L,
          \\[0.25em]
          &u\L-a(W\L) &\;&\text{if }p\star\le p\L,
        \end{aligned}
      \end{cases}
      \\[0.5em]
      \lambda^+_3\;&=\;
      \begin{cases}
        \begin{aligned}
          &u\R+v\R\,Q\big(p\star;W\R\big) &\;&\text{if }p\star>p\R,
          \\[0.25em]
          &u\R+a(W\R) &\;&\text{if }p\star\le p\R.
        \end{aligned}
      \end{cases}
    \end{aligned}
  \end{cases}
\end{align}
Here, we have used the mass flux
\begin{align}
  \label{eq:mass_flux}
  Q\big(p\star;W\big)\;:=\;
  \frac{\big(p\star-p\big)^{1/2}}{\big(v-v\star\big)^{1/2}},
\end{align}
and $p$ and $v$ denote the pressure and the specific volume of the state
$W$, and $v\star$ the specific volume behind the respective wave.

\subsection{Strategy and main result}
\label{sec:strategy}

Carrying out the program of Section~\ref{sec:review} for the composite
equation of state \eqref{eq:composite_eos} requires the wave curves of the
full system, which is precisely the effort we wish to avoid. We do so
deliberately, because we want to admit equations of state for which the
pressure functions \eqref{eq:pressure_function} are difficult or impossible
to evaluate analytically. This includes the situations discussed above, in
which the outer waves are no longer elementary but composite, see
Section~\ref{sec:composite_waves}, and it includes equations of state for
which \eqref{eq:pressure_equation} fails to be uniquely solvable, see
Remark~\ref{rem:no_uniqueness}. We make no attempt to resolve any of this.
All we wish to construct is an upper bound on the wavespeeds that remains
valid in all of these cases.

We therefore assume throughout unique solvability of the
\emph{hydrodynamical} problem obtained from \eqref{eq:riemann_problem} by
dropping the barotropic contributions $\pb$ and $\eb$, viz.,
\begin{align}
  \label{eq:riemann_problem_h}
  \begin{cases}
    \begin{aligned}
      \partial_t\rho + \partial_x (\rho u) &= 0, \\[0.25em]
      \partial_t(\rho u) + \partial_x\big(\rho u^2 + \ph\big) &= 0,
      \\[0.25em]
      \partial_t \mathcal{E}_{\mathrm h} +
      \partial_x\big(u (\mathcal{E}_{\mathrm h}+\ph)\big) &= 0,
    \end{aligned}
  \end{cases}
  \qquad
  \mathcal{E}_{\mathrm h}\;=\;\frac{1}{2}\rho\,u^2\,+\,\eh,
\end{align}
with $\ph=\ph(\rho,\eh)$ and subject to the same initial data $W\L$ and
$W\R$; this is the setting of Theorem~\ref{thm:riemann_fan} below, and in
particular both outer waves of \eqref{eq:riemann_problem_h} are
elementary.

For later reference we record the three ingredients of
Section~\ref{sec:review} for the reduced problem
\eqref{eq:riemann_problem_h}. Marking all quantities by a subscript
$\mathrm{h}$, the pressure function \eqref{eq:pressure_function} becomes
\begin{align}
  \label{eq:pressure_function_h}
  f_{\mathrm h}(\tilde p_h\star;W\L)\;=\;
  \begin{cases}
    \begin{aligned}
      &f\S_{\mathrm h}(\tilde p_h\star;W\L)\;:=\;
      \big(\tilde p_h\star-\ph\L\big)^{1/2}
      \big(v\L-\tilde v\star\big)^{1/2}
      &\;&\text{if }\tilde p_h\star>\ph\L,
      \\[0.25em]
      &f\R_{\mathrm h}(\tilde p_h\star;W\L)\;:=\;
      \int_{\rho\L}^{\tilde\rho\star}
      \frac{\ah(\rho,\eh)}{\rho}\bigg|_{\sh}\,\mathrm{d}\rho
      &\;&\text{if }\tilde p_h\star\le\ph\L,
    \end{aligned}
  \end{cases}
\end{align}
with the hydrodynamical star volume $\tilde v\star$ given by the reduced
Rankine-Hugoniot condition
\begin{align*}
  \tag{\texorpdfstring{RH\textsubscript{h}}{RHh}}
  \hateh(\tilde v\star,\tilde p_h\star) -\hateh(v\L,\ph\L)
  \,+\,\frac{1}{2}(\tilde p_h\star+\ph\L)
  \big(\tilde v\star - v\L\big) \;=\;0,
\end{align*}
the matching condition \eqref{eq:pressure_equation} for the star pressure
$\tilde p_h\star$ now reads
\begin{align}
  \label{eq:pressure_equation_h}
  0\;&=\;f_{\mathrm h}\big(\tilde p_h\star;W\L\big)
  \,+\,f_{\mathrm h}\big(\tilde p_h\star;W\R\big)\,+\,u\R-u\L,
\end{align}
and the leftmost wavespeed \eqref{eq:lambda_one} is given by
\begin{align}
  \label{eq:lambda_one_h}
  \begin{cases}
    \begin{aligned}
      \lambda^-_{1,\mathrm h}\;&=\;
      \begin{cases}
        \begin{aligned}
          &u\L-v\L\,Q_{\mathrm h}\big(\tilde p_h\star;W\L\big)
          &\,&\text{if }\tilde p_h\star>\ph\L,
          \\[0.25em]
          &u\L-\ah(W\L)
          &\,&\text{if }\tilde p_h\star\le\ph\L,
        \end{aligned}
      \end{cases}
      \\[0.5em]
      \lambda^+_{3,\mathrm h}\;&=\;
      \begin{cases}
        \begin{aligned}
          &u\R+v\R\,Q_{\mathrm h}\big(\tilde p_h\star;W\R\big)
          &\,&\text{if }\tilde p_h\star>\ph\R,
          \\[0.25em]
          &u\R+\ah(W\R)
          &\,&\text{if }\tilde p_h\star\le\ph\R,
        \end{aligned}
      \end{cases}
    \end{aligned}
  \end{cases}
\end{align}
with the hydrodynamical mass flux $Q_{\mathrm h}(\tilde p_h\star;W)$
defined as in \eqref{eq:mass_flux}, but with $\ph$ in place of $p$ and with
$\tilde v\star$ in place of $v\star$.

In Sections~\ref{sec:structural} and~\ref{sec:matching} we will
establish the following main result.
\begin{theorem}[General wavespeed bounds]
  \label{thm:main}
  Assume that the left and right waves are each either compressive, or
  expansive (see \eqref{eq:composite_compressive} and
  \eqref{eq:composite_expansive}). Let $\overbar q\L$ and $\overbar q\R$,
  as well as $\underbar q\L$ and $\underbar q\R$, be the unique solutions
  of
  \begin{align}
    \label{eq:main_onesided}
    \begin{cases}
      \begin{aligned}
        f\S_{\mathrm h}\big(\overbar q\L;W\L\big)\,
        &=\,\max\big(u\L-u\R,\,0\big)
        =\,f\S_{\mathrm h}\big(\overbar q\R;W\R\big),
        \\[0.25em]
        f\S_{\mathrm h}\big(\underbar q\L;W\L\big)\,
        &=\,\min\big(u\L-u\R,\,0\big)
        \,=\,f\S_{\mathrm h}\big(\underbar q\R;W\R\big),
      \end{aligned}
    \end{cases}
  \end{align}
  where $f\S_{\mathrm h}(\,\cdot\,;W)$ is understood through its
  continuation past $\ph$, see Definition~\ref{def:expansive_continuation},
  so that $\underbar q\L\le\ph\L\le \overbar q\L$ and $\underbar
  q\R\le\ph\R\le\overbar q\R$. Note that exactly one of the two equations
  in \eqref{eq:main_onesided} is nontrivial. Let
  \begin{align}
    \label{eq:main_pressure_bound}
    \begin{cases}
      \begin{aligned}
        \barph\starL\,&:=\,\max\big\{\overbar q\L, \,p\R-\hatpb(v\L)\big\},
        &\;
        \barph\starR\,&:=\,\max\big\{\overbar q\R, \,p\L-\hatpb(v\R)\big\},
        \\[0.25em]
        \underph\starL\,&:=\, \min\big\{\underbar q\L,\,p\R-\hatpb(v\L)\big\},
        &\;
        \underph\starR\,&:=\, \min\big\{\underbar q\R,\,p\L-\hatpb(v\R)\big\}.
      \end{aligned}
    \end{cases}
  \end{align}
  Let $\bar v\starL$ and $\bar v\starR$, as well as
  $\underbar v\starL$ and $\underbar v\starR$, be determined by the
  Rankine-Hugoniot condition \eqref{eq:rankine_hugoniot_h} for
  $\barph\starL$ and $W\L$, and for $\barph\starR$ and $W\R$, respectively
  (and, correspondingly, for $\underph\starL$ and
  $W\L$, and for $\underph\starR$ and $W\R$), so that $\bar v\starL\le
  v\L\le\underbar v\starL$ and $\bar v\starR\le v\R\le\underbar v\starR$,
  and let
  \begin{align}
    \label{eq:main_lambda}
    \bar\lambda^-_{1,\mathrm h}\;:=\;u\L-v\L\,
    Q_{\mathrm h}\big(\barph\starL;W\L\big),
    \qquad
    \bar\lambda^+_{3,\mathrm h}\;:=\;u\R+v\R\,
    Q_{\mathrm h}\big(\barph\starR;W\R\big).
  \end{align}
  Then, under mild structural assumptions on the equation of state
  \eqref{eq:composite_eos} and on the solution of
  \eqref{eq:riemann_problem_h}, all made precise in
  Section~\ref{sec:structural}, the extremal wavespeeds of the full
  problem \eqref{eq:riemann_problem} obey
  \begin{align}
    \label{eq:main_wavespeed}
    \left\{
    \begin{aligned}
      \lambda^-_1\;&\ge\;
      u\L-\sqrt{\big(u\L-\bar\lambda^-_{1,\mathrm h}\big)^2
      \,+\,2\,(v\L)^2\!\!\max_{\bar v\starL\le\tau\le\underbar v\starL}\!
      \frac{\hatab^2(\tau)}{\tau^2}},
      \\[0.75em]
      \lambda^+_3\;&\le\;
      u\R+\sqrt{\big(\bar\lambda^+_{3,\mathrm h}-u\R\big)^2
      \,+\,2\,(v\R)^2\!\!\max_{\bar v\starR\le\tau\le\underbar v\starR}\!
      \frac{\hatab^2(\tau)}{\tau^2}}.
    \end{aligned}
    \right.
  \end{align}
  Importantly, no case distinction on the type of the outermost elementary
  wave of either family is necessary.
\end{theorem}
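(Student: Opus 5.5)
We will prove the bound for the left wave; the right wave follows by the symmetry $x\mapsto -x$, $u\mapsto -u$. The plan has three steps. First, localize the pair of hydrodynamical star pressures. Second, compare the full left wave curve with the auxiliary shock curve of $\ph$. Third, convert the resulting mass-flux estimate into a speed estimate. Throughout we parametrize the full left wave, elementary or composite, by its hydrodynamical star pressure $\ph\starL$. The right wave is parametrized by $\ph\starR$, and the two are related at the contact by
\begin{align*}
  \ph\starL+\hatpb(v\starL)\;=\;\ph\starR+\hatpb(v\starR)\;=\;p\star .
\end{align*}

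\textbf{Step 1: localizing the star pressures.} We will show
\begin{align*}
  \underph\starL\le\ph\starL\le\barph\starL,
  \qquad
  \underph\starR\le\ph\starR\le\barph\starR .
\end{align*}
The comparison principles of Section~\ref{sec:structural} (ordering of Hugoniot curves and shock functions for compressive waves, the rarefaction bound for expansive waves, and their composite versions) give a one-sided inequality for the full velocity jump. For a compressive left wave, $u\L-u\star\le f\S_{\mathrm h}(\ph\starL;W\L)$, in the sense of the continued shock function of Definition~\ref{def:expansive_continuation}. The analogous inequality holds for expansive waves with the opposite orientation.

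Next we case-split on the sign of $u\star-u\R$, equivalently on whether the right wave is compressive or expansive. Suppose, for instance, that $\ph\starL>\overbar q\L$. Monotonicity of $f\S_{\mathrm h}$ then forces the left velocity jump to exceed $\max(u\L-u\R,0)$, so the right wave must be expansive. An expansive right wave has $p\star\le p\R$. This gives $\ph\starL\le p\R-\hatpb(v\starL)$. Since the left wave is compressive, $v\starL\le v\L$, and we want to replace $\hatpb(v\starL)$ by $\hatpb(v\L)$.

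\emph{Main obstacle.} This replacement is where I expect the difficulty, because $\hatpb$ need not be monotone. My first attempt will use the hyperbolicity assumption $\ab^2\ge0$, which says that $\hatpb$ is nonincreasing in $v$. Then $\hatpb(v\starL)\ge\hatpb(v\L)$, so $\ph\starL\le p\R-\hatpb(v\L)$. This contradicts $\ph\starL>\barph\starL$ and yields the upper bound $\ph\starL\le\barph\starL$. The lower bound and the right-wave bounds follow in the same way.

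\textbf{Step 2: comparing the full and auxiliary shock curves.} Once $\ph\starL\in[\underph\starL,\barph\starL]$, every state traversed by the full left wave has specific volume in $[\bar v\starL,\underbar v\starL]$. This uses the monotone ordering of volumes along the auxiliary Hugoniot and isentrope under the pressure bounds. The leftmost speed of the full wave is the speed of its leading segment, either a shock or a characteristic. In both cases its square mass flux is bounded by that of the full Hugoniot chord from $W\L$. In volume form this square mass flux is
\begin{align*}
  \frac{\ph\star-\ph\L}{v\L-v\star}\;+\;\frac{\hatpb(v\star)-\hatpb(v\L)}{v\L-v\star}.
\end{align*}
By monotonicity of $Q_{\mathrm h}$ in the star pressure, the first term is at most $Q_{\mathrm h}(\barph\starL;W\L)^2$. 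By the mean value theorem, the second term is at most $\max_\tau \hatab^2(\tau)/\tau^2$, since $-\hatpb'(\tau)=\hatab^2/\tau^2$.

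The factor $2$ in the statement absorbs the discrepancy between the full star volume and the auxiliary Hugoniot volume. It also absorbs the rarefaction-edge case, where the bound $a^2=\ah^2+\ab^2$ yields at most one extra $\ab^2$ contribution, evaluated at $v\L$ inside the same interval.

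\textbf{Step 3: converting to a speed bound.} The leading speed is $\lambda^-_1=u\L-v\L\,Q$. Substituting the bound on $Q^2$ from Step 2, and using $v\L Q_{\mathrm h}(\barph\starL;W\L)=u\L-\bar\lambda^-_{1,\mathrm h}$, gives exactly \eqref{eq:main_wavespeed}. No case distinction on the wave type enters the final formula, because the worst case over shocks and rarefaction edges has been absorbed into the maximum over $\tau$.
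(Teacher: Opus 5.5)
Your architecture (a pressure bracket, monotonicity of $Q_{\mathrm h}$, a barotropic mean-value term, the factor two) mirrors the paper's, but two steps do not hold as stated.

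\textbf{Step 1.} It fails in the compressive--compressive configuration, and by mirroring in the expansive--expansive one. First, your inequality is backwards. For a compressive shock, Proposition~\ref{prop:shock_function} gives $u\L-u\star\ge f\S_{\mathrm h}(\ph\starL;W\L)$, and your inference that $\ph\starL>\overbar q\L$ pushes the left jump above $\max(u\L-u\R,0)$ needs exactly this direction. More seriously, even the correct direction is only available for a \emph{single} compressive shock issued from $W\L$:
\begin{itemize}
\item for a compressive fan you only have $f\R\ge f\R_{\mathrm h}$, an isentrope integral that in compression lies below $f\S_{\mathrm h}$ (e.g.\ for an ideal gas);
\item for a composite wave the summands of \eqref{eq:f_composite} are issued from the intermediate states $W^{(k-1)}$, so the piecewise comparisons do not add up to $f^{\mathrm C}(\ph\starL;W\L)\ge f\S_{\mathrm h}(\ph\starL;W\L)$. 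The remark after Lemma~\ref{lem:matching_separation} notes that this step fails for composite compressive waves.
\end{itemize}
So in that configuration you have no bound on the star pressure $\ph\starL$, and the paper never claims one. It bounds only the pressure $\ph^{(1)}$ behind the \emph{outermost} elementary wave. Every summand of \eqref{eq:f_composite} has the sign of the total jump (Lemma~\ref{lem:elementary_comparison}), so the leading summand, a single shock from $W\L$, obeys $f\S_{\mathrm h}(\ph^{(1)};W\L)\le f\S(\ph^{(1)};W\L)\le\Delta\L\le u\L-u\R$ (Lemma~\ref{lem:composite_pressure_bound}, Proposition~\ref{prop:matching_compression_compression}). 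This suffices because only that wave determines $\lambda^-_1$. Your total-pressure argument in the compressive--expansive case is correct and is Proposition~\ref{prop:matching_compression_expansion}.

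\textbf{Step 2.} A pressure bracket controls only the \emph{auxiliary} volumes $\xi_h(\,\cdot\,;W\L)$, not the states of the full wave.
\begin{itemize}
\item \emph{Compressive shock, barotropic term.} Lemma~\ref{lem:inequality_on_density} gives $v\star\le\tilde v\star$ whenever $\hatAb(v\star,v\L)\ge0$. Nothing then places $v\star$ above $\bar v\starL$, so the mean value theorem on $[v\star,v\L]$ does not yield the maximum over $[\bar v\starL,v\L]$. The paper needs Lemma~\ref{lem:estimate_on_quotient} here: a contradiction argument, using that \eqref{eq:rankine_hugoniot} has no second root in $(v\star,v\L]$, bounds the barotropic chord slope by the maximum over $[\tilde v\star,v\L]$ even though $v\star$ may lie outside that interval.
\item \emph{Compressive shock, hydrodynamical term.} Your first term equals $Q_{\mathrm h}^2\,(v\L-\tilde v\star)/(v\L-v\star)$, and this ratio exceeds one when $\hatAb<0$. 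Bounding it by the second case of \eqref{eq:specific_volume_factor}, plus a further mean-value step on $[\tilde v\star,v\L]$, is what produces the factor two (Proposition~\ref{prop:shock_wavespeed_general}, Corollary~\ref{cor:shock_wavespeed_monotone}).
\item \emph{Expansive outermost shock.} The auxiliary curve bounds the volume from the wrong side, $v\L\le\tilde v\star\le v\star$. Getting $v^{(1)}\le\underbar v\starL$ requires the chord condition \eqref{A4} at the front state together with $v\L Q_{\mathrm h}\le\ah\le a$ (Lemma~\ref{lem:expansive_hydro_bracket}). Your argument never uses \eqref{A4}.
\item \emph{Leading fan.} The head speed $u\L-a(W\L)$ is not a chord quantity. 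It is dominated because $\barph\starL\ge\ph\L$ gives $\ah(W\L)\le v\L Q_{\mathrm h}(\barph\starL;W\L)$ by Lemma~\ref{lem:Q_monotone}, and because $v\L\in[\bar v\starL,\underbar v\starL]$.
\end{itemize}
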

\begin{remark}
  The lower bounds on the pressures, $\underph\starL$ and $\underph\starR$,
  are well defined as long as $\min(u\L-u\R,0)$ lies in the range of the
  continued shock function $f\S_{\mathrm h}(\,\cdot\,;W)$. Should this
  fail, then $\underbar v\starL$ and $\underbar v\starR$ in
  \eqref{eq:main_wavespeed} may be replaced by any suitable upper bound
  derived from a priori knowledge on the hydrodynamical equation of state.
\end{remark}
The bounds \eqref{eq:main_wavespeed} hold irrespective of whether the outer
waves are elementary or composite, and irrespective of whether they are
compressive or expansive. Under additional smallness and regularity
assumptions, however, both outer waves are \emph{regular}, meaning that
each of them is an elementary wave, described by a single pressure function
of a shock or a rarefaction, along which the fundamental derivative
\eqref{A3} stays positive, and the bounds can be improved by an iteration:
\begin{theorem}[Regular wavespeed bounds]
  \label{thm:main_iterative}
  In the situation of Theorem~\ref{thm:main}, assume that the assumptions
  of Lemma~\ref{lem:matching_volume_bound} are satisfied for $W\L$ and
  $W\R$. Let $\tilde p_h\star$ be the solution of
  \eqref{eq:pressure_equation_h} and set
  \begin{align}
    \label{eq:main_pistar}
    \underbar\pi\star\;:=\;\min\big(\tilde p_h\star,\,\ph\L,\,\ph\R\big),
    \qquad
    \bar\pi\star\;:=\;\max\big(\tilde p_h\star,\,\ph\L,\,\ph\R\big).
  \end{align}
  Set $\bar P^{(0)}_{\mathrm L}:=\barph\starL$, $\bar P^{(0)}_{\mathrm
  R}:=\barph\starR$, $\underbar P^{(0)}_{\mathrm L}:=\underph\starL$ and
  $\underbar P^{(0)}_{\mathrm R}:=\underph\starR$. Assume that the
  smallness conditions $4vm\le d$ and $\Delta_{\mathrm b}^+<d$ with the
  quantities \eqref{eq:matching_volume_data} and
  \eqref{eq:deviation_delta_b}, the positivity $\underbar\Xi(\bar
  P^{(0)};W)>0$ of the lower volume bound \eqref{eq:matching_volume_bound},
  as well as the convexity criterion \eqref{eq:kappa_criterion}, hold true
  for these initial pressure bounds and the states $W\L$ and $W\R$. Then,
  each of the left and right waves is a regular shock
  or a regular rarefaction, and we can define
  recursively, for $n\ge0$:
  \begin{align}
    \label{eq:main_bootstrap}
    \begin{cases}
      \begin{aligned}
        \underbar V^{(n)}_{\mathrm L}
        &:=
        \underbar\Xi\big(\bar P^{(n)}_{\mathrm L};W\L\big),
        \quad
        \bar V^{(n)}_{\mathrm L}
        :=
        \overbar\Xi\big(\underbar P^{(n)}_{\mathrm L};W\L\big),
        \\[0.25em]
        \underbar V^{(n)}_{\mathrm R}
        &:=
        \underbar\Xi\big(\bar P^{(n)}_{\mathrm R};W\R\big),
        \quad
        \bar V^{(n)}_{\mathrm R}
        :=
        \overbar\Xi\big(\underbar P^{(n)}_{\mathrm R};W\R\big),
        \\[0.25em]
        \underbar\delta^{(n)}&:=
          \hatpb\big(\underbar V^{(n)}_{\mathrm L}
          \big)-\hatpb\big(\bar V^{(n)}_{\mathrm R}\big),
        \quad
        \bar\delta^{(n)}:=
          \hatpb\big(\underbar V^{(n)}_{\mathrm R}
          \big)-\hatpb\big(\bar V^{(n)}_{\mathrm L}\big),
        \\[0.25em]
        \varepsilon^{(n)}&:=
        \max\big(\bar\delta^{(n)},\,
        \underbar\delta^{(n)}\big),
        \\[0.25em]
        \underbar\eta^{(n)}&:=
        -\eta\S\big(\bar P^{(n)}_{\mathrm L},\underbar V^{(n)}_{\mathrm L};W\L\big)
        -\eta\S\big(\bar P^{(n)}_{\mathrm R},\underbar V^{(n)}_{\mathrm R};W\R\big),
        \\[0.25em]
        \bar\eta^{(n)}&:=
        \eta\R\big(\underbar P^{(n)}_{\mathrm L},\bar V^{(n)}_{\mathrm L};W\L\big)
        +\eta\R\big(\underbar P^{(n)}_{\mathrm R},\bar V^{(n)}_{\mathrm R};W\R\big),
        \\[0.25em]
        \underbar\pi^{\ast(n)}&:=
        \max\big(\underbar\pi\star,\,\Phi(\underbar\eta^{(n)};W\L,W\R)\big),
        \\[0.25em]
        \bar\pi^{\ast(n)}&:=
        \min\big(\bar\pi\star,\,\Phi(\bar\eta^{(n)};W\L,W\R)\big),
        \\[0.25em]
        \bar P^{(n+1)}_{\mathrm L}&:=
        \min\big(\bar P^{(n)}_{\mathrm L},\,
        \bar\pi^{\ast(n)}+\max(\bar\delta^{(n)},0)\big),
        \\[0.25em]
        \bar P^{(n+1)}_{\mathrm R}&:=
        \min\big(\bar P^{(n)}_{\mathrm R},\,
        \bar\pi^{\ast(n)}+\max(\underbar\delta^{(n)},0)\big),
        \\[0.25em]
        \underbar P^{(n+1)}_{\mathrm L}&:=
        \max\big(\underbar P^{(n)}_{\mathrm L},\,
        \underbar\pi^{\ast(n)}-\max(\underbar\delta^{(n)},0)\big),
        \\[0.25em]
        \underbar P^{(n+1)}_{\mathrm R}&:=
        \max\big(\underbar P^{(n)}_{\mathrm R},\,
        \underbar\pi^{\ast(n)}-\max(\bar\delta^{(n)},0)\big).
      \end{aligned}
    \end{cases}
  \end{align}
  Here, $\underbar\Xi$ and $\overbar\Xi$ denote the computable volume
  bounds defined in Lemma~\ref{lem:matching_volume_bound}, $\eta\S$ and
  $\eta\R$ the deviation bounds \eqref{eq:deviation_eta} of
  Lemma~\ref{lem:deviation_bounds}, and $\Phi(\eta;W\L,W\R)$ the solution
  of the shifted hydrodynamical pressure equation
  \eqref{eq:pressure_equation_shifted}. Then, the sequences $\bar
  P^{(n)}_{\mathrm L/\mathrm R}$ are non-increasing, the sequences
  $\underbar P^{(n)}_{\mathrm L/\mathrm R}$ are non-decreasing, and, for
  every $n\ge0$, the bounds \eqref{eq:main_wavespeed} remain valid with
  $\barph\starL$, $\barph\starR$, $\underph\starL$ and $\underph\starR$
  replaced by $\max(\bar P^{(n)}_{\mathrm L},\ph\L)$, $\max(\bar
  P^{(n)}_{\mathrm R},\ph\R)$, $\min(\underbar P^{(n)}_{\mathrm L},\ph\L)$
  and $\min(\underbar P^{(n)}_{\mathrm R},\ph\R)$.
\end{theorem}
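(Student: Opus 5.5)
The statement to prove is Theorem~\ref{thm:main_iterative}. I would argue by induction on $n$, with Theorem~\ref{thm:main} and the comparison lemmas of Section~\ref{sec:matching} as black boxes.

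\textbf{Step 1: regularity.} First I would show that under the stated hypotheses both outer waves are regular. The smallness conditions $4vm\le d$ and $\Delta_{\mathrm b}^+<d$, together with $\underbar\Xi(\bar P^{(0)};W)>0$, are exactly the hypotheses of Lemma~\ref{lem:matching_volume_bound}. That lemma then confines the star volume of each outer wave to $[\underbar\Xi(\bar P^{(0)};W),\overbar\Xi(\underbar P^{(0)};W)]$. This uses that the pair of hydrodynamical star pressures lies in $[\underbar P^{(0)},\bar P^{(0)}]$, which is the content of Theorem~\ref{thm:main}. On this volume interval, the convexity criterion \eqref{eq:kappa_criterion} gives a positive fundamental derivative \eqref{A3}. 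So no inflection point of the isentrope or Hugoniot locus is crossed, and each wave is an elementary shock or rarefaction.

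\textbf{Step 2: the induction claim.} The hypothesis $\mathcal H_n$ is that the true star pressures satisfy
\[
\underbar P^{(n)}_{\mathrm L}\le p_{\mathrm h}^{\ast\mathrm L}\le\bar P^{(n)}_{\mathrm L},\qquad
\underbar P^{(n)}_{\mathrm R}\le p_{\mathrm h}^{\ast\mathrm R}\le\bar P^{(n)}_{\mathrm R}.
\]
The case $n=0$ is Theorem~\ref{thm:main}. Assuming $\mathcal H_n$, I would proceed as follows.
\begin{enumerate}
\item Monotonicity of $\underbar\Xi$ and $\overbar\Xi$ in the pressure argument places the true star volumes in $[\underbar V^{(n)},\bar V^{(n)}]$.
\item Since $\hatpb$ is decreasing (the barotropic sound speed is real), the barotropic jump $\delta=\hatpb(v^{\ast\mathrm R})-\hatpb(v^{\ast\mathrm L})$ lies in $[\underbar\delta^{(n)},\bar\delta^{(n)}]$, up to the sign convention fixed in Section~\ref{sec:matching}. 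Total-pressure continuity at the contact gives $p_{\mathrm h}^{\ast\mathrm L}-p_{\mathrm h}^{\ast\mathrm R}=\delta$.
\item Lemma~\ref{lem:deviation_bounds} applied on the volume boxes says the true velocity matching differs from the hydrodynamical pressure equation by a shift lying between $\underbar\eta^{(n)}$ and $\bar\eta^{(n)}$. Here I use that $\eta\S$ and $\eta\R$ are monotone in their arguments, so evaluating at the extreme box corners gives valid bounds.
\item The solution $\Phi(\eta;W\L,W\R)$ of \eqref{eq:pressure_equation_shifted} is monotone in $\eta$, because $f_{\mathrm h}$ is increasing. Hence a suitable reference pressure (for instance the smaller of the two star pressures) lies in $[\Phi(\underbar\eta^{(n)}),\Phi(\bar\eta^{(n)})]$. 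Intersecting with $[\underbar\pi\star,\bar\pi\star]$ is allowed by the standard min/max bracket for the root of a monotone pressure equation.
\item Converting back with $\pm\max(\delta,0)$ gives the new bounds on each side. Taking $\min$ or $\max$ with the previous iterate preserves validity, which yields $\mathcal H_{n+1}$.
\end{enumerate}

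\textbf{Step 3: monotonicity and wave-speed bounds.} Monotonicity of the sequences is immediate from the $\min$/$\max$ with the previous iterate in \eqref{eq:main_bootstrap}. The hypotheses of Step~1 are then inherited by every iterate, since the pressure intervals only shrink. For the wave-speed bounds, I would rerun the final part of the proof of Theorem~\ref{thm:main} with the new pressure bounds. That argument used only that the true star pressures lie between the pressure bounds, and that $\tau\mapsto$ mass flux and the volume interval are monotone in those bounds. The clamping with $\ph$ is needed to keep $\bar v\le v\le\underbar v$.

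\textbf{Main obstacle.} I expect the hardest point to be the sign bookkeeping in items~2 and~4 of Step~2. One must decide which of $p_{\mathrm h}^{\ast\mathrm L}$ and $p_{\mathrm h}^{\ast\mathrm R}$ the shifted equation $\Phi$ controls, and then check that adding $\max(\bar\delta,0)$ on the left and $\max(\underbar\delta,0)$ on the right is correct in all sign cases of $\delta$. I would first fix the convention of \eqref{eq:pressure_equation_shifted} and verify the two cases $\delta\ge0$ and $\delta<0$ separately.
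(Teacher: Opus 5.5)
Your outline follows the paper's route: regularity from Lemma~\ref{lem:elementary_criterion}, then the induction of Proposition~\ref{prop:matching_bootstrap} built on Lemmas~\ref{lem:matching_volume_bound} and~\ref{lem:deviation_bounds} and Corollary~\ref{cor:deviation_star_pressures}, then the clipped bracket. However, item~4 of Step~2, the point you flagged as the main obstacle, is resolved incorrectly, and the inductive step breaks there.

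Neither $\Phi$ nor $[\underbar\pi\star,\bar\pi\star]$ brackets any single star pressure from both sides. Since $\ph\starL\neq\ph\starR$ in general, all you know is $f_{\mathrm h}(\ph\starL;W\L)+f_{\mathrm h}(\ph\starR;W\R)\in[u\L-u\R+\underbar\eta^{(n)},\,u\L-u\R+\bar\eta^{(n)}]$. Monotonicity of $f_{\mathrm h}$ then gives only two one-sided bounds (Corollary~\ref{cor:deviation_star_pressures}):
\begin{itemize}
\item $\min(\ph\starL,\ph\starR)\le\Phi(\bar\eta^{(n)})$;
\item $\Phi(\underbar\eta^{(n)})\le\max(\ph\starL,\ph\starR)$.
\end{itemize}
Likewise, $\underbar\pi\star\le\max(\ph\starL,\ph\starR)$ and $\min(\ph\starL,\ph\starR)\le\bar\pi\star$ are not a standard bracket for a root. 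The true star pressures solve the coupled system \eqref{eq:matching}, not a monotone scalar equation. These two inequalities come from Lemma~\ref{lem:matching_separation} together with $\ph\star\ge\ph$ across a shock and $\ph\star\le\ph$ across a fan, and again they are one-sided on the minimum resp.\ maximum.

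Your claim that the smaller star pressure lies in $[\underbar\pi^{\ast(n)},\bar\pi^{\ast(n)}]$ is false. Take E10 at $c_0=0.99\,c_0^{\max}$ in Table~\ref{tab:elementary_wavespeeds}. There $\ph\L=\ph\R=10$ and $\underbar\pi\star=\tilde p_h\star=8.66235$, yet $\min(\ph\starL,\ph\starR)=\ph\starR=8.28532$.

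The missing step is a case split on which star pressure is larger, combined with correct control of their difference. From $\hatpb$ non-increasing and the volume boxes you get $-\underbar\delta^{(n)}\le\ph\starL-\ph\starR\le\bar\delta^{(n)}$. This is not $[\underbar\delta^{(n)},\bar\delta^{(n)}]$ as in your item~2: both quantities are upper bounds, $\underbar\delta^{(n)}$ on $\ph\starR-\ph\starL$ and $\bar\delta^{(n)}$ on $\ph\starL-\ph\starR$. The two cases then read:
\begin{itemize}
\item If $\ph\starL\le\ph\starR$, then $\ph\starL$ is the minimum, so $\ph\starL\le\bar\pi^{\ast(n)}$, and $\ph\starL\ge\ph\starR-\underbar\delta^{(n)}\ge\underbar\pi^{\ast(n)}-\underbar\delta^{(n)}$.
\item If $\ph\starL>\ph\starR$, then $\ph\starL$ is the maximum, so $\ph\starL\ge\underbar\pi^{\ast(n)}$, and $\ph\starL\le\ph\starR+\bar\delta^{(n)}\le\bar\pi^{\ast(n)}+\bar\delta^{(n)}$.
\end{itemize}
This yields exactly the updates in \eqref{eq:main_bootstrap}. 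In particular, it explains the subtraction of $\max(\underbar\delta^{(n)},0)$ resp.\ $\max(\bar\delta^{(n)},0)$ in the lower bounds, which your single-reference picture would make superfluous. Your upper bounds happen to be right, because they only use the upper bound on the minimum.

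The rest of your proposal matches the paper: monotonicity from the $\min$/$\max$ with the previous iterate, inheritance of the smallness conditions, and the clipped bracket in \eqref{eq:main_wavespeed}. One smaller caveat on Step~1: Lemma~\ref{lem:matching_volume_bound} presupposes a regular wave, so its hypotheses are not only the smallness conditions. Using it to produce the interval on which you check \eqref{eq:kappa_criterion} is therefore circular as written. The paper's proof applies Lemma~\ref{lem:elementary_criterion} to the initial brackets in the same way, so on this point you do not depart from it.
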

\begin{remark}
  \label{rem:shifted_no_solution}
  If \eqref{eq:pressure_equation_shifted} has no solution for
  $\eta=\underbar\eta^{(n)}$ or $\eta=\bar\eta^{(n)}$, the corresponding
  argument of the maximum or minimum in \eqref{eq:main_bootstrap} shall be
  omitted.
\end{remark}
If the pressure and volume bounds of Theorem~\ref{thm:main_iterative} are
closing in on the star states, the wavespeed calculation itself can be
improved. For every $n\ge0$ the iteration \eqref{eq:main_bootstrap}
guarantees
\begin{align*}
  \underbar P^{(n)}_{\mathrm L}\;\le\;
  \ph\starL\;\le\;\bar P^{(n)}_{\mathrm L},
  \qquad
  \underbar P^{(n)}_{\mathrm R}\;\le\;
  \ph\starR\;\le\;\bar P^{(n)}_{\mathrm R}.
\end{align*}
We call such a bracket \emph{sufficiently small} if it does not contain
the pressure of the adjacent initial state, i.\,e., if $\ph\L\notin[\underbar
P^{(n)}_{\mathrm L},\bar P^{(n)}_{\mathrm L}]$, or $\ph\R\notin[\underbar
P^{(n)}_{\mathrm R},\bar P^{(n)}_{\mathrm R}]$, respectively. A
sufficiently small bracket determines the type of the wave, and the
factor two in \eqref{eq:main_wavespeed}, which is the price for not
knowing the type, can be dropped.
\begin{corollary}[Wavespeed bounds for sufficiently small brackets]
  \label{cor:main_closing}
  In the situation of Theorem~\ref{thm:main_iterative}, let $n\ge0$, and
  let $\bar\lambda^-_{1,\mathrm h}$, $\bar v\starL$,
  $\bar\lambda^+_{3,\mathrm h}$ and $\bar v\starR$ be defined as in
  Theorem~\ref{thm:main}, but with $\bar P^{(n)}_{\mathrm L}$ and $\bar
  P^{(n)}_{\mathrm R}$ in place of $\barph\starL$ and $\barph\starR$.

  (a) \emph{Left shock.} If $\ph\L<\underbar P^{(n)}_{\mathrm L}$, then
  the left wave is a shock, and
  \begin{align*}
    \lambda^-_1 \;\ge\; u\L -\sqrt{
    \Big(1+\frac{\hatpb(\bar v\starL)-\hatpb(v\L)}
    {\underbar P^{(n)}_{\mathrm L}-\ph\L}\Big)
    \big(u\L-\bar\lambda^-_{1,\mathrm{h}}\big)^2+ (v\L)^2
    \max_{\bar v\starL\,\le\,\tau\le v\L}\frac{\hatab^2(\tau)}{\tau^2}}.
  \end{align*}
  (b) \emph{Left rarefaction.} If $\bar P^{(n)}_{\mathrm L}<\ph\L$, then
  the left wave is a rarefaction, and
  \begin{align*}
    \lambda^-_1\;=\;u\L-\sqrt{\ah^2\big(\rho\L,\eh(\rho\L,\ph\L)\big)
    \,+\,\ab^2(\rho\L)}.
  \end{align*}
  (c) \emph{Right shock.} If $\ph\R<\underbar P^{(n)}_{\mathrm R}$, then
  the right wave is a shock, and
  \begin{align*}
    \lambda^+_3 \;\le\; u\R +\sqrt{
    \Big(1+\frac{\hatpb(\bar v\starR)-\hatpb(v\R)}
    {\underbar P^{(n)}_{\mathrm R}-\ph\R}\Big)
    \big(\bar\lambda^+_{3,\mathrm{h}}-u\R\big)^2+ (v\R)^2
    \max_{\bar v\starR\,\le\,\tau\le v\R}\frac{\hatab^2(\tau)}{\tau^2}}.
  \end{align*}
  (d) \emph{Right rarefaction.} If $\bar P^{(n)}_{\mathrm R}<\ph\R$, then
  the right wave is a rarefaction, and
  \begin{align*}
    \lambda^+_3\;=\;u\R+\sqrt{\ah^2\big(\rho\R,\eh(\rho\R,\ph\R)\big)
    \,+\,\ab^2(\rho\R)}.
  \end{align*}
  If $\underbar P^{(n)}_{\mathrm L}\le\ph\L\le\bar P^{(n)}_{\mathrm L}$,
  or $\underbar P^{(n)}_{\mathrm R}\le\ph\R\le\bar P^{(n)}_{\mathrm R}$,
  the respective bound of Theorem~\ref{thm:main_iterative} remains in
  force.
\end{corollary}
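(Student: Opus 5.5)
The bracket guarantee $\underbar P^{(n)}_{\mathrm L}\le\ph\starL\le\bar P^{(n)}_{\mathrm L}$ from Theorem~\ref{thm:main_iterative} already holds. Theorem~\ref{thm:main_iterative} also tells us that each outer wave is regular, i.e.\ a single elementary shock or a single elementary rarefaction with positive fundamental derivative along it. So I only need to do two things: read off the wave type from the position of $\ph\L$ relative to the bracket, and then redo the wavespeed estimate of Theorem~\ref{thm:main} for that known type. I treat the left wave; the right wave follows by the reflection $x\mapsto-x$, $u\mapsto-u$, which swaps L and R.

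\emph{Type identification.} For a regular wave, being compressive means $\ph\starL>\ph\L$, since the hydrodynamical pressure parametrizes the wave curve monotonically, and rarefaction means $\ph\starL\le\ph\L$. If $\ph\L<\underbar P^{(n)}_{\mathrm L}\le\ph\starL$, the wave is therefore a shock. If $\ph\starL\le\bar P^{(n)}_{\mathrm L}<\ph\L$, it is a rarefaction.

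\emph{Rarefaction, case (b).} The leftmost speed of an elementary rarefaction is the characteristic speed at the left edge of the fan, $u\L-a(W\L)$. The sound speed splits as $a^2=\ah^2+\ab^2$, which is the rarefaction comparison principle of Section~\ref{sec:structural}. Evaluating $\ah$ at $(\rho\L,\eh(\rho\L,\ph\L))$ gives the stated equality.

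\emph{Shock, case (a).} For an elementary shock, $\lambda^-_1=u\L-v\L Q$ with $Q^2=(p\star-p\L)/(v\L-v\star)$ computed with the total pressure. I split the total pressure jump as
\begin{align*}
  p\star-p\L\;=\;\big(\ph\starL-\ph\L\big)\,+\,\big(\hatpb(v\star)-\hatpb(v\L)\big).
\end{align*}
The shock comparison principle (ordering of Hugoniot curves) gives $v\star\ge\bar v\starL$, where $\bar v\starL$ is the hydrodynamical Hugoniot volume at $\bar P^{(n)}_{\mathrm L}\ge\ph\starL$.

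For the hydrodynamical part I write
\begin{align*}
  \frac{\ph\starL-\ph\L}{v\L-v\star}\;\le\;\frac{\bar P^{(n)}_{\mathrm L}-\ph\L}{v\L-\bar v\starL}\;=\;\frac{(u\L-\bar\lambda^-_{1,\mathrm h})^2}{(v\L)^2}.
\end{align*}
This requires monotonicity of the hydrodynamical mass flux along the Hugoniot, which follows from convexity of $\ph$. A naive splitting would then bound the barotropic quotient $(\hatpb(v\star)-\hatpb(v\L))/(v\L-v\star)$ by the mean value theorem by $\max\hatab^2/\tau^2$ over $[\bar v\starL,v\L]$, which uses $-\hatpb'(\tau)=\ab^2/\tau^2$.

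The stated bound has a different shape: a multiplicative factor $1+(\hatpb(\bar v\starL)-\hatpb(v\L))/(\underbar P^{(n)}_{\mathrm L}-\ph\L)$ in front of the hydrodynamical term, plus an additive $\max\hatab^2/\tau^2$ term. I therefore expect the intended argument to compare $v\star$ with the hydrodynamical Hugoniot volume $\tilde v$ at $\ph\starL$. The extra compression $\tilde v-v\star$ comes from the barotropic pressure. I would bound it by $(\hatpb(v\star)-\hatpb(v\L))/(\ph\starL-\ph\L)$ relative to $v\L-\tilde v$, then estimate the numerator by $\hatpb(\bar v\starL)-\hatpb(v\L)$ and the denominator from below by $\underbar P^{(n)}_{\mathrm L}-\ph\L$, using the monotonicity of $\hatpb$ that was established in the shock comparison lemma. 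Reproducing this bookkeeping with the correct monotonicity directions is the main obstacle. Once the factor is in place, the remaining barotropic quotient gives the additive max term by the mean value theorem, without the factor two of Theorem~\ref{thm:main}.

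\emph{Remaining case.} If the bracket contains $\ph\L$, the type is not determined, and I simply invoke Theorem~\ref{thm:main_iterative}.
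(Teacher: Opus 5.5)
Your type identification and the rarefaction cases (b) and (d) are fine and match the paper. The gap is in case (a), at the claim that the ordering of Hugoniot curves gives $v\star\ge\bar v\starL$.

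Lemma~\ref{lem:inequality_on_density} orders $v\star$ against $\tilde v\star=\xi_h(\ph\starL;W\L)$ only according to the sign of $\hatAb(v\star,v\L)$. If $\hatAb(v\star,v\L)>0$, then \eqref{eq:volume_identity} forces $v\star<\tilde v\star$ strictly. Since $\bar v\starL=\xi_h(\bar P^{(n)}_{\mathrm L};W\L)\le\tilde v\star$, and $\bar v\starL\to\tilde v\star$ as the bracket tightens, $v\star<\bar v\starL$ is entirely possible. This is exactly why the lower volume bound $\underbar\Xi(\overbar P;W)$ of Lemma~\ref{lem:matching_volume_bound} lies below $\xi_h(\overbar P;W)$.

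The mean value theorem then only places the intermediate point somewhere in $(v\star,v\L)$. Nothing in your argument controls $\hatab^2/\tau^2$ on $(v\star,\bar v\starL)$, so the additive term $(v\L)^2\max_{\bar v\starL\le\tau\le v\L}\hatab^2(\tau)/\tau^2$ is not established.

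Your tentative inequality $(\ph\starL-\ph\L)/(v\L-v\star)\le(\bar P^{(n)}_{\mathrm L}-\ph\L)/(v\L-\bar v\starL)$ has the mirror defect. Lemma~\ref{lem:Q_monotone} concerns the hydrodynamical chord to $\tilde v\star$, not to $v\star$. The factor $(v\L-\tilde v\star)/(v\L-v\star)$ in \eqref{eq:Q_shock_identity} exists precisely to absorb this difference.

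The missing ingredient is Lemma~\ref{lem:estimate_on_quotient}. Suppose $v\star$ is the only solution of \eqref{eq:rankine_hugoniot} in $(v\star,v\L]$; this holds here because $4vm\le d$ implies the criterion of Lemma~\ref{lem:hugoniot_solvable}. Then $(\hatpb(v\star)-\hatpb(v\L))/(v\L-v\star)\le\max_{\tilde v\star\le\tau\le v\L}\hatab^2(\tau)/\tau^2$, even when $v\star<\tilde v\star$. Its proof is not a mean-value estimate. It is a contradiction argument built from three pieces: the secant deviation $h=\hatpb-\ell$, the representation of $\hatAb(w,v\L)$ through $h$, and the positivity of the Rankine--Hugoniot function on $(v\star,v\L]$ that uniqueness provides.

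The paper combines this with the factor estimate of Lemma~\ref{lem:inequality_on_density} into Proposition~\ref{prop:shock_wavespeed_general} at $\ph\star=\ph\starL$, with prefactor $1+(\hatpb(\tilde v\star)-\hatpb(v\L))/(\ph\starL-\ph\L)$. It then only checks monotonicity in the bracket:
\begin{itemize}
\item $\tilde v\star\ge\bar v\starL$, which gives $\hatpb(\tilde v\star)\le\hatpb(\bar v\starL)$ and $[\tilde v\star,v\L]\subset[\bar v\starL,v\L]$;
\item $\ph\starL-\ph\L\ge\underbar P^{(n)}_{\mathrm L}-\ph\L>0$;
\item $u\L-\lambda^-_{1,\mathrm h}\le u\L-\bar\lambda^-_{1,\mathrm h}$ by Lemma~\ref{lem:Q_monotone}.
\end{itemize}

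Your multiplicative-factor bookkeeping is repairable by a case split on the sign of $\hatAb$. For $\hatAb\ge0$ the volume ratio is at most $1$. For $\hatAb\le0$ one genuinely has $v\star\ge\tilde v\star\ge\bar v\starL$, so your numerator estimate goes through. That part is therefore not the main obstacle; the additive barotropic term is.
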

Section~\ref{sec:structural} sets up the proof of Theorems~\ref{thm:main}
and~\ref{thm:main_iterative}, and Section~\ref{sec:numerics} discusses how
to implement an efficient wavespeed estimate based on them and showcases a
number of numerical results.


\section{Comparison principles for the left wave}
\label{sec:structural}
This section develops the tools behind Theorems~\ref{thm:main}
and~\ref{thm:main_iterative}. We first collect the structural assumptions
on the two constituents of the composite equation of state
\eqref{eq:composite_eos}. We then take the left wave of the Riemann problem
\eqref{eq:riemann_problem} apart, one wave type at a time: a compressive
shock, an expansive shock, a rarefaction, and a composite wave. For each
type we make the pressure function \eqref{eq:pressure_function} and the
wavespeed explicit and compare them with their counterparts of the reduced
problem \eqref{eq:riemann_problem_h}. Estimates for the right wave follow
analogously and are not discussed separately.

\subsection{Assumptions on the equation of state}
\label{sec:assumptions}
The two constituents $\ph(\rho,\eh)$ and $\pb(\rho)$ of the
composite equation of state \eqref{eq:composite_eos} are subject to the
following assumptions. For a given function $f(\rho)$ depending on the
density we introduce a change of variables to the specific volume
$v=1/\rho$ by setting $\hat f(v):= f\big(\rho(v)\big)=f(1/v)$.
\begin{definition}[Assumptions on $\ph(\rho,\eh)$]
  Let $\ph(\rho,\eh)$ be a given hydrodynamical equation of state with an
  associated specific entropy $\sh(\rho,\eh)$ and a speed of
  sound $\ah(\rho,\eh)$, such that \cite{Toro2009,Smith1979}
  \begin{align}
    \tag{A.\,0}\label{A0}
    \ph(\rho,\eh)>-p_\infty,
    \;\;\;
    \ah^2(\rho,\eh)=-v^2\,\partial_{v}\hatph(v,\eh)\big|_{s_{\mathrm h}}>0,
    \;\;\;
    \partial_{v}\partial_{v}\hatph(v,\eh)\big|_{s_{\mathrm h}}>0.
  \end{align}
  Let $\hateh(v,\ph)$ denote the specific internal energy as a function of
  specific volume $v$ and pressure $\ph$. Furthermore, we assume that
  \begin{gather}
    \tag{A.\,1}\label{A1}
    \partial_v\hateh(v,\ph)\Big|_{\ph}\ge p_\infty,
    \quad
    \partial_p\hateh(v,\ph)\Big|_{v}\ge 0,
    \quad
    \partial_{\sh}\hatph(v,\eh)\Big|_{v}>0,
    \\\notag
    \hateh(v,\ph)\ge q,\quad\text{and}\quad
    \lim_{v\to0+}\hateh(v,\ph)\Big|_{\ph}=q\,\in\,\mathbb{R}.
  \end{gather}
  Note that $\partial_{\sh}\hatph>0$ is a consequence of
  $\partial_p\hateh>0$ and positive temperature $T>0$.
\end{definition}

\begin{remark}
  \label{rem:NASG}
  Assumptions \eqref{A0} and \eqref{A1} are satisfied by a large class of
  equations of state. In the following we will make use of the
  \emph{Noble-Abel stiffened gas} equation of state,
  \begin{align*}
    \ph^{\text{NASG}}(\rho,\eh) = (\gamma - 1) \rho\,(\eh - q) / (1 - b
    \rho) - \gamma p_\infty,
  \end{align*}
  where the ratio of specific heats $\gamma>1$, the covolume $0\le b<1$ and
  $p_\infty\ge0$ are constants. We compute
  \begin{align*}
    \hateh^{\text{NASG}}(v,\ph) = q +
    v\,(1-b/v)\Big(\frac{(\ph+p_\infty)}{\gamma-1}+p_\infty\Big),
  \end{align*}
  which implies $\partial_v\hateh(v,\ph)\Big|_{\ph} \,=\,
  \frac{(\ph+p_\infty)}{\gamma-1}+p_\infty\,=\,v(1-b/v)\,(\eh-q)$. Closed-form
  expressions for the wave curves and wavespeeds of the corresponding
  hydrodynamical Riemann problem \eqref{eq:riemann_problem_h} are collected
  in Section~\ref{sec:nasg}.
\end{remark}
We have the following theorem.
\begin{theorem}[Riemann fan
  \cite{Weyl1949,CourantFriedrichs1977,Smith1979,MenikoffPlohr1989,Toro2009}]
  \label{thm:riemann_fan}
  Let assumptions~\eqref{A0} and \eqref{A1} hold true. Then, the
  hydrodynamical Riemann problem \eqref{eq:riemann_problem_h} is
  well-posed. The solution is self-similar, $U(x,t)=W(x/t)$, consisting of
  four constant states denoted by $W\L$, $W\starL$, $W\starR$, $W\R$ from
  left to right, where $W\L$ and $W\starL$, as well as $W\starR$ and $W\R$
  are connected by an elementary rarefaction wave or an elementary shock
  wave. The states $W\starL$ and $W\starR$ are always connected by a
  contact discontinuity implying
  \begin{align*}
    u\star:=u\starL=u\starR
    \quad\text{and}\quad
    \tilde p_h\star:=p\starL=p\starR.
  \end{align*}
\end{theorem}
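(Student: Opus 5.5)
The plan is to follow the classical Lax--Godunov construction \cite{CourantFriedrichs1977,Smith1979,Toro2009} for the reduced system \eqref{eq:riemann_problem_h}. I would show that the two acoustic families are genuinely nonlinear and the middle family is linearly degenerate, parametrize each outer wave curve globally by the pressure, and then solve the scalar matching condition \eqref{eq:pressure_equation_h} by monotonicity.

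\emph{Step 1: characteristic structure.} Writing \eqref{eq:riemann_problem_h} in the primitive variables $(\rho,u,\sh)$, the eigenvalues are $u-\ah$, $u$, $u+\ah$. They are real and distinct because $\ah^2>0$ by \eqref{A0}. The middle field is linearly degenerate, and the Riemann invariants $u$ and $\ph$ are constant across it. This already gives $u\starL=u\starR$ and $p\starL=p\starR$ at the contact discontinuity. For the outer fields, $\nabla\lambda\cdot r$ is a positive multiple of $\partial_v\partial_v\hatph|_{\sh}$, which is positive by \eqref{A0}. Hence the $1$- and $3$-fields are genuinely nonlinear, and the admissible ones among their integral curves and Hugoniot branches are exactly rarefactions with $\tilde p_h\star\le\ph\L$ and Lax shocks with $\tilde p_h\star>\ph\L$.

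\emph{Step 2: global wave curves parametrized by pressure (main obstacle).} For the shock branch, I would study
\[
G(v):=\hateh(v,\tilde p_h\star)-\hateh(v\L,\ph\L)+\tfrac12(\tilde p_h\star+\ph\L)(v-v\L)
\]
on $(0,v\L)$. Assumption \eqref{A1} gives $\partial_vG=\partial_v\hateh|_{\ph}+\tfrac12(\tilde p_h\star+\ph\L)\ge p_\infty+\tfrac12(\tilde p_h\star+\ph\L)>0$, using $\ph>-p_\infty$. Moreover, $G(v\L)=\hateh(v\L,\tilde p_h\star)-\hateh(v\L,\ph\L)\ge0$ by $\partial_p\hateh\ge0$. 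The limit $\lim_{v\to0^+}\hateh=q$ together with the lower bound $\hateh\ge q$ should make $G(0^+)<0$. This yields a unique $\tilde v\star\in(0,v\L)$ solving \eqref{eq:rankine_hugoniot_h}, and the implicit function theorem shows that it depends monotonically (decreasingly) on $\tilde p_h\star$. This is the step I expect to be most delicate: the sign at $v\to0^+$ and the strict monotonicity must be extracted from \eqref{A1} carefully, in particular when some of the inequalities are not strict. For the rarefaction branch, I would integrate along the isentrope $\sh=\sh(W\L)$. By \eqref{A0} the pressure is strictly decreasing in $v$, so $\tilde\rho\star$ is a well-defined function of $\tilde p_h\star\in(-p_\infty,\ph\L]$.

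\emph{Step 3: the pressure functions and the scalar equation.} From Step 2, $f_{\mathrm h}(\cdot;W)$ in \eqref{eq:pressure_function_h} is continuous and strictly increasing. It is also $C^1$ across $\tilde p_h\star=\ph$, because the Hugoniot curve and the isentrope have second-order contact at $W$. As $\tilde p_h\star\to\infty$, $f\S_{\mathrm h}\to\infty$ since $v-\tilde v\star$ stays bounded below. As $\tilde p_h\star\to-p_\infty$, $f\R_{\mathrm h}$ tends to a finite limit. Consequently the left-hand side of \eqref{eq:pressure_equation_h} is strictly increasing in $\tilde p_h\star$, and it has a unique root whenever $u\R-u\L$ is below the pressure-positivity (vacuum) threshold. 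Above that threshold the solution contains a vacuum state, which is handled as in \cite{Toro2009}; this is the standard meaning of well-posedness here. Uniqueness of the root, combined with uniqueness of each wave given its star pressure, yields uniqueness of the self-similar four-state solution. Finally, the Lax conditions from Step 1 ensure that the resulting fan is ordered and admissible.
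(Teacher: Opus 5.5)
The paper gives no proof of this theorem. It quotes it from the classical literature (Weyl, Courant--Friedrichs, Smith, Menikoff--Plohr, Toro), and your outline follows the construction carried out there. Step~2, however, has a genuine gap, and it cannot be closed from \eqref{A0} and \eqref{A1} alone. The implicit function theorem gives $\mathrm{d}\tilde v\star/\mathrm{d}\tilde p_h\star=-\partial_pG/\partial_vG$ with $\partial_vG>0$. But $\partial_pG=\partial_p\hateh(\tilde v\star,\tilde p_h\star)-\frac12(v\L-\tilde v\star)$, and \eqref{A1} only gives $\partial_p\hateh\ge0$; this derivative equals $v/\Gamma$ and is small when the Gr\"uneisen coefficient $\Gamma$ is large. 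So the claimed decrease of $\tilde v\star$ is unjustified. With it fall both conclusions of Step~3 that rest on it: strict monotonicity of $f\S_{\mathrm h}$, and $f\S_{\mathrm h}\to\infty$ because $v\L-\tilde v\star$ stays bounded below.

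The second conclusion is in fact false in general. Take $q=p_\infty=0$ and $\hateh(v,\ph)=v\,\big(2-\exp(-\ph)\big)$ for $\ph>0$. Then \eqref{A1} is immediate. The isentropes are $\psi(\ph)+\ln v=\mathrm{const}$ with $\psi'=\exp(-\ph)/\big(2-\exp(-\ph)+\ph\big)$ positive and decreasing. Hence $\partial_v\partial_v\hatph|_{\sh}=\big((\psi')^2-\psi''\big)/\big(v^2(\psi')^3\big)>0$ and \eqref{A0} holds; the temperature $v\,(2-\exp(-\ph)+\ph)$ is even positive. Yet the Hugoniot curve is $\xi_h(p)=v\L\big(2-\exp(-\ph\L)+\frac12(p+\ph\L)\big)/\big(2-\exp(-p)+\frac12(p+\ph\L)\big)$, which returns to $v\L$ as $p\to\infty$. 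Consequently $(f\S_{\mathrm h})^2=(p-\ph\L)(v\L-\xi_h(p))$ increases only up to $2v\L\exp(-\ph\L)$. For $u\L-u\R\ge\sqrt{2v\L\exp(-\ph\L)}+\sqrt{2v\R\exp(-\ph\R)}$ the pressure equation \eqref{eq:pressure_equation_h} therefore has no root, and there is no solution of the asserted form.

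What survives is uniqueness. Differentiate $(f\S_{\mathrm h})^2$ along the curve, without using any monotonicity of $\xi_h$:
\[
  \partial_vG\,\frac{\mathrm{d}}{\mathrm{d}p}\Big[\big(p-\ph\L\big)\big(v\L-\xi_h(p)\big)\Big]
  =\big(v\L-\xi_h\big)\big(\partial_v\hateh+\ph\L\big)+\big(p-\ph\L\big)\,\partial_p\hateh
  \;\ge\;\big(v\L-\xi_h\big)\big(p_\infty+\ph\L\big)\;>\;0 .
\]
So the first condition of \eqref{A1} yields strict monotonicity of $f\S_{\mathrm h}$ directly; that is its role, as in Smith's work. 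Existence for arbitrarily strong compressions requires an additional hypothesis. Examples are that $\xi_h$ be non-increasing, or that $\hateh\to\infty$ along the Hugoniot curve as $p\to\infty$. The cited results contain conditions of this kind, and the paper itself later assumes $\xi_h$ non-increasing in Lemma~\ref{lem:matching_volume_bound}.

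Two smaller repairs are also needed.
\begin{itemize}
  \item $G(0^+)<0$ does not follow from $\hateh\ge q$ when $\tilde p_h\star+\ph\L\le0$, which is possible for $p_\infty>0$. Integrate $\partial_v\hateh\ge p_\infty$ from $0$ to get $\hateh(v\L,\ph\L)\ge q+p_\infty v\L$, hence $G(0^+)\le-\frac12v\L(2p_\infty+\tilde p_h\star+\ph\L)<0$.
  \item Genuine nonlinearity gives the Lax conditions only for weak shocks. For shocks of finite strength you need Weyl's global analysis of the Hugoniot curve. Along the curve $T\,\mathrm{d}\sh=\frac12(v\L-v)^2\,\mathrm{d}(Q_{\mathrm h}^2)$, so entropy monotonicity reduces to monotone chord slopes, which is Weyl's convexity result.
\end{itemize}
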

The barotropic pressure contribution $\pb(\rho)$ is required to have
the following structural properties.
\begin{definition}[Assumptions on $\eb$ and $\pb$]
  Assume that $\eb(\rho)$ is a three times continuously differentiable
  function of the density $\rho$ satisfying
  $\pb(\rho)\,:=\,\rho^2\,\partial_\rho\eb(\rho)$, such that
  \begin{align*}
    \tag{A.\,2}\label{A2}
    \ab^2(\rho)\,:=\, \partial_{\rho}\pb(\rho)\;\ge\;0.
  \end{align*}
\end{definition}
\begin{remark}
  Assumptions \eqref{A0} and \eqref{A1} are crucial for
  Theorem~\ref{thm:riemann_fan} to hold true. In particular, if the
  convexity condition of \eqref{A0} is violated, then composite
  shock-rarefaction waves can emerge in the Riemann problem
  \cite{Wendroff1972,MenikoffPlohr1989,MullerVoss2006}. Moreover, violating
  condition \eqref{A1} can lead to non-uniqueness in the matching of states
  connected by a shock \cite{Smith1979}. While it is tempting to avoid such
  issues, we will not enforce additional conditions on $\hat\pb(v)$.
  Instead, we will quantify how much concavity of $\hat\pb(v)$ can be
  compensated by the hydrodynamical equation of state and the implications
  for the solution structure of the Riemann problem.
\end{remark}

A consequence of Assumption \eqref{A2} on the barotropic speed of sound
contribution is the fact that it ensures hyperbolicity of the Riemann
problem \eqref{eq:riemann_problem} with full pressure $p$, and creates a
correspondence between its compression and rarefaction conditions and
those emerging in the reduced Riemann problem
\eqref{eq:riemann_problem_h}:
\begin{lemma}
  \label{lem:no_weird_stuff}
  Let $W=[\rho,u,p]$ and let $W\star=[\rho\star,u\star,p\star]$ be two
  given states. Then,
  \begin{align*}
    \rho^\ast\ge\rho,\;\sh\star\ge\sh\;\text{ and }\; p^\ast\ge p
    \qquad&\Leftrightarrow\qquad
    \rho^\ast\ge\rho,\;\sh\star\ge\sh\;\text{ and }\; \ph^\ast\ge\ph,
    \\[0.25em]
    \rho^\ast\le\rho,\;\sh\star\le\sh\;\text{ and }\; p^\ast\le p
    \qquad&\Leftrightarrow\qquad
    \rho^\ast\le\rho,\;\sh\star\le\sh\;\text{ and }\; \ph^\ast\le\ph.
  \end{align*}
  The first case corresponds to a compressive shock,
  or (with $\sh\star=\sh$) to a \emph{compressive} rarefaction; the second
  case corresponds (with $\sh\star=\sh$) to an expansive rarefaction, or an
  \emph{expansive} shock.
\end{lemma}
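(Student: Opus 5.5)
The plan is to use the additive splitting $p=\ph(\rho,\eh)+\pb(\rho)$ and handle the two displayed equivalences in the same way. I describe the first (compressive) case; the second follows by reversing all inequalities.

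The first step is to record that $\pb$ is monotone in $\rho$. Assumption \eqref{A2} gives $\partial_\rho\pb=\ab^2\ge0$, so $\pb$ is non-decreasing. Hence $\rho\star\ge\rho$ implies $\pb(\rho\star)\ge\pb(\rho)$.

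The second step is to show that $\ph$ is monotone along the relevant directions. Working in the variables $(v,\sh)$, I write $\ph\star-\ph=\hatph(v\star,\sh\star)-\hatph(v,\sh)$. I then split the increment into two legs:
\begin{itemize}
  \item an isentropic leg, from $(v,\sh)$ to $(v\star,\sh)$;
  \item an isochoric leg, from $(v\star,\sh)$ to $(v\star,\sh\star)$.
\end{itemize}
On the first leg, \eqref{A0} gives $\partial_v\hatph|_{\sh}=-\ah^2/v^2<0$. Since $v\star\le v$, this leg contributes a nonnegative amount. On the second leg, \eqref{A1} gives $\partial_{\sh}\hatph|_v>0$, and $\sh\star\ge\sh$ makes this contribution nonnegative as well. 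The only caveat is that the intermediate state $(v\star,\sh)$ must lie in the domain of the equation of state; I would simply assume the state space is a product in $(v,\sh)$, or else route the path through the isochore first. Either way, $\rho\star\ge\rho$ and $\sh\star\ge\sh$ together give both $\ph\star\ge\ph$ and $\pb(\rho\star)\ge\pb(\rho)$. Therefore $p\star-p=(\ph\star-\ph)+(\pb\star-\pb)\ge0$.

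It follows that, under the common hypotheses $\rho\star\ge\rho$ and $\sh\star\ge\sh$, both $p\star\ge p$ and $\ph\star\ge\ph$ hold automatically. The two sides of the equivalence are therefore both equivalent to the pair of common hypotheses, and hence to each other. The second line is identical with all inequalities reversed: $v\star\ge v$ and $\sh\star\le\sh$ make both legs nonpositive, and $\pb$ is non-decreasing.

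Finally, I need to justify the labels. With $\sh\star=\sh$, a state on the isentrope with $\rho\star\ge\rho$ is what a compressive rarefaction (or an isentropic compression) reaches. The entropy condition $\sh\star\ge\sh$ together with compression characterizes admissible compressive shocks, and the reversed case characterizes expansive waves. These statements are just naming conventions matching \eqref{eq:composite_compressive} and \eqref{eq:composite_expansive}.

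The only genuine obstacle is the path/domain issue in the second step. If the domain is not convex, I would instead argue along the isentrope through $W$ down to volume $v\star$, which is allowed by \eqref{A0}, and then vary $\sh$ at fixed $v\star$. This requires only that isochores in the state space are connected, which I take as implicit in \eqref{A1}.
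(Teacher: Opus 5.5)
Your proof is correct and follows the paper's argument. Both use that $\pb$ is non-decreasing by \eqref{A2}, and that $\partial_v\hatph|_{\sh}<0$ and $\partial_{\sh}\hatph|_v>0$ by \eqref{A0}--\eqref{A1}, so the common hypotheses on $\rho$ and $\sh$ force both $\ph\star\ge\ph$ and $p\star\ge p$. Your explicit isentrope-then-isochore path only spells out the integration the paper leaves implicit, and the paper tacitly assumes away the domain caveat you raise as well.
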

\begin{proof}
  Since $p=\ph+\pb(\rho)$, we have
  $p\star-p=(\ph\star-\ph)+\big(\pb(\rho\star)-\pb(\rho)\big)$, where the
  last term has the same sign as $\rho\star-\rho$ by \eqref{A2}. Moreover,
  \eqref{A0} and \eqref{A1} imply
  $\partial_v\hatph(v,\eh)\big|_{\sh}=-\ah^2/v^2<0$ and
  $\partial_{\sh}\hatph(v,\eh)\big|_{v}>0$, so that $\rho\star\ge\rho$ and
  $\sh\star\ge\sh$ already imply $\ph\star\ge\ph$, and hence $p\star\ge p$.
  The case $\rho\star\le\rho$ and $\sh\star\le\sh$
  is analogous.
\end{proof}
Composite waves are subject to three additional structural assumptions,
\eqref{A3} to \eqref{A5}, which, for the sake of exposition, we state in
Definition~\ref{def:composite_assumptions} of
Section~\ref{sec:composite_waves}.

\subsection{Left compressive shock}
\label{sec:left_shock}
We start with the case that the left state and the star state are connected
by a single, elementary shock. Our goal is to make the shock branch $f\S$
of the pressure function \eqref{eq:pressure_function} and the mass flux
\eqref{eq:mass_flux} explicit for the composite equation of state
\eqref{eq:composite_eos}, and to compare both quantities with their
hydrodynamical counterparts \eqref{eq:pressure_function_h} and
\eqref{eq:lambda_one_h}.

Let $W\L=[\rho\L,u\L,p\L]$ be a left state and
$W\star=[\rho\star,u\star,p\star]$ a star state (situated right of the left
state) and assume both states are connected by a compressive elementary
shock, i.\,e., $\rho\star\ge\rho\L$ ($v\star\le v\L$), $\sh\star\ge\sh\L$
and $p\star\ge p\L$. We will discuss the case of expansive shocks in
Section~\ref{sec:left_expansive_shock}. Assume that the Rankine-Hugoniot
condition,
\begin{align*}
  \eh\star\,+\,\eb(\rho\star)\,-\,\eh\L\,-\,\eb(\rho\L)
  \,+\,\frac{1}{2}\big(\ph\star+\pb(\rho\star)+\ph\L+\pb(\rho\L)\big)
  \big(v\star-v\L\big)
  \;=\;0,
\end{align*}
is verified. For later use we introduce
\begin{align}
  \label{eq:Ab}
  \hatAb(v\star, v)\;:=\;
  \hateb(v\star)-\hateb(v)\,+\,\frac{1}{2}\big(\hatpb(v\star)+\hatpb(v)\big)
  \big(v\star-v\big)
\end{align}
and rewrite the Rankine-Hugoniot condition as follows:
\begin{align}
  \label{eq:rankine_hugoniot} \tag{RH}
  \hateh(v\star,\ph\star)-\hateh(v\L,\ph\L)\,+\,\frac{1}{2}(\ph\star+\ph\L)
  \big(v\star-v\L\big) \,+\,\hatAb(v\star,v\L) \;=\;0.
\end{align}
Here, $\hatAb$ collects precisely those contributions that are absent in
the reduced problem \eqref{eq:riemann_problem_h}. We will establish that it
is---for fixed data $W\L$ and at least locally around
$(v\star,\ph\star)$---possible to uniquely solve
\eqref{eq:rankine_hugoniot} yielding an implicit function
$v\star=\xi(\ph\star;W\L)$ of solutions to \eqref{eq:rankine_hugoniot}. Our
strategy is to relate \eqref{eq:rankine_hugoniot} to the corresponding
condition for \eqref{eq:riemann_problem_h}. We start by collecting a number
of useful observations.
\begin{lemma}
  \label{lem:sign_on_ab}
  We have
  \begin{align}
    \label{eq:sign_on_ab}
    \frac{\mathrm d}{\mathrm d v\star}\hat A_{\mathrm{b}}(v\star, v\L)
    \;=\;
    \frac{1}{2} \int_{v\star}^{v\L} \!\int_{v\star}^{\tau}
    \!\!\partial_v\partial_v\hatpb(\kappa) \mathrm{d}\kappa\,
    \mathrm{d}\tau.
  \end{align}
  In particular, $\hat A_{\mathrm{b}}(v\star,v\L)\ge0$ for $\hatpb(v)$
  concave, and $\hat A_{\mathrm{b}}(v\star,v\L)\le0$ for $\hatpb(v)$ convex.
  Moreover, $\hat A_{\mathrm{b}}(v\star,v\L)\,\equiv\,0$ holds true if and
  only if $\hatpb(v)$ is a linear function in $v$.
\end{lemma}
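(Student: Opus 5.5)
The plan is a direct computation: differentiate $\hat A_{\mathrm b}(v\star,v\L)$ in $v\star$ using the relation between $\hateb$ and $\hatpb$, then recognise the result as a Taylor remainder. Since $\pb(\rho)=\rho^2\partial_\rho\eb(\rho)$ and $\rho=1/v$, we have $\partial_v\hateb(v)=\partial_\rho\eb\cdot(-1/v^2)=-\rho^2\partial_\rho\eb=-\hatpb(v)$. Differentiating \eqref{eq:Ab} then gives
\begin{align*}
  \frac{\mathrm d}{\mathrm d v\star}\hat A_{\mathrm b}(v\star,v\L)
  &= -\hatpb(v\star)+\tfrac12\big(\hatpb(v\star)+\hatpb(v\L)\big)
  +\tfrac12\,\partial_v\hatpb(v\star)\,(v\star-v\L)
  \\
  &= \tfrac12\Big(\hatpb(v\L)-\hatpb(v\star)-\partial_v\hatpb(v\star)\,(v\L-v\star)\Big).
\end{align*}
The bracket is the first-order Taylor remainder of $\hatpb$ around $v\star$. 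Writing it as $\int_{v\star}^{v\L}\big(\partial_v\hatpb(\tau)-\partial_v\hatpb(v\star)\big)\,\mathrm d\tau$ and expanding the inner difference as $\int_{v\star}^{\tau}\partial_v\partial_v\hatpb(\kappa)\,\mathrm d\kappa$ yields \eqref{eq:sign_on_ab}. Here $\hatpb\in C^2$ because $\eb\in C^3$ by \eqref{A2}.

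For the sign statements, I use $\hat A_{\mathrm b}(v\L,v\L)=0$, so that
\[
  \hat A_{\mathrm b}(v\star,v\L)=-\int_{v\star}^{v\L}\frac{\mathrm d}{\mathrm d s}\hat A_{\mathrm b}(s,v\L)\,\mathrm ds .
\]
In the compressive setting of this subsection, $v\star\le v\L$. For $s\le v\L$ the double integral in \eqref{eq:sign_on_ab} runs over the ordered region $s\le\kappa\le\tau\le v\L$. So the derivative has the sign of $\partial_v\partial_v\hatpb$: it is $\le0$ for concave $\hatpb$ and $\ge0$ for convex $\hatpb$. Consequently $\hat A_{\mathrm b}(v\star,v\L)\ge0$ in the concave case and $\le0$ in the convex case.

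This orientation is the one delicate point, and I will state it explicitly. For $v\star>v\L$ the derivative keeps the same sign, because both integration limits are reversed and the two sign flips cancel. But the outer integral $\hat A_{\mathrm b}(v\star,v\L)=\int_{v\L}^{v\star}\frac{\mathrm d}{\mathrm d s}\hat A_{\mathrm b}(s,v\L)\,\mathrm ds$ now changes orientation, so the sign of $\hat A_{\mathrm b}$ itself reverses. Hence the sign claim is to be read for $v\star\le v\L$, consistent with the compressive shock under consideration. The expansive case of Section~\ref{sec:left_expansive_shock} gets the opposite sign.

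For the characterisation of $\hat A_{\mathrm b}\equiv0$, one direction is immediate: if $\hatpb$ is linear, then $\partial_v\partial_v\hatpb\equiv0$, and \eqref{eq:sign_on_ab} together with $\hat A_{\mathrm b}(v\L,v\L)=0$ gives $\hat A_{\mathrm b}\equiv0$. Conversely, suppose $\hat A_{\mathrm b}(\cdot,v\L)$ vanishes identically, for all admissible $v\star$ and $v\L$. Then its derivative $g(v\star)=\tfrac12\big(\hatpb(v\L)-\hatpb(v\star)-\partial_v\hatpb(v\star)(v\L-v\star)\big)$ vanishes identically. Differentiating once more gives $g'(v\star)=-\tfrac12\,\partial_v\partial_v\hatpb(v\star)\,(v\L-v\star)=0$. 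Hence $\partial_v\partial_v\hatpb=0$ for every $v\star\ne v\L$, and by continuity for all $v$, so $\hatpb$ is linear.
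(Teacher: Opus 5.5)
Your proof is correct and follows the paper's argument. It uses the same differentiation via $\partial_v\hateb=-\hatpb$, identifies the result as the first-order Taylor remainder written as a double integral, and integrates from $\hat A_{\mathrm b}(v\L,v\L)=0$ under the restriction $v\star\le v\L$, which the paper also invokes (``observing $v\ge v\star$''). Your remark that the sign of $\hat A_{\mathrm b}$ flips for $v\star>v\L$, and your second-derivative argument for the converse of the linearity characterisation, make explicit what the paper's proof leaves implicit.
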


\begin{proof}
  Observe that $\hat A_{\mathrm{b}}(v,v)=0$. Moreover,
  \begin{align}
    \label{eq:sign_on_ab_proof}
    \frac{\mathrm d}{\mathrm d v\star}\hat A_{\mathrm{b}}(v\star, v)
    \;&=\;
    \partial_v\hateb(v\star)
    \,+\,\frac{1}{2}\big(\hatpb(v\star)+\hatpb(v)\big)
    \,+\, \frac{1}{2}\partial_v\hatpb(v\star) (v\star-v)
    \\\notag\;&=\;
    \frac{1}{2}\big(\hatpb(v)-\hatpb(v\star)\big)
    \,+\, \frac{1}{2}\partial_v\hatpb(v\star) (v\star-v)
    \\\notag\;&=\;
    \frac{1}{2} \Big( \hatpb(v)\,-\, \big\{\hatpb(v\star)
    \,+\, \partial_v\hatpb(v\star) (v-v\star)\big\}\Big)
    \\\notag\;&=\;
    \frac{1}{2} \int_{v\star}^{v}\big(\partial_v\hatpb(\tau)-
    \partial_v\hatpb(v\star)\big)\mathrm{d}\tau
    \\\notag\;&=\;
    \frac{1}{2} \int_{v\star}^{v}
    \int_{v\star}^{\tau}
    \!\!\partial_v\partial_v\hatpb(\kappa)
    \mathrm{d}\kappa\,
    \mathrm{d}\tau.
  \end{align}
  This implies $\frac{\mathrm d}{\mathrm d v\star}\hat
  A_{\mathrm{b}}(v\star, v)\,\le\,0$ for $\hatpb(v)$ concave, and
  $\frac{\mathrm d}{\mathrm d v\star}\hat A_{\mathrm{b}}(v\star,
  v)\,\ge\,0$ for $\hatpb(v)$ convex. Observing $v\ge v\star$ establishes
  the second part of the Lemma.
\end{proof}

\begin{lemma}
  \label{lem:inequality_on_density}
  Let $\ph\star$ be fixed. Let $v\star$ be given by
  \eqref{eq:rankine_hugoniot} and introduce $\tilde v\star$ satisfying
  \begin{align}
    \label{eq:rankine_hugoniot_h}
    \tag{\texorpdfstring{RH\textsubscript{h}}{RHh}}
    \hateh(\tilde v\star,\ph\star) -\hateh(v\L,\ph\L)
    \,+\,\frac{1}{2}(\ph\star+\ph\L) \big(\tilde v\star - v\L\big) \;=\;0.
  \end{align}
  Then, $\tilde v\star\ge v\star$ ($\tilde\rho\star\le\rho\star$) if
  $A_{\mathrm{b}}(v\star,v\L)\ge0$, and $\tilde v\star\le v\star$
  ($\tilde\rho\star\ge\rho\star$) if $\hat A_{\mathrm{b}}(v\star,v\L)\le0$.
  Furthermore,
  \begin{align}
    \label{eq:specific_volume_factor}
    \frac{v\L-\tilde v\star}{v\L-v\star}
    \;\le\;
    \begin{cases}
      \begin{aligned}
        &1 &\quad& \text{if }\hatAb(v\star,v\L)\ge 0,
        \\[0.25em]
        &1\;+\;\frac{\hatpb(v\star)-\hatpb(v\L)}{\ph\star-\ph\L}
        &\quad& \text{if }\hatAb(v\star,v\L)\le 0,
        \quad\ph\star>\ph\L.
      \end{aligned}
    \end{cases}
  \end{align}
\end{lemma}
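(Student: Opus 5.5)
The idea is to compare the two Rankine--Hugoniot relations through the single auxiliary function
\begin{align*}
  G(w)\;:=\;\hateh(w,\ph\star)-\hateh(v\L,\ph\L)
  +\frac12(\ph\star+\ph\L)(w-v\L),
\end{align*}
with $\ph\star$ and $W\L$ held fixed. By \eqref{eq:rankine_hugoniot_h} we have $G(\tilde v\star)=0$, and \eqref{eq:rankine_hugoniot} reads $G(v\star)=-\hatAb(v\star,v\L)$. So the whole statement reduces to monotonicity of $G$ together with a quantitative lower bound on $G'$.

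For monotonicity, note that $G'(w)=\partial_v\hateh(w,\ph\star)|_{\ph}+\frac12(\ph\star+\ph\L)$. By \eqref{A1}, $\partial_v\hateh\ge p_\infty$, and by \eqref{A0}, $\ph>-p_\infty$ for both pressures. Hence
\begin{align*}
  G'(w)\;\ge\;p_\infty+\tfrac12(\ph\star+\ph\L)
  \;=\;\tfrac12(\ph\star-\ph\L)+(\ph\L+p_\infty)\;>\;\tfrac12(\ph\star-\ph\L),
\end{align*}
and the first inequality also shows $G'>0$ outright. Since $G$ is strictly increasing and $G(\tilde v\star)-G(v\star)=\hatAb(v\star,v\L)$, we get $\tilde v\star\ge v\star$ when $\hatAb\ge0$ and $\tilde v\star\le v\star$ when $\hatAb\le0$. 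The first line of \eqref{eq:specific_volume_factor} then follows at once from $v\L-\tilde v\star\le v\L-v\star$, using $v\star<v\L$ for a compressive shock.

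For the second line, assume $\hatAb\le0$ and $\ph\star>\ph\L$, so that $\tilde v\star\le v\star$. The mean value theorem together with the lower bound on $G'$ gives
\begin{align*}
  \tfrac12(\ph\star-\ph\L)\,(v\star-\tilde v\star)\;\le\;G(v\star)-G(\tilde v\star)\;=\;-\hatAb(v\star,v\L).
\end{align*}
Next we need an upper bound for $-\hatAb$ that uses only \eqref{A2} and no convexity of $\hatpb$. From $\pb=\rho^2\partial_\rho\eb$ we get $\partial_v\hateb=-\hatpb$, and therefore
\begin{align*}
  -\hatAb(v\star,v\L)\;=\;\int_{v\star}^{v\L}\hatpb(\tau)\,\mathrm d\tau-\tfrac12\big(\hatpb(v\star)+\hatpb(v\L)\big)(v\L-v\star).
\end{align*}
By \eqref{A2}, $\hatpb$ is non-increasing in $v$, so the integral is at most $\hatpb(v\star)(v\L-v\star)$. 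This yields
\begin{align*}
  -\hatAb\;\le\;\tfrac12\big(\hatpb(v\star)-\hatpb(v\L)\big)(v\L-v\star).
\end{align*}
Combining the two displays and dividing by $\frac12(\ph\star-\ph\L)(v\L-v\star)>0$ gives
\begin{align*}
  \frac{v\star-\tilde v\star}{v\L-v\star}\;\le\;\frac{\hatpb(v\star)-\hatpb(v\L)}{\ph\star-\ph\L}.
\end{align*}
Adding $1$ to both sides is exactly the second case of \eqref{eq:specific_volume_factor}.

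The main obstacle is this last quantitative step. One has to find a lower bound on $G'$ that is proportional to $\ph\star-\ph\L$; it comes from absorbing $\ph\L+p_\infty>0$. One also needs a bound on $\hatAb$ using monotonicity of $\hatpb$ alone, rather than the second-derivative representation of Lemma~\ref{lem:sign_on_ab}, which would require a sign on $\partial_v\partial_v\hatpb$.
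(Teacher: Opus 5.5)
Your route is the paper's. The strictly increasing $G$ is what underlies identity \eqref{eq:volume_identity}, and absorbing $\ph\L+p_\infty>0$ into the lower bound on $G'$ is equivalent to the paper's final step $2p_\infty+\ph\star+\ph\L\ge\ph\star-\ph\L$. The ordering of $\tilde v\star$ and $v\star$ and the first case of \eqref{eq:specific_volume_factor} are correctly proved.

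The step you single out as the crux, however, has a sign error. Since $\partial_v\hateb=-\hatpb$, one has $\hateb(v\star)-\hateb(v\L)=+\int_{v\star}^{v\L}\hatpb(\tau)\,\mathrm d\tau$. The expression you display therefore equals $+\hatAb(v\star,v\L)$, not $-\hatAb(v\star,v\L)$; compare the first line of \eqref{eq:lemma_Ab_as_h}. A quick check: take the admissible convex choice $\hatpb(v)=1/v$, $\hateb(v)=-\ln v$, with $v\star=1$ and $v\L=2$. Your expression is then $\ln 2-\tfrac34<0$, which is the sign Lemma~\ref{lem:sign_on_ab} assigns to $\hatAb$ itself. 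Consequently the estimate $\int_{v\star}^{v\L}\hatpb\,\mathrm d\tau\le\hatpb(v\star)(v\L-v\star)$ gives an upper bound on $\hatAb$. In the case $\hatAb\le0$ you need an upper bound on the non-negative quantity $-\hatAb$, so the chain as written does not prove the second case.

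The repair uses the opposite one-sided consequence of \eqref{A2}. Write $-\hatAb(v\star,v\L)=\tfrac12\big(\hatpb(v\star)+\hatpb(v\L)\big)(v\L-v\star)-\int_{v\star}^{v\L}\hatpb(\tau)\,\mathrm d\tau$ and use $\int_{v\star}^{v\L}\hatpb(\tau)\,\mathrm d\tau\ge\hatpb(v\L)(v\L-v\star)$. This gives $-\hatAb(v\star,v\L)\le\tfrac12\big(\hatpb(v\star)-\hatpb(v\L)\big)(v\L-v\star)$. That is the bound you wanted, and it is exactly \eqref{eq:hatab_estimate} of the paper, where the mean-value point obeys $\hatpb(\xi)\ge\hatpb(v\L)$.

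Your target inequality was true all along: the trapezoidal error of a monotone function is bounded by half its oscillation times the interval length in both directions. Your derivation simply used the wrong direction. With this correction the rest of your argument goes through unchanged.
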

\begin{proof}
  Subtracting \eqref{eq:rankine_hugoniot} from
  \eqref{eq:rankine_hugoniot_h} and rearranging:
  \begin{align*}
    \hateh(\tilde v\star,\ph\star) - \hateh(v\star,\ph\star)
    \,+\,\frac{1}{2}(\ph\star+\ph\L) \big(\tilde v\star-v\star\big)
    \,=\,\,\hat A_{\mathrm{b}}(v\star, v\L).
  \end{align*}
  Some further manipulations give
  \begin{align}
    \label{eq:volume_identity}
    \big(\tilde v\star-v\star\big)\,\frac{1}{2}
    \big(\underbrace{2p_\infty+\ph\star+\ph\L}_{>0}\big)
    \;+\;
    \int_{v\star}^{\tilde v\star}\!\!
    \big(\underbrace{\partial_v\hateh(\tau,\ph\star)-p_\infty}_{\ge0}\big)
    \,\mathrm{d}\tau \,=\,\,\hat A_{\mathrm{b}}(v\star, v\L),
  \end{align}
  which proves the first part of the Lemma and the first case of
  \eqref{eq:specific_volume_factor}. In order to show the second case of
  \eqref{eq:specific_volume_factor} we solve the equation for $\big(\tilde
  v\star-v\star\big)$ and substitute:
  \begin{align*}
    \frac{v\L-\tilde v\star}{v\L-v\star} = 1 \,+\,
    \frac{v\star-\tilde v\star}{v\L-v\star}
    = 1 \,+\,
    \frac{\displaystyle
    -\hat A_{\mathrm{b}}(v\star, v\L)\,+\,
    \int_{v\star}^{\tilde v\star}\!\!\!
    \partial_v\hateh(\tau,\ph\star)-p_\infty\,\mathrm{d}\tau
    }{\frac{1}{2}\big(2p_\infty+\ph\star+\ph\L\big)\,\big(v\L-v\star\big)}.
  \end{align*}
  Assuming $v\star < v\L$, a direct computation gives
  \begin{align}
    \label{eq:hatab_estimate}
    \frac{\hat A_{\mathrm{b}}(v\star, v\L)}{v\star-v\L}
    \;&=\;
    \frac{\hateb(v\star)-\hateb(v)}{v\star-v\L}
    \,+\,\frac{1}{2}\big(\hatpb(v\star)+\hatpb(v)\big)
    \\\notag
    \;&=\;
    -\hatpb(\xi) \,+\,\frac{1}{2}\big(\hatpb(v\star)+\hatpb(v)\big)
    \qquad\text{for some }\xi\in(v\star,v\L)
    \\\notag
    \;&\le\; \frac{1}{2}\big(\hatpb(v\star)-\hatpb(v)\big).
  \end{align}
  The inequality follows from the monotonicity assumption \eqref{A2}. Using $v\star\ge\tilde v\star$ and $2p_\infty+\ph\star +
  \ph\L\ge\ph\star-\ph\L>0$ then establishes the second case of
  \eqref{eq:specific_volume_factor}.
\end{proof}
The following lemma verifies that \eqref{eq:rankine_hugoniot} is
locally solvable. For
the sake of completeness, we also establish a \emph{sufficient} convexity
criterion on the pressures that ensures solvability.
\begin{proposition}
  \label{prop:curve_is_fine}
  Let $W\L$ be fixed. Then there exists a neighborhood of $(v\L,\ph\L)$ in
  which the set of solutions $(v\star,\ph\star)$ of
  \eqref{eq:rankine_hugoniot} is described by a continuously differentiable
  function $v\star=\xi(\ph\star)$. This curve can be extended as long as
  $\sh\star\ge\sh\L$ and $v\star\le v\L$ is satisfied for
  $\ph\star\ge\ph\L$ (with reversed inequalities for $\ph\star\le\ph\L$)
  and there exists an entropy $\sh^\hash \in [\sh\L, \sh\star]$ such that
  \begin{align*}
    \min_{\tau\in[v\star,v\L]}
    \partial_v\partial_v\ph\big(\tau,\sh^\hash\big)\big|_{s_{\mathrm{h}}}
    \!+ \min_{\tau\in[v\star,v\L]}\partial_v\partial_v\pb(\tau) \,>\,0.
  \end{align*}
  Note, that this is a sufficient criterion for the solvability of
  \eqref{eq:rankine_hugoniot}.
\end{proposition}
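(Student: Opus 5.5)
The plan is to apply the implicit function theorem to the map
\begin{align*}
  G(v\star,\ph\star)\;:=\;\hateh(v\star,\ph\star)-\hateh(v\L,\ph\L)
  \,+\,\frac{1}{2}(\ph\star+\ph\L)\big(v\star-v\L\big)
  \,+\,\hatAb(v\star,v\L),
\end{align*}
so that \eqref{eq:rankine_hugoniot} reads $G=0$. Since $\eb$ is three times and $\hateh$ is sufficiently smoothly differentiable, $G$ is continuously differentiable. Trivially $G(v\L,\ph\L)=0$. The partial derivative $\partial_{\ph\star}G=\partial_p\hateh(v\star,\ph\star)+\frac12(v\star-v\L)$ equals $\partial_p\hateh(v\L,\ph\L)$ at the base point. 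This is positive, using $\partial_p\hateh>0$; the remark after \eqref{A1} ties this to $T>0$. The implicit function theorem therefore gives a $C^1$ curve parametrized by $v\star$.

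To obtain the parametrization $v\star=\xi(\ph\star)$ asked for in the statement, I would instead compute
\begin{align*}
  \partial_{v\star}G\;=\;\partial_v\hateh(v\star,\ph\star)+\tfrac12(\ph\star+\ph\L)+\tfrac{\mathrm d}{\mathrm dv\star}\hatAb(v\star,v\L).
\end{align*}
At the base point the last term vanishes by Lemma~\ref{lem:sign_on_ab}. The remaining terms are at least $p_\infty+\ph\L>0$, by \eqref{A1} and \eqref{A0}. So the implicit function theorem directly yields $v\star=\xi(\ph\star)$ near $(v\L,\ph\L)$, with $\xi'=-\partial_pG/\partial_vG<0$ there.

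For the continuation, I would use the standard extension argument. The curve continues as long as $\partial_{v\star}G\neq0$ along it, and a sign change is the only obstruction, since compactness handles bounded branches. The task is therefore to show $\partial_{v\star}G>0$ under the stated criterion. The natural route is to rewrite $\partial_{v\star}G$ using the Hugoniot identity, in the manner of \eqref{eq:sign_on_ab_proof}. Writing $\hateh$ at fixed $\ph\star$ and using $\mathrm d\hateh=T\,\mathrm d\sh-\hatph\,\mathrm dv$, the hydrodynamic part can be expressed as a double integral of $\partial_v\partial_v\hatph|_{\sh}$ along an isentrope, plus a term linear in the pressure gap. The classical derivation of uniqueness of the Hugoniot curve under convexity is the model here. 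The barotropic part is already given by \eqref{eq:sign_on_ab} as $\frac12\iint\partial_v\partial_v\hatpb$.

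A mean-value argument in the entropy variable then selects an $\sh^\hash\in[\sh\L,\sh\star]$ such that the total is bounded below by a positive multiple of
\begin{align*}
  \min\partial_v\partial_v\hatph(\cdot,\sh^\hash)\big|_{\sh}\;+\;\min\partial_v\partial_v\hatpb,
\end{align*}
times $\frac14(v\L-v\star)^2$. The two parts combine because both are double integrals over the same triangle $v\star\le\kappa\le\tau\le v\L$. The constraints $\sh\star\ge\sh\L$ and $v\star\le v\L$ make the sign bookkeeping and the orientation of the integrals consistent, and Lemma~\ref{lem:no_weird_stuff} keeps $\ph\star\ge\ph\L$ compatible with them. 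The case $\ph\star\le\ph\L$ is symmetric.

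I expect the main obstacle to be this last step: expressing $\partial_{v\star}G$ cleanly as a curvature integral along a single isentrope $\sh^\hash$. The endpoint states have different entropies, so the mean-value choice of $\sh^\hash$ must be made carefully. I would first try parametrizing by the isentrope through the star state, and absorbing the entropy variation via $\partial_{\sh}\hatph>0$.
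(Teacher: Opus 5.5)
Your local step is correct and matches the paper: at $(v\L,\ph\L)$ the term $\partial_{v\star}\hatAb$ vanishes and $\partial_{v\star}\mH\ge p_\infty+\ph\L>0$. The preliminary detour via $\partial_{\ph\star}\mH$ is unnecessary, and it would need $\partial_p\hateh>0$, whereas \eqref{A1} only grants $\ge0$.

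The gap is in the continuation, which is the substance of the proposition. You correctly reduce it to showing $\partial_{v\star}\mH>0$ along the curve, but you leave that step open, and the route you sketch does not close it, for three reasons.
\begin{enumerate}
\item If you rewrite $\partial_v\hateh(v\star,\ph\star)|_{\ph}=T\,\partial_v\sh|_{\ph}-\ph\star$ using $\mathrm d\hateh=T\,\mathrm d\sh-\hatph\,\mathrm dv$, the hydrodynamic part becomes $T\,\partial_v\sh|_{\ph}-\frac12(\ph\star-\ph\L)$. The pressure gap then enters with a minus sign, so it cannot be the positive quantity that absorbs a negative $\partial_{v\star}\hatAb$. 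What remains is a pointwise term, not a curvature integral.
\item A mean-value argument that \emph{selects} $\sh^\hash$ is incompatible with the statement. The criterion is assumed only for \emph{some} $\sh^\hash\in[\sh\L,\sh\star]$, so the proof must work for an arbitrary entropy in that interval.
\item The two double integrals are not over the same triangle. Their weights are $(\kappa-v\star)$ and $(v\L-\kappa)$, supported on complementary halves of $[v\star,v\L]^2$. This is exactly why the criterion is a sum of two separate minima rather than the minimum of the sum.
\end{enumerate}

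The missing idea is that only a lower bound is needed, not a representation. Write $\frac12(\ph\star+\ph\L)=\ph\L+\frac12(\ph\star-\ph\L)$. Discard $\partial_v\hateh(v\star,\ph\star)|_{\ph}+\ph\L\ge p_\infty+\ph\L>0$ using \eqref{A1} and \eqref{A0}. This leaves $\frac12(\ph\star-\ph\L)+\partial_{v\star}\hatAb(v\star,v\L)$.

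Now bound the pressure gap from below along the path $(v\L,\sh\L)\to(v\L,\sh^\hash)\to(v\star,\sh^\hash)\to(v\star,\sh\star)$. Both isochoric legs are non-negative because $\partial_{\sh}\hatph|_v>0$ and $\sh\L\le\sh^\hash\le\sh\star$; this holds for every such $\sh^\hash$. From the isentropic leg, subtract the tangent term $\partial_v\hatph(v\L,\sh^\hash)|_{\sh}\,(v\star-v\L)$, which is non-negative since $\partial_v\hatph|_{\sh}<0$ and $v\star\le v\L$. This gives
\begin{align*}
  \ph\star-\ph\L\;\ge\;\int_{v\star}^{v\L}(\kappa-v\star)\,
  \partial_v\partial_v\hatph(\kappa,\sh^\hash)\big|_{\sh}\,\mathrm d\kappa .
\end{align*}
Lemma~\ref{lem:sign_on_ab} gives $\partial_{v\star}\hatAb(v\star,v\L)=\frac12\int_{v\star}^{v\L}(v\L-\kappa)\,\partial_v\partial_v\hatpb(\kappa)\,\mathrm d\kappa$. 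Each weight integrates to $\frac12(v\L-v\star)^2$, so combining the two yields $\partial_{v\star}\mH\ge\frac14(v\L-v\star)^2\big[\min\partial_v\partial_v\hatph(\cdot,\sh^\hash)|_{\sh}+\min\partial_v\partial_v\hatpb\big]>0$.
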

\begin{proof}
  We use the implicit function theorem to characterize the curve described
  by \eqref{eq:rankine_hugoniot}. To this end, denote by
  $\mH(v\star,\ph\star)$ the left-hand side of \eqref{eq:rankine_hugoniot},
  and let us collect and analyze the two relevant partial derivatives:
  \begin{align*}
    \partial_{v\star}\mH \;&=\;\phantom{-}
    \partial_v\eh(v\star,\ph\star)\big|_{\ph}+
    \frac{1}{2}\big(\ph\star+\ph\L\big)+\partial_{v\star}\hatAb(v\star,v\L),
    \\
    \partial_{\ph\star}\mH \;&=\;
    \phantom{-}\partial_{\ph}\eh(v\star,\ph\star)\big|_{v}+
    \frac{1}{2}\big(v\star-v\L\big).
  \end{align*}
  In order to invert \eqref{eq:rankine_hugoniot} resolving
  $v\star=\xi(\ph\star)$ we need $\partial_{v\star}\mH\neq0$ to hold true.
  We have $\partial_{v\star}\mH\ge p_\infty+\frac{1}{2}(\ph\star+\ph\L)>0$
  for $v\star=v\L$, implying that we can invert the curve at least locally
  around $(v\L,\ph\L)$. Let $(v\star,\ph\star)$ be a point on the curve and,
  assuming $\ph\star\ge\ph\L$, rearrange as follows:
  \begin{align*}
    \partial_{v\star}\mH =
    \underbrace{\partial_v\eh(v\star,\ph\star)\big|_{\ph}+\ph\L}_{\ge0}
    \,+\,
    \frac{1}{2}\big(\ph\star-\ph\L\big)+\partial_{v\star}\hatAb(v\star,v\L),
  \end{align*}
  In a second step we look for an expression similar to \eqref{eq:sign_on_ab} for the
  difference $\ph\star-\ph\L$. Heuristically speaking, we want to establish
  that the increase in pressure from $\ph\L$ to $\ph\star$ is at least as
  much as integrating $\partial_v\partial_v\hatph(v,p)\big|_{\hatsh}$ twice
  along an isentrope. By slight abuse of notation, change variables to
  $\ph(v,\sh)$ and integrate:
  \begin{align*}
    \ph&(v\star,\sh\star)-\ph(v\L,\sh\L)
    \\[0.25em]
    &=\; \ph(v\star,\sh\star)-\ph(v\star,\sh^\hash)
    \,+\,\ph(v\star,\sh^\hash)-\ph(v\L,\sh^\hash)
    \,+\,\ph(v\L,\sh^\hash)-\ph(v\L,\sh\L)
    \\&=\;
    \int_{\sh^\hash}^{\sh\star}\underbrace{\partial_{\sh}\ph(v\L,\sigma)\big|_{v}}_{\ge0}\,\mathrm{d}\sigma
    \,+\,\int_{v\L}^{v\star}\partial_v\ph(\tau,\sh^\hash)\big|_{\sh}\,\mathrm{d}\tau
    \,+\,\int_{\sh\L}^{\sh^\hash}\underbrace{\partial_{\sh}\ph(v\L,\sigma)\big|_{v}}_{\ge0}\,\mathrm{d}\sigma
    \\&\ge\;
    \int_{v\L}^{v\star}\partial_v\ph(\tau,\sh^\hash)\big|_{\sh}\,\mathrm{d}\tau
    \\&\ge\;
    \int_{v\L}^{v\star}\partial_v\ph(\tau,\sh^\hash)\big|_{\sh}\,\mathrm{d}\tau
    \,-\, \underbrace{\partial_v\ph(v\L,\sh^\hash)\big|_{\sh}\big(v\star-v\L\big)}_{\ge0}
    \\&=\;
    \int_{v\L}^{v\star}\int_{v\L}^{\tau}\partial_v\partial_v\ph(\kappa,\sh^\hash)\big|_{\sh}\,\mathrm{d}\kappa
    \,\mathrm{d}\tau
    \;=\;
    \int_{v\star}^{v\L}\big(\kappa-v\star\big)\,
    \partial_v\partial_v\ph(\kappa,\sh^\hash)\big|_{\sh}\,\mathrm{d}\kappa.
  \end{align*}
  Finally, using \eqref{eq:sign_on_ab} in the equivalent form
  \begin{align*}
    \partial_{v\star}\hatAb(v\star,v\L)=\frac{1}{2}
    \int_{v\star}^{v\L}(v\L-\kappa)\,\partial_v\partial_v\hatpb(\kappa)\,
    \mathrm{d}\kappa
  \end{align*}
  establishes
  \begin{align*}
    \partial_{v\star}\mH \;&\ge\;
    \frac{1}{2}\int_{v\star}^{v\L}\Big[\big(\kappa-v\star\big)\,
    \partial_v\partial_v\ph(\kappa,\sh^\hash)\big|_{\sh}
    \,+\,\big(v\L-\kappa\big)\,\partial_v\partial_v\pb(\kappa)\Big]
    \,\mathrm{d}\kappa
    \\[0.25em]
    \;&\ge\;
    \frac{1}{4}\big(v\L-v\star\big)^2\,\Big[\min_{\tau\in[v\star,v\L]}
    \partial_v\partial_v\ph(\tau,\sh^\hash)\big|_{\sh}
    \,+\,\min_{\tau\in[v\star,v\L]} \partial_v\partial_v\pb(\tau)\Big]
    \;>\;0.
  \end{align*}
\end{proof}
A more careful analysis of the Hugoniot curve reveals that the assumptions
on specific volume and entropy can be dropped. We summarize:
\begin{corollary}
  \label{cor:curve_is_fine}
  The result of Proposition~\ref{prop:curve_is_fine} already holds true by
  just assuming that
  \vspace{-1em}
  \begin{align*}
    \min_{\tau\in[v\star,v\L]}
    \partial_v\partial_v\ph\big(\tau,\sh\star\big)\big|_{s_{\mathrm{h}}}
    \!+ \min_{\tau\in[v\star,v\L]}\partial_v\partial_v\pb(\tau) \,>\,0
  \end{align*}
  is satisfied along the Hugoniot curve $v\star=\xi(\ph\star)$.
\end{corollary}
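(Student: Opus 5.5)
The plan is to re-run the implicit-function argument of Proposition~\ref{prop:curve_is_fine}, but to compare the Hugoniot data with the isentrope through the \emph{star} state $(v\star,\sh\star)$ rather than with an intermediate isentrope $\sh^\hash\in[\sh\L,\sh\star]$. Fix a point $(v\star,\ph\star)$ on the curve. The aim is a lower bound on $\partial_{v\star}\mH$ that uses only the convexity of $\tau\mapsto\ph(\tau,\sh\star)$ and of $\hatpb$ on $[v\star,v\L]$. The constraints $v\star\le v\L$ and $\sh\star\ge\sh\L$ were used in the proposition only to fix the sign of the two entropy integrals, and these integrals should not appear in the new argument. Once $\partial_{v\star}\mH>0$ holds at every point of the curve under the stated criterion, the implicit function theorem extends $\xi$ continuously as far as the criterion holds, as before.

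First, I would rewrite $\partial_v\hateh(v\star,\ph\star)\big|_{\ph}$ using the Gibbs relation $\mathrm d\eh=T\,\mathrm d\sh-\ph\,\mathrm dv$ along the isentrope through the star state. This gives
\begin{align*}
  \partial_v\hateh\big|_{\ph}
  \;=\;-\ph\star+\partial_{\ph}\hateh\big|_{v}\,\ah^2/(v\star)^2 ,
\end{align*}
where the second term is non-negative by \eqref{A0} and \eqref{A1}. Second, I would eliminate $\ph\L$ via \eqref{eq:rankine_hugoniot}. Write $\hateh(v\L,\ph\L)-\hateh(v\L,\ph^{s}(v\L))$ as $\int\partial_p\hateh\,\mathrm dp$, where $\ph^{s}(\tau):=\ph(\tau,\sh\star)$ denotes the star isentrope. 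Along that isentrope, $\hateh(v\L,\ph^{s}(v\L))-\hateh(v\star,\ph\star)=-\int_{v\star}^{v\L}\ph^{s}(\tau)\,\mathrm d\tau$. Inserting both pieces into \eqref{eq:rankine_hugoniot} turns the Hugoniot relation into a trapezoidal-rule error for $\ph^{s}$ on $[v\star,v\L]$, plus a term $\int\partial_p\hateh\,\mathrm dp$ and $\hatAb$. The trapezoidal error can be written as the weighted integral $\tfrac12\int_{v\star}^{v\L}(\kappa-v\star)(v\L-\kappa)\,\partial_v\partial_v\ph^{s}(\kappa)\,\mathrm d\kappa$, and $\hatAb$ has the analogous form by Lemma~\ref{lem:sign_on_ab}.

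Third, I would differentiate this representation in $v\star$, or equivalently substitute it into $\partial_{v\star}\mH$. The goal is
\begin{align*}
  \partial_{v\star}\mH\;\ge\;\tfrac12\int_{v\star}^{v\L}
  \Big[w_1(\kappa)\,\partial_v\partial_v\ph(\kappa,\sh\star)\big|_{\sh}
  +(v\L-\kappa)\,\partial_v\partial_v\hatpb(\kappa)\Big]\mathrm d\kappa,
\end{align*}
with non-negative weights whose integrals are comparable to $(v\L-v\star)^2$. Bounding each second derivative from below by its minimum then gives positivity exactly as in Proposition~\ref{prop:curve_is_fine}. The same computation with the interval orientation reversed covers $\ph\star\le\ph\L$. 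For that reason I would write all integrals over the interval between $v\star$ and $v\L$ rather than assume $v\star<v\L$.

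The main obstacle is the leftover term $\int_{\ph^{s}(v\L)}^{\ph\L}\partial_p\hateh(v\L,p)\,\mathrm dp$. Its sign is tied to $\sh\star-\sh\L$, which is exactly the assumption we want to drop. My first attempt would be to show that it enters $\partial_{v\star}\mH$ only through the positive factor $\partial_{\ph}\hateh\,\ah^2/v^2$ produced in the first step, so that it can be absorbed without fixing a sign. Failing that, I would note that the Hugoniot relation itself forces $\sh\star-\sh\L$ to be third order in $v\L-v\star$ and to have the sign of $v\L-v\star$ while $\partial_{v\star}\mH>0$ along the curve. A continuation argument in $\ph\star$ would then recover the monotonicity conditions of Proposition~\ref{prop:curve_is_fine} automatically rather than assuming them.
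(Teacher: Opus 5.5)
You leave open the one step that makes up the whole content of the corollary. Once $\sh\star\ge\sh\L$ and $v\star\le v\L$ are known along the curve, $\sh^\hash=\sh\star$ is an admissible choice in Proposition~\ref{prop:curve_is_fine}, and its computation finishes the proof. So these orderings are exactly what must be extracted from the criterion at $\sh\star$. Contrary to your remark, the entropy integrals do not disappear: with $\sh^\hash=\sh\star$ the surviving one is the isochoric jump $\hatph(v\L,\sh\star)-\ph\L$ at $v=v\L$, which is the same leg that produces your leftover term.

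Neither of your remedies supplies these orderings. The leftover term is an increment of $\hateh$ along the isochore $v=v\L$, and nothing links it to the factor $\partial_{\ph}\hateh\,\ah^2/(v\star)^2$ at the star state. Moreover, your Gibbs rewrite gives $\partial_{v\star}\mH=\partial_{\ph}\hateh\,\ah^2/(v\star)^2-\tfrac12(\ph\star-\ph\L)+\partial_{v\star}\hatAb$. This flips the favourable term $+\tfrac12(\ph\star-\ph\L)$ of the proposition's grouping $(\partial_v\hateh|_{\ph}+\ph\L)+\tfrac12(\ph\star-\ph\L)+\partial_{v\star}\hatAb$, from which the convexity integral is extracted, into an unfavourable one. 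The fallback is not an argument either. Third-order smallness of $\sh\star-\sh\L$ is a statement local to $W\L$, and you give no mechanism that turns $\partial_{v\star}\mH>0$ into a sign of $\sh\star-\sh\L$ away from $W\L$. The paper obtains the orderings from precisely such a global mechanism: Weyl's monotonicity theorem for the Hugoniot curve, transferred to the composite equation of state via Lemma~\ref{lem:no_weird_stuff}. It then reruns the proposition's estimate along a staircase of isentropic and isochoric steps, closed by a compactness argument.

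Your second step does close the gap once you track signs. With $\delta:=\ph\L-\hatph(v\L,\sh\star)$ and $v\star<v\L$, \eqref{eq:rankine_hugoniot} is equivalent to
\[
  \tfrac12\int_{v\star}^{v\L}(\kappa-v\star)(v\L-\kappa)
  \Big[\partial_v\partial_v\ph(\kappa,\sh\star)\big|_{\sh}
  +\partial_v\partial_v\hatpb(\kappa)\Big]\,\mathrm d\kappa
  \;=\;-\tfrac12\,\delta\,(v\L-v\star)
  \;-\;\Big(\hateh(v\L,\ph\L)-\hateh(v\L,\ph\L-\delta)\Big).
\]
The criterion makes the left-hand side positive, while both terms on the right have the sign of $-\delta$. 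Hence $\delta<0$, that is, $\sh\star>\sh\L$, because $\partial_{\sh}\hatph|_v>0$.

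The ordering $v\star<v\L$ for $\ph\star>\ph\L$ follows by continuity. The branch leaves $(v\L,\ph\L)$ with slope $-\partial_{\ph}\hateh/(\partial_v\hateh|_{\ph}+\ph\L)<0$. It can never return to $v=v\L$, since $\mH(v\L,\ph\star)=\hateh(v\L,\ph\star)-\hateh(v\L,\ph\L)\neq0$ for $\ph\star\neq\ph\L$.

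With both orderings in hand, Proposition~\ref{prop:curve_is_fine} with $\sh^\hash=\sh\star$ yields $\partial_{v\star}\mH>0$. No absorption argument is needed, and you obtain a self-contained substitute for the appeal to Weyl.

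The case $\ph\star\le\ph\L$, however, is not a mirror image. For $v\star>v\L$ the two terms on the right carry opposite signs, so this identity no longer fixes the sign of $\delta$. The analogous identity built on the isentrope through $W\L$ does fix the sign there, but it needs convexity at $\sh\L$ rather than at $\sh\star$.
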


\begin{proof}
  By assumption there exists a region in the $(v,\ph)$ plane where the total
  pressure $p$ is convex. Then, by virtue of Lemma~\ref{lem:no_weird_stuff}
  and by adjusting a result by Weyl on the monotonicity of the Hugoniot
  curve to our setting of a compound equation of state, \cite[Sect.~3,
  Thm.~1, eq.\,(12), and Sect.~4, Thms.~2 and 3]{Weyl1949} (see also
  \cite[Sect.~65]{CourantFriedrichs1977} and \cite[Prop.\,2.1 and
  Cor.\,2.4]{Smith1979}) we know that $\sh\star\ge\sh\L$ and $v\star\le
  v\L$ is already satisfied for $\ph\star\ge\ph\L$ (with reversed
  inequalities for $\ph\star\le\ph\L$). We can then refine the construction
  used in the proof of Proposition~\ref{prop:curve_is_fine} by integrating
  over a series of isentropic and isovolume ``step functions'' along the
  Hugoniot curve that stay in the region of convex total pressure. A
  compactness argument establishes that such a procedure is possible in
  finite steps.
\end{proof}
We conclude the discussion on the solvability of
\eqref{eq:rankine_hugoniot} with a computable, purely algebraic solvability
criterion that does not rely on any convexity assumptions on the (partial)
pressures.
\begin{lemma}[Solvability criterion for \eqref{eq:rankine_hugoniot}]
  \label{lem:hugoniot_solvable}
  Let $W\L$ be a state and let $I$ be a closed interval containing its
  specific volume $v\L\in I$. Set $\overbar v:=\text{len}(I)$ and introduce
  \begin{align*}
    \begin{cases}
      \begin{aligned}
        m&:=\max_{\tau\in I} \Big(\frac{\hatab^2(\tau)}{\tau^2}\Big)
               -\min_{\tau\in I} \Big(\frac{\hatab^2(\tau)}{\tau^2}\Big)
          =\max_{\tau\in I} \big(\partial_v\hatpb(\tau)\big)
          -\min_{\tau\in I} \big(\partial_v\hatpb(\tau)\big),
        \\[0.5em]
        d\,&:=\,2\big(p_\infty+\ph\L\big).
      \end{aligned}
    \end{cases}
  \end{align*}
  If the smallness condition $2\overbar vm < d$ holds true, then
  \eqref{eq:rankine_hugoniot} is uniquely solvable for $v\star$ on $I$. In
  addition the monotonicity properties of
  Proposition~\ref{prop:curve_is_fine} hold true.
\end{lemma}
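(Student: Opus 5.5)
The plan is to reuse the implicit-function setup from the proof of Proposition~\ref{prop:curve_is_fine}. Fix $\ph\star$ and regard the left-hand side $\mH(v\star,\ph\star)$ of \eqref{eq:rankine_hugoniot} as a function of $v\star\in I$ alone. We will show that $\partial_{v\star}\mH>0$ on all of $I$, uniformly and without any convexity hypothesis. Strict monotonicity in $v\star$ then gives uniqueness of a root in $I$. Existence follows by continuity and the intermediate value theorem, using the sign of $\mH$ at the endpoints of $I$, or equivalently by continuing the local branch from $(v\L,\ph\L)$ for as long as $v\star$ stays in $I$. Once $\partial_{v\star}\mH\neq0$ holds along the curve, the implicit function theorem gives the $C^1$ curve $v\star=\xi(\ph\star)$.

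The key computation starts from the derivative formula in the proof of Proposition~\ref{prop:curve_is_fine}:
\[
  \partial_{v\star}\mH=\partial_v\hateh(v\star,\ph\star)\big|_{\ph}+\tfrac12(\ph\star+\ph\L)+\partial_{v\star}\hatAb(v\star,v\L).
\]
By \eqref{A1}, the first term is at least $p_\infty$. In the compressive case $\ph\star\ge\ph\L$, the first two terms together are therefore bounded below by $p_\infty+\ph\L=d/2$. For the barotropic term we use the first line of \eqref{eq:sign_on_ab_proof}, which gives
\[
  \partial_{v\star}\hatAb=\tfrac12\int_{v\star}^{v\L}\big(\partial_v\hatpb(\tau)-\partial_v\hatpb(v\star)\big)\,\mathrm d\tau .
\]
Since $v\star,v\L\in I$, the integrand is bounded in absolute value by $m$ and the interval of integration has length at most $\overbar v$. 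Hence $|\partial_{v\star}\hatAb|\le\tfrac12\overbar v m$. Combining, we get $\partial_{v\star}\mH\ge\tfrac12 d-\tfrac12\overbar v m>0$ as soon as $\overbar v m<d$. The stated condition $2\overbar vm<d$ is stronger and leaves a margin, $\partial_{v\star}\mH\ge d/4$. I plan to spend that margin in the expansive case $\ph\star<\ph\L$. There the term $\tfrac12(\ph\star+\ph\L)$ is only bounded below by $\tfrac12(\ph\star-\ph\L)+\ph\L$, so we will restrict to pressures with $\ph\L-\ph\star\le\tfrac12(p_\infty+\ph\L)$, or else exploit $\ph\star>-p_\infty$ from \eqref{A0}. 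I will check which of these the factor $2$ is designed for.

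The final step is to recover the monotonicity properties of Proposition~\ref{prop:curve_is_fine}. Differentiating $\mH(\xi(\ph\star),\ph\star)=0$ gives $\xi'=-\partial_{\ph\star}\mH/\partial_{v\star}\mH$, where $\partial_{\ph\star}\mH=\partial_p\hateh+\tfrac12(v\star-v\L)$. We need $\xi'\le0$, i.e.\ $v\star\le v\L$ exactly when $\ph\star\ge\ph\L$. I would obtain this without differentiating, by evaluating at $v\star=v\L$. Indeed, $\mH(v\L,\ph\star)=\hateh(v\L,\ph\star)-\hateh(v\L,\ph\L)$, which has the sign of $\ph\star-\ph\L$ by \eqref{A1}. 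Since $\mH$ is increasing in $v\star$, the root lies to the left of $v\L$ iff $\ph\star\ge\ph\L$. The entropy inequality $\sh\star\ge\sh\L$ then follows from Lemma~\ref{lem:no_weird_stuff} together with the Weyl-type argument cited in Corollary~\ref{cor:curve_is_fine}.

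I expect the main obstacle to be existence rather than uniqueness, i.e.\ guaranteeing that the root actually lies in $I$. My first attempt is to state the result as uniqueness within $I$ plus continuation of the branch while it remains in $I$. I would supplement this with the endpoint sign check from the previous paragraph, which already locates the root on the correct side of $v\L$.
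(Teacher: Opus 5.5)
Your argument is the paper's: a uniform lower bound on $\partial_{v\star}\mH$ over $I$ from $\partial_v\hateh\ge p_\infty$ and $|\partial_{v\star}\hatAb|\le\frac12\overbar v\,m$, followed by implicit-function continuation of the branch from $(v\L,\ph\L)$ for as long as $v\star$ stays in $I$, which is also all that ``uniquely solvable'' amounts to in the paper's proof. The factor $2$ is meant for your second option, and it needs neither a compressive/expansive split nor any restriction on $\ph\star$: since $\ph\star>-p_\infty$ by \eqref{A0}, one has $p_\infty+\frac12(\ph\star+\ph\L)>\frac12(p_\infty+\ph\L)=d/4>\frac12\overbar v\,m$, and hence $\partial_{v\star}\mH>0$ on $I$ for every admissible $\ph\star$.
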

\begin{proof}
  Denote again by $\mH(v\star,\ph\star)$ the left-hand side of
  \eqref{eq:rankine_hugoniot}. The fourth line of
  \eqref{eq:sign_on_ab_proof} reads
  \vspace{-0.5em}
  \begin{align*}
    \frac{\mathrm d}{\mathrm d v\star}\hat A_{\mathrm{b}}(v\star, v\L)
    \,&=\,
    \frac{1}{2} \int_{v\star}^{v\L}\big(\partial_v\hatpb(\tau)-
    \partial_v\hatpb(v\star)\big)\mathrm{d}\tau
  \end{align*}
  and since the integrand is bounded by $m$ in absolute value we obtain,
  \begin{align*}
    \partial_{v\star}\hatAb(v\star,v\L)\;\ge\;
    -\frac{1}{2}\,m\,\big|v\L-v\star\big|
    \;\ge\;-\frac{1}{2}\,m\,\overbar v\;\ge\;-\frac{1}{4}\,d
    \qquad\text{for }v\star\in I.
  \end{align*}
  This implies
  \begin{align*}
    \partial_{v\star}\mH
    \;&=\;
    \partial_v\hateh(v\star,\ph\star)\big|_{\ph}
    \,+\,\frac{1}{2}\big(\ph\star+\ph\L\big)
    \,+\,\partial_{v\star}\hatAb(v\star,v\L)
    \\[0.25em]
    \;&\ge\;
    \frac{1}{2}\big(2p_\infty+\ph\star+\ph\L\big)\,-\,\frac{1}{4}\,d
    \;\ge\;\frac{1}{2}\big(p_\infty+\ph\L\big)\,-\,\frac{1}{4}\,d
    \;>\;0
    \quad\text{for }v\star\in I.
  \end{align*}
  The remainder of the argument is identical to the proof of
  Proposition~\ref{prop:curve_is_fine}.
\end{proof}

With these preparations at hand we relate the shock branches of the
pressure functions \eqref{eq:pressure_function} and
\eqref{eq:pressure_function_h} to each other. Recall that
\begin{align}
  \label{eq:f_shock}
  u\star\;=\;u\L- f\S(\ph\star;W\L),
  \quad\text{with }\; f\S(\ph\star;W\L)\;=\;\big(p\star-p\L\big)^{1/2}
  \big(v\L-v\star\big)^{1/2},
\end{align}
and
\begin{align}
  \label{eq:f_shock_h}
  f_{\mathrm{h}}\S(\ph\star;W\L)\;=\;\big(\ph\star-\ph\L\big)^{1/2}
  \big(v\L-\tilde v\star\big)^{1/2},
\end{align}
where $\rho\star$ and $\tilde\rho\star$ are determined by
\eqref{eq:rankine_hugoniot} and \eqref{eq:rankine_hugoniot_h},
respectively. Here, by slight abuse of notation, we parametrize $f\S$ by
the hydrodynamical star pressure $\ph\star$ instead of the total star
pressure $p\star$ used in \eqref{eq:pressure_function}; the two determine
each other through $p\star=\ph\star+\pb(\rho\star)$. Note that
\eqref{eq:f_shock_h} is well-defined because $p\star\ge p\L$ implies
$\ph\star\ge\ph\L$ by virtue of
Lemma~\ref{lem:no_weird_stuff}.
\begin{proposition}
  \label{prop:shock_function}
  We have
  \begin{align*}
    f\S(\ph\star;W\L)
    \;&=\;
    \Bigg[1\;+\frac{\hatpb(v\star)-\hatpb(v\L)}{\ph\star-\ph\L}\Bigg]^{1/2}
    \Bigg[\frac{v\L-v\star}{v\L-\tilde v\star}\Bigg]^{1/2}
    f_{\mathrm{h}}\S(\ph\star;W\L)
    \\[0.50em]
    &\ge\,f_{\mathrm{h}}\S(\ph\star;W\L).
  \end{align*}
\end{proposition}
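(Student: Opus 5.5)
The identity is pure algebra; the inequality then follows from a case split on the sign of $\hatAb(v\star,v\L)$, using Lemma~\ref{lem:inequality_on_density}.

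\textbf{Factorisation.} Throughout, assume $\ph\star>\ph\L$, $v\star<v\L$ and $\tilde v\star<v\L$, so that all quotients are defined. Since $p=\ph+\pb(\rho)$, we have
\begin{align*}
  p\star-p\L\;=\;(\ph\star-\ph\L)\Big[1+\frac{\hatpb(v\star)-\hatpb(v\L)}{\ph\star-\ph\L}\Big].
\end{align*}
For the volume factor, trivially
\begin{align*}
  v\L-v\star\;=\;\frac{v\L-v\star}{v\L-\tilde v\star}\,\big(v\L-\tilde v\star\big).
\end{align*}
Inserting both into the definition \eqref{eq:f_shock} of $f\S$ and comparing with \eqref{eq:f_shock_h} gives the stated equality. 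Taking square roots factor by factor is legitimate: $\ph\star-\ph\L>0$, both volume differences are positive, and the bracket equals $(p\star-p\L)/(\ph\star-\ph\L)\ge0$ because $p\star\ge p\L$.

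\textbf{Inequality.} It suffices to show that the product of the two bracketed factors is at least $1$. I would first record that \eqref{A2} gives $\partial_v\hatpb(v)=-\rho^2\ab^2\le0$, so $\hatpb$ is non-increasing in $v$. Since $v\star\le v\L$, this yields $\hatpb(v\star)\ge\hatpb(v\L)$, and hence the first bracket is $\ge1$. Now split on the sign of $\hatAb(v\star,v\L)$.

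\emph{Case $\hatAb(v\star,v\L)\ge0$.} The first case of \eqref{eq:specific_volume_factor} gives $(v\L-\tilde v\star)/(v\L-v\star)\le1$. So the second bracket is $\ge1$, and the product is $\ge1$.

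\emph{Case $\hatAb(v\star,v\L)\le0$.} The second case of \eqref{eq:specific_volume_factor} states exactly that $(v\L-\tilde v\star)/(v\L-v\star)$ is bounded by the first bracket. Hence the first bracket times $(v\L-v\star)/(v\L-\tilde v\star)$ is $\ge1$. Here the comparison is carried out before any square roots are taken.

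\textbf{Degenerate cases and the main obstacle.} The degenerate cases are where care is needed, and I expect the bookkeeping to be the only real obstacle.
\begin{itemize}
  \item If $\ph\star=\ph\L$, then $f_{\mathrm h}\S=0\le f\S$ trivially; the identity is then read in the limiting sense.
  \item Otherwise $\ph\star>\ph\L$. By the monotonicity of the reduced Hugoniot curve in Proposition~\ref{prop:curve_is_fine} (with $\pb\equiv0$), this gives $\tilde v\star<v\L$.
  \item It is also necessary to confirm $v\star<v\L$ strictly. Suppose instead $v\star=v\L$. Then $\hatAb(v\L,v\L)=0$, so \eqref{eq:volume_identity} forces $\tilde v\star=v\L$, which contradicts the previous point.
\end{itemize}
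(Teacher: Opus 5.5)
Your proof is correct and follows the paper's argument: the identity comes from factoring $p\star-p\L$ and $v\L-v\star$, and the inequality comes from the two cases of \eqref{eq:specific_volume_factor} in Lemma~\ref{lem:inequality_on_density}, split on the sign of $\hatAb(v\star,v\L)$, which the paper states in a single sentence. Your handling of the degenerate cases (the case $\ph\star=\ph\L$, and ruling out $v\star=v\L$ via \eqref{eq:volume_identity}) goes beyond the paper, which leaves these points implicit.
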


\begin{proof}
  The equality is a direct consequence of the definition of both pressure
  functions. The inequality is established by estimating the second term
  from below with the help of Lemma~\ref{lem:inequality_on_density}.
\end{proof}
A short remark is in order.
\begin{remark}
  \label{rem:no_uniqueness}
  The assumptions made in Proposition~\ref{prop:curve_is_fine} are not
  sufficient to establish a stronger variant of
  Proposition~\ref{prop:shock_function}, namely, that for any given left
  state $W\L$ and any $\Delta\in\mathbb{R}^+_0$ the equation
  $\Delta=f\S(\ph\star;W\L)$ admits at most one solution. For this to hold
  true, additional properties have to be verified for the composite
  equation of state given by $p=\ph(\rho,\eh)+\pb(\rho)$; see
  \cite{Smith1979}.
\end{remark}

In the same manner we relate the shock branches of the wavespeeds
\eqref{eq:lambda_one} and \eqref{eq:lambda_one_h} to each other. Recall
that
\begin{align}
  \label{eq:Q_shock}
  \lambda^-_1\;=\;u\L-v\L Q(\ph\star;W\L);
  \quad&\text{with }\; Q(\ph\star;W)\;=\;
  \frac{\big(p\star-p\L\big)^{1/2}}{\big(v\L-v\star\big)^{1/2}},
  \quad\text{and}
  \\
  \label{eq:Q_shock_h}
  \lambda^-_{1,\mathrm h}\;=\;u\L-v\L Q_{\mathrm h}(\ph\star;W\L);
  \quad&\text{with }\;
  Q_{\mathrm h}(\ph\star;W\L)\;=\;
  \frac{\big(\ph\star-\ph\L\big)^{1/2}}{\big(v\L-\tilde v\star\big)^{1/2}},
\end{align}
where $v\star$ and $\tilde v\star$ are again determined by
\eqref{eq:rankine_hugoniot} and \eqref{eq:rankine_hugoniot_h},
respectively, and where the mass flux $Q$ of \eqref{eq:mass_flux} is
parametrized by $\ph\star$.
\begin{proposition}
  \label{prop:shock_wavespeed}
  We have
  \begin{align}
    \label{eq:Q_shock_identity}
    \big(\lambda^-_1-u\L\big)^2
    \;&=\;
    \Bigg[\frac{v\L-\tilde v\star} {v\L-v\star}\Bigg]
    \big(\lambda^-_{1,\mathrm{h}}-u\L\big)^2
    \;+\;
    (v\L)^2\, \frac{\hatpb(v\star)-\hatpb(v\L)}{v\L-v\star}.
  \end{align}
  For the special case of $\hatpb(v)$ being purely concave on $(v\star,v\L)$
  the equation admits the following lower bound:
  \begin{align*}
    \lambda^-_1 \;\ge\;
    u\L -\sqrt{\big(u\L-\lambda^-_{1,\mathrm{h}}\big)^2+\hatab^2(v\L)}
    \quad\text{($\hatpb(v)$ concave)},
  \end{align*}
  and, conversely, for the special case of $\hatpb(v)$ being purely convex
  on $(v\star,v\L)$ we have
  \begin{align*}
    \lambda^-_1 \;\ge\; u\L -\sqrt{ \Big(1+\frac{\hatpb(\tilde
    v\star)-\hatpb(v\L)}{\ph\star-\ph\L}\Big)
    \big(u\L-\lambda^-_{1,\mathrm{h}}\big)^2+ \frac{(v\L)^2}{(\tilde v\star)^2}\,
    \hatab^2(\tilde v\star)}.
  \end{align*}
\end{proposition}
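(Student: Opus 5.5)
We start from the definitions \eqref{eq:Q_shock} and \eqref{eq:Q_shock_h}. Multiplying by $v\L$ and squaring gives
\begin{align*}
  (\lambda^-_1-u\L)^2 &= (v\L)^2\,\frac{p\star-p\L}{v\L-v\star},
  \\
  (\lambda^-_{1,\mathrm h}-u\L)^2 &= (v\L)^2\,\frac{\ph\star-\ph\L}{v\L-\tilde v\star}.
\end{align*}
We then split $p\star-p\L=(\ph\star-\ph\L)+(\hatpb(v\star)-\hatpb(v\L))$. The hydrodynamical part is rewritten as
\begin{align*}
  (v\L)^2\,\frac{\ph\star-\ph\L}{v\L-v\star}
  = \frac{v\L-\tilde v\star}{v\L-v\star}\,(\lambda^-_{1,\mathrm h}-u\L)^2 .
\end{align*}
The barotropic part remains as the second summand. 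This yields the identity \eqref{eq:Q_shock_identity} by pure algebra, with no assumption beyond $v\star<v\L$ and $\tilde v\star<v\L$.

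\emph{Concave case.} By Lemma~\ref{lem:sign_on_ab} we have $\hatAb\ge0$, so Lemma~\ref{lem:inequality_on_density} gives $\tilde v\star\ge v\star$. Hence the bracket $(v\L-\tilde v\star)/(v\L-v\star)\le1$. For the second summand we use the mean value theorem:
\begin{align*}
  \frac{\hatpb(v\star)-\hatpb(v\L)}{v\L-v\star} = -\partial_v\hatpb(\zeta)
  \quad\text{for some } \zeta\in(v\star,v\L).
\end{align*}
Concavity makes $-\partial_v\hatpb$ nondecreasing in $v$, so $-\partial_v\hatpb(\zeta)\le-\partial_v\hatpb(v\L)$. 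Note that $-\partial_v\hatpb(v)=\ab^2/v^2=\hatab^2(v)/v^2$. Hence the second summand is at most $(v\L)^2\hatab^2(v\L)/(v\L)^2=\hatab^2(v\L)$. Taking square roots with $\lambda^-_1\le u\L$ gives the first bound.

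\emph{Convex case.} Here $\hatAb\le0$, so $\tilde v\star\le v\star$ and the bracket exceeds one. We write
\begin{align*}
  \frac{v\L-\tilde v\star}{v\L-v\star}
  = 1+\frac{v\star-\tilde v\star}{v\L-v\star}.
\end{align*}
We bound the excess through \eqref{eq:volume_identity} together with \eqref{eq:hatab_estimate}, as in the second case of \eqref{eq:specific_volume_factor}. The subtle point is that the statement features $\tilde v\star$ rather than $v\star$. The plan is to recombine the two summands of \eqref{eq:Q_shock_identity}, since $\tilde v\star\le v\star$ and the functions involved are monotone.

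For the barotropic summand, convexity makes $-\partial_v\hatpb$ nonincreasing. Since $\zeta\ge v\star\ge\tilde v\star$, we get $-\partial_v\hatpb(\zeta)\le-\partial_v\hatpb(\tilde v\star)=\hatab^2(\tilde v\star)/(\tilde v\star)^2$. This gives the last term.

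For the bracket, we use $\hatpb(v\star)\le\hatpb(\tilde v\star)$ by \eqref{A2}, since $\hatpb$ is nonincreasing in $v$. Then
\begin{align*}
  1+\frac{\hatpb(v\star)-\hatpb(v\L)}{\ph\star-\ph\L}
  \le 1+\frac{\hatpb(\tilde v\star)-\hatpb(v\L)}{\ph\star-\ph\L}.
\end{align*}
The main obstacle is justifying the bracket bound $1+(\hatpb(v\star)-\hatpb(v\L))/(\ph\star-\ph\L)$ rigorously. This requires dropping the nonnegative integral $\int(\partial_v\hateh-p_\infty)$ in \eqref{eq:volume_identity} with the correct sign. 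It also requires the denominator bound $2p_\infty+\ph\star+\ph\L\ge\ph\star-\ph\L$. I would redo that step explicitly and keep track of which sign each term carries.
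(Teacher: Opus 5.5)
Your proposal is correct and is essentially the paper's proof. It uses the same algebraic split of $p\star-p\L$ to get \eqref{eq:Q_shock_identity}, the mean value theorem with monotonicity of $-\partial_v\hatpb$ (evaluated at $v\L$, resp.\ pushed down to $\tilde v\star$) for the barotropic summand, and Lemma~\ref{lem:inequality_on_density} together with $\hatpb(\tilde v\star)\ge\hatpb(v\star)$ for the bracket. The step you flag as the main obstacle is exactly the already-proven second case of \eqref{eq:specific_volume_factor}, so you may simply cite it; note only that, like the paper, passing from $v\star$ to $\tilde v\star$ in the last term tacitly uses convexity of $\hatpb$ on $[\tilde v\star,v\star]$, slightly beyond the stated interval $(v\star,v\L)$.
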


\begin{proof}
  Using the definitions of $Q$ and $Q_{\mathrm{h}}$ and after some algebra:
  \begin{align*}
    (v\L)^2\,Q^2(\ph\star;W)
    \;&=\;
    \Bigg[\frac{v\L-\tilde v\star} {v\L-v\star}\Bigg]
    \,(v\L)^2 Q_{\mathrm{h}}^2(\ph\star;W)
    \;+\;
    (v\L)^2 \frac{\hatpb(v\star)-\hatpb(v\L)}{v\L-v\star}.
  \end{align*}
  Equation \eqref{eq:Q_shock_identity} now follows from definitions
  \eqref{eq:Q_shock} and \eqref{eq:Q_shock_h}. The two special cases are a
  consequence of Lemma~\ref{lem:inequality_on_density} and the observation
  that we have for some $v\star\le\xi\le v\L$:
  \begin{align*}
    \frac{\hatpb(v\star)-\hatpb(v\L)}{v\L-v\star}
    \;=\; -\,\frac{\mathrm d}{\mathrm d v}\hatpb(\xi)
    \;=\; \Big(\underbrace{\frac{\mathrm d}{\mathrm d v}\hatpb(v)
    -\,\frac{\mathrm d}{\mathrm d v}\hatpb(\xi)}_{=:\text{(I)}}
    \Big) -\,\frac{\mathrm d}{\mathrm d v}\hatpb(v).
  \end{align*}
  Choosing $v=v\L$ for concave $\hatpb(v)$, we have $\text{(I)}\le0$ and
  $-\partial_v\hatpb(v\L)=(v\L)^{-2}\hatab^2(v\L)$. In case of a convex
  $\hatpb(v)$ we set $v=v\star$ and have again $\text{(I)}\le0$. The
  argument now follows from $\tilde v\star\le v\star$ by virtue of
  Lemma~\ref{lem:inequality_on_density}, and the fact that $\hatpb(\tilde
  v\star)\ge\hatpb(v\star)$.
\end{proof}
For the general case we first collect an observation.
\begin{lemma}
  \label{lem:estimate_on_quotient}
  Let $\ph\star\ge\ph\L$ be given and let $v\star\le v\L$ be a solution of
  \eqref{eq:rankine_hugoniot}. Assume that it is the only one to its right,
  i.\,e., that no $w\in(v\star,v\L]$ satisfies \eqref{eq:rankine_hugoniot}
  as well. Then,
  \begin{align}
    \label{eq:estimate_on_quotient}
    \frac{\hatpb(v\star)-\hatpb(v\L)}{v\L-v\star}
    \;\le\;
    \max_{\tilde v\star\,\le\,\tau\le v\L}\big(-\partial_v\hatpb(\tau)\big)
  \end{align}
  holds true throughout. Note that the quotient on the left is an
  expression in $v\star$, but the bound on the right uses $\tilde v\star$.
\end{lemma}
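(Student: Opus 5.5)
The plan is to split according to the relative position of $\tilde v\star$ and $v\star$, which Lemma~\ref{lem:inequality_on_density} ties to the sign of $\hatAb(v\star,v\L)$.

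\emph{Case $\tilde v\star\le v\star$.} This is the case $\hatAb(v\star,v\L)\le0$. By the mean value theorem,
\begin{align*}
  \frac{\hatpb(v\star)-\hatpb(v\L)}{v\L-v\star}
  \;=\;-\partial_v\hatpb(\xi)
  \qquad\text{for some }\xi\in(v\star,v\L).
\end{align*}
Since $[v\star,v\L]\subset[\tilde v\star,v\L]$, the estimate \eqref{eq:estimate_on_quotient} follows immediately. The uniqueness hypothesis is not needed here.

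\emph{Case $v\star<\tilde v\star$.} This is the case $\hatAb(v\star,v\L)>0$, and here the uniqueness hypothesis must be used. Write $\mH(w)$ for the left-hand side of \eqref{eq:rankine_hugoniot} at fixed $\ph\star$, and decompose $\mH(w)=\mH_{\mathrm h}(w)+\hatAb(w,v\L)$, where $\mH_{\mathrm h}$ is the left-hand side of \eqref{eq:rankine_hugoniot_h}. The plan has three steps.
\begin{itemize}
  \item[(i)] Record the sign of $\mH$. At the right endpoint, $\mH(v\L)=\hateh(v\L,\ph\star)-\hateh(v\L,\ph\L)\ge0$ by \eqref{A1}; the degenerate case $\ph\star=\ph\L$ is trivial. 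Since $\mH(v\star)=0$ and, by hypothesis, $\mH$ has no zero in $(v\star,v\L]$, continuity gives $\mH>0$ on $(v\star,v\L]$.
  \item[(ii)] Record the behaviour of $\mH_{\mathrm h}$. As in \eqref{eq:volume_identity}, $\mH_{\mathrm h}$ is strictly increasing with slope at least $p_\infty+\tfrac12(\ph\star+\ph\L)>0$, and it vanishes at $\tilde v\star$. Hence $\hatAb(w,v\L)>-\mH_{\mathrm h}(w)>0$ for $w\in[v\star,\tilde v\star)$, while $\mH_{\mathrm h}\ge0$ on $[\tilde v\star,v\L]$.
  \item[(iii)] Argue by contradiction. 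Let $S$ denote the quotient on the left of \eqref{eq:estimate_on_quotient}, and suppose $-\partial_v\hatpb(\tau)<S$ for all $\tau\in[\tilde v\star,v\L]$. Integrating gives $\hatpb(\tilde v\star)-\hatpb(v\L)<S\,(v\L-\tilde v\star)$. Consequently the chord of $\hatpb$ over $[v\star,\tilde v\star]$ is steeper (in $-\hatpb$) than $S$, while $\hatpb$ lies strictly on one side of its secant on $[\tilde v\star,v\L]$.
\end{itemize}

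To close step~(iii), feed this geometric information into the representation of $\hatAb(w,v\L)$ as an integral of $\partial_v\hatpb(\tau)-\partial_v\hatpb(w)$ from \eqref{eq:sign_on_ab_proof}. Evaluating at a suitable point $w\in(v\star,\tilde v\star]$, or tracking $\mH$ as $w$ moves from $v\star$ towards $\tilde v\star$, should show that $\mH$ returns to zero inside $(v\star,v\L]$, contradicting step~(i). The natural test point is $w=\tilde v\star$, where $\mH(\tilde v\star)=\hatAb(\tilde v\star,v\L)$. The bound on $-\partial_v\hatpb$ over $[\tilde v\star,v\L]$ controls precisely this quantity, via the estimate \eqref{eq:hatab_estimate} run on the interval $[\tilde v\star,v\L]$.

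The main obstacle is step~(iii): turning the assumption ``no further Hugoniot root to the right'' into a pointwise statement about $\partial_v\hatpb$ on the shorter interval $[\tilde v\star,v\L]$. The sign information on $\mH$ is easy. What is delicate is showing that a secant slope over $[v\star,v\L]$ exceeding every derivative value on $[\tilde v\star,v\L]$ forces an additional intersection. If the direct contradiction at $w=\tilde v\star$ is not sharp enough, I would fall back on a continuity argument: deform $v\star$ along the Hugoniot curve, which is locally parametrized by Proposition~\ref{prop:curve_is_fine}, starting from $\ph\star=\ph\L$, where the inequality is trivial, and show that the inequality cannot fail first without the Hugoniot curve folding back. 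Such a fold would produce a second root in $(v\star,v\L]$.
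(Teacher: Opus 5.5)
Your first case and steps (i)--(ii) are sound and agree with the paper. In particular, (ii) already gives $\hatAb(w,v\L)>0$ for every $w\in[v\star,\tilde v\star)$, which is exactly the positivity the paper uses.

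\textbf{The gap is step (iii).} The contradiction is never derived, and the test point you single out cannot deliver it. Let $\ell$ be the secant of $\hatpb$ through $v\star$ and $v\L$, and set $h:=\hatpb-\ell$. Using $\partial_v\hateb=-\hatpb$ and the exactness of the trapezoidal rule for affine functions,
\begin{align*}
  \hatAb(w,v\L)\;=\;\int_w^{v\L}h(\tau)\,\mathrm{d}\tau\;-\;\frac{h(w)}{2}\,\big(v\L-w\big).
\end{align*}
Your contradiction hypothesis is precisely $h'>0$ on $[\tilde v\star,v\L]$, hence $h<0$ on $[\tilde v\star,v\L)$.

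At $w=\tilde v\star$ the integral is negative, but the boundary term $-\tfrac12 h(\tilde v\star)(v\L-\tilde v\star)$ is positive. A bound on $-\partial_v\hatpb$ over $[\tilde v\star,v\L]$ does not decide the sign of the sum. For instance, if $\hatpb$ is concave on $[\tilde v\star,v\L]$, Lemma~\ref{lem:sign_on_ab} gives $\hatAb(\tilde v\star,v\L)\ge0$, which is compatible with $\mH(\tilde v\star)=\hatAb(\tilde v\star,v\L)>0$.

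Nor does \eqref{eq:hatab_estimate} help: it bounds $\hatAb$ from below, and reversing the mean-value step only yields a nonnegative upper bound.

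Your fallback deformation argument is a heuristic, not a proof. It presupposes a global parametrization of the Hugoniot curve between $\ph\L$ and $\ph\star$. That is only available locally, or under the convexity criterion of Proposition~\ref{prop:curve_is_fine}. It also offers no mechanism that forces the fold.

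\textbf{The missing idea} is to test where the boundary term vanishes. Set $w_0:=\sup\{w\in[v\star,\tilde v\star]:h(w)\ge0\}$. Since $h(v\star)=0$ and $h(\tilde v\star)<0$, continuity gives $w_0\in[v\star,\tilde v\star)$, $h(w_0)=0$, and $h<0$ on $(w_0,v\L)$. Hence
\begin{align*}
  \hatAb(w_0,v\L)\;=\;\int_{w_0}^{v\L}h\,\mathrm{d}\tau\;<\;0,
\end{align*}
which contradicts your step (ii).

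Two consequences for how you set this up:
\begin{itemize}
  \item[(a)] $w_0=v\star$ is possible, so the test point must be sought in $[v\star,\tilde v\star)$, not in $(v\star,\tilde v\star]$.
  \item[(b)] The contradiction is a sign contradiction for $\hatAb$, not the appearance of a second zero of $\mH$. When $w_0=v\star$ it contradicts the case hypothesis $\hatAb(v\star,v\L)>0$ itself.
\end{itemize}
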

\begin{proof}
  The inequality is trivially satisfied for $\hatAb(v\star,v\L)\le0$,
  i.\,e., when $\tilde v\star\le v\star$: by the mean value theorem the
  left-hand side of \eqref{eq:estimate_on_quotient} is equal to
  $-\partial_v\hatpb(\xi)$ for some
  $\xi\in(v\star,v\L)\subset[\tilde v\star,v\L]$. Thus, assume
  $v\star<\tilde v\star$, i.\,e., $\hatAb(v\star,v\L)>0$, and suppose for
  the sake of contradiction that
  \begin{align*}
    g\;:=\;\frac{\hatpb(v\star)-\hatpb(v\L)}{v\L-v\star}
    \;>\;
    \max_{\tilde v\star\,\le\,\tau\le
    v\L}\big(-\partial_v\hatpb(\tau)\big).
  \end{align*}
  This assumption contradicts the absence of a second
  solution of \eqref{eq:rankine_hugoniot}, as we show next. Let $\ell(w)$
  denote the affine secant of $\hatpb(v)$ with slope $-g$ through the points
  $\big(v\star,\hatpb(v\star)\big)$ and $\big(v\L,\hatpb(v\L)\big)$, and
  introduce the deviation $h(w):=\hatpb(w)-\ell(w)$ satisfying
  $h(v\star)=h(v\L)=0$. For $w\in[\tilde v\star,v\L]$ we compute
  $\partial_w h(w)=\partial_v\hatpb(w)+g > 0$. Consequently,
  \begin{align}
    \label{eq:lemma_h_negative}
    h(w)\;<\;0\quad\text{for }w\in[\tilde v\star,v\L),
    \qquad
    h(v\L)=0.
  \end{align}
  Moreover, using $\partial_v\hateb(v)=-\hatpb(v)$ we rewrite Definition
  \eqref{eq:Ab} as follows:
  \begin{multline}
    \label{eq:lemma_Ab_as_h}
    \hatAb(w,v\L)
    \;=\;
    \int_w^{v\L}\hatpb(\tau)\,\mathrm{d}\tau
    \,-\,\frac{\hatpb(w)+\hatpb(v\L)}{2}\big(v\L-w\big)
    \\
    \;=\;
    \int_w^{v\L}h(\tau)\,\mathrm{d}\tau\,-\,\frac{h(w)}{2}\big(v\L-w\big).
  \end{multline}
  At this point the uniqueness assumption enters. Recall that $\mH(w;\ph\star)$
  denotes the left-hand side of \eqref{eq:rankine_hugoniot} with $v\star$
  replaced by $w$. We claim that
  \begin{align}
    \label{eq:lemma_H_positive}
    \mH(w;\ph\star)\;>\;0\qquad\text{for }w\in(v\star,v\L].
  \end{align}
  Indeed, $\mH(v\L;\ph\star)=\hateh(v\L,\ph\star)-\hateh(v\L,\ph\L)\ge0$
  holds true by assumption \eqref{A1} and $\ph\star\ge\ph\L$. If
  $\mH(\,\cdot\,;\ph\star)$ vanished at $v\L$, or vanished or attained a
  negative value at some $w\in(v\star,v\L)$, then the intermediate value
  theorem would produce a second solution of
  \eqref{eq:rankine_hugoniot} in $(v\star,v\L]$, which we have excluded.
  We write out $\mH(w;\ph\star)$ using definition
  \eqref{eq:rankine_hugoniot}:
  \begin{align*}
    \mH(w;\ph\star)
    \;=\;
    \hateh(w,\ph\star)-\hateh(v\L,\ph\L)
    \,+\,\frac{1}{2}\big(\ph\star+\ph\L\big)\big(w-v\L\big)
    \,+\,\hatAb(w,v\L).
  \end{align*}
  Subtracting \eqref{eq:rankine_hugoniot_h} evaluated at $\tilde v\star$
  from this identity leads to
  \begin{align*}
    \mH(w;\ph\star)
    \;&=\;
    \hateh(w,\ph\star)-\hateh(\tilde v\star,\ph\star)
    \,+\,\frac{1}{2}\big(\ph\star+\ph\L\big)\big(w-\tilde v\star\big)
    \,+\,\hatAb(w,v\L)
    \\
    \;&=\;
    -\int_w^{\tilde v\star}\Big(\partial_v\hateh(\tau,\ph\star)
    +\frac{1}{2}\big(\ph\star+\ph\L\big)\Big)\,\mathrm{d}\tau
    \,+\,\hatAb(w,v\L).
  \end{align*}
  Rearranging terms and using
  $\partial_v\hateh(\tau,\ph\star)\ge p_\infty$, see \eqref{A1}, we
  obtain for $w\in[v\star,\tilde v\star]$:
  \begin{align}
    \label{eq:lemma_Ab_lower_bound}
    \hatAb(w,v\L)
    \;&=\;
    \mH(w;\ph\star)
    \,+\,\int_w^{\tilde v\star}\Big(\partial_v\hateh(\tau,\ph\star)
    +\frac{1}{2}\big(\ph\star+\ph\L\big)\Big)\,\mathrm{d}\tau
    \\\notag
    \;&\ge\;
    \mH(w;\ph\star)
    \,+\,\frac{1}{2}\big(2p_\infty+\ph\star+\ph\L\big)
    \big(\tilde v\star-w\big).
  \end{align}
  Finally, set $w_0:=\sup\big\{w\in[v\star,\tilde v\star]\,:\,
  h(w)\ge0\big\}$. The set is nonempty due to $h(v\star)=0$, and
  \eqref{eq:lemma_h_negative} implies $w_0<\tilde v\star$ as well as, by
  continuity, $h(w_0)=0$. By construction, $h<0$ on
  $(w_0,\tilde v\star]$ and, by virtue of \eqref{eq:lemma_h_negative}, on
  $[\tilde v\star,v\L)$. Identity \eqref{eq:lemma_Ab_as_h} thus yields
  \vspace{-1em}
  \begin{align*}
    \hatAb(w_0,v\L)\;=\;\int_{w_0}^{v\L}h(\tau)\,\mathrm{d}\tau\;<\;0.
  \end{align*}
  On the other hand, evaluating \eqref{eq:lemma_Ab_lower_bound} at $w=w_0$
  and using \eqref{eq:lemma_H_positive} together with
  $\mH(v\star;\ph\star)=0$, $w_0<\tilde v\star$, and
  $2p_\infty+\ph\star+\ph\L>0$ shows
  $\hatAb(w_0,v\L)>0$; a contradiction. This establishes
  \eqref{eq:estimate_on_quotient}.
\end{proof}
This puts us in a position to complement
Proposition~\ref{prop:shock_wavespeed} for the general case.
\begin{proposition}
  \label{prop:shock_wavespeed_general}
  Let $\ph\star>\ph\L$ and assume that \eqref{eq:rankine_hugoniot} is
  solved uniquely by $v\star<v\L$, i.\,e., that no $w\in(v\star,v\L]$
  satisfies \eqref{eq:rankine_hugoniot} as well. Then, without imposing any
  convexity or concavity condition on $\hatpb(v)$, we have
  \begin{align}
    \label{eq:wavespeed_bound_general}
    \lambda^-_1 \;\ge\; u\L -\sqrt{ \Big(1+\frac{\hatpb(\tilde
    v\star)-\hatpb(v\L)}{\ph\star-\ph\L}\Big)
    \big(u\L-\lambda^-_{1,\mathrm{h}}\big)^2+ (v\L)^2
    \max_{\tilde v\star\,\le\,\tau\le v\L}\frac{\hatab^2(\tau)}{\tau^2}}.
  \end{align}
\end{proposition}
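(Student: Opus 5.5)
The plan is to start from the exact identity \eqref{eq:Q_shock_identity} of Proposition~\ref{prop:shock_wavespeed},
\begin{align*}
  \big(\lambda^-_1-u\L\big)^2
  \;=\;
  \Bigg[\frac{v\L-\tilde v\star}{v\L-v\star}\Bigg]
  \big(\lambda^-_{1,\mathrm{h}}-u\L\big)^2
  \;+\;
  (v\L)^2\,\frac{\hatpb(v\star)-\hatpb(v\L)}{v\L-v\star},
\end{align*}
and to bound each of the two terms on the right from above. This requires no convexity information on $\hatpb$. Since $\lambda^-_1\le u\L$ for a left shock (the mass flux is nonnegative), an upper bound on $(\lambda^-_1-u\L)^2$ turns, after taking square roots, directly into the claimed lower bound on $\lambda^-_1$.

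For the second term I will invoke Lemma~\ref{lem:estimate_on_quotient}. Its hypotheses, namely $\ph\star\ge\ph\L$ and that $v\star$ is the only solution of \eqref{eq:rankine_hugoniot} in $(v\star,v\L]$, are exactly those of the present proposition. The lemma gives
\begin{align*}
  \frac{\hatpb(v\star)-\hatpb(v\L)}{v\L-v\star}
  \;\le\;\max_{\tilde v\star\le\tau\le v\L}\big(-\partial_v\hatpb(\tau)\big)
  \;=\;\max_{\tilde v\star\le\tau\le v\L}\frac{\hatab^2(\tau)}{\tau^2},
\end{align*}
where the equality uses $-\partial_v\hatpb(v)=v^{-2}\hatab^2(v)$, which follows from $\pb=\rho^2\partial_\rho\eb$ and \eqref{A2}. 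Multiplying by $(v\L)^2$ produces the second summand under the root.

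For the first term I will split according to the sign of $\hatAb(v\star,v\L)$, as in Lemma~\ref{lem:inequality_on_density}.
\begin{itemize}
  \item[--] If $\hatAb\ge0$, then \eqref{eq:specific_volume_factor} bounds the factor by $1$. It remains to check that $1\le 1+(\hatpb(\tilde v\star)-\hatpb(v\L))/(\ph\star-\ph\L)$. This holds because Lemma~\ref{lem:inequality_on_density} gives $\tilde v\star\le v\L$ in the compressive regime, so $\hatpb(\tilde v\star)\ge\hatpb(v\L)$ by the monotonicity \eqref{A2}, and $\ph\star>\ph\L$.
  \item[--] If $\hatAb\le0$, then \eqref{eq:specific_volume_factor} bounds the factor by $1+(\hatpb(v\star)-\hatpb(v\L))/(\ph\star-\ph\L)$. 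In this case $\tilde v\star\le v\star$, so $\hatpb(\tilde v\star)\ge\hatpb(v\star)$ by \eqref{A2}, and the factor is again dominated by the stated expression.
\end{itemize}
In both cases the bracket is at most $1+(\hatpb(\tilde v\star)-\hatpb(v\L))/(\ph\star-\ph\L)$. Since $(\lambda^-_{1,\mathrm h}-u\L)^2\ge0$, this gives the first summand. Adding the two estimates and taking square roots yields \eqref{eq:wavespeed_bound_general}.

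I expect the only delicate point to be the first case. There one must make sure that $\tilde v\star\le v\L$ still holds when $\tilde v\star\ge v\star$; this is what allows the two cases to be merged into the single expression involving $\tilde v\star$. I plan to obtain it from \eqref{eq:rankine_hugoniot_h} together with the classical monotonicity of the hydrodynamical Hugoniot curve (Theorem~\ref{thm:riemann_fan}) for $\ph\star>\ph\L$. All the substantive work, in particular the appearance of $\tilde v\star$ in the range of the maximum, has already been carried out in Lemma~\ref{lem:estimate_on_quotient}.
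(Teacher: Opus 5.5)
Your proposal is correct and matches the paper's proof step for step: you start from the identity \eqref{eq:Q_shock_identity}, bound the barotropic quotient with Lemma~\ref{lem:estimate_on_quotient}, and bound the volume factor by splitting on the sign of $\hatAb(v\star,v\L)$ with the monotonicity of $\hatpb$. The one fact you flag, $\tilde v\star\le v\L$, is not supplied by Lemma~\ref{lem:inequality_on_density}, contrary to your first bullet. It comes from the monotonicity of the hydrodynamical Hugoniot curve for $\ph\star>\ph\L$, as you plan in your last paragraph, and the paper uses it the same way.
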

\begin{proof}
  We estimate both terms on the right-hand side of
  \eqref{eq:Q_shock_identity} from above. For the second term,
  Lemma~\ref{lem:estimate_on_quotient} in combination with
  $-\partial_v\hatpb(\tau)=\tau^{-2}\hatab^2(\tau)$ gives
  \begin{align*}
    \frac{\hatpb(v\star)-\hatpb(v\L)}{v\L-v\star}
    \;\le\;
    \max_{\tilde v\star\,\le\,\tau\le v\L}\frac{\hatab^2(\tau)}{\tau^2}.
  \end{align*}
  For the first term, recall that assumption \eqref{A2} implies
  $\partial_v\hatpb(v)=-v^{-2}\hatab^2(v)\le0$, i.\,e., $\hatpb(v)$ is
  non-increasing. Thus, in case $\hatAb(v\star,v\L)\ge0$ we conclude
  $\hatpb(\tilde v\star)\ge\hatpb(v\L)$ from $\tilde v\star\le v\L$ and the
  first case of \eqref{eq:specific_volume_factor} yields
  \begin{align*}
    \frac{v\L-\tilde v\star}{v\L-v\star}
    \;\le\;1\;\le\;
    1\,+\,\frac{\hatpb(\tilde v\star)-\hatpb(v\L)}{\ph\star-\ph\L}.
  \end{align*}
  In case $\hatAb(v\star,v\L)\le0$ we have $\tilde v\star\le v\star$ by
  virtue of Lemma~\ref{lem:inequality_on_density} and thus
  $\hatpb(\tilde v\star)\ge\hatpb(v\star)$, so that the second case of
  \eqref{eq:specific_volume_factor} implies the very same bound. The
  assertion now follows by substituting both estimates into
  \eqref{eq:Q_shock_identity} and taking the square root, observing
  $\lambda^-_1\le u\L$, see \eqref{eq:Q_shock}.
\end{proof}
The prefactor multiplying $(u\L-\lambda^-_{1,\mathrm{h}})^2$ in
\eqref{eq:wavespeed_bound_general} is not monotone in $\ph\star$, which is
inconvenient whenever $\ph\star$ itself is only known up to an upper bound.
Trading a factor of two for monotonicity removes the difficulty.
\begin{corollary}
  \label{cor:shock_wavespeed_monotone}
  Under the assumptions of
  Proposition~\ref{prop:shock_wavespeed_general},
  \begin{align}
    \label{eq:wavespeed_bound_monotone}
    \lambda^-_1 \;\ge\; u\L -\sqrt{\big(u\L-\lambda^-_{1,\mathrm{h}}\big)^2
    \;+\;2\,(v\L)^2
    \max_{\tilde v\star\,\le\,\tau\le v\L}\frac{\hatab^2(\tau)}{\tau^2}}.
  \end{align}
\end{corollary}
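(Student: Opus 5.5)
We bound the prefactor in \eqref{eq:wavespeed_bound_general} by moving the barotropic part of $(u\L-\lambda^-_{1,\mathrm h})^2$ into the second term. From \eqref{eq:Q_shock_h} we have $(u\L-\lambda^-_{1,\mathrm h})^2=(v\L)^2Q_{\mathrm h}^2=(v\L)^2(\ph\star-\ph\L)/(v\L-\tilde v\star)$. The extra term in the prefactor therefore contributes
\begin{align*}
  \frac{\hatpb(\tilde v\star)-\hatpb(v\L)}{\ph\star-\ph\L}
  \big(u\L-\lambda^-_{1,\mathrm h}\big)^2
  \;=\;(v\L)^2\,\frac{\hatpb(\tilde v\star)-\hatpb(v\L)}{v\L-\tilde v\star}.
\end{align*}
The factor $\ph\star-\ph\L$ cancels exactly, and this cancellation is what removes the non-monotone dependence on $\ph\star$.

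Next we bound the difference quotient. Since $\tilde v\star\le v\L$ (with $\tilde v\star<v\L$ for $\ph\star>\ph\L$ by \eqref{eq:rankine_hugoniot_h} and \eqref{A1}), the mean value theorem gives some $\zeta\in(\tilde v\star,v\L)$ with
\begin{align*}
  \frac{\hatpb(\tilde v\star)-\hatpb(v\L)}{v\L-\tilde v\star}
  \;=\;-\partial_v\hatpb(\zeta)\;=\;\frac{\hatab^2(\zeta)}{\zeta^2}
  \;\le\;\max_{\tilde v\star\le\tau\le v\L}\frac{\hatab^2(\tau)}{\tau^2}.
\end{align*}
Here we use $-\partial_v\hatpb(\tau)=\tau^{-2}\hatab^2(\tau)$, which follows from \eqref{A2}.

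Now substitute into \eqref{eq:wavespeed_bound_general}. The term under the square root is bounded by $(u\L-\lambda^-_{1,\mathrm h})^2+(v\L)^2M+(v\L)^2M$, where $M$ is the maximum above; one copy of $M$ comes from the prefactor and one was already present. This is exactly \eqref{eq:wavespeed_bound_monotone}. The square root is monotone, so the lower bound on $\lambda^-_1$ follows.

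The only point needing care is that \eqref{eq:wavespeed_bound_general} is already established under the same hypotheses, so the argument is purely algebraic. We must also check that $v\L-\tilde v\star>0$, so that the cancellation is legitimate. In the degenerate case $\ph\star=\ph\L$, both shock terms vanish and the bound holds trivially.
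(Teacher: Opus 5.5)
Your proof is correct and follows the same route as the paper: you rewrite the prefactor using $(u\L-\lambda^-_{1,\mathrm h})^2=(v\L)^2(\ph\star-\ph\L)/(v\L-\tilde v\star)$, cancel $\ph\star-\ph\L$, bound the resulting difference quotient by the mean value theorem, and add the second term of \eqref{eq:wavespeed_bound_general} to get the factor two. Your closing remark on $\ph\star=\ph\L$ is unnecessary, since Proposition~\ref{prop:shock_wavespeed_general} already assumes $\ph\star>\ph\L$, and it is also slightly inaccurate: in that limit $v\L Q_{\mathrm h}$ tends to $\ah(W\L)$ rather than to zero, so the terms do not vanish.
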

\begin{proof}
  By \eqref{eq:Q_shock_h} we have $\big(u\L-\lambda^-_{1,\mathrm
  h}\big)^2=(v\L)^2\,(\ph\star-\ph\L)\,\big(v\L-\tilde v\star\big)^{-1}$,
  so that the first term of \eqref{eq:wavespeed_bound_general} splits as
  \begin{align*}
    \Big(1+\frac{\hatpb(\tilde v\star)-\hatpb(v\L)}{\ph\star-\ph\L}\Big)
    \big(u\L-\lambda^-_{1,\mathrm{h}}\big)^2
    \;=\;
    \big(u\L-\lambda^-_{1,\mathrm{h}}\big)^2
    \,+\,(v\L)^2\,\frac{\hatpb(\tilde v\star)-\hatpb(v\L)}{v\L-\tilde v\star}.
  \end{align*}
  The mean value theorem applied to the last quotient produces
  $-\partial_v\hatpb(\xi)=\xi^{-2}\hatab^2(\xi)$ for some
  $\xi\in(\tilde v\star,v\L)$, which is bounded by the maximum in
  \eqref{eq:wavespeed_bound_monotone}. Adding the second term of
  \eqref{eq:wavespeed_bound_general} gives the factor two.
\end{proof}

\subsection{Left expansive shock}
\label{sec:left_expansive_shock}
The estimates in Propositions~\ref{prop:shock_wavespeed},
\ref{prop:shock_wavespeed_general}, and
Corollary~\ref{cor:shock_wavespeed_monotone} were derived for the case
$\ph\star\ge\ph\L$, i.\,e., for a \emph{compressive} shock. We thus collect
a number of crucial properties for the case $\ph\star\le\ph\L$ that we will
need later. We start by discussing the hydrodynamical system.
\begin{definition}[Continuation past $\ph\L$]
  \label{def:expansive_continuation} The solution $\tilde
  v\star=\xi_h(\ph\star;W\L)$ of \eqref{eq:rankine_hugoniot_h} is defined
  on both sides of $\ph\L$, see Proposition~\ref{prop:curve_is_fine}, and
  is strictly decreasing with $\xi_h(\ph\L;W\L)=v\L$. Accordingly, we
  extend \eqref{eq:f_shock_h} and \eqref{eq:Q_shock_h} to
  $\ph\star\le\ph\L$, where $\tilde v\star\ge v\L$, by setting
  \begin{align}
    \label{eq:f_shock_h_continued}
    f\S_{\mathrm h}(\ph\star;W\L) \;&:=\;-\big|\ph\star-\ph\L\big|^{1/2}\,
    \big|v\L-\tilde v\star\big|^{1/2},
    \\[0.25em]
    \label{eq:Q_shock_h_continued}
    Q_{\mathrm h}(\ph\star;W\L) \;&:=\;\bigg[\frac{\big|\ph\star-\ph\L\big|}
    {\big|v\L-\tilde v\star\big|}\bigg]^{1/2}\;\ge\;0,
  \end{align}
  which agrees with \eqref{eq:f_shock_h} and
  \eqref{eq:Q_shock_h} for $\ph\star\ge\ph\L$.
\end{definition}
With this reading $f\S_{\mathrm h}(\,\cdot\,;W\L)$ is real valued,
strictly increasing, and changes sign at $\ph\L$. We note:
\begin{lemma}[Monotonicity of the hydrodynamical mass flux]
  \label{lem:Q_monotone}
  Let $W\L$ be fixed. Let $Q_{\mathrm h}(\ph\star;W\L)$ be the
  hydrodynamical mass flux \eqref{eq:Q_shock_h}, read through its
  continuation \eqref{eq:Q_shock_h_continued} for $\ph\star\le\ph\L$ and as
  a limit for $\ph\star=\ph\L$. Then the map $\ph\star\mapsto
  v\L\,Q_{\mathrm h}(\ph\star;W\L)$ is non-decreasing along the entire
  Hugoniot curve through $W\L$, and
  \begin{align}
    \label{eq:Q_monotone}
    \begin{cases}
      \begin{aligned}
        v\L\,Q_{\mathrm h}(\ph\L;W\L)\;&=\;\ah(W\L),
        \\[0.25em]
        v\L\,Q_{\mathrm h}(\ph\star;W\L)\;&\ge\;\ah(W\L)
        &\;&\text{for }\ph\star\ge\ph\L,
        \\[0.25em]
        v\L\,Q_{\mathrm h}(\ph\star;W\L)\;&\le\;\ah(W\L)
        &\;&\text{for }\ph\star\le\ph\L.
      \end{aligned}
    \end{cases}
  \end{align}
\end{lemma}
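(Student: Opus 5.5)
We reduce everything to the Hugoniot curve of the hydrodynamical system alone. Write $p:=\ph\star$ and $w(p):=\xi_h(p;W\L)$ for the solution of \eqref{eq:rankine_hugoniot_h}. By Definition~\ref{def:expansive_continuation}, $w$ is strictly decreasing with $w(\ph\L)=v\L$, and on both sides of $\ph\L$
\begin{align*}
  v\L^2\,Q_{\mathrm h}^2(p;W\L)\;=\;-(v\L)^2\,\frac{p-\ph\L}{w(p)-v\L}.
\end{align*}
This is $(v\L)^2$ times minus the slope of the Rayleigh chord from $(v\L,\ph\L)$ to $(w(p),p)$ in the $(v,\ph)$ plane. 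The claim therefore becomes a statement about chord slopes along the hydrodynamical Hugoniot curve. Under \eqref{A0} and \eqref{A1} that curve is the classical one of Theorem~\ref{thm:riemann_fan}, and we plan to use the classical Weyl-type facts invoked in Corollary~\ref{cor:curve_is_fine}.

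\textbf{Value at $\ph\L$.} At $p=\ph\L$ we take the limit $p\to\ph\L$. Differentiating \eqref{eq:rankine_hugoniot_h} implicitly, $\mH_h(w,p)=0$, gives
\begin{align*}
  w'(p)\;=\;-\,\frac{\partial_p\hateh+\tfrac12(w-v\L)}{\partial_v\hateh+\tfrac12(p+\ph\L)}.
\end{align*}
At $(v\L,\ph\L)$ this equals $-\partial_p\hateh/(\partial_v\hateh+\ph\L)$. The isentrope satisfies $\mathrm{d}\eh=-\ph\,\mathrm{d}v$, i.e.\ $\partial_v\hateh+\partial_p\hateh\,\partial_v\hatph|_{\sh}=-\ph$. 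Hence $w'(\ph\L)=1/\partial_v\hatph|_{\sh}=-v^2/\ah^2$; the Hugoniot curve is tangent to the isentrope at $W\L$. The chord slope therefore tends to $\partial_v\hatph|_{\sh}=-\ah^2/(v\L)^2$, and $v\L Q_{\mathrm h}\to\ah(W\L)$.

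\textbf{Monotonicity.} This is the main step. Set $\sigma(p):=(p-\ph\L)/(v\L-w(p))>0$. We must show $\sigma$ is non-decreasing in $p$; the inequalities in \eqref{eq:Q_monotone} then follow from the value at $\ph\L$. Differentiating,
\begin{align*}
  \sigma'(p)\;=\;\frac{(v\L-w)+(p-\ph\L)\,w'}{(v\L-w)^2},
\end{align*}
so we need $w'(p)\ge-1/\sigma(p)$: the Hugoniot curve must be steeper, in the $(v,\ph)$ plane, than the Rayleigh line at the same point. We will use the standard identity along the Hugoniot curve,
\begin{align*}
  T\,\mathrm{d}\sh\;=\;\tfrac12(v\L-w)^2\,\mathrm{d}\sigma ,
\end{align*}
obtained by differentiating \eqref{eq:rankine_hugoniot_h} and combining with $\mathrm{d}\eh=T\,\mathrm{d}\sh-p\,\mathrm{d}v$. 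Monotonicity of $\sigma$ is thus equivalent to $\sh$ being non-decreasing along the curve in $p$. That is exactly Weyl's result under \eqref{A0} and \eqref{A1}, as used in Corollary~\ref{cor:curve_is_fine}: $\sh\star\ge\sh\L$ for $p\ge\ph\L$, with entropy monotone along the branch, and the reversed statement on the expansive branch.

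The obstacle is the expansive branch, $w>v\L$. There both $p-\ph\L$ and $w-v\L$ change sign, so the sign bookkeeping in the identity must be checked. Weyl's monotonicity must also be applied to the branch $p<\ph\L$, where entropy decreases as $p$ decreases. That is consistent with $\mathrm{d}\sigma$ having the sign of $\mathrm{d}\sh$, since $(v\L-w)^2>0$ and $T>0$ (the latter from $\partial_p\hateh\ge0$ in \eqref{A1}). If the cited entropy monotonicity is not available in the required global form, the fallback is direct. Suppose $\sigma'(p_0)<0$ at some point. Then the Rayleigh line through $W\L$ and $(w(p_0),p_0)$ would be crossed by the Hugoniot curve. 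The strict convexity $\partial_v^2\hatph|_{\sh}>0$ from \eqref{A0} excludes this, by the classical argument that entropy along a Rayleigh line has a single extremum.
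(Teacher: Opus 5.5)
Your proof is correct, but it gets the monotonicity from a different key fact than the paper. The paper cites convexity of the hydrodynamical Hugoniot curve in the $(v,\ph)$ plane (Weyl, Courant--Friedrichs, Smith). From that it reads off monotonicity of the chord slope from $(v\L,\ph\L)$, and it simply asserts that the chord degenerates into the tangent of the isentrope at $W\L$.

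You instead use the identity $T\,\mathrm{d}\sh=\tfrac12(v\L-w)^2\,\mathrm{d}\sigma$. This identity is right: differentiating \eqref{eq:rankine_hugoniot_h} and inserting $\mathrm{d}\eh=T\,\mathrm{d}\sh-\ph\,\mathrm{d}v$ gives $T\,\mathrm{d}\sh=\tfrac12\big[(p-\ph\L)\,\mathrm{d}w-(w-v\L)\,\mathrm{d}p\big]$. Monotonicity of $Q_{\mathrm h}^2$ thus becomes \emph{equivalent} to monotonicity of $\sh$ along the curve, on both branches at once since $(v\L-w)^2>0$. You also verify the tangency by implicit differentiation rather than quoting it.

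The paper's route is shorter, but it rests on global convexity of the Hugoniot curve, which is more than the lemma needs. Yours needs only the monotone chord-slope property. Your fallback makes the lemma essentially self-contained from \eqref{A0}--\eqref{A1}. Along a Rayleigh line through $W\L$ the hydrodynamical Hugoniot function satisfies $\mathrm{d}\mH_{\mathrm h}=T\,\mathrm{d}\sh$. Moreover, $\sh$ is strictly quasi-concave along lines, because the isentropes are strictly convex and $\partial_{\sh}\hatph>0$. Hence such a line meets the curve at most once besides $W\L$.

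Two small points need tidying.
\begin{itemize}
\item $T>0$ does not follow from $\partial_p\hateh\ge0$ alone. Write $T=\partial_p\hateh|_v\,\partial_{\sh}\hatph|_v$, and note that your own isentrope identity $\partial_v\hateh+\ph=-\partial_p\hateh\,\partial_v\hatph|_{\sh}$ forces $\partial_p\hateh>0$, since its left side is at least $p_\infty+\ph>0$. This also shows your tangency quotient is nondegenerate.
\item In the fallback, injectivity plus continuity only gives strict monotonicity of $\sigma$; you still have to fix the direction. A line through $W\L$ steeper than the isentrope lies above it for $v$ slightly below $v\L$. So along it the entropy, and hence $\mH_{\mathrm h}$, first increases into the compressive side, and such a line can only meet the compressive branch.
\end{itemize}
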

\begin{proof}
  Under assumptions \eqref{A0} and \eqref{A1} the hydrodynamical Hugoniot
  curve through $W\L$ is convex in the $(v,\ph)$ plane
  \cite{Weyl1949,CourantFriedrichs1977,Smith1979}. The square
  $Q_{\mathrm h}^2(\ph\star;W\L)$ is the negative slope of the chord
  connecting $(v\L,\ph\L)$ to $(\tilde v\star,\ph\star)$ on this curve,
  which is non-decreasing in $\ph\star$ by convexity. As $\ph\star\to\ph\L$
  the chord degenerates into the tangent, and the Hugoniot curve touches
  the isentrope through $W\L$ there. Hence $(v\L)^2\,Q_{\mathrm
  h}^2(\ph\star;W\L)\to-(v\L)^2\,\partial_v\hatph(v\L,\eh\L)\big|_{\sh}
  =\ah^2(W\L)$ by \eqref{A0}, which is \eqref{eq:Q_monotone}; the remaining
  two lines follow from the monotonicity.
\end{proof}
We turn to the composite system.
\begin{lemma}[Expansive counterpart of Proposition~\ref{prop:shock_function}]
  \label{lem:shock_function_expansive}
  Let $\ph\star<\ph\L$, let $v\star>v\L$ solve \eqref{eq:rankine_hugoniot},
  and let $\tilde v\star=\xi_h(\ph\star;W\L)$. Let $f\S_{\mathrm h}$ be
  given by \eqref{eq:f_shock_h_continued} and set
  $f\S(\ph\star;W\L):=-\big|p\star-p\L\big|^{1/2}\big|v\L-v\star\big|^{1/2}$.
  Then the identity of Proposition~\ref{prop:shock_function} holds
  verbatim, and
  \vspace{-1em}
  \begin{align*}
    f\S(\ph\star;W\L)\;\le\;f\S_{\mathrm h}(\ph\star;W\L)\;\le\;0.
  \end{align*}
\end{lemma}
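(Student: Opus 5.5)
The plan is to redo the algebra behind Proposition~\ref{prop:shock_function} with absolute values, and then show that the product of the two correction factors is at least one. As in the compressive case, the main tool is the volume identity \eqref{eq:volume_identity} from the proof of Lemma~\ref{lem:inequality_on_density}.

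\emph{Signs and the identity.} Since $v\star>v\L$ and $\hatpb$ is non-increasing by \eqref{A2}, we have $\hatpb(v\star)\le\hatpb(v\L)$. Together with $\ph\star<\ph\L$ this gives
\begin{align*}
  \big|p\star-p\L\big|=(\ph\L-\ph\star)\,(1+X),
  \qquad
  X:=\frac{\hatpb(v\L)-\hatpb(v\star)}{\ph\L-\ph\star}
  =\frac{\hatpb(v\star)-\hatpb(v\L)}{\ph\star-\ph\L}\;\ge\;0.
\end{align*}
By Definition~\ref{def:expansive_continuation} we have $\tilde v\star>v\L$, so
\begin{align*}
  \big|v\L-v\star\big|
  =\frac{v\star-v\L}{\tilde v\star-v\L}\,\big|v\L-\tilde v\star\big|,
  \qquad
  \frac{v\star-v\L}{\tilde v\star-v\L}=\frac{v\L-v\star}{v\L-\tilde v\star}>0.
\end{align*}
Multiplying these two relations, taking square roots, and attaching the common minus sign reproduces the identity of Proposition~\ref{prop:shock_function} verbatim.

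\emph{The inequality.} Because both $f\S$ and $f\S_{\mathrm h}$ carry a minus sign, it suffices to show $(1+X)\,R\ge1$, where $R:=(v\star-v\L)/(\tilde v\star-v\L)$. The claim $f\S_{\mathrm h}\le0$ is immediate from \eqref{eq:f_shock_h_continued}. I would split into two cases according to the position of $\tilde v\star$ relative to $v\star$.

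If $\tilde v\star\le v\star$, then $R\ge1$ and $X\ge0$, so the product is at least one.

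If $\tilde v\star>v\star$, I would use the identity \eqref{eq:volume_identity}. It comes from subtracting \eqref{eq:rankine_hugoniot} from \eqref{eq:rankine_hugoniot_h}, so it holds without a sign restriction. Its integral term is nonnegative by \eqref{A1}, and the prefactor $2p_\infty+\ph\star+\ph\L$ is positive by \eqref{A0}. Hence
\begin{align*}
  (\tilde v\star-v\star)\,\tfrac12\big(2p_\infty+\ph\star+\ph\L\big)\;\le\;\hatAb(v\star,v\L).
\end{align*}
Next, I would bound $\hatAb(v\star,v\L)$ from above by mirroring \eqref{eq:hatab_estimate}. The mean value theorem applied to $\hateb$ (using $\partial_v\hateb=-\hatpb$) gives
\begin{align*}
  \frac{\hatAb(v\star,v\L)}{v\star-v\L}
  =-\hatpb(\xi)+\tfrac12\big(\hatpb(v\star)+\hatpb(v\L)\big)
  \qquad\text{for some }\xi\in(v\L,v\star).
\end{align*}
Since $\hatpb(\xi)\ge\hatpb(v\star)$ by monotonicity, this is at most $\tfrac12\big(\hatpb(v\L)-\hatpb(v\star)\big)$. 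Combining the two estimates,
\begin{align*}
  \frac{\tilde v\star-v\star}{v\star-v\L}
  \;\le\;\frac{\hatpb(v\L)-\hatpb(v\star)}{2p_\infty+\ph\star+\ph\L}
  \;\le\;X.
\end{align*}
The last step uses $(2p_\infty+\ph\star+\ph\L)-(\ph\L-\ph\star)=2(p_\infty+\ph\star)>0$, which holds by \eqref{A0}. It follows that $R^{-1}=1+(\tilde v\star-v\star)/(v\star-v\L)\le1+X$, that is, $(1+X)R\ge1$.

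\emph{Main obstacle.} The delicate step is the second case, where the volume ratio $R$ falls below one. There the lost factor has to be recovered entirely by the pressure factor $1+X$. This requires the denominator comparison $2p_\infty+\ph\star+\ph\L\ge\ph\L-\ph\star$, which rests on $\ph\star>-p_\infty$ from \eqref{A0}. I expect this to be the point that needs care; the remaining steps are sign bookkeeping.
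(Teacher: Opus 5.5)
Your proposal is correct and follows the paper's proof. Both reduce the ordering to $\frac{\tilde v\star-v\L}{v\star-v\L}\le 1+\frac{\hatpb(v\star)-\hatpb(v\L)}{\ph\star-\ph\L}$ and obtain it by mirroring the volume identity \eqref{eq:volume_identity} and the estimate \eqref{eq:hatab_estimate} of Lemma~\ref{lem:inequality_on_density}; the paper compresses this into ``with every inequality in $v$ reversed,'' and you write out the case split on $\tilde v\star$ versus $v\star$ and the use of $\ph\star>-p_\infty$ explicitly.
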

\begin{proof}
  Both quotients in the identity are non-negative, since numerator and
  denominator change sign simultaneously, and the identity itself is a
  direct consequence of the definitions. As $f\S_{\mathrm h}\le0$, the
  ordering of the pressure functions is equivalent to showing
  \vspace{-1em}
  \begin{align}
    \label{eq:expansive_volume_quotient}
    \frac{\tilde v\star-v\L}{v\star-v\L}
    \;\le\;1+\frac{\hatpb(v\star)-\hatpb(v\L)}{\ph\star-\ph\L}.
  \end{align}
  This is the second case of \eqref{eq:specific_volume_factor} with
  reversed sign: identity \eqref{eq:volume_identity} and estimate
  \eqref{eq:hatab_estimate} of Lemma~\ref{lem:inequality_on_density}
  remain valid for $v\star>v\L$ with every inequality in $v$ reversed, and
  so does the proof of Lemma~\ref{lem:inequality_on_density}.
\end{proof}
Wavespeed bounds for the expansive case follow the same route. We again
start with the general statement of Proposition~\ref{prop:shock_wavespeed}:
\begin{lemma}[Expansive counterpart of
  Corollary~\ref{cor:shock_wavespeed_monotone}]
  \label{lem:shock_wavespeed_expansive}
  Let $\ph\star<\ph\L$, let $v\star>v\L$ solve \eqref{eq:rankine_hugoniot},
  and let $\tilde v\star=\xi_h(\ph\star;W\L)$. Then identity
  \eqref{eq:Q_shock_identity} holds verbatim, and
  \begin{align}
    \label{eq:shock_wavespeed_expansive}
    \lambda^-_1 \;\ge\;
    u\L -\sqrt{\big(u\L-\lambda^-_{1,\mathrm{h}}\big)^2
    \;+\;2\,(v\L)^2
    \max_{v\L\,\le\,\tau\le v\star}\frac{\hatab^2(\tau)}{\tau^2}}.
  \end{align}
\end{lemma}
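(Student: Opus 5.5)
We follow the proof of Proposition~\ref{prop:shock_wavespeed} and Corollary~\ref{cor:shock_wavespeed_monotone}, with all volume inequalities reversed.

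\textbf{Step 1: the identity.} In the expansive regime set
\begin{align*}
  Q^2 := \frac{p\star-p\L}{v\L-v\star},
  \qquad
  Q_{\mathrm h}^2 := \frac{\ph\star-\ph\L}{v\L-\tilde v\star}.
\end{align*}
Each quotient is non-negative, because numerator and denominator are both negative. Since $p\star-p\L=(\ph\star-\ph\L)+(\hatpb(v\star)-\hatpb(v\L))$, we have
\begin{align*}
  Q^2 = \frac{v\L-\tilde v\star}{v\L-v\star}\,Q_{\mathrm h}^2
  + \frac{\hatpb(v\star)-\hatpb(v\L)}{v\L-v\star}.
\end{align*}
Multiplying by $(v\L)^2$ and using $\lambda^-_1=u\L-v\L Q$ and $\lambda^-_{1,\mathrm h}=u\L-v\L Q_{\mathrm h}$ gives \eqref{eq:Q_shock_identity} verbatim.

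\textbf{Step 2: bound the coefficient of the first term.} By \eqref{eq:expansive_volume_quotient} from Lemma~\ref{lem:shock_function_expansive},
\begin{align*}
  \frac{\tilde v\star-v\L}{v\star-v\L}\le 1+\frac{\hatpb(v\star)-\hatpb(v\L)}{\ph\star-\ph\L}.
\end{align*}
Multiply this by $(u\L-\lambda^-_{1,\mathrm h})^2=(v\L)^2(\ph\star-\ph\L)/(v\L-\tilde v\star)$. The correction term becomes
\begin{align*}
  (v\L)^2\,\frac{\hatpb(v\star)-\hatpb(v\L)}{v\L-\tilde v\star}
  =(v\L)^2\,\frac{\hatpb(v\L)-\hatpb(v\star)}{\tilde v\star-v\L}.
\end{align*}
By \eqref{A2}, $\hatpb$ is non-increasing, so the numerator is non-negative. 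The denominator is positive, and here we will also need $\tilde v\star\ge v\star$.

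\textbf{Step 3: the second term and combination.} The second term of \eqref{eq:Q_shock_identity} is $(v\L)^2(\hatpb(v\star)-\hatpb(v\L))/(v\L-v\star)$. By the mean value theorem it equals $(v\L)^2\xi^{-2}\hatab^2(\xi)$ for some $\xi\in(v\L,v\star)$, so it is bounded by $(v\L)^2\max_{v\L\le\tau\le v\star}\tau^{-2}\hatab^2(\tau)$. The correction term from Step 2 is treated the same way. If $\tilde v\star\ge v\star$, it is at most
\begin{align*}
  (v\L)^2\,\frac{\hatpb(v\L)-\hatpb(v\star)}{v\star-v\L},
\end{align*}
and this is bounded by the same maximum. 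Adding the two bounds gives the factor two, and taking square roots with $\lambda^-_1\le u\L$ yields \eqref{eq:shock_wavespeed_expansive}.

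\textbf{Main obstacle.} The difficulty is the case $\tilde v\star<v\star$ in Step 2, where dividing by $\tilde v\star-v\L$ no longer keeps the correction term on $[v\L,v\star]$. The sign of $\tilde v\star-v\star$ is governed by \eqref{eq:volume_identity}, which for $v\star>v\L$ reads as in Lemma~\ref{lem:inequality_on_density} with inequalities in $v$ reversed. Hence $\tilde v\star<v\star$ corresponds to one sign of $\hatAb(v\star,v\L)$.

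In that case I would not use \eqref{eq:expansive_volume_quotient}. Instead I would use the cruder reversed first case of \eqref{eq:specific_volume_factor}, namely $(\tilde v\star-v\L)/(v\star-v\L)\le1$. This bounds the first term by $(u\L-\lambda^-_{1,\mathrm h})^2$ directly, so only a single copy of the maximum is needed.

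I expect the bookkeeping of which sign of $\hatAb$ gives which ordering, after every inequality in $v$ is reversed, to be the delicate point. I would resolve it by redoing \eqref{eq:volume_identity} explicitly with $v\star>v\L$, checking that $2p_\infty+\ph\star+\ph\L>0$ still holds in the expansive range by \eqref{A0}.
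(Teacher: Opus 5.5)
Your proposal is correct and follows the paper's route. The identity \eqref{eq:Q_shock_identity} is purely algebraic, and the bound comes from \eqref{eq:expansive_volume_quotient} combined with the splitting and mean-value argument of Corollary~\ref{cor:shock_wavespeed_monotone}. The case distinction $\tilde v\star\ge v\star$ versus $\tilde v\star<v\star$ that you add is exactly the detail the paper's one-line proof leaves implicit. The second case needs no sign bookkeeping for $\hatAb$ at all, because $v\L<\tilde v\star<v\star$ already forces $(\tilde v\star-v\L)/(v\star-v\L)<1$.
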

\begin{proof}
  The identity is algebraic and does not rely on an ordering of the
  states. The bound follows as in the proof of
  Corollary~\ref{cor:shock_wavespeed_monotone}, with
  \eqref{eq:expansive_volume_quotient} in place of the estimate on the
  first quotient of \eqref{eq:Q_shock_identity}.
\end{proof}
At first glance, the bound \eqref{eq:shock_wavespeed_expansive} of
Lemma~\ref{lem:shock_wavespeed_expansive} looks like a major inconvenience:
it is an expression in the star volume $v\star$, and the inequality chain
$v\L\le\tilde v\star\le v\star$ bounds $v\star$ from the wrong side. Worse,
the mechanism that allowed Lemma~\ref{lem:estimate_on_quotient} to dispense
with the star state in the compressive case is not available here.
Fortunately, the star volume can be bounded by information that is readily
at hand. Borrowing a structural assumption from
Definition~\ref{def:composite_assumptions} that we discuss for composite
waves we formulate:

\begin{lemma}[Hydrodynamical bracket for an expansive shock]
  \label{lem:expansive_hydro_bracket}
  Let $\ph\star<\ph\L$ and $v\star>v\L$ solve \eqref{eq:rankine_hugoniot},
  with velocity jump $\Delta:=-f\S(\ph\star;W\L)>0$ and total pressure jump
  $\Pi:=p\L-p\star>0$. Assume that the chord condition \eqref{A4} holds
  true at the front state, that is
  \begin{align}
    \label{eq:expansive_condition}
    v\L\,Q(p\star;W\L)\;\ge\;a(W\L)\;:=\;\sqrt{\ah^2(W\L)+\ab^2(\rho\L)}.
  \end{align}
  Let $q\le\ph\L$ and set $\underbar v:=\xi_h(q;W\L)$ by virtue of
  Definition~\ref{def:expansive_continuation}. Then,
  $v\star\le\underbar v$, provided that
  \begin{align*}
    -f\S_{\mathrm h}(q;W\L)\ge\Delta,
    \quad\text{or}\quad
    \ph\L-q\ge\Pi
    \quad\text{holds true.}
  \end{align*}
\end{lemma}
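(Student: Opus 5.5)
The plan is to compare chords through $(v\L,\cdot)$. For the composite system I will use the chord condition \eqref{eq:expansive_condition}. For the hydrodynamical system I will use the monotonicity of the mass flux, Lemma~\ref{lem:Q_monotone}. The link between the two is the elementary inequality $\ah(W\L)\le a(W\L)$, which holds because $\ab^2(\rho\L)\ge0$ by \eqref{A2}. No convexity of $\hatpb$ enters.

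\emph{Step 1: bounds on the composite chord.} By the definitions of $f\S$ and of the mass flux, with $v\star>v\L$ and $\Pi>0$, we have
\begin{align*}
  Q^2(p\star;W\L)=\frac{\Pi}{v\star-v\L},
  \qquad
  \Delta=Q(p\star;W\L)\,\big(v\star-v\L\big).
\end{align*}
The chord condition \eqref{eq:expansive_condition} states $Q(p\star;W\L)\ge a(W\L)/v\L$. Combining this with the two identities gives
\begin{align*}
  v\star-v\L\;\le\;\frac{(v\L)^2\,\Pi}{a^2(W\L)},
  \qquad
  v\star-v\L\;\le\;\frac{v\L\,\Delta}{a(W\L)}.
\end{align*}

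\emph{Step 2: bounds on the hydrodynamical chord.} Take $q<\ph\L$. The case $q=\ph\L$ is vacuous, since either hypothesis would then force $\Delta\le0$ or $\Pi\le0$. By Definition~\ref{def:expansive_continuation} we have $\underbar v=\xi_h(q;W\L)>v\L$, and
\begin{align*}
  \ph\L-q=Q_{\mathrm h}^2(q;W\L)\,(\underbar v-v\L),
  \qquad
  -f\S_{\mathrm h}(q;W\L)=Q_{\mathrm h}(q;W\L)\,(\underbar v-v\L).
\end{align*}
Lemma~\ref{lem:Q_monotone} gives $v\L\,Q_{\mathrm h}(q;W\L)\le\ah(W\L)\le a(W\L)$ for $q\le\ph\L$. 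Therefore
\begin{align*}
  \underbar v-v\L\;\ge\;\frac{(v\L)^2(\ph\L-q)}{a^2(W\L)},
  \qquad
  \underbar v-v\L\;\ge\;\frac{v\L\,\big(-f\S_{\mathrm h}(q;W\L)\big)}{a(W\L)}.
\end{align*}

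\emph{Step 3: conclusion.} If $\ph\L-q\ge\Pi$, chain the first inequality of Step~2 with the first inequality of Step~1. If instead $-f\S_{\mathrm h}(q;W\L)\ge\Delta$, chain the second inequalities. In either case $\underbar v-v\L\ge v\star-v\L$, that is, $v\star\le\underbar v$.

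The main point needing care is Step~2. One must make sure that Lemma~\ref{lem:Q_monotone} is applied to the continued mass flux \eqref{eq:Q_shock_h_continued} on the expansive side, and that the continued quantities satisfy the stated identities with absolute values resolved correctly. This holds because $\ph\L-q>0$ and $\underbar v-v\L>0$. The remaining steps are purely algebraic.
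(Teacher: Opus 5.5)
Your proof is correct and matches the paper's argument essentially step for step. Both turn the chord condition into upper bounds on $v\star-v\L$ in terms of $\Delta$ and $\Pi$, and then use Lemma~\ref{lem:Q_monotone} together with $\ah(W\L)\le a(W\L)$ to get the matching lower bounds on $\underbar v-v\L$; your observation that the case $q=\ph\L$ is vacuous is a small, correct detail the paper leaves implicit.
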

\begin{proof}
  With the sign convention of Lemma~\ref{lem:shock_function_expansive},
  the shock function \eqref{eq:f_shock} and the mass flux
  \eqref{eq:Q_shock} of the expansive shock read
  \begin{align*}
    \Delta=\Pi^{1/2}\big(v\star-v\L\big)^{1/2},
    \quad
    Q:= Q(p\star;W\L)\,=\, \frac{\Pi^{1/2}}{(v\star-v\L)^{1/2}},
    \quad
    v\star-v\L=\frac{\Delta}{Q}=\frac{\Pi}{Q^2}.
  \end{align*}
  The chord condition \eqref{eq:expansive_condition} bounds $Q$ from below:
  \begin{align*}
    v\star-v\L\;=\;\frac{\Delta}{Q}\;\le\;\frac{v\L}{a(W\L)}\,\Delta,
    \qquad
    v\star-v\L\;=\;\frac{\Pi}{Q^2}\;\le\;\frac{(v\L)^2}{a^2(W\L)}\,\Pi.
  \end{align*}
  On the hydrodynamical side, the continued shock function
  \eqref{eq:f_shock_h_continued} and mass flux
  \eqref{eq:Q_shock_h_continued} at the pressure $q\le\ph\L$ are related
  to the volume $\underbar v=\xi_h(q;W\L)\ge v\L$ in the same way,
  \begin{align*}
    -f\S_{\mathrm h}(q;W\L)\;=\;Q_{\mathrm h}(q;W\L)\,\big(\underbar v-v\L\big),
    \qquad
    \ph\L-q\;=\;Q_{\mathrm h}^2(q;W\L)\,\big(\underbar v-v\L\big),
  \end{align*}
  and \eqref{eq:Q_monotone} bounds the hydrodynamical mass flux from above,
  \begin{align*}
    v\L\,Q_{\mathrm h}(q;W\L)\;\le\;\ah(W\L)\;\le\;a(W\L).
  \end{align*}
  In the first case this yields
  \begin{align*}
    \underbar v-v\L\;=\;\frac{-f\S_{\mathrm h}(q;W\L)}{Q_{\mathrm h}(q;W\L)}
    \;\ge\;\frac{\Delta}{Q_{\mathrm h}(q;W\L)}
    \;\ge\;\frac{v\L}{\ah(W\L)}\,\Delta
    \;\ge\;\frac{v\L}{a(W\L)}\,\Delta,
  \end{align*}
  and in the second case
  \begin{align*}
    \underbar v-v\L\;=\;\frac{\ph\L-q}{Q_{\mathrm h}^2(q;W\L)}
    \;\ge\;\frac{\Pi}{Q_{\mathrm h}^2(q;W\L)}
    \;\ge\;\frac{(v\L)^2}{\ah^2(W\L)}\,\Pi
    \;\ge\;\frac{(v\L)^2}{a^2(W\L)}\,\Pi.
  \end{align*}
\end{proof}
We remark that \eqref{eq:expansive_condition} follows from \eqref{A4}
by the same tangent limit that established the first line of
\eqref{eq:Q_monotone}.

\subsection{Elementary left rarefaction}
\label{sec:left_rarefaction}
Let $W\L=[\rho\L,u\L,p\L]$ again be the left state and let
$W\star=[\rho\star,u\star,p\star]$ be a star state situated to the right of
it, this time connected to $W\L$ by a rarefaction wave, i.\,e.,
$\rho\star\le\rho\L$ and $p\star\le p\L$. In contrast to the shock case, the
solution is smooth across the wave and $\rho\star$ is therefore not
determined by a jump condition, but implicitly by the requirement that the
entropy stay constant across the fan:
\begin{align}
  \label{eq:constant_entropy}
  s_{\mathrm h}\big(\rho\star,\eh(\rho\star,\ph\star)\big)\;=\; s_{\mathrm
  h}\big(\rho\L,\eh(\rho\L,\ph\L)\big).
\end{align}
The rarefaction branches of the pressure functions
\eqref{eq:pressure_function} and \eqref{eq:pressure_function_h} can be
made explicit. They are obtained by integrating the Riemann invariant of the
left-facing characteristic field along the isentrope
\eqref{eq:constant_entropy}. Recall that
\begin{multline}
  \label{eq:f_rarefaction}
  u\star\;=\;u\L- f\R(\ph\star;W\L),
  \\
  \quad\text{with }\; f\R(\ph\star;W\L)\;=\;\int_{\rho\L}^{\rho\star}
  \frac{\sqrt{\ah^2(\rho,\eh)\,+\,\ab^2(\rho)}}{\rho}
  \Bigg|_{s_{\mathrm h}}\mathrm{d}\rho,
\end{multline}
and
\begin{align}
  \label{eq:f_rarefaction_h}
  f\R_{\mathrm{h}}(\ph\star;W\L)\;=\;\int_{\rho\L}^{\rho\star}
  \frac{\ah(\rho,\eh)}{\rho}\Bigg|_{s_{\mathrm h}}\mathrm{d}\rho,
\end{align}
where the internal energy $\eh$ is understood as a function of $\rho$ along
that isentrope, and where the full speed of sound
$\sqrt{\ah^2+\ab^2}$ is real and strictly positive by virtue of \eqref{A0}
and \eqref{A2}. In contrast to the shock case, the star density of the
reduced problem \eqref{eq:riemann_problem_h} coincides with $\rho\star$
because \eqref{eq:constant_entropy} involves hydrodynamical quantities
only; we may thus write $\rho\star$ in place of $\tilde\rho\star$ in
\eqref{eq:f_rarefaction_h}, so that the two pressure functions differ
solely in the integrand. They are ordered as follows.
\begin{proposition}
\label{prop:rarefaction}
  Let $\rho\star\le\rho\L$. Then
  \begin{align}
    \label{eq:f_rarefaction_estimate}
    f\R(\ph\star;W\L)\;\le\;f\R_{\mathrm{h}}(\ph\star;W\L)\;\le\;0,
  \end{align}
  and the head of the rarefaction fan travels at speed
  \begin{align}
    \label{eq:lambda_rarefaction}
    \lambda^-_1\;=\;u\L-\sqrt{\ah^2\big(\rho\L,\eh(\rho\L,\ph\L)\big)
    \,+\,\ab^2(\rho\L)}.
  \end{align}
\end{proposition}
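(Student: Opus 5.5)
The plan is to compare the two integrals in \eqref{eq:f_rarefaction} and \eqref{eq:f_rarefaction_h} pointwise in their integrands, exploiting the fact that both are taken over the same interval $[\rho\star,\rho\L]$ along the same hydrodynamical isentrope \eqref{eq:constant_entropy}. First I will record that the upper integration limit is the same: the constant-entropy condition \eqref{eq:constant_entropy} involves only $\sh$, $\rho$ and $\ph$. Hence for given $\ph\star$ it determines the same $\rho\star$ in both problems, and the internal energy $\eh(\rho)$ along the path of integration is the same function of $\rho$ in both integrals.

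Next, for every $\rho$ between $\rho\star$ and $\rho\L$, assumptions \eqref{A0} and \eqref{A2} give $\ah^2>0$ and $\ab^2\ge0$. Therefore
\begin{align*}
  \frac{\sqrt{\ah^2(\rho,\eh)+\ab^2(\rho)}}{\rho}\;\ge\;\frac{\ah(\rho,\eh)}{\rho}\;>\;0 .
\end{align*}
Since $\rho\star\le\rho\L$, both integrals run from the larger to the smaller limit, so each equals minus the integral of a nonnegative function over $[\rho\star,\rho\L]$. The larger integrand therefore produces the more negative value. This yields $f\R\le f\R_{\mathrm h}\le0$, which is \eqref{eq:f_rarefaction_estimate}.

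For the head speed \eqref{eq:lambda_rarefaction}, I will use the standard fact for a $1$-rarefaction: it is a centered simple wave in which the characteristic speed $u-a$ is constant along each ray $x/t$. The head is the ray adjacent to the undisturbed state $W\L$, so its speed is $u\L-a(W\L)$. The remaining step is to identify the full sound speed of the composite equation of state. From $p=\ph+\pb$ one gets $a^2=-v^2\partial_v\hat p|_{\sh}=-v^2\partial_v\hatph|_{\sh}-v^2\partial_v\hatpb=\ah^2+\ab^2$. This uses that $\pb$ depends on $\rho$ alone and that the total entropy is the hydrodynamical one, since $\eb$ is barotropic and contributes no entropy. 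That identity gives $a(W\L)=\sqrt{\ah^2(\rho\L,\eh(\rho\L,\ph\L))+\ab^2(\rho\L)}$.

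The main obstacle is justifying that the wave is a genuine elementary rarefaction whose head is the characteristic at $W\L$. This requires the characteristic speed $u-a$ to increase monotonically through the fan, which holds when the fundamental derivative of the total pressure stays positive along the isentrope. I will state this as the standing premise of the elementary case, with composite waves deferred to Section~\ref{sec:composite_waves}. Given that premise, the head speed follows directly from the Riemann invariant $u+\int a/\rho\,\mathrm d\rho$, which is constant across the fan and is exactly what defines $f\R$.
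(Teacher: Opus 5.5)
Your proposal is correct and follows the paper's proof. Both arguments rest on the pointwise bound $\sqrt{\ah^2+\ab^2}\ge\ah>0$ along the common isentrope, with the ordering reversed because both integrals run downward from $\rho\L$ to $\rho\star$, and both read off the head speed as the characteristic speed $u-\sqrt{\ah^2+\ab^2}$ at $W\L$. Your extra check that $a^2=\ah^2+\ab^2$ (since $\eb$ carries no entropy) and your explicit elementary-fan premise are details the paper leaves implicit.
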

\begin{proof}
  Assumption \eqref{A2} gives $\sqrt{\ah^2+\ab^2}\ge\ah>0$ pointwise along
  the isentrope. Both integrals run from $\rho\L$ to the smaller value
  $\rho\star$, so both are non-positive and the pointwise ordering of the
  two integrands reverses upon integration, which is
  \eqref{eq:f_rarefaction_estimate}. Identity
  \eqref{eq:lambda_rarefaction} is the characteristic speed
  $u-\sqrt{\ah^2+\ab^2}$ of the left-facing wave family evaluated at
  $W\L$, the state that bounds the fan to the left.
\end{proof}
\begin{corollary}
  \label{cor:compressive_rarefaction}
  Proposition~\ref{prop:rarefaction} holds true analogously for a
  \emph{compressive} rarefaction, i.\,e., for $\rho\star\ge\rho\L$: both
  integrals in \eqref{eq:f_rarefaction} and \eqref{eq:f_rarefaction_h} then
  run in the direction of increasing $\rho$, so the ordering
  \eqref{eq:f_rarefaction_estimate} reverses to $0\le f\R_{\mathrm
  h}(\ph\star;W\L)\le f\R(\ph\star;W\L)$, while
  \eqref{eq:lambda_rarefaction} is unaffected, being evaluated at $W\L$
  alone.
\end{corollary}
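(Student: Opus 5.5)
The plan is to rerun the proof of Proposition~\ref{prop:rarefaction} with the orientation of the integration reversed, and to check that nothing else in that argument depended on $\rho\star\le\rho\L$. Concretely, we consider a left wave along which $\sh$ is constant and $\rho\star\ge\rho\L$. By Lemma~\ref{lem:no_weird_stuff} (first case, with $\sh\star=\sh$), this is equivalent to $\ph\star\ge\ph\L$ and $p\star\ge p\L$. The isentrope \eqref{eq:constant_entropy} again involves only hydrodynamical quantities, so the star density is the same for the full and the reduced problem. Hence \eqref{eq:f_rarefaction} and \eqref{eq:f_rarefaction_h} are integrals over the same path $\rho\in[\rho\L,\rho\star]$ and differ only in the integrand.

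The first step is the pointwise comparison of the integrands. Assumption \eqref{A2} gives $\ab^2\ge0$, and \eqref{A0} gives $\ah>0$, so along the isentrope
\begin{align*}
  0<\frac{\ah(\rho,\eh)}{\rho}\le\frac{\sqrt{\ah^2(\rho,\eh)+\ab^2(\rho)}}{\rho}.
\end{align*}
Since now $\rho\star\ge\rho\L$, both integrals run in the direction of increasing $\rho$. Integration therefore preserves the pointwise ordering and both integrals are non-negative, which gives $0\le f\R_{\mathrm h}(\ph\star;W\L)\le f\R(\ph\star;W\L)$. This is the reversal of \eqref{eq:f_rarefaction_estimate} claimed in the statement.

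The second step is the wave speed. Here I would argue exactly as in Proposition~\ref{prop:rarefaction}: the left edge of the smooth fan is the characteristic $u-\sqrt{\ah^2+\ab^2}$ of the left-facing family evaluated at the state bounding the fan on the left, namely $W\L$. This yields \eqref{eq:lambda_rarefaction} unchanged, since the formula involves only $W\L$ and does not depend on the direction of integration.

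The only delicate point is whether a compressive rarefaction is genuinely a fan with $W\L$ at its left edge, i.e.\ whether the characteristic speed increases from $W\L$ to $W\star$ across the wave. This requires a negative fundamental derivative of the total pressure on the relevant segment, which is precisely the nonconvex situation allowed by the barotropic term. I would not prove this. Instead I would take the admissibility of such a wave as part of the hypothesis, as the statement implicitly does, and refer to the composite-wave assumptions of Section~\ref{sec:composite_waves}. Given admissibility, the speed formula follows verbatim, and the corollary reduces to the sign bookkeeping above.
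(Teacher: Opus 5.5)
Your proposal is correct and follows the same argument as the paper. Assumption~\eqref{A2} gives the pointwise ordering $0<\ah\le\sqrt{\ah^2+\ab^2}$ along the common isentrope, and integrating over $[\rho\L,\rho\star]$ in the direction of increasing $\rho$ then yields $0\le f\R_{\mathrm h}(\ph\star;W\L)\le f\R(\ph\star;W\L)$, while the head speed is the characteristic speed at $W\L$. Taking the fan structure of a compressive rarefaction (negative fundamental derivative) as a hypothesis also matches the paper, which does not prove it but places it in the composite-wave setting of Remark~\ref{rem:composite_wave}.
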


\subsection{Left composite wave}
\label{sec:composite_waves}
Whenever the total pressure, viz., $p=\ph(\rho,\eh)+\pb(\rho)$,
fails to be convex, the classification of Theorem~\ref{thm:riemann_fan}
breaks down and the left state $W\L$ need no longer be joined to the star
state $W\star$ by a single elementary wave. Instead, the first
characteristic family can produce a finite sequence of shocks and
rarefaction fans separated by constant states
\cite{Wendroff1972,MenikoffPlohr1989,MullerVoss2006}; see
Figure~\ref{fig:composite_wave}. We make this precise as follows.
\begin{figure}
  \centering
  \begin{tikzpicture}[scale=0.9,every node/.style={font=\small}]
    \coordinate (O) at (0,0);
    \foreach \ah/\at in {158/126, 126/88}{%
      \fill[black!12] (O) -- (\ah:4.2) arc (\ah:\at:4.2) -- cycle;
      \foreach \a in {1,...,5}{%
        \draw[thin,dashed,black!60]
          (O) -- ({\ah+(\at-\ah)*\a/6}:4.2);
      }
    }
    \draw[very thick] (O) -- (158:4.6);
    \draw[very thick] (O) -- (126:4.6);
    \draw[very thick] (O) -- (88:4.6);
    \draw[very thick,-{Stealth}] (-6.4,0) -- (2.8,0)
      node[below left] {$x$};
    \node at (169:5.1) {$W\L=W^{(0)}$};
    \node at (152:5.05) {$W^{(1)}$};
    \node at (131:5.05) {$W^{(2)}$};
    \node at (120:5.05) {$W^{(3)}$};
    \node at (93.5:5.0) {$W^{(4)}$};
    \node at (74:5.2) {$W^{(5)}=W\star$};
  \end{tikzpicture}
  \caption{Schematic representation of a left-facing compressive composite
  wave in the $(x,t)$-plane consisting of three shocks (thick lines) and
  two attached rarefaction fans (shaded, with dashed characteristics).
  Adjacent waves touch: each shock travels at the characteristic speed of
  the neighboring fan edge, and the intermediate states $W^{(k)}$ of
  Definition~\ref{def:composite_wave} are attained along the attachment
  rays.}
  \label{fig:composite_wave}
\end{figure}
\begin{definition}[left composite wave]
  \label{def:composite_wave}
  A \emph{left composite wave} connecting $W\L$ to $W\star$ is a finite
  sequence of states
  \begin{align*}
    W\L\;=\;W^{(0)},\;W^{(1)},\;\ldots,\;W^{(n)}\;=\;W\star,
    \qquad n\ge1,
  \end{align*}
  such that for every $k$ the states $W^{(k-1)}$ and $W^{(k)}$ are
  connected by an elementary left-facing wave, that is, either by a shock
  satisfying \eqref{eq:rankine_hugoniot} or by a rarefaction fan along the
  isentrope \eqref{eq:constant_entropy}, and such that the speeds of
  consecutive elementary waves are ordered so that the resulting
  self-similar profile is well defined; see Figure~\ref{fig:composite_wave}.
  We call the composite wave \emph{compressive} if
  \begin{align}
    \label{eq:composite_compressive}
    \ph\L\;=\;\ph^{(0)}\le\;\ldots\;\le\ph^{(n)}\;=\;\ph\star,
    \qquad
    v\L\;=\;v^{(0)}\ge \;\ldots\;\ge v^{(n)}\;=\;v\star.
  \end{align}
  Conversely, we call the composite wave \emph{expansive} if the
  inequalities in \eqref{eq:composite_compressive} are inverted, viz.,
  \begin{align}
    \label{eq:composite_expansive}
    \ph\L\;=\;\ph^{(0)}\ge\;\ldots\;\ge\ph^{(n)}\;=\;\ph\star,
    \qquad
    v\L\;=\;v^{(0)}\le \;\ldots\;\le v^{(n)}\;=\;v\star.
  \end{align}
\end{definition}
\begin{remark}
  \label{rem:composite_wave}
  Two observations on Definition~\ref{def:composite_wave} are in order.
  First, telescoping the velocity jumps \eqref{eq:f_shock} and
  \eqref{eq:f_rarefaction} across the individual waves shows that the
  pressure function of a composite wave is additive \cite{MullerVoss2006},
  \begin{align}
    \label{eq:f_composite}
    u\star\;=\;u\L-f^{\mathrm{C}}(\ph\star;W\L),
    \quad\text{with }\;
    f^{\mathrm{C}}(\ph\star;W\L)\;:=\;\sum_{k=1}^{n}
    f^{(k)}\big(\ph^{(k)};W^{(k-1)}\big),
  \end{align}
  where $f^{(k)}$ denotes $f\S$ or $f\R$ according to the type of the
  $k$-th elementary wave.

  Second, a rarefaction fan occurring in a compressive composite wave is
  itself compressive: the density \emph{increases} across it. A fan
  requires the characteristic speed $u-\sqrt{\ah^2+\ab^2}$ to increase from
  head to tail, and in a region where
  $\partial_v\partial_v\hatph|_{\sh}+\partial_v\partial_v\hatpb<0$, i.\,e.,
  where the fundamental derivative of the composite equation of state in
  the sense of \cite{MenikoffPlohr1989} is negative, the speed of sound
  drops as the density grows, so that a compression can spread out
  \cite{Wendroff1972,KulikovskiiPogorelovSemenov2000}.
  This is the setting of
  Corollary~\ref{cor:compressive_rarefaction}.
  By the same token, a shock occurring within an \emph{expansive} composite
  wave, Definition~\ref{def:composite_wave}, is itself expansive: the
  density \emph{decreases} across it. In a region
  where $\partial_v\partial_v\hatph|_{\sh}+\partial_v\partial_v\hatpb<0$
  the Rankine-Hugoniot condition \eqref{eq:rankine_hugoniot} may be
  satisfied by such a nonclassical shock connecting states with
  $\ph^+\le\ph^-$ and $v^+\ge v^-$ \cite{Wendroff1972, MenikoffPlohr1989,
  KulikovskiiPogorelovSemenov2000}; this is the setting of
  Section~\ref{sec:left_expansive_shock}.
\end{remark}
\begin{definition}[Assumptions on left composite waves]
  \label{def:composite_assumptions}
  We assume that a composite wave satisfies the
  following three structural properties \cite{Wendroff1972,
  MenikoffPlohr1989, KulikovskiiPogorelovSemenov2000, MullerVoss2006}.
  For simplicity we state them for a left wave.
  First, the fundamental derivative \cite{MenikoffPlohr1989,Thompson1971} of
  the composite equation of state \eqref{eq:composite_eos}, viz.,
  \begin{align}
    \tag{A.\,3}\label{A3}
    \mathcal{G}\,:=\,\frac{v^3\,\big(\partial_v\partial_v\hatph(v,\eh)
    \big|_{\sh}\,+\,\partial_v\partial_v\hatpb(v)\big)}
    {2\,\big(\ah^2+\ab^2\big)}\,\ne\,0
    \quad\text{along each fan of the wave.}
  \end{align}
  For the next two conditions let $\lambda(W)$ denote the
  characteristic speed of a fan with left state $W$, and let
  $\sigma(W^-,W^+)$ denote the speed of a left-facing discontinuity
  connecting $W^-$ and $W^+$:
  \begin{align*}
    \lambda(W)\,:=\,u-\sqrt{\ah^2\big(\rho,\eh\big)+\ab^2(\rho)},
    \qquad
    \sigma\big(W^-,W^+\big)\,:=\,u^--v^-\,Q\big(p^+;W^-\big).
  \end{align*}
  The second condition is known as \emph{Ole{\u\i}nik--Liu chord condition}
  \cite{Oleinik1954,Oleinik1957,Oleinik1959,Liu1975,Liu1976}, which is a
  generalization of the Lax entropy condition: Assume that $W^{(k-1)}$ and
  $W^{(k)}$ are connected by a shock. Then, for every $W$ on the Hugoniot
  locus between $W^{(k-1)}$ and $W^{(k)}$:
  \begin{align}
    \tag{A.\,4}\label{A4}
    \sigma\big(W^{(k-1)},W\big)\;\ge\;\sigma\big(W^{(k-1)},W^{(k)}\big).
  \end{align}
  Finally, adjacent elementary waves have to touch, that is to say
  \begin{align}
    \tag{A.\,5}\label{A5}
    \sigma\big(W^{(k-1)},W^{(k)}\big)\;=\;\lambda\big(W^{(k)}\big)
    \quad\text{resp.}\quad
    \sigma\big(W^{(k)},W^{(k+1)}\big)\;=\;\lambda\big(W^{(k)}\big),
  \end{align}
  whenever the $k$-th wave is a shock followed by a fan, resp. a fan
  followed by a shock.

  A consequence of the three conditions is the fact that a composite wave
  is either compressive or expansive, and (using the fact that $\eb(\rho)$
  is three times continuously differentiable) that shocks and rarefaction
  fans have to alternate along the sequence.
\end{definition}
The following lemma covers both types of composite waves at once.
\begin{lemma}
  \label{lem:elementary_comparison}
  (a) Let $W^-$ and $W^+$ be connected by a compressive elementary left
  wave, i.\,e., $\ph^+\ge\ph^-$ and $v^+\le v^-$. Then
  \begin{align}
    \label{eq:elementary_comparison}
    f\big(\ph^+;W^-\big)\;\ge\;f_{\mathrm{h}}\big(\ph^+;W^-\big)\;\ge\;0,
  \end{align}
  where $f$ and $f_{\mathrm{h}}$ denote the pressure function of the wave
  and its hydrodynamical counterpart, \eqref{eq:f_shock} and
  \eqref{eq:f_shock_h} for a shock, \eqref{eq:f_rarefaction} and
  \eqref{eq:f_rarefaction_h} for a rarefaction fan.

  (b) Conversely, let $W^-$ and $W^+$ instead be
  connected by an \emph{expansive} elementary left wave, i.\,e.,
  $\ph^+\le\ph^-$ and $v^+\ge v^-$. For a shock,
  $f\S_{\mathrm h}(\,\cdot\,;W^-)$ and $f\S(\,\cdot\,;W^-)$ are then
  read through their continuation past $\ph^-$ given in
  Definition~\ref{def:expansive_continuation} and in
  Lemma~\ref{lem:shock_function_expansive}. Then
  \begin{align}
    \label{eq:elementary_comparison_expansive}
    f\big(\ph^+;W^-\big)\;\le\;f_{\mathrm{h}}\big(\ph^+;W^-\big)\;\le\;0.
  \end{align}
\end{lemma}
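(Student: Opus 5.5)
The proof splits by wave type (shock or rarefaction) and, within each part, reduces to results already established.

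\emph{Part (a).} For a compressive shock ($\ph^+\ge\ph^-$, $v^+\le v^-$), Proposition~\ref{prop:shock_function} applied with $W\L$ replaced by $W^-$ gives $f\S(\ph^+;W^-)\ge f\S_{\mathrm h}(\ph^+;W^-)$ directly. Nonnegativity of $f\S_{\mathrm h}$ holds because it is defined in \eqref{eq:f_shock_h} as the square root of $(\ph^+-\ph^-)(v^--\tilde v^+)$. Both factors are nonnegative: the first by assumption, and the second because $\tilde v^+=\xi_h(\ph^+;W^-)\le v^-$. The latter follows from the strict decrease of $\xi_h$ noted in Definition~\ref{def:expansive_continuation}.

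For a compressive rarefaction fan ($\rho^+\ge\rho^-$), I would invoke Corollary~\ref{cor:compressive_rarefaction}, which gives $0\le f\R_{\mathrm h}\le f\R$. Here one should note that both integrals use the same endpoint $\rho^+$, because the isentrope condition \eqref{eq:constant_entropy} only involves hydrodynamical quantities. The integrand ordering $\sqrt{\ah^2+\ab^2}\ge\ah>0$ then integrates over an increasing $\rho$-interval.

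\emph{Part (b).} For an expansive shock, Lemma~\ref{lem:shock_function_expansive}, applied with $W^-$ in the role of $W\L$, gives exactly $f\S\le f\S_{\mathrm h}\le 0$. Its hypothesis $v^+>v^-$ and $\ph^+<\ph^-$ is our standing assumption; the degenerate equality case is trivial, since then both functions vanish.

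For an expansive rarefaction ($\rho^+\le\rho^-$), Proposition~\ref{prop:rarefaction} gives $f\R\le f\R_{\mathrm h}\le0$.

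\emph{Main obstacle.} The only point needing care is consistency of the parametrization by the hydrodynamical pressure. In the shock cases, the star volumes $v^+$ and $\tilde v^+$ differ: one comes from \eqref{eq:rankine_hugoniot} and the other from \eqref{eq:rankine_hugoniot_h}. The cited comparison results, however, already account for this through Lemma~\ref{lem:inequality_on_density} and its expansive counterpart.

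I would also remark that Lemma~\ref{lem:no_weird_stuff} guarantees that the sign conventions (compressive versus expansive) in $\ph$ coincide with those in the total pressure. Each cited result is therefore applicable to the elementary wave as classified in the lemma.
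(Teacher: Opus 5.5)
Your proposal is correct and follows the paper's own proof exactly. The paper obtains the shock cases from Proposition~\ref{prop:shock_function} (together with $f\S_{\mathrm h}\ge0$) and from Lemma~\ref{lem:shock_function_expansive}, and the fan cases from Corollary~\ref{cor:compressive_rarefaction} and Proposition~\ref{prop:rarefaction}, just as you do. One small imprecision: in the degenerate case $\ph^+=\ph^-$ of (b), only $f\S_{\mathrm h}$ need vanish, but the inequality still holds because $f\S\le0$ by its sign convention.
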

\begin{proof}
  For a shock, (a) is
  Proposition~\ref{prop:shock_function} together with
  $f\S_{\mathrm{h}}\ge0$ by \eqref{eq:f_shock_h}, and (b) is
  Lemma~\ref{lem:shock_function_expansive}. For a rarefaction fan, (a) is
  Corollary~\ref{cor:compressive_rarefaction} and (b) is
  Proposition~\ref{prop:rarefaction}.
\end{proof}
The point of \eqref{eq:elementary_comparison} and
\eqref{eq:elementary_comparison_expansive} is that every summand in
\eqref{eq:f_composite} is either non-negative in the compressive case or
non-positive in the expansive one. The leading summand is therefore
controlled by the total, which is what makes the outermost wave accessible
without any knowledge of the waves behind it.
\begin{lemma}
  \label{lem:composite_pressure_bound}
  Let $W\L$ be a given left state, let $\Delta\in\mathbb{R}$, and let $W\L$
  and $W\star$ be connected by a compressive ($\Delta\ge0$) or expansive
  ($\Delta\le0$) left composite wave in the sense of
  Definition~\ref{def:composite_wave} with
  $f^{\mathrm{C}}(\ph\star;W\L)=\Delta$. By virtue of
  Theorem~\ref{thm:riemann_fan} and
  Definition~\ref{def:expansive_continuation}, let $\tilde p_h\star$ be
  the unique solution of $\Delta=f\S_{\mathrm{h}}(\tilde p_h\star;W\L)$. If the
  outermost elementary wave is a shock, then the pressure $\ph^{(1)}$
  behind it obeys
  \begin{align}
    \label{eq:composite_pressure_bound}
    \min\big\{\ph\L,\tilde p_h\star\big\} \;\le\;\ph^{(1)}\;\le\;
    \max\big\{\ph\L,\tilde p_h\star\big\}.
  \end{align}
\end{lemma}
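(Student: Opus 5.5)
Treat the compressive case $\Delta\ge0$ first; the expansive case is its mirror image. The plan rests on two earlier facts: every summand of \eqref{eq:f_composite} has the same sign, and the hydrodynamical shock function is monotone.

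By \eqref{eq:composite_compressive}, every elementary wave of the composite wave is compressive. Lemma~\ref{lem:elementary_comparison}(a) then gives $f^{(k)}\big(\ph^{(k)};W^{(k-1)}\big)\ge0$ for all $k$. Hence the leading summand is dominated by the total:
\begin{align*}
  f\S\big(\ph^{(1)};W\L\big)\;\le\;\sum_{k=1}^n f^{(k)}\big(\ph^{(k)};W^{(k-1)}\big)\;=\;\Delta .
\end{align*}
The outermost wave is a compressive shock, with $\ph^{(1)}\ge\ph\L$ and $v^{(1)}\le v\L$, so Proposition~\ref{prop:shock_function} applies to it. This yields
\begin{align*}
  f\S_{\mathrm h}\big(\ph^{(1)};W\L\big)\;\le\;f\S\big(\ph^{(1)};W\L\big)\;\le\;\Delta\;=\;f\S_{\mathrm h}\big(\tilde p_h\star;W\L\big).
\end{align*}

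By Definition~\ref{def:expansive_continuation}, the continued function $f\S_{\mathrm h}(\,\cdot\,;W\L)$ is strictly increasing. It follows that $\ph^{(1)}\le\tilde p_h\star$. Combined with $\ph\L\le\ph^{(1)}$ from \eqref{eq:composite_compressive}, this gives \eqref{eq:composite_pressure_bound}, because here $\tilde p_h\star\ge\ph\L$ (its shock-function value $\Delta$ is non-negative, and $f\S_{\mathrm h}$ changes sign at $\ph\L$).

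The expansive case $\Delta\le0$ runs the same way with all inequalities reversed. By \eqref{eq:composite_expansive} and Lemma~\ref{lem:elementary_comparison}(b), every summand is non-positive, so $f\S(\ph^{(1)};W\L)\ge\Delta$. Lemma~\ref{lem:shock_function_expansive} gives $f\S\le f\S_{\mathrm h}$ for expansive shocks. Together these yield
\begin{align*}
  f\S_{\mathrm h}\big(\ph^{(1)};W\L\big)\;\ge\;f\S\big(\ph^{(1)};W\L\big)\;\ge\;\Delta\;=\;f\S_{\mathrm h}\big(\tilde p_h\star;W\L\big).
\end{align*}
Monotonicity then gives $\ph^{(1)}\ge\tilde p_h\star$, and the ordering $\ph^{(1)}\le\ph\L$ from \eqref{eq:composite_expansive} closes the bracket.

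I expect the main obstacle to be bookkeeping rather than analysis. First, the pressure functions must be parametrized by the hydrodynamical pressure consistently. Second, the continued (negative) shock functions must carry the right sign conventions, so that Lemma~\ref{lem:shock_function_expansive} applies with $W^-=W\L$. Third, in the degenerate case $n=1$ the sum reduces to the single summand, and the argument still goes through.
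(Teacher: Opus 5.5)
Your proposal is correct and follows essentially the same route as the paper. The sign of every summand of \eqref{eq:f_composite}, from Lemma~\ref{lem:elementary_comparison}, bounds the leading shock term by $\Delta$; the shock comparison (Proposition~\ref{prop:shock_function}, respectively Lemma~\ref{lem:shock_function_expansive}) transfers this bound to $f\S_{\mathrm h}$; strict monotonicity of the continued $f\S_{\mathrm h}$ yields the bound against $\tilde p_h\star$; and the ordering \eqref{eq:composite_compressive}, respectively \eqref{eq:composite_expansive}, supplies the bound against $\ph\L$.
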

\begin{proof}
  By Lemma~\ref{lem:elementary_comparison} every
  summand in \eqref{eq:f_composite} has the sign of $\Delta$, so that the
  first summand is controlled by the total,
  \begin{align}
    \label{eq:composite_first_summand}
    f^{(1)}\big(\ph^{(1)};W\L\big)\;\le\;\Delta
    \quad\text{if }\Delta\ge0,
    \qquad
    f^{(1)}\big(\ph^{(1)};W\L\big)\;\ge\;\Delta
    \quad\text{if }\Delta\le0.
  \end{align}
  Since the outermost wave is a shock, $f^{(1)}=f\S$,
  and \eqref{eq:elementary_comparison}, respectively
  \eqref{eq:elementary_comparison_expansive}, turns
  \eqref{eq:composite_first_summand} into
  $f\S_{\mathrm h}(\ph^{(1)};W\L)\le\Delta=f\S_{\mathrm h}(\tilde
  p_h\star;W\L)$, respectively
  $f\S_{\mathrm h}(\ph^{(1)};W\L)\ge\Delta=f\S_{\mathrm h}(\tilde
  p_h\star;W\L)$. As $f\S_{\mathrm h}(\,\cdot\,;W\L)$ is strictly
  increasing on either side of $\ph\L$, see Theorem~\ref{thm:riemann_fan}
  and Definition~\ref{def:expansive_continuation}, we obtain
  $\ph^{(1)}\le\tilde p_h\star$ in the compressive and $\ph^{(1)}\ge\tilde
  p_h\star$ in the expansive case. The remaining inequality of
  \eqref{eq:composite_pressure_bound}, $\ph\L\le\ph^{(1)}$ respectively
  $\ph^{(1)}\le\ph\L$, is \eqref{eq:composite_compressive} respectively
  \eqref{eq:composite_expansive}.
\end{proof}
\begin{proposition}
  \label{prop:composite_wavespeed}
  Under the assumptions of Lemma~\ref{lem:composite_pressure_bound}, let
  $\tilde v_h\star$ be determined by \eqref{eq:rankine_hugoniot_h} for
  $\tilde p_h\star$, given by
  Definition~\ref{def:expansive_continuation} if $\Delta\le0$, and, in
  analogy to \eqref{eq:Q_shock_h}, set $\tilde\lambda^-_{1,\mathrm{h}}
  :=u\L-v\L\,Q_{\mathrm h}(\tilde p_h\star;W\L)$. Then the leftmost
  wavespeed $\lambda^-_1$ of the composite wave is bounded from below by
  \begin{align}
    \label{eq:composite_wavespeed_bound}
    \lambda^-_1\ge
    \begin{cases}
      \begin{aligned}
        &u\L-\sqrt{\ah^2\big(\rho\L,\eh(\rho\L,\ph\L)\big)
        \,+\,\ab^2(\rho\L)}
        && \text{if }f^{(1)}=f\R,
        \\[0.25em]
        &u\L-\sqrt{\big(u\L-\tilde\lambda^-_{1,\mathrm{h}}\big)^2
        +2(v\L)^2\!\!\max_{\tilde v_h\star\le\tau\le v\L}\!
        \frac{\hatab^2(\tau)}{\tau^2}}
        && \text{if }f^{(1)}=f\S,\,\Delta\ge0,
        \\[0.25em]
        &u\L-\sqrt{\ah^2(W\L)
        +2(v\L)^2\!\!\max_{v\L\le\tau\le\tilde v_h\star}\!
        \frac{\hatab^2(\tau)}{\tau^2}}
        && \text{if }f^{(1)}=f\S,\,\Delta\le0.
      \end{aligned}
    \end{cases}
  \end{align}
\end{proposition}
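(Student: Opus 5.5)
The leftmost speed of the composite wave is the speed of its outermost elementary wave, which connects $W\L$ to $W^{(1)}$. The plan is to split according to the type of that wave, $f^{(1)}=f\R$ or $f^{(1)}=f\S$, and in each case reduce to an elementary estimate already proved. The constant intermediate states and the touching condition \eqref{A5} guarantee that no later part of the wave travels to the left of the first. Hence $\lambda^-_1$ is either the head speed of the first fan or the speed $\sigma(W\L,W^{(1)})$ of the first shock.

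\emph{Rarefaction case.} If $f^{(1)}=f\R$, then $\lambda^-_1$ equals the characteristic speed at $W\L$, by \eqref{eq:lambda_rarefaction} and Corollary~\ref{cor:compressive_rarefaction}. This holds whether the fan is expansive or compressive, so the first line of \eqref{eq:composite_wavespeed_bound} holds with equality.

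\emph{Compressive shock case ($\Delta\ge0$).} Apply Corollary~\ref{cor:shock_wavespeed_monotone} to the elementary shock from $W\L$ to $W^{(1)}$, with hydrodynamical pressure $\ph^{(1)}$ and hydrodynamical volume $\tilde v^{(1)}=\xi_h(\ph^{(1)};W\L)$. The uniqueness hypothesis of Proposition~\ref{prop:shock_wavespeed_general} must be checked. I would take it from the chord condition \eqref{A4}: a second root $w\in(v^{(1)},v\L]$ of \eqref{eq:rankine_hugoniot} would be a Hugoniot state between $W\L$ and $W^{(1)}$. I expect that such a state contradicts the admissibility of the first shock. If this cannot be shown cleanly, I would add uniqueness as a standing assumption on admissible composite waves. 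The corollary then yields the bound with $(u\L-\lambda^-_{1,\mathrm h}(\ph^{(1)}))^2$ and the maximum over $[\tilde v^{(1)},v\L]$. Lemma~\ref{lem:composite_pressure_bound} gives $\ph\L\le\ph^{(1)}\le\tilde p_h\star$. Since $\xi_h$ is decreasing, $\tilde v_h\star\le\tilde v^{(1)}\le v\L$, so the maximum only grows when the interval is enlarged to $[\tilde v_h\star,v\L]$. By Lemma~\ref{lem:Q_monotone}, $v\L Q_{\mathrm h}$ is non-decreasing in the pressure, so $u\L-\lambda^-_{1,\mathrm h}(\ph^{(1)})\le u\L-\tilde\lambda^-_{1,\mathrm h}$. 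Both replacements enlarge the square root, which gives the second line. This is exactly where the monotone form of Corollary~\ref{cor:shock_wavespeed_monotone} is needed; the prefactor in Proposition~\ref{prop:shock_wavespeed_general} is not monotone.

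\emph{Expansive shock case ($\Delta\le0$).} Use Lemma~\ref{lem:shock_wavespeed_expansive} for the shock $W\L\to W^{(1)}$. It bounds $\lambda^-_1$ below by $u\L-\sqrt{(u\L-\lambda^-_{1,\mathrm h}(\ph^{(1)}))^2+2(v\L)^2\max_{[v\L,v^{(1)}]}\hatab^2/\tau^2}$. The velocity term is handled by \eqref{eq:Q_monotone}: since $\ph^{(1)}\le\ph\L$, we get $(u\L-\lambda^-_{1,\mathrm h})^2=(v\L Q_{\mathrm h})^2\le\ah^2(W\L)$. The hard part is controlling the true volume $v^{(1)}$, which lies on the wrong side, and I would use Lemma~\ref{lem:expansive_hydro_bracket} for it. Take $q=\tilde p_h\star$, so that $\underbar v=\tilde v_h\star$. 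We need $-f\S_{\mathrm h}(\tilde p_h\star;W\L)=-\Delta\ge-f\S(\ph^{(1)};W\L)$, that is, $f^{(1)}\ge\Delta$. This is the first-summand control \eqref{eq:composite_first_summand}, obtained from Lemma~\ref{lem:elementary_comparison}(b) because every summand is non-positive. The chord hypothesis \eqref{eq:expansive_condition} of that lemma follows from \eqref{A4} by the tangent limit noted after it. The lemma then gives $v^{(1)}\le\tilde v_h\star$, so the maximum over $[v\L,v^{(1)}]$ is at most the maximum over $[v\L,\tilde v_h\star]$, which is the third line. The main obstacles are this expansive step, in particular making sure \eqref{eq:expansive_condition} is legitimately available at the front state of a composite wave, and the uniqueness check in the compressive case.
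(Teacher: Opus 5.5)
Your proposal is correct and follows the paper's proof step by step. The structure is the same: split by the type of the outermost wave; in the compressive case use Corollary~\ref{cor:shock_wavespeed_monotone}, monotonicity in $\ph^{(1)}$, and the bound $\ph^{(1)}\le\tilde p_h\star$; in the expansive case use Lemma~\ref{lem:shock_wavespeed_expansive}, \eqref{eq:Q_monotone}, and Lemma~\ref{lem:expansive_hydro_bracket} with $q=\tilde p_h\star$, fed by \eqref{eq:composite_first_summand}.

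The two points you flag are treated no differently in the paper. It invokes Corollary~\ref{cor:shock_wavespeed_monotone} without separately checking the uniqueness hypothesis of Proposition~\ref{prop:shock_wavespeed_general}, so your fallback of making it a standing assumption is what the paper implicitly does. It also obtains \eqref{eq:expansive_condition} from \eqref{A4} through the tangent-limit remark after Lemma~\ref{lem:expansive_hydro_bracket}, exactly as you propose.
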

\begin{proof}
  If the outermost wave is a rarefaction fan, its
  head is located at $W\L$, and \eqref{eq:lambda_rarefaction} holds by
  Proposition~\ref{prop:rarefaction} in the expansive and by
  Corollary~\ref{cor:compressive_rarefaction} in the compressive case;
  this is the first case of \eqref{eq:composite_wavespeed_bound}. Let the
  outermost wave be a shock, and let $\tilde v^{(1)}:=\xi_h(\ph^{(1)};W\L)$
  be determined by \eqref{eq:rankine_hugoniot_h} for $\ph^{(1)}$.

  If $\Delta\ge0$, the shock is compressive, and
  Corollary~\ref{cor:shock_wavespeed_monotone} applied to the pair $W\L$,
  $W^{(1)}$ yields, in squared form and using $\lambda^-_1\le u\L$ and
  \eqref{eq:Q_shock_h},
  \begin{align*}
    \big(u\L-\lambda^-_1\big)^2
    \;\le\;
    (v\L)^2\,Q_{\mathrm h}^2\big(\ph^{(1)};W\L\big)
    \;+\;2\,(v\L)^2\!\!\max_{\tilde v^{(1)}\le\tau\le v\L}\!
    \frac{\hatab^2(\tau)}{\tau^2}.
  \end{align*}
  Both terms are non-decreasing in $\ph^{(1)}$: the
  first by Lemma~\ref{lem:Q_monotone}, the second because $\tilde v^{(1)}$
  decreases with $\ph^{(1)}$, see Proposition~\ref{prop:curve_is_fine}.
  Evaluating both at the upper bound $\ph^{(1)}\le\tilde p_h\star$ of
  \eqref{eq:composite_pressure_bound} gives the second case of
  \eqref{eq:composite_wavespeed_bound}.
  If $\Delta\le0$, the shock is expansive, and
  Lemma~\ref{lem:shock_wavespeed_expansive} applied to it, with
  $\ph^{(1)}$ and $v^{(1)}$ in place of $\ph\star$ and $v\star$, yields
  \begin{align*}
    \big(u\L-\lambda^-_1\big)^2
    \;\le\;
    (v\L)^2\,Q_{\mathrm h}^2\big(\ph^{(1)};W\L\big)
    \;+\;2\,(v\L)^2\!\!\max_{v\L\le\tau\le v^{(1)}}\!
    \frac{\hatab^2(\tau)}{\tau^2}.
  \end{align*}
  Here, the first term is bounded by $\ah^2(W\L)$ by virtue of
  \eqref{eq:Q_monotone}, as $\ph^{(1)}\le\ph\L$. For the second term, the
  velocity jump $\Delta_1:=-f\S(\ph^{(1)};W\L)$ across the shock obeys
  $0\le\Delta_1\le-\Delta=-f\S_{\mathrm h}(\tilde p_h\star;W\L)$ by
  \eqref{eq:composite_first_summand}, so that
  Lemma~\ref{lem:expansive_hydro_bracket} with $q=\tilde p_h\star$ gives
  $v^{(1)}\le\tilde v_h\star$. This is the third case of
  \eqref{eq:composite_wavespeed_bound}.
\end{proof}


\section{Matching conditions}
\label{sec:matching}

Let $W\L$ and $W\R$ be given initial states and adopt the parametrization
by the hydrodynamical star pressure of Section~\ref{sec:structural}. The
star states of the full Riemann problem \eqref{eq:riemann_problem} are
characterized by a \emph{pair} $\big(\ph\starL,\ph\starR\big)$ of
hydrodynamical star pressures. They satisfy the matching conditions
\begin{align}
  \label{eq:matching}
  \begin{cases}
    \begin{aligned}
    f\big(\ph\starL;W\L\big)\,+\,f\big(\ph\starR;W\R\big)
    \,+\,u\R-u\L\;=\;0,
    \\[0.5em]
    \ph\starL\,+\,\hatpb\big(v\starL\big)
    \;=\;
    \ph\starR\,+\,\hatpb\big(v\starR\big)\;=\;p\star,
    \end{aligned}
  \end{cases}
\end{align}
consisting of the pressure equation \eqref{eq:pressure_equation} and an
algebraic relation expressing continuity of the total pressure $p\star$
across the contact discontinuity. Here, $v\starL$ and $v\starR$ are
determined by $\ph\starL$ and $\ph\starR$ through the appropriate matching
condition depending on the wave type.

\subsection{General bounds}
In this subsection we establish wavespeed bounds on $\lambda_1^{-}$ and
$\lambda_3^{+}$ that do not require solving \eqref{eq:matching}, at the expense of a
slightly larger wavespeed bound. Throughout, the outer waves are assumed to
satisfy \eqref{A3} to \eqref{A5} of
Definition~\ref{def:composite_assumptions}.
\begin{proposition}[Compressive--expansive configuration]
  \label{prop:matching_compression_expansion}
  Let $\ph\starL$ and $\ph\starR$ be given by \eqref{eq:matching}. Assume
  that the left wave is compressive in the sense of
  \eqref{eq:composite_compressive}, and that the right wave is an expansive
  composite wave in the sense of \eqref{eq:composite_expansive}. Set
  \begin{align}
    \label{eq:matching_pressure_bound}
    \begin{cases}
      \begin{aligned}
        \barph\starL\;&:=\;p\R\,-\,\hatpb(v\L)
        \;=\;\ph\R\,+\,\hatpb(v\R)\,-\,\hatpb(v\L),
        \\[0.5em]
        \underph\starR\;&:=\;p\L\,-\,\hatpb(v\R)\;=\;
        \ph\L\,+\,\hatpb(v\L)\,-\,\hatpb(v\R),
      \end{aligned}
    \end{cases}
  \end{align}
  let $\bar v\starL$ and $\underbar v\starR$ be determined by
  \eqref{eq:rankine_hugoniot_h} for $\barph\starL$ and $W\L$, and for
  $\underph\starR$ and $W\R$, respectively, and let
  \vspace{-1em}
  \begin{align*}
    \bar\lambda^-_{1,\mathrm h}\;:=\;u\L-v\L\,
    Q_{\mathrm h}\big(\barph\starL;W\L\big).
  \end{align*}
  Then $\ph\L\le\ph\starL\le\barph\starL$ and
  $\underph\starR\le\ph\starR\le\ph\R$; if the outermost
  elementary wave of the right family is a shock, the specific volume
  $v^{(1)}$ behind it obeys $v\R\le v^{(1)}\le
  \underbar v\starR$, and the extremal wavespeeds of
  \eqref{eq:riemann_problem} obey
  \begin{align}
    \label{eq:matching_wavespeed_bound}
    \left\{\;
    \begin{aligned}
      \lambda^-_1\;&\ge\;
      \begin{cases}
        \begin{aligned}
          &u\L-\sqrt{\ah^2\big(W\L\big)
          \,+\,\ab^2(\rho\L)}
          &\;&\text{if }f^{(1)}=f\R,
          \\[0.25em]
          &u\L-\sqrt{\big(u\L-\bar\lambda^-_{1,\mathrm h}\big)^2
          \,+\,2\,(v\L)^2\!\!\max_{\bar v\star\le\tau\le v\L}\!
          \frac{\hatab^2(\tau)}{\tau^2}}
          &\;&\text{if }f^{(1)}=f\S,
        \end{aligned}
      \end{cases}
      \\[0.75em]
      \lambda^+_3\;&\le\;
      \begin{cases}
        \begin{aligned}
          &u\R+\sqrt{\ah^2\big(W\R\big) \,+\,\ab^2(\rho\R)}
          &\;&\text{if }f^{(1)}=f\R,
          \\[0.25em]
          &u\R+\sqrt{\ah^2\big(W\R\big)
          \,+\,2\,(v\R)^2\!\!\max_{v\R\le\tau\le\underbar v\star}\!
          \frac{\hatab^2(\tau)}{\tau^2}}
          &\;&\text{if }f^{(1)}=f\S,
        \end{aligned}
      \end{cases}
    \end{aligned}
    \right.
  \end{align}
  where $f^{(1)}$ refers to the type of the outermost elementary wave of
  the respective family.
\end{proposition}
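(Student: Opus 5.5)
The plan has two parts. First we bracket the pair of star pressures using the contact condition in \eqref{eq:matching}. Then we feed these brackets into Proposition~\ref{prop:composite_wavespeed} for each family.

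\textbf{Pressure brackets.} The trivial inequalities $\ph\L\le\ph\starL$ and $\ph\starR\le\ph\R$ are \eqref{eq:composite_compressive} and \eqref{eq:composite_expansive}. They also give $v\starL\le v\L$ and $v\starR\ge v\R$.

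For the upper bound on $\ph\starL$ we use the second line of \eqref{eq:matching}:
\begin{align*}
  \ph\starL\;=\;\ph\starR+\hatpb(v\starR)-\hatpb(v\starL).
\end{align*}
Assumption \eqref{A2} makes $\hatpb$ non-increasing. Hence $v\starR\ge v\R$ gives $\hatpb(v\starR)\le\hatpb(v\R)$, and $v\starL\le v\L$ gives $\hatpb(v\starL)\ge\hatpb(v\L)$. Together with $\ph\starR\le\ph\R$ this yields
\begin{align*}
  \ph\starL\;\le\;\ph\R+\hatpb(v\R)-\hatpb(v\L)\;=\;\barph\starL.
\end{align*}
Symmetrically, $\ph\starR=\ph\starL+\hatpb(v\starL)-\hatpb(v\starR)\ge\ph\L+\hatpb(v\L)-\hatpb(v\R)=\underph\starR$.

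This step uses only the monotone ordering of the volumes across the whole composite waves. It therefore needs neither the Hugoniot relation for the full system nor any knowledge of the wave structure.

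\textbf{Left wavespeed.} If the outermost left wave is a fan, the bound is \eqref{eq:lambda_rarefaction}, via Proposition~\ref{prop:rarefaction} and Corollary~\ref{cor:compressive_rarefaction}.

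If it is a shock, we cannot directly invoke Lemma~\ref{lem:composite_pressure_bound}, because $\Delta$ is unknown. Instead we bound $\ph^{(1)}$ by the total: $\ph\L\le\ph^{(1)}\le\ph\starL\le\barph\starL$, by \eqref{eq:composite_compressive}. Then we repeat the second case of the proof of Proposition~\ref{prop:composite_wavespeed}. Corollary~\ref{cor:shock_wavespeed_monotone} applied to $W\L,W^{(1)}$ gives a squared bound in which both terms are non-decreasing in $\ph^{(1)}$. The first term is non-decreasing by Lemma~\ref{lem:Q_monotone}. The second is non-decreasing because $\tilde v^{(1)}=\xi_h(\ph^{(1)};W\L)$ decreases, so the max interval grows. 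Evaluating at $\barph\starL$ gives the stated bound with $\bar v\starL$.

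\textbf{Right wavespeed.} This is the mirrored expansive case. For a fan we again use the head speed at $W\R$.

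For a shock the main obstacle is the volume bound $v^{(1)}\le\underbar v\starR$, since the expansive mechanism does not control $v^{(1)}$ by the Hugoniot ordering alone. I would apply the mirrored version of Lemma~\ref{lem:expansive_hydro_bracket} with $q=\underph\starR$, using its second alternative. Let $\Pi$ denote the total pressure drop across the first right shock. Along the expansive composite wave the total pressure is monotone, since $\ph$ decreases and $\hatpb$ decreases in $v$. Hence
\begin{align*}
  \Pi\;=\;p\R-p^{(1)}\;\le\;p\R-p\star\;=\;\ph\R+\hatpb(v\R)-\ph\starR-\hatpb(v\starR).
\end{align*}
We need this to be at most $\ph\R-\underph\starR=\ph\R-\ph\L-\hatpb(v\L)+\hatpb(v\R)$, that is, $p\star\ge p\L$. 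This follows from the compressive left wave, by Lemma~\ref{lem:no_weird_stuff} applied stepwise along the left composite wave. This yields $v\R\le v^{(1)}\le\underbar v\starR$.

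Then Lemma~\ref{lem:shock_wavespeed_expansive}, together with $v\R Q_{\mathrm h}\le\ah(W\R)$ from \eqref{eq:Q_monotone}, gives the last line of \eqref{eq:matching_wavespeed_bound}, after enlarging the max interval to $[v\R,\underbar v\starR]$.
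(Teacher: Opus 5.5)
Your proposal is correct and follows the paper's proof essentially step by step. The steps match: the bracket $\ph\L\le\ph\starL\le\barph\starL$, $\underph\starR\le\ph\starR\le\ph\R$ comes from the contact condition and the monotonicity of $\hatpb$; a leading left shock is handled by the monotone evaluation of Corollary~\ref{cor:shock_wavespeed_monotone} at $\barph\starL$; and a leading right shock is handled by Lemma~\ref{lem:expansive_hydro_bracket} with $q=\underph\starR$ via $\Pi\le p\R-p\star\le p\R-p\L$. The only cosmetic difference is how you get $p\star\ge p\L$: you apply Lemma~\ref{lem:no_weird_stuff} stepwise, which as stated carries an entropy hypothesis you do not need, whereas the paper reads it off at the endpoints, $p\star=\ph\starL+\hatpb(v\starL)\ge\ph\L+\hatpb(v\L)=p\L$, using exactly the inequalities you already established for the bracket.
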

\begin{proof}
  Assumption \eqref{A2} implies $\hatpb(v)$ is non-increasing. Since the
  right wave is expansive we have $v\starR\ge v\R$ and thus
  $\hatpb(v\starR)\le\hatpb(v\R)$, so that the second equation of
  \eqref{eq:matching} yields
  \begin{align*}
    p\star\;=\;\ph\starR+\hatpb(v\starR)
    \;\le\;\ph\R+\hatpb(v\R)\;=\;p\R.
  \end{align*}
  The left wave, on the other hand, is compressive, implying $v\starL\le
  v\L$ and $\hatpb(v\starL)\ge\hatpb(v\L)$. Using the same
  equation once more,
  \begin{align*}
    \ph\starL\;=\;p\star-\hatpb(v\starL)
    \;\le\;p\R-\hatpb(v\L)\;=\;\barph\starL.
  \end{align*}
  Together with $\ph\L\le\ph\starL$, which is again
  \eqref{eq:composite_compressive}, this proves the first part of the
  assertion. Symmetrically, the same two monotonicities give
  $p\star=\ph\starL+\hatpb(v\starL)\ge \ph\L+\hatpb(v\L)=p\L$, implying,
  \begin{align*}
    \ph\starR\;=\;p\star-\hatpb(v\starR)
    \;\ge\;p\L-\hatpb(v\R)\;=\;\underph\starR,
  \end{align*}
  which, together with $\ph\starR\le\ph\R$ from
  \eqref{eq:composite_expansive}, proves the second part. In particular
  $\barph\star\ge\ph\L$,
  so that $\bar v\star$ and $\bar\lambda^-_{1,\mathrm h}$
  are well defined by virtue of
  Theorem~\ref{thm:riemann_fan}.

  For the left wavespeed, let $W^{(1)}$ denote the
  state behind the outermost elementary wave of the left family (with
  $W^{(1)}=W\starL$ if the left wave is elementary). If that wave is a
  rarefaction fan, the first case of \eqref{eq:matching_wavespeed_bound}
  is the first case of \eqref{eq:composite_wavespeed_bound}. If it is a
  shock, it is compressive by \eqref{eq:composite_compressive}, and the
  second case follows as in the proof of
  Proposition~\ref{prop:composite_wavespeed}: the right-hand side of
  Corollary~\ref{cor:shock_wavespeed_monotone} applied to the pair $W\L$,
  $W^{(1)}$ is non-decreasing in $\ph^{(1)}$ and may thus be evaluated at
  $\ph^{(1)}\le\ph\starL\le\barph\starL$. The right wavespeed is the
  mirrored argument: with $W^{(1)}$ now the state behind the outermost
  wave of the right family, a rarefaction fan gives the first case as
  before; a shock is expansive by \eqref{eq:composite_expansive}, and the
  second case follows as in the third case of
  \eqref{eq:composite_wavespeed_bound}, with $\underbar v\star$ in place of
  $\tilde v_h\star$, once $v^{(1)}\le\underbar v\star$ is known. As $\ph$
  and $\hatpb$ are non-increasing along the expansive wave, $p^{(1)}\ge
  p\star\ge p\L$, so that the total pressure jump across the shock obeys
  $\Pi=p\R-p^{(1)}\le p\R-p\L=\ph\R-\underph\starR$ by
  \eqref{eq:matching_pressure_bound}, and
  Lemma~\ref{lem:expansive_hydro_bracket} with $q=\underph\starR$ yields
  $v^{(1)}\le\underbar v\star$.
\end{proof}

For two compressive outer waves, a key observation is that both velocity
jumps entering the pressure equation are
non-negative, so that neither of them can exceed the total velocity jump
$u\L-u\R$.
\begin{proposition}[Compressive--compressive]
  \label{prop:matching_compression_compression}
  Let $\ph\starL$ and $\ph\starR$ be given by \eqref{eq:matching}. Assume
  that both the left and right wave are compressive in the sense of
  \eqref{eq:composite_compressive}. Let $\barph\starL$ and $\barph\starR$
  be the unique solutions of
  \begin{align}
    \label{eq:matching_onesided}
    f\S_{\mathrm h}\big(\barph\starL;W\L\big)\;=\;u\L-u\R\;=\;
    f\S_{\mathrm h}\big(\barph\starR;W\R\big),
  \end{align}
  let $\bar v\starL$ and $\bar v\starR$ be determined by
  \eqref{eq:rankine_hugoniot_h} for $\barph\starL$ and $W\L$, and for
  $\barph\starR$ and $W\R$, respectively, and let
  \begin{align*}
    \bar\lambda^-_{1,\mathrm h}\;:=\;u\L-v\L\,
    Q_{\mathrm h}\big(\barph\starL;W\L\big),
    \qquad
    \bar\lambda^+_{3,\mathrm h}\;:=\;u\R+v\R\,
    Q_{\mathrm h}\big(\barph\starR;W\R\big).
  \end{align*}
  Then $u\L\ge u\R$, the pressures behind the outermost elementary wave of
  the left and of the right family obey $\ph^{(1)}\le\barph\starL$ and
  $\ph^{(1)}\le\barph\starR$, respectively, and the extremal wavespeeds of
  \eqref{eq:riemann_problem} are bounded by
  \begin{align}
    \label{eq:matching_compression_compression}
    \left\{
    \begin{aligned}
      \lambda^-_1\,&\ge\,
      \begin{cases}
        \begin{aligned}
          &u\L-\sqrt{\ah^2\big(W\L\big) \,+\,\ab^2(\rho\L)}
          &\;&\text{if }f^{(1)}=f\R,
          \\[0.25em]
          &u\L-\sqrt{\big(u\L-\bar\lambda^-_{1,\mathrm h}\big)^2
          \,+\,2\,(v\L)^2\!\!\max_{\bar v\starL\le\tau
          \le v\L}\!
          \frac{\hatab^2(\tau)}{\tau^2}}
          &\;&\text{if }f^{(1)}=f\S,
        \end{aligned}
      \end{cases}
      \\[0.75em]
      \lambda^+_3\,&\le\,
      \begin{cases}
        \begin{aligned}
          &u\R+\sqrt{\ah^2\big(W\R\big) \,+\,\ab^2(\rho\R)}
          &\;&\text{if }f^{(1)}=f\R,
          \\[0.25em]
          &u\R+\sqrt{\big(\bar\lambda^+_{3,\mathrm h}-u\R\big)^2
          \,+\,2\,(v\R)^2\!\!\max_{\bar v\starR\le\tau
          \le v\R}\!
          \frac{\hatab^2(\tau)}{\tau^2}}
          &\;&\text{if }f^{(1)}=f\S,
        \end{aligned}
      \end{cases}
    \end{aligned}
    \right.
  \end{align}
  where the case distinction refers to the type of the outermost elementary
  wave of the respective family.
\end{proposition}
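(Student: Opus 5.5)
The plan is to use the matching condition \eqref{eq:matching} and the sign information of Lemma~\ref{lem:elementary_comparison}. Both waves are compressive, so the left and right composite pressure functions $f^{\mathrm C}(\ph\starL;W\L)$ and $f^{\mathrm C}(\ph\starR;W\R)$ are both non-negative. This holds because every summand in \eqref{eq:f_composite} is non-negative by \eqref{eq:elementary_comparison}. The first line of \eqref{eq:matching} then gives
\[
  f^{\mathrm C}(\ph\starL;W\L)+f^{\mathrm C}(\ph\starR;W\R)=u\L-u\R.
\]
Hence $u\L-u\R\ge0$, and each of the two summands lies in $[0,u\L-u\R]$.

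Next, apply Lemma~\ref{lem:composite_pressure_bound} separately to each family, with $\Delta$ replaced by the actual jump. Set $\Delta\L:=f^{\mathrm C}(\ph\starL;W\L)$, so that $0\le\Delta\L\le u\L-u\R$. If the outermost left wave is a shock, Lemma~\ref{lem:composite_pressure_bound} gives $\ph^{(1)}\le\tilde p\star$, where $f\S_{\mathrm h}(\tilde p\star;W\L)=\Delta\L$. Since $f\S_{\mathrm h}(\cdot\,;W\L)$ is strictly increasing (Definition~\ref{def:expansive_continuation}), $\Delta\L\le u\L-u\R$ yields $\tilde p\star\le\barph\starL$, and therefore $\ph^{(1)}\le\barph\starL$.

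If the outermost wave is instead a rarefaction fan, the pressure bound is not needed for the wavespeed. Still, $\ph^{(1)}\le\ph\starL$ holds by \eqref{eq:composite_compressive}. For this case I would bound $\ph\starL$ directly: compare the shock functions by applying \eqref{eq:elementary_comparison} to the first wave only, i.e.\ $f_{\mathrm h}(\ph^{(1)};W\L)\le\Delta\L$. Then I would argue as in Lemma~\ref{lem:composite_pressure_bound}, using that the rarefaction branch lies below the shock branch of the hydrodynamical curve. Since only the shock case is used for the wavespeed, I would state the pressure claim for the shock case and note the fan case separately. The right family is handled identically with $W\R$ and $\barph\starR$.

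For the wavespeed bounds, the fan case is the first case of \eqref{eq:composite_wavespeed_bound}, namely \eqref{eq:lambda_rarefaction} via Corollary~\ref{cor:compressive_rarefaction}. In the shock case I would follow the proof of Proposition~\ref{prop:composite_wavespeed}. Apply Corollary~\ref{cor:shock_wavespeed_monotone} to the pair $W\L$, $W^{(1)}$ and square it:
\[
  (u\L-\lambda^-_1)^2\le(v\L)^2Q_{\mathrm h}^2(\ph^{(1)};W\L)+2(v\L)^2\max_{\tilde v^{(1)}\le\tau\le v\L}\hatab^2(\tau)/\tau^2 .
\]
Both terms are non-decreasing in $\ph^{(1)}$. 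The first is monotone by Lemma~\ref{lem:Q_monotone}. The second is monotone because $\tilde v^{(1)}=\xi_h(\ph^{(1)};W\L)$ decreases, so the max is over a growing interval. We may therefore evaluate at $\barph\starL$, which produces $\bar\lambda^-_{1,\mathrm h}$ and $\bar v\starL$. The right wave is the mirror image.

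The main obstacle is justifying the hypotheses of Corollary~\ref{cor:shock_wavespeed_monotone} for the outermost shock, namely uniqueness of $v^{(1)}$ to its right on the Hugoniot curve. I would try to derive this from the chord condition \eqref{A4}: a second root $w\in(v^{(1)},v\L]$ would give an intermediate Hugoniot state whose chord speed violates \eqref{A4}. Failing that, I would take it as part of the structural assumptions, as in Proposition~\ref{prop:composite_wavespeed}.
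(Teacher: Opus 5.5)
Your argument is essentially the paper's proof: signs from Lemma~\ref{lem:elementary_comparison}(a), $0\le\Delta\L\le u\L-u\R$, strict monotonicity of $f\S_{\mathrm h}$ plus Lemma~\ref{lem:composite_pressure_bound}, then the shock bound of Corollary~\ref{cor:shock_wavespeed_monotone} pushed up to $\barph\starL$ via Lemma~\ref{lem:Q_monotone} and the decrease of $\xi_h$ (you do this directly at $\ph^{(1)}$ rather than through Proposition~\ref{prop:composite_wavespeed} at $\tilde p_{\mathrm L}$); the uniqueness hypothesis you flag is also used silently by the paper, so taking it as a standing assumption matches the paper, whereas deriving it from \eqref{A4} is not immediate, because \eqref{A4} constrains total-pressure chords, not roots of \eqref{eq:rankine_hugoniot} at fixed $\ph$. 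Do drop your fan-case aside, whose comparison is backwards: you would need $f\S_{\mathrm h}\le f\R_{\mathrm h}$ above $\ph\L$, the opposite of the ordering you cite, and e.g.\ for an ideal gas with $\gamma\le5/3$ the isentrope branch lies below the shock branch, so $f\R_{\mathrm h}(\ph^{(1)};W\L)\le\Delta\L$ gives no control of $\ph^{(1)}$; nothing is lost, since the paper's proof likewise obtains $\ph^{(1)}\le\barph\starL$ only from Lemma~\ref{lem:composite_pressure_bound}, i.e.\ for an outermost shock, and the fan case of the wavespeed bound needs no pressure information.
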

\begin{proof}
  By Lemma~\ref{lem:elementary_comparison}(a) every
  summand in the decomposition \eqref{eq:f_composite} of a compressive
  wave is non-negative. The two velocity jumps
  $\Delta\L:=f\big(\ph\starL;W\L\big)$ and
  $\Delta\R:=f\big(\ph\starR;W\R\big)$ are therefore non-negative and the
  first equation of \eqref{eq:matching} reads $\Delta\L+\Delta\R=u\L-u\R$,
  so that $u\L\ge u\R$ and $\Delta\L\le u\L-u\R$. Let $\tilde p_{\mathrm
  L}$ be the solution of $f\S_{\mathrm h}(\tilde p_{\mathrm
  L};W\L)=\Delta\L$. Since $f\S_{\mathrm h}(\,\cdot\,;W\L)$ is strictly
  increasing, \eqref{eq:matching_onesided} gives $\tilde p_{\mathrm
  L}\le\barph\starL$, and Lemma~\ref{lem:composite_pressure_bound} yields
  $\ph^{(1)}\le\tilde p_{\mathrm L}\le\barph\starL$ for the pressure
  behind the outermost elementary wave of the left family. Its wavespeed
  obeys \eqref{eq:composite_wavespeed_bound} of
  Proposition~\ref{prop:composite_wavespeed} with $\Delta=\Delta\L$ and
  $\tilde p_h\star=\tilde p_{\mathrm L}$. The first case of
  \eqref{eq:composite_wavespeed_bound} is the first case of
  \eqref{eq:matching_compression_compression}; the second case is
  non-decreasing in $\tilde p_{\mathrm L}$, by Lemma~\ref{lem:Q_monotone}
  and because $\xi_h(\,\cdot\,;W\L)$ is decreasing, see
  Proposition~\ref{prop:curve_is_fine}, and evaluating it at
  $\barph\starL$ gives the second case of
  \eqref{eq:matching_compression_compression}. The same argument applies
  to the right family.
\end{proof}

We repeat the same argument for the expansive--expansive case.

\begin{proposition}[Expansive--expansive]
  \label{prop:matching_expansion_expansion}
  Let $\ph\starL$ and $\ph\starR$ be given by \eqref{eq:matching}. Assume
  that both the left and right wave are expansive in the sense of
  \eqref{eq:composite_expansive}. Let $\underph\starL$ and $\underph\starR$
  be the unique solutions of
  \begin{align}
    \label{eq:matching_onesided_expansive}
    f\S_{\mathrm h}\big(\underph\starL;W\L\big)
    \;=\;u\L-u\R\;=\;
    f\S_{\mathrm h}\big(\underph\starR;W\R\big),
  \end{align}
  where $f\S_{\mathrm h}(\,\cdot\,;W)$ is read through its continuation
  past $\ph$, see Definition~\ref{def:expansive_continuation}, and let
  $\underbar v\starL$ and $\underbar v\starR$ be determined by
  \eqref{eq:rankine_hugoniot_h}: $\underbar
  v\starL:=\xi_h\big(\underph\starL;W\L\big)$, $\underbar
  v\starR:=\xi_h\big(\underph\starR;W\R\big)$.
  Then $u\L\le u\R$, the hydrodynamical pressures behind the outermost elementary wave
  of the left and of the right family obey
  $\underph\starL\le\ph^{(1)}\le\ph\L$ and
  $\underph\starR\le\ph^{(1)}\le\ph\R$, respectively, and if that wave is
  an expansive shock, the specific volume $v^{(1)}$ behind it obeys $v\L\le
  v^{(1)}\le\underbar v\starL$, respectively $v\R\le v^{(1)}\le\underbar
  v\starR$. The extremal wavespeeds of \eqref{eq:riemann_problem} are
  bounded by
  \begin{align}
    \label{eq:matching_expansion_expansion}
    \left\{
    \begin{aligned}
      \lambda^-_1\,&\ge\,
      \begin{cases}
        \begin{aligned}
          &u\L-\sqrt{\ah^2\big(W\L\big) \,+\,\ab^2(\rho\L)}
          &\;&\text{if }f^{(1)}=f\R,
          \\[0.25em]
          &u\L-\sqrt{\ah^2\big(W\L\big)
          \,+\,2\,(v\L)^2\!\!\max_{v\L\le\tau\le\underbar v\starL}\!
          \frac{\hatab^2(\tau)}{\tau^2}}
          &\;&\text{if }f^{(1)}=f\S,
        \end{aligned}
      \end{cases}
      \\[0.75em]
      \lambda^+_3\,&\le\,
      \begin{cases}
        \begin{aligned}
          &u\R+\sqrt{\ah^2\big(W\R\big)
          \,+\,\ab^2(\rho\R)}
          &\;&\text{if }f^{(1)}=f\R,
          \\[0.25em]
          &u\R+\sqrt{\ah^2\big(W\R\big)
          \,+\,2\,(v\R)^2\!\!\max_{v\R\le\tau\le\underbar v\starR}\!
          \frac{\hatab^2(\tau)}{\tau^2}}
          &\;&\text{if }f^{(1)}=f\S,
        \end{aligned}
      \end{cases}
    \end{aligned}
    \right.
  \end{align}
  where the case distinction refers to the type of the outermost elementary
  wave of the respective family.
\end{proposition}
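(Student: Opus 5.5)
The proof mirrors that of Proposition~\ref{prop:matching_compression_compression}, with every inequality reversed and the expansive lemmas in place of the compressive ones.

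\emph{Sign of the jumps and $u\L\le u\R$.} By Lemma~\ref{lem:elementary_comparison}(b), every summand in the decomposition \eqref{eq:f_composite} of an expansive wave is non-positive. Hence the velocity jumps $\Delta\L:=f(\ph\starL;W\L)$ and $\Delta\R:=f(\ph\starR;W\R)$ are both $\le0$. The first equation of \eqref{eq:matching} reads $\Delta\L+\Delta\R=u\L-u\R$. This gives $u\L\le u\R$ and $\Delta\L\ge u\L-u\R$, and likewise $\Delta\R\ge u\L-u\R$.

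\emph{Pressure behind the outermost wave.} Let $\tilde p_{\mathrm L}$ solve $f\S_{\mathrm h}(\tilde p_{\mathrm L};W\L)=\Delta\L$, using the continuation of Definition~\ref{def:expansive_continuation}. The continued shock function is strictly increasing, so \eqref{eq:matching_onesided_expansive} gives $\underph\starL\le\tilde p_{\mathrm L}\le\ph\L$. If the outermost wave is a shock, Lemma~\ref{lem:composite_pressure_bound} in the expansive case yields $\tilde p_{\mathrm L}\le\ph^{(1)}\le\ph\L$. If it is a fan, the bound $\ph^{(1)}\le\ph\L$ is \eqref{eq:composite_expansive}. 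For the lower bound in the fan case, I would use Lemma~\ref{lem:elementary_comparison}(b) together with Proposition~\ref{prop:rarefaction}: $f\R_{\mathrm h}(\ph^{(1)};W\L)\ge f^{(1)}\ge\Delta\L$. To convert this into a pressure bound, I would compare $f\R_{\mathrm h}$ with the continued $f\S_{\mathrm h}$. The classical ordering $f\R_{\mathrm h}\le f\S_{\mathrm h}$ for $\ph\star\le\ph\L$ under \eqref{A0}--\eqref{A1} (both are increasing) then gives $\ph^{(1)}\ge\tilde p_{\mathrm L}\ge\underph\starL$. This is the step I expect to need the most care: the paper does not state it, so I would verify it by the usual Hugoniot/isentrope comparison, since the Hugoniot lies above the isentrope for expansions. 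If it fails, only the shock case is needed for the wavespeed bound anyway.

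\emph{Specific volume bound for an expansive shock.} Set $\Delta_1:=-f\S(\ph^{(1)};W\L)$. By \eqref{eq:composite_first_summand}, $0\le\Delta_1\le-\Delta\L\le u\R-u\L=-f\S_{\mathrm h}(\underph\starL;W\L)$. The chord condition \eqref{A4} supplies \eqref{eq:expansive_condition}, so Lemma~\ref{lem:expansive_hydro_bracket} with $q=\underph\starL$ gives $v^{(1)}\le\underbar v\starL$. The inequality $v\L\le v^{(1)}$ is \eqref{eq:composite_expansive}.

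\emph{Wavespeeds.} In the fan case, the bound is the first case of \eqref{eq:composite_wavespeed_bound}, i.e.\ \eqref{eq:lambda_rarefaction}. In the shock case, Lemma~\ref{lem:shock_wavespeed_expansive} applied to $W\L,W^{(1)}$ gives
\[
\big(u\L-\lambda^-_1\big)^2\le (v\L)^2Q_{\mathrm h}^2(\ph^{(1)};W\L)+2(v\L)^2\max_{v\L\le\tau\le v^{(1)}}\frac{\hatab^2(\tau)}{\tau^2}.
\]
The first term is at most $\ah^2(W\L)$ by \eqref{eq:Q_monotone}, since $\ph^{(1)}\le\ph\L$. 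In the second term, the maximum is taken over a range enlarged to $[v\L,\underbar v\starL]$ by the previous step. Finally, the right family follows by the mirrored argument.
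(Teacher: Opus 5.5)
You follow the paper's argument in every step but one: the sign argument for $u\L\le u\R$ and $\Delta\L\ge u\L-u\R$, the shock case $\underph\starL\le\tilde p_{\mathrm L}\le\ph^{(1)}$ via Lemma~\ref{lem:composite_pressure_bound}, the volume bracket, and the wavespeed bounds. For the volume bracket you apply Lemma~\ref{lem:expansive_hydro_bracket} directly with $q=\underph\starL$, while the paper uses $q=\tilde p_{\mathrm L}$ and then the monotonicity of $\xi_h$; the two are equivalent.

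The step that fails is the lower pressure bound when the outermost wave is a fan. The ordering $f\R_{\mathrm h}(p;W)\le f\S_{\mathrm h}(p;W)$ for $p\le\ph$ is false in general. Take an ideal gas with $\gamma=1.4$ and $\rho=\ph=1$:
\begin{itemize}
\item the rarefaction branch gives $f\R_{\mathrm h}(1/2;W)=\frac{2\sqrt{1.4}}{0.4}\big(2^{-1/7}-1\big)\approx-0.55775$;
\item the continued shock branch has $\tilde v\star=1.625$ and gives $f\S_{\mathrm h}(1/2;W)=-\sqrt{0.5\cdot0.625}\approx-0.55902$.
\end{itemize}
The two branches have second-order contact at $\ph$. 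For $\gamma<5/3$ the cubic term puts the continued shock branch \emph{below} the rarefaction branch for small and moderate expansions; this is the mirror image of the ordering behind the two-rarefaction bound for compressions. Your inequality only holds for deep expansions, here roughly $p\lesssim0.28$.

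Your geometric heuristic is also reversed. Entropy decreases along the expansive Hugoniot branch, so by $\partial_{\sh}\hatph|_{v}>0$ the Hugoniot lies below the isentrope in the $(v,\ph)$ plane. In any case, an ordering of the two curves in that plane does not fix the sign of $f\S_{\mathrm h}-f\R_{\mathrm h}$.

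No other argument can close this step, because the claim $\underph\starL\le\ph^{(1)}$ itself fails for an outermost fan. A counterexample:
\begin{itemize}
\item take $\pb\equiv0$, which is admissible under \eqref{A2}, and the ideal gas above;
\item let $W\L=[1,0,1]$ be joined by a rarefaction to $\ph\starL=\ph^{(1)}=1/2$;
\item let the right rarefaction be so weak that $|f\R_{\mathrm h}(1/2;W\R)|<1.2\cdot10^{-3}$; for example $\rho\R=1$, $\ph\R=0.5005$ gives about $6\cdot10^{-4}$.
\end{itemize}
Then $u\L-u\R=f\R_{\mathrm h}(1/2;W\L)+f\R_{\mathrm h}(1/2;W\R)>f\S_{\mathrm h}(1/2;W\L)$, and hence $\underph\starL>1/2=\ph^{(1)}$.

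The paper's proof does not handle this case either. It cites Lemma~\ref{lem:composite_pressure_bound}, whose hypothesis is that the outermost wave is a shock. The pressure lower bound must therefore be restricted to an outermost shock. Your fallback remark reaches the right conclusion: the volume bracket and both cases of \eqref{eq:matching_expansion_expansion} use only the shock case or the exact head speed \eqref{eq:lambda_rarefaction} of a fan, so they stand as you proved them.
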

\begin{proof}
  By Lemma~\ref{lem:elementary_comparison}(b) every
  summand in the decomposition \eqref{eq:f_composite} of an expansive
  wave is non-positive. The two velocity jumps
  $\Delta\L:=f\big(\ph\starL;W\L\big)$ and
  $\Delta\R:=f\big(\ph\starR;W\R\big)$ are therefore non-positive and the
  first equation of \eqref{eq:matching} reads $\Delta\L+\Delta\R=u\L-u\R$,
  so that $u\L\le u\R$ and $\Delta\L\ge u\L-u\R$. Let $\tilde p_{\mathrm
  L}$ be the solution of $f\S_{\mathrm h}(\tilde p_{\mathrm
  L};W\L)=\Delta\L$, is given by
  Definition~\ref{def:expansive_continuation}. Since $f\S_{\mathrm
  h}(\,\cdot\,;W\L)$ is strictly increasing,
  \eqref{eq:matching_onesided_expansive} gives $\tilde p_{\mathrm
  L}\ge\underph\starL$, and Lemma~\ref{lem:composite_pressure_bound} yields
  $\ph^{(1)}\ge\tilde p_{\mathrm L}\ge\underph\starL$ for the pressure
  behind the outermost elementary wave of the left family, whereas
  $\ph^{(1)}\le\ph\L$ is \eqref{eq:composite_expansive}. If that wave is a
  shock, the proof of Proposition~\ref{prop:composite_wavespeed} shows
  $v^{(1)}\le\xi_h(\tilde p_{\mathrm
  L};W\L)\le\xi_h(\underph\starL;W\L)=\underbar v\starL$, as
  $\xi_h(\,\cdot\,;W\L)$ is decreasing. The wavespeed bound is
  \eqref{eq:composite_wavespeed_bound} of
  Proposition~\ref{prop:composite_wavespeed} with $\Delta=\Delta\L$ and
  $\tilde p_h\star=\tilde p_{\mathrm L}$: its first case is the first case
  of \eqref{eq:matching_expansion_expansion}, and its third case implies
  the second case of \eqref{eq:matching_expansion_expansion}, since
  $[v\L,\xi_h(\tilde p_{\mathrm L};W\L)]\subset[v\L,\underbar v\starL]$.
  The same arguments apply to the right family.
\end{proof}
The four possible configurations of the two outer waves
are now covered, and it remains to collect the pieces:
\begin{proof}[Proof of Theorem~\ref{thm:main}]
  We discuss the left wave; the right wave is treated
  in the same way. In every configuration the shock case ($f^{(1)}=f\S$)
  of the respective estimate,
  \eqref{eq:matching_wavespeed_bound}, \eqref{eq:matching_compression_compression}
  or \eqref{eq:matching_expansion_expansion}, is an upper bound for the
  rarefaction case ($f^{(1)}=f\R$), so it suffices to dominate the shock
  case by \eqref{eq:main_wavespeed}.

  Let the left wave be compressive, so that
  Proposition~\ref{prop:matching_compression_expansion}
  or~\ref{prop:matching_compression_compression} applies. The pressure
  $\barph\starL$ of \eqref{eq:main_pressure_bound} dominates $\barph\star$
  of \eqref{eq:matching_pressure_bound}, respectively $\barph\starL$ of
  \eqref{eq:matching_onesided}, as $u\L\ge u\R$ there. Hence, by
  Lemma~\ref{lem:Q_monotone}, the hydrodynamical term of the shock case is
  dominated by that of \eqref{eq:main_wavespeed}, and $\bar v\starL\le
  v\L\le\underbar v\starL$ encloses the interval of the maximum.

  Let the left wave be expansive, so that
  Proposition~\ref{prop:matching_compression_expansion} or
  Proposition~\ref{prop:matching_expansion_expansion} applies. Then
  $\overbar q\L\ge\ph\L$ by \eqref{eq:main_onesided}, so that
  $\barph\starL\ge\ph\L$ and, by \eqref{eq:Q_monotone},
  \begin{align*}
    \big(u\L-\bar\lambda^-_{1,\mathrm h}\big)^2
    \;=\;(v\L)^2\,Q_{\mathrm h}^2\big(\barph\starL;W\L\big)
    \;\ge\;\ah^2(W\L).
  \end{align*}
  Moreover, the pressure $\underph\starL$ of \eqref{eq:main_pressure_bound}
  is dominated by the lower pressure of the respective proposition,
  $p\R-\hatpb(v\L)$ of the mirrored \eqref{eq:matching_pressure_bound},
  respectively $\underph\starL$ of \eqref{eq:matching_onesided_expansive},
  as $u\L\le u\R$ there. Since $\xi_h(\,\cdot\,;W\L)$ is decreasing,
  $\underbar v\starL$ of Theorem~\ref{thm:main} dominates the star volume
  of the proposition, and $[\bar v\starL,\underbar v\starL]$ encloses the
  interval of the maximum.
\end{proof}

\subsection{Iterative estimates for regular waves}
Proposition~\ref{prop:matching_compression_compression} holds in
considerable generality, but having to resort to the ``one-sided'' matching
condition \eqref{eq:matching_onesided} leaves something to be desired. The
bounds can be improved significantly by an iterative procedure if both
outer waves are regular waves. Here, we call an
elementary wave \emph{regular} if the fundamental derivative $\mathcal G$
of \eqref{A3} is positive along it, so that, by
Remark~\ref{rem:composite_wave}, a regular shock is compressive and a
regular rarefaction is expansive. The procedure is motivated by the
following consequence of the matching condition \eqref{eq:matching}:
\begin{lemma}[Separation of the star pressures]
  \label{lem:matching_separation}
  Let $\ph\starL$ and $\ph\starR$ be given by \eqref{eq:matching} and
  assume that both the left and the right wave are regular waves. Let
  $\tilde p_h\star$ be the solution of \eqref{eq:pressure_equation_h} and
  introduce the \emph{barotropic spread}
  \begin{align}
    \label{eq:matching_spread}
    \sigma\;:=\;\big|\hatpb\big(v\starR\big)-\hatpb\big(v\starL\big)\big|.
  \end{align}
  Then, in the case of a shock-shock configuration we have
  \begin{align}
    \label{eq:matching_separation}
    \min\big(\ph\starL,\,\ph\starR\big)\;\le\;\tilde p_h\star,
    \qquad
    \max\big(\ph\starL,\,\ph\starR\big)\;\le\;\tilde p_h\star\,+\,\sigma,
  \end{align}
  and in the case of a rarefaction-rarefaction configuration
  \begin{align}
    \label{eq:matching_separation_rr}
    \tilde p_h\star\,-\,\sigma\;\le\; \min\big(\ph\starL,\,\ph\starR\big),
    \qquad
    \tilde p_h\star\;\le\;\max\big(\ph\starL,\,\ph\starR\big).
  \end{align}
  In the case of a shock-rarefaction configuration we get
  \begin{align*}
    \min(\ph\L,\ph\R)-\sigma
    \,\le\,
    \min\big(\ph\starL,\ph\starR\big)
    \,\le\,
    \max\big(\ph\starL,\ph\starR\big)\;\le\;\max(\ph\L,\ph\R)+\sigma.
  \end{align*}
\end{lemma}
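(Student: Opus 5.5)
The proof rests on two ingredients: the ordering of the pressure functions from Lemma~\ref{lem:elementary_comparison}, and the strict monotonicity of the hydrodynamical pressure function $f_{\mathrm h}(\,\cdot\,;W)$ of \eqref{eq:pressure_function_h} on both the shock and the rarefaction branch. The tool that does the work is the auxiliary function
\begin{align*}
  F(p)\;:=\;f_{\mathrm h}(p;W\L)+f_{\mathrm h}(p;W\R)+u\R-u\L .
\end{align*}
It is strictly increasing and has $\tilde p_h\star$ as its unique zero by Theorem~\ref{thm:riemann_fan}. The second line of \eqref{eq:matching} gives $\ph\starL-\ph\starR=\hatpb(v\starR)-\hatpb(v\starL)$. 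Hence $|\ph\starL-\ph\starR|=\sigma$, so that $\max(\ph\starL,\ph\starR)=\min(\ph\starL,\ph\starR)+\sigma$. Each two-sided claim therefore reduces to a one-sided statement about $\min$ or $\max$.

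\emph{Shock--shock.} Both waves are compressive, so Lemma~\ref{lem:elementary_comparison}(a) gives $f\ge f_{\mathrm h}$ for each wave. Set $m:=\min(\ph\starL,\ph\starR)$. Monotonicity of $f_{\mathrm h}$ gives $f_{\mathrm h}(m;W\L)\le f_{\mathrm h}(\ph\starL;W\L)\le f(\ph\starL;W\L)$, and the same holds for the right wave. Adding the two and using the first line of \eqref{eq:matching} yields $F(m)\le0=F(\tilde p_h\star)$, hence $m\le\tilde p_h\star$. The bound on the maximum then follows from $\max=m+\sigma$.

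\emph{Rarefaction--rarefaction.} The argument is mirrored. Proposition~\ref{prop:rarefaction} gives $f\R\le f\R_{\mathrm h}$. Together with monotonicity at $M:=\max(\ph\starL,\ph\starR)$ this yields $F(M)\ge0$, hence $M\ge\tilde p_h\star$, and then $\min=M-\sigma\ge\tilde p_h\star-\sigma$. One point needs checking here. Across a rarefaction the star density coincides for both systems, so $f\R(\ph\star;W)$ and $f\R_{\mathrm h}(\ph\star;W)$ are both genuinely parametrized by the same $\ph\star$. This is exactly the statement of \eqref{eq:f_rarefaction_estimate}.

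\emph{Shock--rarefaction.} Take the left wave to be the shock; the other case is symmetric. Compressivity (Lemma~\ref{lem:no_weird_stuff}) gives $\ph\starL\ge\ph\L$, and expansivity gives $\ph\starR\le\ph\R$. From $\ph\starL=\ph\starR\pm\sigma$ we obtain $\ph\L\le\ph\starL\le\ph\starR+\sigma\le\ph\R+\sigma$ and $\ph\L-\sigma\le\ph\starL-\sigma\le\ph\starR\le\ph\R$. So both star pressures lie in $[\min(\ph\L,\ph\R)-\sigma,\max(\ph\L,\ph\R)+\sigma]$. This step uses only the wave types and the contact relation, not $\tilde p_h\star$.

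The main obstacle is the monotonicity step in the first two cases. We need $f_{\mathrm h}(\,\cdot\,;W)$ to be increasing across the switch at $\ph$ from the rarefaction branch to the shock branch. In the shock--shock case, $m$ may be the pressure of the \emph{other} side, and we need $m\ge\ph$ for the side in question in order to stay on the shock branch. If that fails, I would still compare through the full pressure function $f_{\mathrm h}$, which is continuous and increasing by the classical theory. I would invoke Theorem~\ref{thm:riemann_fan} and Toro's monotonicity of \eqref{eq:pressure_function_h} rather than the branchwise results.
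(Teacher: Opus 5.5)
Your proof is correct. For the shock--shock and rarefaction--rarefaction cases it is the paper's argument: bound $f$ from below (resp.\ above) by $f_{\mathrm h}$, move to $m=\min(\ph\starL,\ph\starR)$ (resp.\ $M=\max(\ph\starL,\ph\starR)$) using monotonicity of the full, branch-switching $f_{\mathrm h}$, compare with $\tilde p_h\star$, and finish with $\max=\min+\sigma$ from the contact relation. You are right that one must cross from the rarefaction branch to the shock branch, and the paper handles this exactly as you propose: it relies on the continuity and monotonicity of the full $f_{\mathrm h}$ from Theorem~\ref{thm:riemann_fan}.

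The only difference is the shock--rarefaction case. There the paper just points back to Proposition~\ref{prop:matching_compression_expansion}. That proposition's bounds are phrased through the initial-state difference $\hatpb(v\R)-\hatpb(v\L)$, not through $\sigma$. So the stated $\sigma$-form is not literally its conclusion; it follows from the same two ingredients, namely the wave-type inequalities and the contact relation. Your direct argument is self-contained and proves the stated inclusion. It uses $\ph\starL\ge\ph\L$, $\ph\starR\le\ph\R$ and $|\ph\starL-\ph\starR|=\sigma$, and for a left shock it even gives the slightly sharper interval $[\ph\L-\sigma,\ph\R+\sigma]$.
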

\begin{proof}
  In the shock-shock case, Proposition~\ref{prop:shock_function} bounds
  both pressure functions of the first equation of \eqref{eq:matching} from
  below by their hydrodynamical counterparts, so that $f\S_{\mathrm
  h}(\ph\starL;W\L)+f\S_{\mathrm h}(\ph\starR;W\R)\le u\L-u\R$. Both waves
  are shocks, hence $\ph\starL\ge\ph\L$ and $\ph\starR\ge\ph\R$ and the two
  summands coincide with $f_{\mathrm h}(\ph\starL;W\L)$ and $f_{\mathrm
  h}(\ph\starR;W\R)$. Evaluating both at $m:=\min(\ph\starL,\ph\starR)$
  decreases them, therefore
  \begin{align*}
    f_{\mathrm h}\big(m;W\L\big)+f_{\mathrm h}\big(m;W\R\big)
    \;\le\;u\L-u\R\;=\;
    f_{\mathrm h}\big(\tilde p_h\star;W\L\big)
    +f_{\mathrm h}\big(\tilde p_h\star;W\R\big),
  \end{align*}
  and strict monotonicity of the left-hand side in $m$ implies $m\le\tilde
  p_h\star$. The second equation of \eqref{eq:matching} reads
  $\ph\starL-\ph\starR=\hatpb(v\starR)-\hatpb(v\starL)$, so that
  $\max(\ph\starL,\ph\starR)=m+\sigma$$\;\le\tilde p_h\star+\sigma$. In the
  rarefaction-rarefaction case Proposition~\ref{prop:rarefaction} bounds
  both pressure functions from above by their hydrodynamical counterparts,
  so that the chain of inequalities holds with all signs reversed and with
  $M:=\max(\ph\starL,\ph\starR)$ in place of $m$. This gives $M\ge\tilde
  p_h\star$ and $\min(\ph\starL,\ph\starR)=M-\sigma\ge\tilde
  p_h\star-\sigma$. The pressure bounds for the shock-rarefaction case are
  already established in Proposition
  \ref{prop:matching_compression_expansion}.
\end{proof}

\begin{remark}
  Note that \eqref{eq:matching_separation} and
  \eqref{eq:matching_separation_rr} do not hold true for the case of
  composite waves. The step of establishing
  $\min\big(\ph\starL,\ph\starR\big)\le\tilde p_h\star$ fails for a
  composite compressive wave. Similarly, the step of establishing $\tilde
  p_h\star\le\max\big(\ph\starL,\ph\starR\big)$ fails for a composite
  expansive one.
\end{remark}

Much of the remainder this section is devoted to quantifying the barotropic
spread $\sigma$ in \eqref{eq:matching_spread} in a computable manner and
then incorporating this result into an iterative procedure that succesively
improves the bounds established in
Propositions~\ref{prop:matching_compression_expansion}
and~\ref{prop:matching_compression_compression}.
In order to guide the reader through a series of technical Lemmas we start
by summarizing the overall strategy how we want to achieve this. In case of
regular waves Propositions~\ref{prop:matching_compression_expansion}
and~\ref{prop:matching_compression_compression} yield bounds on the two
hydrodynamical star pressures, which we now seek to improve iteratively.
Say after the $n$th iteration we have found:
\vspace{-1em}
\begin{align*}
  \underbar P^{(n)}_{\mathrm L}\,\le\,
  \ph\starL\,\le\,\bar P^{(n)}_{\mathrm L},
  \qquad
  \underbar P^{(n)}_{\mathrm R}\,\le\,
  \ph\starR\,\le\,\bar P^{(n)}_{\mathrm R}.
\end{align*}
In Lemma~\ref{lem:inequality_on_density} we then convert the pressure
bounds into bounds on the specific star volumes:
\vspace{-1em}
\begin{align*}
  \underbar V^{(n)}_{\mathrm L}\;
  \le\;v\starL\;\le\;
  \bar V^{(n)}_{\mathrm L},
  \quad
  \underbar V^{(n)}_{\mathrm R}\;
  \le\;v\starR\;\le\;
  \bar V^{(n)}_{\mathrm R},
\end{align*}
We then convert the pressure and volume bounds into an estimate quantifying
the mismatch between the hydrodynamical and full pressure functions (Lemma
~\ref{lem:deviation_bounds}):
\begin{align*}
  \begin{cases}
    \begin{aligned}
      -\eta\R(\underbar P^{(n)}_{\mathrm L},\overbar V^{(n)}_{\mathrm L};W\L)
      \,\le\,
      f\big(\ph\starL;W\L\big) - f_{\mathrm h}\big(\ph\starL;W\L\big)
      \,\le\,
      \eta\S(\overbar P^{(n)}_{\mathrm L},\underbar V^{(n)}_{\mathrm L};W\R),
      \\[0.50em]
      -\eta\R(\underbar P^{(n)}_{\mathrm R},\overbar V^{(n)}_{\mathrm R};W\R)
      \,\le\,
      f\big(\ph\starR;W\R\big) - f_{\mathrm h}\big(\ph\starR;W\R\big)
      \,\le\,
      \eta\S(\overbar P^{(n)}_{\mathrm R},\underbar V^{(n)}_{\mathrm R};W\R).
    \end{aligned}
  \end{cases}
\end{align*}
Which in turn allows us to construct new bounds on the pressure quantifying
this mismatch and the barotropic spread; see
Corollary~\ref{cor:deviation_star_pressures} and
Proposition~\ref{prop:matching_bootstrap}:
\begin{align*}
  \underbar\pi^{\ast(n)}-\max\big(\underbar\delta^{(n)},0\big)
  \,&\le\,\ph\starL\,\le\,
  \bar\pi^{\ast(n)}+\max\big(\bar\delta^{(n)},0\big),
  \\[0.25em]
  \underbar\pi^{\ast(n)}-\max\big(\bar\delta^{(n)},0\big)
  \,&\le\,\ph\starR\,\le\,
  \bar\pi^{\ast(n)}+\max\big(\underbar\delta^{(n)},0\big).
\end{align*}
These new bounds are then used to improve the bounds on the pressure by
taking the better estimate. The construction can then be iterated as long
as the pressure bounds are improving; see
Proposition~\ref{prop:matching_bootstrap}.
The outlined iterative procedure requires various smallness assumptions on
the barotropic component to hold true. We start by establishing computable
bounds on $v\star$:
\begin{lemma}[Computable bounds on star volumes]
  \label{lem:matching_volume_bound}
  Let $W$ be a state with specific volume $v$ and hydrodynamical pressure
  $\ph$, connected to a star state by a regular shock with
  hydrodynamical star pressure $\ph\star\ge\ph$, and let $v\star$ and
  $\tilde v\star$ be given by \eqref{eq:rankine_hugoniot} and
  \eqref{eq:rankine_hugoniot_h}. Denote by $\xi_h(\,\cdot\,;W)$ the
  implicit solution function of \eqref{eq:rankine_hugoniot_h} and assume
  that it is non-increasing. Let $\underbar P\,\le\,\ph\star\le\overbar P$
  be bounds on the pressure and introduce the following quantities:
  \begin{align}
    \label{eq:matching_volume_data}
    \begin{cases}
      \begin{aligned}
        \underbar c\,&:=\,v-\xi_h(\underbar P;W),
        &\quad
        \overbar c\,&:=\,v-\xi_h(\overbar P;W),
        \\[0.25em]
        m\;&:=\!\!\max_{\xi_h(\overbar P;W)\,\le\,\tau\le v}\!
        \frac{\hatab^2(\tau)}{\tau^2},
        &\quad
        d\;&:=\;2\big(p_\infty+\ph\big).
      \end{aligned}
    \end{cases}
  \end{align}
  Assume that the hydrodynamical pressure $\ph$ is sufficiently large such
  that $4vm\le d$ holds true. Then,
  \begin{align}
    \label{eq:matching_volume_bound}
    v\star\;\ge\;\underbar\Xi(\overbar P;W):=v-
    \frac{2\,\overbar c}{1+\sqrt{1-4\,m\,\overbar c/d}}
    \qquad\qquad\text{for}\quad4vm\le d.
  \end{align}
  Without any smallness condition we have, in
  addition, the computable upper bound
  \begin{align}
    \label{eq:matching_volume_bound_upper}
    v\star\;\le\;\overbar\Xi(\underbar P;W):=v-
    \frac{2\,\underbar c}{1+\sqrt{1+4\,m\,\underbar c/d}}.
  \end{align}
  If, instead, $W$ is connected to the star state by
  a regular rarefaction, then $v\star=\xi_h(\ph\star;W)$, where
  $\xi_h(\,\cdot\,;W)$ now denotes the implicit solution function of
  \eqref{eq:constant_entropy}, so that the pressure bounds
  $\underbar P\le\ph\star\le\overbar P$ yield the
  computable bounds
  \begin{align}
    \label{eq:matching_volume_bound_rarefaction}
    \underbar\Xi(\overbar P;W)\;:=\;\xi_h(\overbar P;W)\;\le\;v\star\;\le\;\xi_h(\underbar P;W)\;=:\;\overbar\Xi(\underbar P;W),
  \end{align}
  irrespective of any further assumption on
  $\hatpb$.
\end{lemma}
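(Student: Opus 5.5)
The plan is to derive a quadratic inequality for the compression $x:=v-v\star$ from identity \eqref{eq:volume_identity} of Lemma~\ref{lem:inequality_on_density}, applied with $W$ in place of $W\L$:
\begin{align*}
  \big(\tilde v\star-v\star\big)\,\tfrac12\big(2p_\infty+\ph\star+\ph\big)
  +\int_{v\star}^{\tilde v\star}\!\big(\partial_v\hateh(\tau,\ph\star)-p_\infty\big)\,\mathrm d\tau
  \;=\;\hatAb(v\star,v).
\end{align*}
The integral term has the sign of $\tilde v\star-v\star$, and $\tfrac12(2p_\infty+\ph\star+\ph)\ge d/2$ because $\ph\star\ge\ph$. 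Hence $|\tilde v\star-v\star|\,d/2\le|\hatAb(v\star,v)|$. To control $\hatAb$ I will use the fourth line of \eqref{eq:sign_on_ab_proof}. Its integrand is $\partial_v\hatpb(\tau)-\partial_v\hatpb(v\star)$, and since $\partial_v\hatpb=-\hatab^2/\tau^2\in[-m,0]$ on the relevant interval, the integrand is at most $m$ in absolute value. So $|\tfrac{\mathrm d}{\mathrm dw}\hatAb(w,v)|\le\tfrac12 m(v-w)$. Integrating from $w=v$, where $\hatAb$ vanishes, gives $|\hatAb(v\star,v)|\le\tfrac14 m x^2$, and therefore $|\tilde v\star-v\star|\le m x^2/(2d)\le m x^2/d$. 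Writing $\tilde c:=v-\tilde v\star$, this reads $|x-\tilde c|\le m x^2/d$.

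For the lower bound \eqref{eq:matching_volume_bound} I split into two cases. If $\tilde v\star\le v\star$, then $v\star\ge\tilde v\star\ge\xi_h(\overbar P;W)$ because $\xi_h$ is non-increasing, and $\underbar\Xi(\overbar P;W)\le v-\overbar c=\xi_h(\overbar P;W)$, so there is nothing to prove. If $v\star<\tilde v\star$, then $x-\tilde c\le m x^2/d$, i.e.\ $(m/d)x^2-x+\tilde c\ge0$. Thus $x$ lies at or below the smaller root $2\tilde c/(1+\sqrt{1-4m\tilde c/d})$, or at or above the larger root, which is at least $d/(2m)\ge 2v$ by $4vm\le d$. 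The larger branch is impossible since $x<v$. Since $\tilde c\le\overbar c$ and the smaller root is increasing in $\tilde c$, this gives \eqref{eq:matching_volume_bound}; note that $4vm\le d$ together with $\overbar c\le v$ also keeps the square root real.

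The main obstacle is that $m$ is a maximum over $[\xi_h(\overbar P;W),v]$, whereas the estimate on $\hatAb$ needs it over $[v\star,v]$, and in the case $v\star<\tilde v\star$ the point $v\star$ may a priori lie to the left of $\xi_h(\overbar P;W)$. I would resolve this by a continuation argument along the Hugoniot curve $\ph\star\mapsto v\star=\xi(\ph\star)$ of Proposition~\ref{prop:curve_is_fine}, which is continuous and starts at $v\star=v$ for $\ph\star=\ph$. As long as $v\star\ge\xi_h(\overbar P;W)$, the argument above yields $x\le 2\overbar c/(1+\sqrt{1-4m\overbar c/d})$. Moreover, the gap between the two roots, together with continuity, prevents $x$ from jumping to the upper branch. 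Finally, $v-2\overbar c/(1+\sqrt{\cdot})\ge\xi_h(\overbar P;W)$ fails only in a degenerate limit, so the first exit time from $[\xi_h(\overbar P;W),v]$ can never be reached. The case $\overbar c=vd/(4vm)$ at the boundary of the smallness condition needs a short separate comment.

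For the upper bound \eqref{eq:matching_volume_bound_upper} the situation is benign. If $v\star\le\tilde v\star$, then $v\star\le\tilde v\star\le\xi_h(\underbar P;W)=v-\underbar c\le\overbar\Xi(\underbar P;W)$. If $v\star>\tilde v\star$, then $[v\star,v]\subset[\tilde v\star,v]\subset[\xi_h(\overbar P;W),v]$, so the definition of $m$ applies directly, and $\tilde c-x\le m x^2/d$ gives $(m/d)x^2+x-\tilde c\ge0$. Hence $x\ge 2\tilde c/(1+\sqrt{1+4m\tilde c/d})\ge 2\underbar c/(1+\sqrt{1+4m\underbar c/d})$ by monotonicity in $\tilde c\ge\underbar c$, with no smallness condition needed. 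For a regular rarefaction, \eqref{eq:constant_entropy} involves only hydrodynamical quantities, so $v\star=\xi_h(\ph\star;W)$ exactly. The isentrope is monotone by \eqref{A0}, so \eqref{eq:matching_volume_bound_rarefaction} follows immediately from $\underbar P\le\ph\star\le\overbar P$.
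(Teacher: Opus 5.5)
Your handling of the case $\tilde v\star\le v\star$, of the upper bound \eqref{eq:matching_volume_bound_upper}, of the rarefaction, and your exclusion of the larger root via $x<v\le d/(4m)$ are all fine. For the upper bound, indeed $[v\star,v]\subset[\tilde v\star,v]\subset[\xi_h(\overbar P;W),v]$.

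The gap is in the lower bound \eqref{eq:matching_volume_bound} for $v\star<\tilde v\star$. Your two-sided estimate $|\hatAb(v\star,v)|\le\frac14 m x^2$ needs $\hatab^2(\tau)/\tau^2\le m$ on all of $[v\star,v]$. The continuation argument meant to secure $v\star\ge\xi_h(\overbar P;W)$ does not close. At a first exit point, where $x=\overbar c$, your inequality gives only $x\le 2\overbar c/(1+\sqrt{1-4m\overbar c/d})$, and the right-hand side is at least $\overbar c$, so there is no contradiction. Your claim that $v-2\overbar c/(1+\sqrt{1-4m\overbar c/d})\ge\xi_h(\overbar P;W)$ "fails only in a degenerate limit" has the inequality reversed. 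You observed yourself in the first case that $\underbar\Xi(\overbar P;W)\le\xi_h(\overbar P;W)$, with equality only if $m\overbar c=0$.

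The bound to be proved explicitly allows $v\star<\xi_h(\overbar P;W)$, and this does occur. For $\ph\star=\overbar P$ and $\hatAb(v\star,v)>0$ one has $v\star<\tilde v\star=\xi_h(\overbar P;W)$. On $[v\star,\xi_h(\overbar P;W))$ the quantity $m$ says nothing about $\hatab$; the barotropic bump may sit exactly there. So no Taylor-type bound on $\hatAb$ in terms of $m$ is available, and the continuation cannot be pushed past the exit point.

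The missing idea is Lemma~\ref{lem:estimate_on_quotient}, which exists precisely to avoid this region. Replace your two-sided bound by the one-sided estimate behind \eqref{eq:hatab_estimate}, which uses only the monotonicity $\hatpb(\xi)\le\hatpb(v\star)$:
\[
  d\,(\tilde v\star-v\star)\;\le\;(2p_\infty+\ph\star+\ph)(\tilde v\star-v\star)\;\le\;2\hatAb(v\star,v)\;\le\;(v-v\star)\big(\hatpb(v\star)-\hatpb(v)\big).
\]
The condition $4vm\le d$ gives unique solvability of \eqref{eq:rankine_hugoniot} through Lemma~\ref{lem:hugoniot_solvable}, so Lemma~\ref{lem:estimate_on_quotient} applies. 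It bounds the chord slope $(\hatpb(v\star)-\hatpb(v))/(v-v\star)$ by the maximum of $\hatab^2(\tau)/\tau^2$ over $[\tilde v\star,v]$, not over $[v\star,v]$. That maximum is at most $m$, since $\tilde v\star=\xi_h(\ph\star;W)\ge\xi_h(\overbar P;W)$.

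Its proof is a contradiction argument exploiting that no second Hugoniot solution lies in $(v\star,v]$. It therefore uses the Rankine--Hugoniot structure rather than any control of $\hatpb$ on $[v\star,\tilde v\star)$. This yields $\tilde v\star-v\star\le\frac md x^2$ directly, with no continuation argument, and your quadratic root analysis then finishes the proof.
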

\begin{proof}
  \eqref{eq:rankine_hugoniot} is uniquely solvable, as the smallness
  condition $4vm\le d$ implies the criterion of
  Lemma~\ref{lem:hugoniot_solvable}. We proceed by first establishing
  \begin{align}
    \label{eq:matching_volume_quadratic}
    \tilde v\star-v\star\;\le\;\frac{m}{d}\,\big(v-v\star\big)^2.
  \end{align}
  This is trivial for $\tilde v\star\le v\star$. Thus assume that $v\star <
  \tilde v\star$. A careful inspection of the proof of
  Lemma~\ref{lem:inequality_on_density} establishes the inequalities
  \begin{align*}
    \big(\tilde v\star-v\star\big)\,
    \underbrace{\big(2p_\infty+\ph\star+\ph\big)}_{\ge\,d}
    \;\le\;2\,\hatAb(v\star,v)\;\le\;
    \big(v-v\star\big)\, \big(\hatpb(v\star)-\hatpb(v)\big).
  \end{align*}
  The second inequality is a consequence of \eqref{eq:hatab_estimate}, but
  using $\hatpb(\xi)\le\hatpb(v\star)$ to establish a bound from below.
  Finally, Lemma~\ref{lem:estimate_on_quotient} establishes
  \begin{align*}
    \frac{\hatpb(v\star)-\hatpb(v)}{v-v\star}
    \;\le\;
    \max_{\tilde v\star\le\tau\le v}\hatab^2(\tau)/\tau^2
    \;\le\;m.
  \end{align*}
  Since $\xi_h(\overbar P;W)\le\tilde v\star$ we also have $v-\tilde v\star\le\overbar c$,
  so that \eqref{eq:matching_volume_quadratic} turns into
  \begin{align*}
    \frac{m}{d}\,\big(v-v\star\big)^2\,-\,\big(v-v\star\big)
    \,+\,\overbar c\;=:\;\delta\ge\;0.
  \end{align*}
  Solving this quadratic equation in $v-v\star$ establishes
  \begin{align*}
    v\,-\,v\star\;=\;\frac{2\,(\overbar c-\delta)}
    {1+\sqrt{1-4\,m\,(\overbar c-\delta)/d}}
    \;\le\;\frac{2\,\overbar c}
    {1+\sqrt{1-4\,m\,\overbar c/d}}
    \;=\;v\,-\,\underbar\Xi(\overbar P;W).
  \end{align*}
  The upper bound
  \eqref{eq:matching_volume_bound_upper} follows along the same lines. For
  $\hatAb(v\star,v)\ge0$ we have $v\star\le\tilde v\star\le\xi_h(\underbar
  P;W)=v-\underbar c\le\overbar\Xi(\underbar P;W)$. For
  $\hatAb(v\star,v)\le0$, in turn, $\tilde v\star\le v\star$ and the above
  chain of inequalities, applied to $-\hatAb(v\star,v)$, yields
  $v\star-\tilde v\star\le\frac{m}{d}(v-v\star)^2$. Since $\underbar c\le
  v-\tilde v\star$ this results in
  \begin{align*}
    \frac{m}{d}\,\big(v-v\star\big)^2
    \,+\,\big(v-v\star\big)\,-\,\underbar c\;\ge\;0,
  \end{align*}
  and solving this quadratic equation in $v-v\star$
  gives $v-v\star\ge2\,\underbar c\,/\,(1+\sqrt{1+4\,m\,\underbar c/d})$.
  For a regular rarefaction, in turn, the full
  and the reduced problem share the isentrope \eqref{eq:constant_entropy},
  which involves hydrodynamical quantities only, see
  Section~\ref{sec:left_rarefaction}. Hence $v\star=\xi_h(\ph\star;W)$ and
  \eqref{eq:matching_volume_bound_rarefaction} follows from the
  monotonicity of $\xi_h$.
\end{proof}
Lemma~\ref{lem:matching_volume_bound} also yields computable bounds on the
deviation of the pressure functions \eqref{eq:f_shock} and
\eqref{eq:f_rarefaction} from their hydrodynamical counterparts. The bounds
do not require any knowledge of the wave type.
\begin{lemma}[Deviation of the pressure functions]
  \label{lem:deviation_bounds}
  Let $W$ be a state with specific volume $v$ and hydrodynamical pressure
  $\ph$, connected to a star state by a regular wave with
  hydrodynamical star pressure $\ph\star$ and specific volume $v\star$. Let
  $\underbar P\le\ph\star\le\overbar P$ be bounds on the pressure, let
  $0<\underbar V\le v\star\le\overbar V$ be the volume bounds of
  Lemma~\ref{lem:matching_volume_bound}, and let $m$ and $d$ be given by
  \eqref{eq:matching_volume_data}. Introduce the barotropic pressure
  changes across the volume bracket,
  \begin{align}
    \label{eq:deviation_delta_b}
    \Delta_{\mathrm b}^+\;:=\;\max\big(\hatpb(\underbar V)-\hatpb(v),\,0\big),
    \qquad
    \Delta_{\mathrm b}^-\;:=\;\max\big(\hatpb(v)-\hatpb(\overbar V),\,0\big),
  \end{align}
  as well as
  \vspace{-1em}
  \begin{align}
    \label{eq:deviation_eta}
    \begin{cases}
      \begin{aligned}
        \eta\S(\overbar P,\underbar V;W)\;&:=\;\frac{v}{2\,\ah(W)}\,
        \frac{d\,+\,\overbar P-\ph}{d\,-\,\Delta_{\mathrm
        b}^+}\,\Delta_{\mathrm b}^+,
        \\[0.5em]
        \eta\R(\underbar P,\overbar V;W)\;&:=\;\frac{\overbar V}{2\,\ah(\overbar V,\underbar P)}\,
        \Delta_{\mathrm b}^-,
      \end{aligned}
    \end{cases}
  \end{align}
  where $\ah(\overbar V,\underbar P)$ denotes the hydrodynamical speed of
  sound at specific volume $\overbar V$ and hydrodynamical pressure
  $\underbar P$. Note that $\eta\R(\underbar P,\overbar V;W)=0$ for
  $\underbar P\ge\ph$ and correspondingly $\eta\S(\overbar P,\underbar
  V;W)=0$ for $\overbar P\le\ph$. Assume that $4vm\le d$ and
  $\Delta_{\mathrm b}^+<d$. Then,
  \begin{align}
    \label{eq:deviation_bound}
    \begin{cases}
      \begin{aligned}
        0\;&\le\;f\S(\ph\star;W)-f\S_{\mathrm h}(\ph\star;W)\;\le\;\eta\S(\overbar P,\underbar V;W)
        &\quad&\text{for a shock},
        \\[0.50em]
        0\;&\le\;f\R_{\mathrm h}(\ph\star;W)-f\R(\ph\star;W)\;\le\;\eta\R(\underbar P,\overbar V;W)
        &\quad&\text{for a rarefaction}.
      \end{aligned}
    \end{cases}
  \end{align}
\end{lemma}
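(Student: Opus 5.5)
The plan is to treat the shock and the rarefaction cases separately, each starting from an exact identity relating $f$ to $f_{\mathrm h}$ at the same hydrodynamical star pressure.

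\emph{Shock case.} The lower bound $0\le f\S-f\S_{\mathrm h}$ is Proposition~\ref{prop:shock_function}. For the upper bound we write $f\S=(p\star-p)^{1/2}(v-v\star)^{1/2}$ and $f\S_{\mathrm h}=(\ph\star-\ph)^{1/2}(v-\tilde v\star)^{1/2}$, so that
\begin{align*}
  f\S-f\S_{\mathrm h}\;=\;\frac{(p\star-p)(v-v\star)-(\ph\star-\ph)(v-\tilde v\star)}{f\S+f\S_{\mathrm h}}.
\end{align*}
The denominator is bounded below by $2f\S_{\mathrm h}=2(\ph\star-\ph)^{1/2}(v-\tilde v\star)^{1/2}$, and, by Lemma~\ref{lem:Q_monotone}, $f\S_{\mathrm h}=Q_{\mathrm h}(v-\tilde v\star)\ge(\ah(W)/v)(v-\tilde v\star)$. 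The numerator splits as
\begin{align*}
  (\ph\star-\ph)(\tilde v\star-v\star)\;+\;\big(\hatpb(v\star)-\hatpb(v)\big)(v-v\star).
\end{align*}
Since $v\star\ge\underbar V$, the second summand is at most $\Delta_{\mathrm b}^+(v-v\star)$. For the first summand we use the identity \eqref{eq:volume_identity} together with the bound $2\hatAb(v\star,v)\le(v-v\star)(\hatpb(v\star)-\hatpb(v))$ from the proof of Lemma~\ref{lem:matching_volume_bound}. This gives $(\tilde v\star-v\star)\,d\le(v-v\star)\Delta_{\mathrm b}^+$, and hence $\tilde v\star-v\star\le(v-v\star)\Delta_{\mathrm b}^+/d$. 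Consequently
\begin{align*}
  v-v\star\;\le\;(v-\tilde v\star)+(v-v\star)\Delta_{\mathrm b}^+/d,
  \qquad\text{that is,}\qquad
  v-v\star\;\le\;\frac{d}{d-\Delta_{\mathrm b}^+}\,(v-\tilde v\star),
\end{align*}
which is where $\Delta_{\mathrm b}^+<d$ is needed.

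Collecting terms, the numerator is at most
\begin{align*}
  \Delta_{\mathrm b}^+(v-v\star)\Big(\frac{\ph\star-\ph}{d}+1\Big)
  \;\le\;\Delta_{\mathrm b}^+\,\frac{d+\overbar P-\ph}{d-\Delta_{\mathrm b}^+}\,(v-\tilde v\star).
\end{align*}
Dividing by the denominator bound $2(\ah(W)/v)(v-\tilde v\star)$ cancels $v-\tilde v\star$ and yields exactly $\eta\S$. The main obstacle is getting the constants to line up in this step, in particular justifying the factor $d+\overbar P-\ph$, which rests on $\ph\star\le\overbar P$. The case $\tilde v\star\le v\star$ is easier, since the first summand is then nonpositive.

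\emph{Rarefaction case.} The lower bound is Proposition~\ref{prop:rarefaction}. For the upper bound we use $\sqrt{\ah^2+\ab^2}-\ah=\ab^2/(\sqrt{\ah^2+\ab^2}+\ah)\le\ab^2/(2\ah)$ under the integral, which gives
\begin{align*}
  f\R_{\mathrm h}-f\R\;\le\;\int_{\rho\star}^{\rho}\frac{\ab^2}{2\ah\rho}\,\mathrm d\rho.
\end{align*}
Along an expansive regular isentrope $\ah$ decreases with decreasing density when $\mathcal G>0$, so it is minimal at the star state. That state has $v\star\le\overbar V$ and $\ph\star\ge\underbar P$, which gives $\ah\ge\ah(\overbar V,\underbar P)$; the monotonicity of $\ah$ in these arguments must be checked. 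We also bound $1/\rho\le\overbar V$. The remaining integral $\int\ab^2\,\mathrm d\rho=\pb(\rho)-\pb(\rho\star)=\hatpb(v)-\hatpb(v\star)\le\Delta_{\mathrm b}^-$, which gives $\eta\R$. If $\underbar P\ge\ph$ no rarefaction occurs, so the bound vanishes, consistent with the remark in the statement.
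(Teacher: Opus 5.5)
Your shock case is correct and follows the paper's argument. You use the same splitting of $(f\S)^2-(f\S_{\mathrm h})^2$, the same estimate $\max(\tilde v\star-v\star,0)\le(v-v\star)\Delta_{\mathrm b}^+/d$ from \eqref{eq:volume_identity} and \eqref{eq:hatab_estimate}, and the same lower bound $f\S_{\mathrm h}\ge(\ah(W)/v)(v-\tilde v\star)$ from Lemma~\ref{lem:Q_monotone}. The only difference is that you convert $v-v\star$ into $v-\tilde v\star$ in the numerator, where the paper converts in the denominator; this is immaterial.

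The rarefaction case has a genuine gap where you bound $1/(\rho\,\ah)$ one factor at a time. Your claim that $\ah$ decreases with decreasing density along the isentrope whenever $\mathcal G>0$ is false in general. We have $\frac{v^3\,\partial_v\partial_v\hatph|_{\sh}}{2\ah^2}=1+\frac{\rho}{\ah}\,\partial_\rho\ah|_{\sh}$. The convexity in \eqref{A0} only makes this quantity positive, whereas $\ah$ is non-decreasing in $\rho$ only where it is at least one. Regularity of the composite wave concerns $\partial_v\partial_v(\hatph+\hatpb)$ and gives nothing here either.

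Your second step, getting $\ah(v\star,\ph\star)\ge\ah(\overbar V,\underbar P)$ from monotonicity of $\ah$ in each argument separately, also fails. Already for the Noble--Abel stiffened gas, $\ah^2=\gamma(\ph+p_\infty)v^2/(v-b)$ increases in $v$ at fixed $\ph$ for $v>2b$. So passing from $v\star$ to $\overbar V\ge v\star$ pushes $\ah$ the wrong way.

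What \eqref{A0} does give is monotonicity of the product. On the fan, $\frac{1}{\rho\,\ah}=\frac{\tau}{\hatah(\tau)}=\big(-\partial_v\hatph(\tau)|_{\sh}\big)^{-1/2}$ is non-decreasing in $\tau$, so its maximum over $[v,v\star]$ is attained at $v\star$. Moreover, $\overbar V=\xi_h(\underbar P;W)$ lies on the same isentrope through $W$ (see \eqref{eq:matching_volume_bound_rarefaction}), with $v\star\le\overbar V$. The same monotonicity then yields $v\star/\hatah(v\star)\le\overbar V/\hatah(\overbar V)=\overbar V/\ah(\overbar V,\underbar P)$. With this replacement, your estimates $\sqrt{\ah^2+\ab^2}-\ah\le\ab^2/(2\ah)$ and $\hatpb(v)-\hatpb(v\star)\le\Delta_{\mathrm b}^-$ give exactly $\eta\R$, which is how the paper argues.
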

\begin{proof}
  The lower bounds are Propositions~\ref{prop:shock_function}
  and~\ref{prop:rarefaction}. We first discuss the shock case. Set
  $\Delta_{\mathrm b}:=\hatpb(v\star)-\hatpb(v)$. As $\hatpb$ is
  non-increasing and $\underbar V\le v\star\le v$, we have
  $0\le\Delta_{\mathrm b}\le\Delta_{\mathrm b}^+<d$. Definitions
  \eqref{eq:f_shock} and \eqref{eq:f_shock_h} give
  \begin{align*}
    \big(f\S\big)^2-\big(f\S_{\mathrm h}\big)^2
    \;=\;(\ph\star-\ph)\big(\tilde v\star-v\star\big)
    \,+\,\Delta_{\mathrm b}\,\big(v-v\star\big).
  \end{align*}
  A close inspection of \eqref{eq:volume_identity} and
  \eqref{eq:hatab_estimate} reveals
  \begin{align*}
    \tilde v\star-v\star \,\le\,
    (v-v\star)\,\frac{\hatpb(v\star)-\hatpb(v)}{2p_\infty+\ph\star+\ph},
    \quad\text{implying}\quad
    \max(\tilde v\star-v\star,0)\le(v-v\star)\frac{\Delta_{\mathrm b}}{d}.
  \end{align*}
  We thus get
  \vspace{-1em}
  \begin{align*}
    \big(f\S\big)^2-\big(f\S_{\mathrm h}\big)^2
    \;\le\;\big(v-v\star\big)\,
    \frac{d+\ph\star-\ph}{d}\,\Delta_{\mathrm b}.
  \end{align*}
  On the other hand, the shock speed is bounded from below by the speed of
  sound, see \eqref{eq:Q_monotone} and Lemma~\ref{lem:Q_monotone}. This implies
  \begin{align*}
    f\S_{\mathrm h}\;=\;Q_{\mathrm h}\,\big(v-\tilde v\star\big)
    \;\ge\;\frac{\ah(W)}{v}\,\big(v-\tilde v\star\big)
    \;\ge\;\frac{\ah(W)}{v}\,\big(v-v\star\big)\,
    \frac{d-\Delta_{\mathrm b}}{d},
  \end{align*}
  where we have used $v-\tilde v\star\ge(v-v\star)-\max(\tilde
  v\star-v\star,0)$ in the last step. Since $f\S\ge f\S_{\mathrm h}\ge0$,
  \vspace{-1em}
  \begin{align*}
    f\S-f\S_{\mathrm h}
    \;=\;\frac{\big(f\S\big)^2-\big(f\S_{\mathrm h}\big)^2}
    {f\S+f\S_{\mathrm h}}
    \;\le\;\frac{\big(f\S\big)^2-\big(f\S_{\mathrm h}\big)^2}
    {2f\S_{\mathrm h}}
    \;\le\;\frac{v}{2\,\ah(W)}\,
    \frac{d+\ph\star-\ph}{d-\Delta_{\mathrm b}}\,\Delta_{\mathrm b}.
  \end{align*}
  The right-hand side is non-decreasing in
  $\Delta_{\mathrm b}\le\Delta_{\mathrm b}^+$ and in $\ph\star\le\overbar
  P$, which is the first bound in \eqref{eq:deviation_bound}.

  For the rarefaction case, let $\hatah(\tau)$ denote the hydrodynamical
  speed of sound along the isentrope through $W$ as a function of specific
  volume $\tau$. Using the definitions \eqref{eq:f_rarefaction} and
  \eqref{eq:f_rarefaction_h}, together with the observation that
  \begin{align*}
    \sqrt{\hatah^2(\tau)+\hatab^2(\tau)}-\hatah(\tau)\,\le\,
    \frac{\hatab^2(\tau)}{2\hatah(\tau)}
    \,=\,
    \frac{-\tau^2\,\partial_v\hatpb(\tau)}{2\hatah(\tau)}
  \end{align*}
  we calculate
  \vspace{-1em}
  \begin{align*}
    f\R_{\mathrm h}-f\R
    \;&=\;\int_v^{v\star}
    \frac{\sqrt{\hatah^2(\tau)+\hatab^2(\tau)}-\hatah(\tau)}{\tau}
    \,\mathrm d\tau
    \\
    \;&\le\;\int_v^{v\star}\big(-\partial_v\hatpb(\tau)\big)\,
    \frac{\tau}{2\,\hatah(\tau)}\,\mathrm d\tau
    \\
    \;&\le\;\frac{v\star}{2\,\hatah(v\star)}\,
    \big(\hatpb(v)-\hatpb(v\star)\big).
  \end{align*}
  where the last step uses the fact that
  $\hatah(\tau)/\tau=(-\partial_v\hatph)^{1/2}$ is non-increasing in $\tau$
  along the isentrope by \eqref{A0}. The same monotonicity,
  $v\star\le\overbar V$, and $\hatpb(v)-\hatpb(v\star)\le\Delta_{\mathrm
  b}^-$ give the second bound in \eqref{eq:deviation_bound}, since
  $\overbar V=\xi_h(\underbar P;W)$ lies on the isentrope at pressure
  $\underbar P$, see \eqref{eq:matching_volume_bound_rarefaction}, so that
  $\hatah(\overbar V)=\ah(\overbar V,\underbar P)$.
\end{proof}
\begin{corollary}[Computable bounds on star pressures]
  \label{cor:deviation_star_pressures}
  Let $\ph\starL$ and $\ph\starR$ be given by \eqref{eq:matching} and
  assume that both the left and the right wave are regular waves. Let
  the assumptions of Lemma~\ref{lem:deviation_bounds} be satisfied for
  $W\L$ with bounds $\underbar P_{\mathrm L}$, $\overbar P_{\mathrm L}$,
  $\underbar V_{\mathrm L}$, $\overbar V_{\mathrm L}$ and for $W\R$ with
  bounds $\underbar P_{\mathrm R}$, $\overbar P_{\mathrm R}$, $\underbar
  V_{\mathrm R}$, $\overbar V_{\mathrm R}$, and set
  \begin{align}
    \label{eq:deviation_eta_sum}
    \begin{cases}
      \begin{aligned}
        \underbar\eta\;&:=\;
        -\eta\S(\overbar P_{\mathrm L},\underbar V_{\mathrm L};W\L)
        -\eta\S(\overbar P_{\mathrm R},\underbar V_{\mathrm R};W\R),
        \\[0.25em]
        \bar\eta\;&:=\;
        \eta\R(\underbar P_{\mathrm L},\overbar V_{\mathrm L};W\L)
        +\eta\R(\underbar P_{\mathrm R},\overbar V_{\mathrm R};W\R).
      \end{aligned}
    \end{cases}
  \end{align}
  For $\eta\in\mathbb R$ let $\Phi(\eta;W\L,W\R)$
  denote the solution of the shifted hydrodynamical pressure equation
  \begin{align}
    \label{eq:pressure_equation_shifted}
    f_{\mathrm h}\big(\Phi(\eta;W\L,W\R);W\L\big)
    \,+\,f_{\mathrm h}\big(\Phi(\eta;W\L,W\R);W\R\big)
    \,+\,u\R-u\L\,-\,\eta\;=\;0,
  \end{align}
  so that $\Phi(0;W\L,W\R)=\tilde p_h\star$. Then,
  provided the respective shifted equation admits a solution,
  \begin{align}
    \label{eq:deviation_star_pressures}
    \Phi(\underbar\eta;W\L,W\R)\;\le\;\max\big(\ph\starL,\ph\starR\big),
    \qquad
    \min\big(\ph\starL,\ph\starR\big)\;\le\;\Phi(\bar\eta;W\L,W\R).
  \end{align}
\end{corollary}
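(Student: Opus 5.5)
The plan is to compare the full matching condition \eqref{eq:matching} with the shifted hydrodynamical equation \eqref{eq:pressure_equation_shifted}. We treat the deviations $f-f_{\mathrm h}$ on each side as perturbations of the right-hand side, and then use strict monotonicity of the hydrodynamical pressure function in the pressure argument. Introduce the abbreviation
\begin{align*}
  F(p)\;:=\;f_{\mathrm h}(p;W\L)+f_{\mathrm h}(p;W\R),
\end{align*}
where $f_{\mathrm h}$ is the hydrodynamical pressure function \eqref{eq:pressure_function_h}. By Theorem~\ref{thm:riemann_fan}, each of its two summands is continuous and strictly increasing in $p$, and so is $F$. By definition, $\Phi(\eta;W\L,W\R)$ is the unique solution of $F(\Phi)=u\L-u\R+\eta$, whenever such a solution exists.

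First I would bracket the deviation on each side. Since both waves are regular, the left wave is either a regular shock or a regular rarefaction. In the shock case it is compressive, so $\ph\starL\ge\ph\L$, and the pressure functions are the shock branches $f\S$ and $f\S_{\mathrm h}$. In the rarefaction case it is expansive, so $\ph\starL\le\ph\L$, and we are on the rarefaction branches. In both cases, the hydrodynamical branch evaluated at $\ph\starL$ is exactly $f_{\mathrm h}(\ph\starL;W\L)$. Lemma~\ref{lem:deviation_bounds} then gives $0\le f-f_{\mathrm h}\le\eta\S$ for a shock and $-\eta\R\le f-f_{\mathrm h}\le0$ for a rarefaction. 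Both $\eta\S$ and $\eta\R$ are nonnegative by \eqref{eq:deviation_eta}. Hence, irrespective of the wave type,
\begin{align*}
  -\eta\R(\underbar P_{\mathrm L},\overbar V_{\mathrm L};W\L)\;\le\;
  f(\ph\starL;W\L)-f_{\mathrm h}(\ph\starL;W\L)\;\le\;
  \eta\S(\overbar P_{\mathrm L},\underbar V_{\mathrm L};W\L),
\end{align*}
and the same holds on the right.

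Next I would insert these brackets into the first equation of \eqref{eq:matching}, which reads $f(\ph\starL;W\L)+f(\ph\starR;W\R)=u\L-u\R$. Set $D:=f_{\mathrm h}(\ph\starL;W\L)+f_{\mathrm h}(\ph\starR;W\R)$. Adding the two brackets gives
\begin{align*}
  u\L-u\R+\underbar\eta\;\le\;D\;\le\;u\L-u\R+\bar\eta,
\end{align*}
with $\underbar\eta$ and $\bar\eta$ as in \eqref{eq:deviation_eta_sum}. Now let $M:=\max(\ph\starL,\ph\starR)$ and $m:=\min(\ph\starL,\ph\starR)$. By monotonicity of each summand, $F(m)\le D\le F(M)$. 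It follows that
\begin{align*}
  F(M)\;\ge\;u\L-u\R+\underbar\eta\;=\;F\big(\Phi(\underbar\eta)\big),
  \qquad
  F(m)\;\le\;u\L-u\R+\bar\eta\;=\;F\big(\Phi(\bar\eta)\big).
\end{align*}
Strict monotonicity of $F$ then yields $\Phi(\underbar\eta)\le M$ and $m\le\Phi(\bar\eta)$, which is \eqref{eq:deviation_star_pressures}. Each of these two conclusions is drawn only when the corresponding shifted equation is solvable.

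The main point needing care is the bracket step. There we must check that the hypotheses of Lemma~\ref{lem:deviation_bounds} (namely $4vm\le d$ and $\Delta_{\mathrm b}^+<d$) are available on both sides, and that regularity fixes the branch of $f_{\mathrm h}$ consistently with the sign of $\ph\star-\ph$. If the branch were not fixed in this way, the branch not taken would have to be justified separately. Once this is settled, the remaining steps are purely monotonicity arguments.
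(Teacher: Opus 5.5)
Your proposal is correct and follows the paper's proof essentially step for step. It uses the type-independent bracket $-\eta\R\le f-f_{\mathrm h}\le\eta\S$ from Lemma~\ref{lem:deviation_bounds}, sums it in the first equation of \eqref{eq:matching}, passes to $\max$ and $\min$ of the star pressures by monotonicity of $f_{\mathrm h}$, and compares with \eqref{eq:pressure_equation_shifted} by strict monotonicity; your explicit check that regularity ties the branch of $f_{\mathrm h}$ to the sign of $\ph\star-\ph$ is a detail the paper leaves implicit.
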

\begin{proof}
  By Lemma~\ref{lem:deviation_bounds}, each of the
  two pressure functions in the first equation of \eqref{eq:matching}
  satisfies
  \begin{align*}
    f_{\mathrm h}\big(\ph\star;W\big)-\eta\R(\underbar P,\overbar V;W)
    \;\le\;f\big(\ph\star;W\big)\;\le\;
    f_{\mathrm h}\big(\ph\star;W\big)+\eta\S(\overbar P,\underbar V;W),
  \end{align*}
  where $f=f\S$ and $f_{\mathrm h}=f\S_{\mathrm h}$
  for a shock, $f=f\R$ and $f_{\mathrm h}=f\R_{\mathrm h}$ for a
  rarefaction fan, and where both deviation bounds are non-negative.
  Summing up both sides and using that $f_{\mathrm h}(\,\cdot\,;W)$ is
  non-decreasing we get
  \begin{align*}
    f_{\mathrm h}\big(\max(\ph\starL,\ph\starR);W\L\big)
    +f_{\mathrm h}\big(\max(\ph\starL,\ph\starR);W\R\big)
    \;&\ge\;u\L-u\R+\underbar\eta,
    \\[0.25em]
    f_{\mathrm h}\big(\min(\ph\starL,\ph\starR);W\L\big)
    +f_{\mathrm h}\big(\min(\ph\starL,\ph\starR);W\R\big)
    \;&\le\;u\L-u\R+\bar\eta.
  \end{align*}
  Comparing with \eqref{eq:pressure_equation_shifted} for $\eta=\underbar\eta$
  and $\eta=\bar\eta$, respectively, strict monotonicity of the left-hand
  side in the star pressure yields \eqref{eq:deviation_star_pressures}.
\end{proof}
As a by-product of Lemma~\ref{lem:matching_volume_bound} we can formulate a
computable criterion on the left and right waves that ensures that they are
regular.
\begin{lemma}[Regular wave criterion]
  \label{lem:elementary_criterion}
  Let a state $W$ with specific volume $v$ and specific entropy $\sh$ be
  connected to a star state $W\star$ with specific volume $v\star$ and
  specific entropy $\sh\star$ by a compressive or expansive wave. Let
  $\underbar V\le v\star\le \overbar V$ be bounds on $v\star$. Introduce
  the interval $I=[\min(v,\underbar V),\max(v,\overbar V)]$ and assume that
  \begin{align}
    \label{eq:kappa_criterion}
    \kappa:=
    \min_{\tau\in I}
    \partial_v\partial_v\ph\big(\tau,\sh^\hash\big)\big|_{s_{\mathrm{h}}}
    \!+ \min_{\tau\in I}\partial_v\partial_v\pb(\tau) \,>\,0.
  \end{align}
  where $\sh^\hash\in[\sh,\sh\star]$ is arbitrary. Then, the wave is a
  regular shock or a regular rarefaction.
\end{lemma}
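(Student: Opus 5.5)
The plan is to show that, on the interval $I$, the total isentropic curvature is positive. Then the composite structure of Definition~\ref{def:composite_wave} collapses to a single elementary wave with positive fundamental derivative.

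\emph{Step 1: locate the wave in $I$.} Every intermediate state $W^{(k)}$ of a compressive or expansive composite wave has specific volume between $v$ and $v\star$, by \eqref{eq:composite_compressive} and \eqref{eq:composite_expansive}. Its entropy lies between $\sh$ and $\sh\star$, since entropy is constant across fans and monotone across shocks in the sense of Lemma~\ref{lem:no_weird_stuff}. Hence all volumes visited by the wave lie in $[\min(v,v\star),\max(v,v\star)]\subset I$.

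\emph{Step 2: positivity of $\mathcal G$.} I want $\partial_v\partial_v\hatph|_{\sh}+\partial_v\partial_v\hatpb>0$ on $I$ for the entropies actually attained, not only for $\sh^\hash$. On a fan the entropy is frozen at some $\sh^{(k)}\in[\sh,\sh\star]$. The criterion \eqref{eq:kappa_criterion} is stated for one arbitrary $\sh^\hash$, so I read it as holding for every $\sh^\hash$ in the range, which is what "arbitrary" suggests. Under that reading, $\kappa>0$ gives $\mathcal G>0$ in \eqref{A3} along every fan.

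\emph{Step 3: exclude composite structure.} I argue by contradiction. A composite wave requires shocks and fans to alternate, with the touching condition \eqref{A5}. By Remark~\ref{rem:composite_wave}, a fan inside a compressive composite wave is compressive, which forces the sound speed to decrease with density, i.e.\ $\mathcal G<0$ on it. Likewise, an expansive shock satisfying the chord condition \eqref{A4} needs a region of negative curvature; this is the mechanism behind Lemma~\ref{lem:expansive_hydro_bracket}. Both are excluded by Step 2. For the remaining possibility, two consecutive compressive shocks (or two expansive fans), I use the following facts. With positive curvature, Proposition~\ref{prop:curve_is_fine} (via the argument of Corollary~\ref{cor:curve_is_fine}, using the same convexity bound with $\kappa$) makes the Hugoniot locus through $W^{(k-1)}$ convex in $(v,p)$. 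The chord slope $Q^2$ is then strictly monotone, so \eqref{A4} and \eqref{A5} cannot hold with an attached fan or a second shock: the tangency $\sigma(W^{(k-1)},W^{(k)})=\lambda(W^{(k)})$ is impossible for a strictly convex curve except at $W^{(k-1)}$ itself. Thus $n=1$.

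\emph{Step 4: classify the single wave.} If the single wave is compressive and $\mathcal G>0$, Remark~\ref{rem:composite_wave} rules out a compressive fan, so it is a shock; by Proposition~\ref{prop:curve_is_fine} it satisfies $\sh\star\ge\sh$ and is a regular shock. If it is expansive, an expansive shock is excluded, so it is a rarefaction with $\mathcal G>0$. Both cases are regular.

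The main obstacle is Step 3. The paper only cites the structural literature here, and making the tangency/convexity incompatibility rigorous along the Hugoniot locus with a varying entropy requires the step-function refinement sketched in Corollary~\ref{cor:curve_is_fine}, with the entropy bracket $[\sh,\sh\star]$ controlled throughout.
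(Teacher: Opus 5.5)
Your proposal is correct and follows the paper's proof. In both, $\kappa>0$ bounds the numerator of $\mathcal G$ from below on the region swept by the wave; since shocks and fans must alternate in a composite wave, such a wave would contain a compressive fan or an expansive shock, and both require $\mathcal G<0$ by Remark~\ref{rem:composite_wave}.

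Your reading of the criterion as holding for every entropy in $[\sh,\sh\star]$ is what the paper's argument implicitly relies on when it keeps the Hugoniot locus inside $I\times[\sh,\sh\star]$ via Proposition~\ref{prop:curve_is_fine} and Corollary~\ref{cor:curve_is_fine}. Your extra treatment of two consecutive waves of the same type is unnecessary, because the alternation you already invoke excludes that case. It also attributes to Proposition~\ref{prop:curve_is_fine} a convexity of the Hugoniot curve that this proposition does not prove.
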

\begin{proof}
  The numerator of the fundamental derivative $\mathcal G$ in \eqref{A3} is
  bounded below by $\kappa>0$ on $I$: along a fan because the specific
  entropy is constant, and along a shock curve because, by
  Proposition~\ref{prop:curve_is_fine} and
  Corollary~\ref{cor:curve_is_fine}, the Hugoniot locus stays monotone in
  $v$ and $\sh$ and thus within $I\times[\sh,\sh\star]$. Hence $\mathcal
  G>0$ throughout the wave. Were the wave composite, then shocks and fans
  would have to alternate by virtue of \eqref{A3}--\eqref{A5}, so the wave
  would contain a compressive fan or an expansive shock, both of which
  require $\mathcal G<0$, see Remark~\ref{rem:composite_wave}.
\end{proof}

The pressure and volume estimates of
Proposition~\ref{prop:matching_compression_compression} serve as a
starting point for an iterative procedure for computing successively tighter bounds on
$\sigma$.
\begin{proposition}[Iterative bounds for regular waves]
  \label{prop:matching_bootstrap} Assume that both the left and the right
  wave are regular waves and let $\ph\starL$ and
  $\ph\starR$ be given by \eqref{eq:matching}, and let $\tilde p_h\star$
  solve \eqref{eq:pressure_equation_h}.
  Denote by $\xi_h(\,\cdot\,;W)$
  the specific volume along the hydrodynamical wave
  curve through $W$, i.\,e., the implicit solution function of
  \eqref{eq:rankine_hugoniot_h} for a shock and of
  \eqref{eq:constant_entropy} for a rarefaction fan, and assume that it is
  non-increasing.
  For the shock-shock or rarefaction-rarefaction
  case we set
  \vspace{-0.5em}
  \begin{align*}
    \begin{cases}
      \begin{aligned}
        \underbar\pi\star\,&:=\,\min\big(\tilde p_h\star,\ph\L,\ph\R\big),
        &\quad
        \underbar P^{(0)}_{\mathrm L}\,&:=\,\underph\starL,
        &\quad
        \underbar P^{(0)}_{\mathrm R}\,&:=\,\underph\starR,
        \\[0.25em]
        \bar\pi\star\,&:=\,\max\big(\tilde p_h\star,\ph\L,\ph\R\big),
        &\quad
        \bar P^{(0)}_{\mathrm L}\,&:=\,\barph\starL,
        &\quad
        \bar P^{(0)}_{\mathrm R}\,&:=\,\barph\starR,
      \end{aligned}
    \end{cases}
  \end{align*}
  and for the shock-rarefaction case we set
  \begin{align*}
    \begin{cases}
      \begin{aligned}
        \underbar\pi\star\,&:=\,\min\big(\ph\L,\ph\R\big),
        &\quad
        \underbar P^{(0)}_{\mathrm L}\,&:=\,\min\big(\ph\L,\barph\star\big),
        &\quad
        \underbar P^{(0)}_{\mathrm R}\,&:=\,\min\big(\ph\R,\underph\star\big),
        \\[0.25em]
        \bar\pi\star\,&:=\,\max\big(\ph\L,\ph\R\big),
        &\quad
        \bar P^{(0)}_{\mathrm L}\,&:=\,\max\big(\ph\L,\barph\star\big),
        &\quad
        \bar P^{(0)}_{\mathrm R}\,&:=\,\max\big(\ph\R,\underph\star\big),
      \end{aligned}
    \end{cases}
  \end{align*}
  with $\barph\star$, $\underph\star$ as in
  \eqref{eq:matching_pressure_bound} and $\barph\starL$, $\barph\starR$,
  $\underph\starL$, $\underph\starR$ as in \eqref{eq:matching_onesided},
  see Lemma~\ref{lem:matching_separation} and
  Propositions~\ref{prop:matching_compression_expansion}
  and~\ref{prop:matching_compression_compression}.
  Let $\underbar\Xi$ and $\overbar\Xi$ be the volume
  bounds of Lemma~\ref{lem:matching_volume_bound}, let $\eta\S$ and
  $\eta\R$ be the deviation bounds \eqref{eq:deviation_eta} of
  Lemma~\ref{lem:deviation_bounds}, and let $\Phi$ be the solution of the
  shifted pressure equation \eqref{eq:pressure_equation_shifted}.

  Assume that the smallness conditions $4vm\le d$ and $\Delta_{\mathrm
  b}^+<d$ of Lemmas~\ref{lem:matching_volume_bound}
  and~\ref{lem:deviation_bounds}, as well as $\underbar\Xi(\bar
  P^{(0)};W)>0$, hold true for $\bar P^{(0)}_{\mathrm L}$ and $W\L$, and
  for $\bar P^{(0)}_{\mathrm R}$ and $W\R$. Define recursively, for
  $n\ge0$,
  \begin{align}
    \label{eq:matching_bootstrap}
    \begin{cases}
      \begin{aligned}
        \underbar V^{(n)}_{\mathrm L}
        &:=
        \underbar\Xi\big(\bar P^{(n)}_{\mathrm L};W\L\big),
        \quad
        \bar V^{(n)}_{\mathrm L}
        :=
        \overbar\Xi\big(\underbar
        P^{(n)}_{\mathrm L};W\L\big),
        \\[0.25em]
        \underbar V^{(n)}_{\mathrm R}
        &:=
        \underbar\Xi\big(\bar P^{(n)}_{\mathrm R};W\R\big),
        \quad
        \bar V^{(n)}_{\mathrm R}
        :=
        \overbar\Xi\big(\underbar
        P^{(n)}_{\mathrm R};W\R\big),
        \\[0.25em]
        \underbar\delta^{(n)}&:=
          \hatpb\big(\underbar V^{(n)}_{\mathrm L}
          \big)-\hatpb\big(\bar V^{(n)}_{\mathrm R}\big),
        \quad
        \bar\delta^{(n)}:=
          \hatpb\big(\underbar V^{(n)}_{\mathrm R}
          \big)-\hatpb\big(\bar V^{(n)}_{\mathrm L}\big),
        \\[0.25em]
        \varepsilon^{(n)}&:=
        \max\big(\bar\delta^{(n)},\,
        \underbar\delta^{(n)}\big),
        \\[0.25em]
        \underbar\eta^{(n)}&:=
        -\eta\S\big(\bar P^{(n)}_{\mathrm L},\underbar V^{(n)}_{\mathrm L};W\L\big)
        -\eta\S\big(\bar P^{(n)}_{\mathrm R},\underbar V^{(n)}_{\mathrm R};W\R\big),
        \\[0.25em]
        \bar\eta^{(n)}&:=
        \eta\R\big(\underbar P^{(n)}_{\mathrm L},\bar V^{(n)}_{\mathrm L};W\L\big)
        +\eta\R\big(\underbar P^{(n)}_{\mathrm R},\bar V^{(n)}_{\mathrm R};W\R\big),
        \\[0.25em]
        \underbar\pi^{\ast(n)}&:=
        \max\big(\underbar\pi\star,\,\Phi(\underbar\eta^{(n)};W\L,W\R)\big),
        \\[0.25em]
        \bar\pi^{\ast(n)}&:=
        \min\big(\bar\pi\star,\,\Phi(\bar\eta^{(n)};W\L,W\R)\big),
        \\[0.25em]
        \bar P^{(n+1)}_{\mathrm L}&:=
        \min\big(\bar P^{(n)}_{\mathrm L},\,
        \bar\pi^{\ast(n)}+\max(\bar\delta^{(n)},0)\big),
        \\[0.25em]
        \bar P^{(n+1)}_{\mathrm R}&:=
        \min\big(\bar P^{(n)}_{\mathrm R},\,
        \bar\pi^{\ast(n)}+\max(\underbar\delta^{(n)},0)\big),
        \\[0.25em]
        \underbar P^{(n+1)}_{\mathrm L}&:=
        \max\big(\underbar P^{(n)}_{\mathrm L},\,
        \underbar\pi^{\ast(n)}-\max(\underbar\delta^{(n)},0)\big),
        \\[0.25em]
        \underbar P^{(n+1)}_{\mathrm R}&:=
        \max\big(\underbar P^{(n)}_{\mathrm R},\,
        \underbar\pi^{\ast(n)}-\max(\bar\delta^{(n)},0)\big).
      \end{aligned}
    \end{cases}
  \end{align}
  Then, for every $n\ge0$,
  \begin{align}
    \label{eq:matching_bootstrap_bound}
    \begin{cases}
      \begin{aligned}
        -\bar\delta^{(n)}
        \;&\le\;
        \ph\starR-\ph\starL\;\le\;\underbar\delta^{(n)},
        &\quad
        \sigma\;&\le\;\varepsilon^{(n)},
        \\[0.25em]
        \underbar P^{(n)}_{\mathrm L}\;&\le\;
        \ph\starL\;\le\;\bar P^{(n)}_{\mathrm L},
        &\quad
        \underbar P^{(n)}_{\mathrm R}\;&\le\;
        \ph\starR\;\le\;\bar P^{(n)}_{\mathrm R},
        \\[0.25em]
        \underbar V^{(n)}_{\mathrm L}\;&
        \le\;v\starL\;\le\;
        \bar V^{(n)}_{\mathrm L},
        &\quad
        \underbar V^{(n)}_{\mathrm R}\;&
        \le\;v\starR\;\le\;
        \bar V^{(n)}_{\mathrm R},
      \end{aligned}
    \end{cases}
  \end{align}
  and $\varepsilon^{(n)}$, $\underbar\delta^{(n)}$,
  $\bar\delta^{(n)}$,
  $\bar\eta^{(n)}$, $\bar\pi^{\ast(n)}$, $\bar
  P^{(n)}_{\mathrm L/\mathrm R}$ and $\bar V^{(n)}_{\mathrm L/\mathrm R}$
  are non-increasing, while
  $\underbar\eta^{(n)}$, $\underbar\pi^{\ast(n)}$,
  $\underbar P^{(n)}_{\mathrm L/\mathrm R}$ and
  $\underbar V^{(n)}_{\mathrm L/\mathrm R}$ are non-decreasing. In
  particular
  $\varepsilon^{(n)}$ converges to a limit
  $\varepsilon^{(\infty)}\ge\sigma$, and the iteration may be stopped after
  any number of steps.
\end{proposition}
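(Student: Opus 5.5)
Argue by induction on $n$; the claim to carry along is the second line of \eqref{eq:matching_bootstrap_bound}, the pressure brackets $\underbar P^{(n)}_{\mathrm L}\le\ph\starL\le\bar P^{(n)}_{\mathrm L}$ and likewise for R. For $n=0$ this is exactly the content of Propositions~\ref{prop:matching_compression_compression}, \ref{prop:matching_compression_expansion} (and the expansive analogue, Proposition~\ref{prop:matching_expansion_expansion}), with the shock-rarefaction initial values chosen so that the bracket also contains the adjacent initial pressure. All remaining statements at level $n$ are derived from the pressure bracket, and the bracket at level $n+1$ is derived from those.

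\textbf{Volume and spread bounds.} Given the pressure bracket, Lemma~\ref{lem:matching_volume_bound} applied to each side gives $\underbar V^{(n)}\le v\star\le\bar V^{(n)}$. For a regular shock use \eqref{eq:matching_volume_bound} and \eqref{eq:matching_volume_bound_upper}. For a regular rarefaction use \eqref{eq:matching_volume_bound_rarefaction}, with $\underbar\Xi,\overbar\Xi$ read accordingly.

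The smallness hypotheses are assumed only at $n=0$. They propagate because $\bar P^{(n)}\le\bar P^{(0)}$ by construction (the $\min$ in the update). Hence $\xi_h(\bar P^{(n)})\ge\xi_h(\bar P^{(0)})$, the interval defining $m$ shrinks, and $\underbar V^{(n)}$ increases, since $\underbar\Xi$ is non-increasing in $\overbar P$ through $\overbar c$. So $4vm\le d$, $\Delta_{\mathrm b}^+<d$ and $\underbar\Xi>0$ persist.

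Since $\hatpb$ is non-increasing by \eqref{A2}, the contact relation in \eqref{eq:matching} gives
\[
\ph\starR-\ph\starL=\hatpb(v\starL)-\hatpb(v\starR)\le\hatpb(\underbar V^{(n)}_{\mathrm L})-\hatpb(\bar V^{(n)}_{\mathrm R})=\underbar\delta^{(n)},
\]
and symmetrically $\ph\starR-\ph\starL\ge-\bar\delta^{(n)}$. Consequently $\sigma\le\varepsilon^{(n)}$.

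\textbf{Pressure update.} Lemma~\ref{lem:deviation_bounds} applies with the current brackets. Corollary~\ref{cor:deviation_star_pressures} then gives
\[
\Phi(\underbar\eta^{(n)})\le\max(\ph\starL,\ph\starR),\qquad \min(\ph\starL,\ph\starR)\le\Phi(\bar\eta^{(n)}).
\]
Lemma~\ref{lem:matching_separation} supplies the companion bounds $\min(\ph\starL,\ph\starR)\le\bar\pi\star$ and $\max(\ph\starL,\ph\starR)\ge\underbar\pi\star$. This holds in each configuration: shock-shock via \eqref{eq:matching_separation}, rarefaction-rarefaction via \eqref{eq:matching_separation_rr}, and shock-rarefaction via the compressive/expansive sign information on each side. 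Combining gives $\min(\ph\starL,\ph\starR)\le\bar\pi^{\ast(n)}$ and $\max(\ph\starL,\ph\starR)\ge\underbar\pi^{\ast(n)}$.

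To get $\ph\starL\le\bar\pi^{\ast(n)}+\max(\bar\delta^{(n)},0)$, split on which star pressure is the minimum:
\begin{itemize}
\item if $\ph\starL$ is the minimum, the bound is immediate;
\item otherwise $\ph\starL-\ph\starR\le\bar\delta^{(n)}$, so $\ph\starL\le\ph\starR+\bar\delta^{(n)}\le\bar\pi^{\ast(n)}+\max(\bar\delta^{(n)},0)$.
\end{itemize}
The three other updates follow the same way. Taking $\min$ or $\max$ with the previous bracket preserves validity, which closes the induction.

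\textbf{Monotonicity.} That $\bar P^{(n)}$ is non-increasing and $\underbar P^{(n)}$ non-decreasing is built into the update. The other quantities follow from monotone dependence on the brackets:
\begin{itemize}
\item $\underbar V^{(n)}$ is non-decreasing and $\bar V^{(n)}$ non-increasing, because $\underbar\Xi$ is non-increasing in $\overbar P$ and $\overbar\Xi$ is non-increasing in $\underbar P$ (check the explicit formulas, using that $c\mapsto 2c/(1+\sqrt{1\pm4mc/d})$ is increasing and that $m$ decreases as the interval shrinks).
\item Since $\hatpb$ is non-increasing, $\underbar\delta^{(n)},\bar\delta^{(n)},\varepsilon^{(n)}$ are non-increasing.
\item $\eta\S$ is increasing in $\overbar P$ and in $\Delta_{\mathrm b}^+$; the latter decreases as $\underbar V$ increases. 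Likewise $\eta\R$ decreases as $\overbar V$ decreases and $\underbar P$ increases; here use that $\tau/\ah$ is increasing along the isentrope, as in the proof of Lemma~\ref{lem:deviation_bounds}.
\item Hence $\underbar\eta^{(n)}$ is non-decreasing and $\bar\eta^{(n)}$ non-increasing, and since $\Phi$ is increasing in $\eta$ (the left side of \eqref{eq:pressure_equation_shifted} is strictly increasing in the pressure), $\underbar\pi^{\ast(n)}$ is non-decreasing and $\bar\pi^{\ast(n)}$ non-increasing.
\end{itemize}
Then $\varepsilon^{(n)}$ is monotone and bounded below by $\sigma\ge0$, so it converges to some $\varepsilon^{(\infty)}\ge\sigma$.

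The main obstacle is verifying these monotonicity properties of $\underbar\Xi$, $\overbar\Xi$, $\eta\S$ and $\eta\R$ uniformly across wave types, together with the shock-rarefaction instance of the separation step.
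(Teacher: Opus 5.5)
Your proposal is correct and follows the paper's proof essentially step for step. Both run an induction on the pressure brackets, propagate the smallness hypotheses through $\bar P^{(n)}\le\bar P^{(0)}$, obtain the volume and spread bounds from Lemma~\ref{lem:matching_volume_bound} and the monotonicity of $\hatpb$, update the pressures via Corollary~\ref{cor:deviation_star_pressures} and Lemma~\ref{lem:matching_separation} with a case split on which star pressure is smaller, and inherit monotonicity from $\underbar\Xi$, $\overbar\Xi$, $\eta\S$, $\eta\R$ and $\Phi$.

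One small imprecision concerns the shock--shock and rarefaction--rarefaction cases. There \eqref{eq:matching_separation} (resp.\ \eqref{eq:matching_separation_rr}) supplies only $\min(\ph\starL,\ph\starR)\le\bar\pi\star$ (resp.\ $\max(\ph\starL,\ph\starR)\ge\underbar\pi\star$). The companion bound comes from $\ph\star\ge\ph$ across a shock (resp.\ $\ph\star\le\ph$ across a fan), which is the same sign information you already use in the mixed case.
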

\begin{proof}
  Both waves being regular,
  Propositions~\ref{prop:matching_compression_expansion}
  and~\ref{prop:matching_compression_compression} are the case $n=0$ of
  the pressure bounds in \eqref{eq:matching_bootstrap_bound}. Assume they
  hold for some $n\ge0$. Since $\bar P^{(n)}_{\mathrm L/\mathrm R}\le\bar
  P^{(0)}_{\mathrm L/\mathrm R}$ and $\underbar\Xi$ is non-increasing, the
  smallness conditions of Lemmas~\ref{lem:matching_volume_bound}
  and~\ref{lem:deviation_bounds} and the positivity of the lower volume
  bound remain valid, and Lemma~\ref{lem:matching_volume_bound} yields the
  volume bounds in \eqref{eq:matching_bootstrap_bound}. As $\hatpb$ is
  non-increasing by \eqref{A2}, they imply
  $\hatpb(v\starR)-\hatpb(v\starL)\le\bar\delta^{(n)}$
  and $\hatpb(v\starL)-\hatpb(v\starR)\le\underbar\delta^{(n)}$, where the
  first difference equals $\ph\starL-\ph\starR$ by the second equation
  of \eqref{eq:matching}; this is the first line of
  \eqref{eq:matching_bootstrap_bound} and gives
  $\sigma\le\varepsilon^{(n)}$. Next,
  $\min(\ph\starL,\ph\starR)\le\bar\pi^{\ast(n)}$ and
  $\underbar\pi^{\ast(n)}\le\max(\ph\starL,\ph\starR)$: for
  $\underbar\pi\star$ and $\bar\pi\star$ this is
  Lemma~\ref{lem:matching_separation} together with $\ph\star\ge\ph$ across
  a shock and $\ph\star\le\ph$ across a rarefaction fan, for
  $\Phi(\underbar\eta^{(n)};W\L,W\R)$ and $\Phi(\bar\eta^{(n)};W\L,W\R)$ it
  is Corollary~\ref{cor:deviation_star_pressures}. Distinguishing which of
  the two star pressures is the larger and inserting the bounds on
  $\ph\starL-\ph\starR$ gives
  \begin{align*}
    \underbar\pi^{\ast(n)}-\max\big(\underbar\delta^{(n)},0\big)
    \;&\le\;\ph\starL\;\le\;
    \bar\pi^{\ast(n)}+\max\big(\bar\delta^{(n)},0\big),
    \\[0.25em]
    \underbar\pi^{\ast(n)}-\max\big(\bar\delta^{(n)},0\big)
    \;&\le\;\ph\starR\;\le\;
    \bar\pi^{\ast(n)}+\max\big(\underbar\delta^{(n)},0\big),
  \end{align*}
  hence the pressure bounds for $n+1$. The
  monotonicity of $\bar P^{(n)}_{\mathrm L/\mathrm R}$ and $\underbar
  P^{(n)}_{\mathrm L/\mathrm R}$ holds by definition. It carries over to
  the volume bounds, $\underbar\delta^{(n)}$, $\bar\delta^{(n)}$ and
  $\varepsilon^{(n)}$ because $\underbar\Xi$, $\overbar\Xi$ and $\hatpb$
  are non-increasing; to $\underbar\eta^{(n)}$ and $\bar\eta^{(n)}$ because
  none of $\overbar P$, $\Delta_{\mathrm b}^+$ and $\Delta_{\mathrm b}^-$
  increases, and neither does $\overbar V/\ah(\overbar V,\underbar P)$, as
  $\hatah(\tau)/\tau$ is non-increasing along the isentrope on which
  $\overbar V$ lies whenever $\eta\R\neq0$; and to
  $\underbar\pi^{\ast(n)}$ and $\bar\pi^{\ast(n)}$ because $\Phi$ is
  non-decreasing in its first argument.
\end{proof}
\begin{proof}[Proof of Theorem~\ref{thm:main_iterative}]
  Theorem~\ref{thm:main} provides the initial bracket $\underbar
  P^{(0)}_{\mathrm L}\le\ph\starL\le\bar P^{(0)}_{\mathrm L}$, and
  correspondingly on the right. As the convexity criterion
  \eqref{eq:kappa_criterion} is assumed for these brackets,
  Lemma~\ref{lem:elementary_criterion} guarantees that both outer waves are
  regular. Together with the smallness conditions
  $4vm\le d$ and $\Delta_{\mathrm b}^+<d$, this puts
  Lemmas~\ref{lem:matching_volume_bound} and~\ref{lem:deviation_bounds}
  and Corollary~\ref{cor:deviation_star_pressures} at our disposal for
  $W\L$ and $W\R$, and with them the quantities $\underbar\Xi$,
  $\overbar\Xi$, $\eta\S$, $\eta\R$ and $\Phi$ entering
  \eqref{eq:main_bootstrap}. The choice \eqref{eq:main_pistar} satisfies
  $\underbar\pi\star\le\max(\ph\starL,\ph\starR)$ and
  $\min(\ph\starL,\ph\starR)\le\bar\pi\star$ in every configuration, which
  is all the proof of Proposition~\ref{prop:matching_bootstrap} uses.
  Consequently, Proposition~\ref{prop:matching_bootstrap} applies with
  \eqref{eq:main_bootstrap} in place of \eqref{eq:matching_bootstrap},
  which establishes the monotonicity of the two sequences. The wavespeed
  estimates of Propositions~\ref{prop:matching_compression_expansion}
  and~\ref{prop:matching_compression_compression} rest on the pressure
  bracket alone and are monotone in it by \eqref{eq:Q_monotone}
  (see Lemma~\ref{lem:Q_monotone}), as long as
  the bracket contains $\ph$: the maximum in \eqref{eq:main_wavespeed} has
  to cover the shock, which runs from $v$ to $v\star$, and the state $v$
  itself, at which the head of a fan travels. The iterated bracket
  \eqref{eq:matching_bootstrap_bound} may leave $\ph$, while the clipped
  one, $[\min(\underbar P^{(n)},\ph),\max(\bar P^{(n)},\ph)]$, is still
  tighter than the initial one and contains $\ph$. Replacing the initial
  bracket by the clipped one therefore preserves \eqref{eq:main_wavespeed}.
\end{proof}
\begin{proof}[Proof of Corollary~\ref{cor:main_closing}]
  By Theorem~\ref{thm:main_iterative} both outer waves are
  regular, so that a compressive wave is a shock and an
  expansive wave is a rarefaction. The bracket
  \eqref{eq:matching_bootstrap_bound} thus decides the type in each of
  the four cases. In cases (b) and (d) the head of the fan travels at
  the characteristic speed \eqref{eq:lambda_rarefaction} of
  Proposition~\ref{prop:rarefaction}. In case (a),
  Proposition~\ref{prop:shock_wavespeed_general} holds for
  $\ph\star=\ph\starL$, and each term in
  \eqref{eq:wavespeed_bound_general} is monotone in the bracket: $\tilde
  v\star=\xi_h(\ph\starL;W\L)\ge\bar v\starL$ because $\xi_h$ is
  decreasing, hence $\hatpb(\tilde v\star)\le\hatpb(\bar v\starL)$ by
  \eqref{A2} and the maximum over $[\tilde v\star,v\L]$ is bounded by
  the one over $[\bar v\starL,v\L]$; $\ph\starL-\ph\L\ge\underbar
  P^{(n)}_{\mathrm L}-\ph\L>0$; and $u\L-\lambda^-_{1,\mathrm h}\le
  u\L-\bar\lambda^-_{1,\mathrm h}$ by the monotonicity of the mass flux
  (Lemma~\ref{lem:Q_monotone}). Case (c) is the
  mirrored case.
\end{proof}


\section{Numerical illustrations}
\label{sec:numerics}
We close by illustrating the performance of the algorithm of
Theorems~\ref{thm:main} and \ref{thm:main_iterative} by comparing the wavespeed bounds it
produces with the wavespeeds of the exact Riemann solution, on a selected
synthetic test. We choose the Noble-Abel stiffened gas equation of state
(see Section~\ref{sec:nasg}) as hydrodynamical constituent and complete it
with a synthetic barotropic constituent $\pb$ (see
Section~\ref{sec:barotropic}) that concentrates the barotropic speed of
sound in a narrow bump, so that the total pressure $p=\ph+\pb$ is
potentially nonconvex and the outer waves of \eqref{eq:riemann_problem} may
no longer be elementary.

\subsection{The Noble-Abel stiffened gas equation of state}
\label{sec:nasg}
We collect closed-form expressions for the case that the hydrodynamical
constituent is given by the Noble-Abel stiffened gas equation of state
\cite{LeMetayerSaurel2016,LeMetayerMassoniSaurel2004}. For $p_\infty=0$ all
formulas below reduce to the corresponding formulas for the covolume gas
stated in \cite[Sect.~4.7]{Toro2009} and
\cite[Sect.~4.1]{ClaytonGuermondPopov2022}, see also
\cite{GuermondPopov2016WaveSpeed}. The equation of state and the specific
internal energy read:
\begin{align}
  \label{eq:nasg_eos}
  \ph(\rho,\eh)\,=\,(\gamma-1)\frac{\rho(\eh-q)}{1-b\rho}
  -\gamma\,p_\infty,
  \quad
  \hateh(v,\ph)\,=\,q+(v-b)
  \frac{\ph+\gamma\,p_\infty}{\gamma-1},
\end{align}
where $\gamma>1$, $b\ge0$, $p_\infty\ge0$ and $q$ are constants. The
isentropes of \eqref{eq:nasg_eos} are the curves
$(\ph+p_\infty)\,(v-b)^{\gamma}=\text{const}$, and consequently
\begin{align*}
  \ah^2(\rho,\eh)\;=\;
  \gamma\,\frac{\ph+p_\infty}{\rho\,(1-b\rho)}
  \;=\;\gamma\,\frac{\big(\ph+p_\infty\big)\,v^2}{v-b}.
\end{align*}
For a state $W=[\rho,u,\ph]$ we abbreviate
\begin{align*}
  A(W):=\frac{2(v-b)}{\gamma+1},
  \;\;
  B(W):=p_\infty+\frac{\gamma-1}{\gamma+1}\big(\ph+p_\infty\big),
  \;\;
  \bar a(W):=\ah(W)\frac{v-b}{v}.
\end{align*}
The pressure function \eqref{eq:pressure_function_h} then takes the closed
form
\begin{align*}
  f_{\mathrm h}(\tilde p_h\star;W)\;=\;
  \begin{cases}
    \begin{aligned}
      &f\S_{\mathrm h}(\tilde p_h\star;W)\;=\;
      \big(\tilde p_h\star-\ph\big)
      \bigg(\frac{A(W)}{\tilde p_h\star+B(W)}\bigg)^{1/2}
      &\;&\text{if }\tilde p_h\star>\ph,
      \\[0.5em]
      &f\R_{\mathrm h}(\tilde p_h\star;W)\;=\;
      \frac{2\,\bar a(W)}{\gamma-1}
      \Bigg[\bigg(\frac{\tilde p_h\star+p_\infty}{\ph+p_\infty}
      \bigg)^{\frac{\gamma-1}{2\gamma}}-1\Bigg]
      &\;&\text{if }\tilde p_h\star\le\ph,
    \end{aligned}
  \end{cases}
\end{align*}
so that the matching condition \eqref{eq:pressure_equation_h}, viz.
$0=f_{\mathrm h}\big(\tilde p_h\star;W\L\big)
+f_{\mathrm h}\big(\tilde p_h\star;W\R\big)+u\R-u\L$,
has a unique solution $\tilde p_h\star>-p_\infty$ provided the nonvacuum
condition
\begin{align*}
  u\R-u\L\;<\;\frac{2}{\gamma-1}\,
  \big(\bar a(W\L)\,+\,\bar a(W\R)\big)
\end{align*}
holds \cite[Eq.~(4.40)]{Toro2009},
\cite[Eq.~(4.3) and Lem.~4.1]{ClaytonGuermondPopov2022}.
The wavespeeds \eqref{eq:lambda_one_h} are:
\begin{align*}
  \begin{cases}
    \begin{aligned}
      \lambda^-_{1,\mathrm h}\;&=\;u\L-\ah(W\L)
      \bigg(1+\frac{\gamma+1}{2\gamma}\,
      \frac{\big(\tilde p_h\star-\ph\L\big)_+}{\ph\L+p_\infty}
      \bigg)^{1/2},
      \\[0.5em]
      \lambda^+_{3,\mathrm h}\;&=\;u\R+\ah(W\R)
      \bigg(1+\frac{\gamma+1}{2\gamma}\,
      \frac{\big(\tilde p_h\star-\ph\R\big)_+}{\ph\R+p_\infty}
      \bigg)^{1/2},
    \end{aligned}
  \end{cases}
\end{align*}
where $(\,\cdot\,)_+:=\max(\,\cdot\,,0)$, which covers both cases of
\eqref{eq:lambda_one_h} at once. Finally, the specific volume $\tilde v\star$
behind a hydrodynamical shock, i.\,e., the solution of the Rankine-Hugoniot
condition \eqref{eq:rankine_hugoniot_h}, is available in closed form,
\begin{align*}
  \xi_h(\tilde p_h\star;W)\;=\;
  b+(v-b)\,
  \frac{(\gamma+1)\big(\ph+p_\infty\big)
  +(\gamma-1)\big(\tilde p_h\star+p_\infty\big)}
  {(\gamma-1)\big(\ph+p_\infty\big)
  +(\gamma+1)\big(\tilde p_h\star+p_\infty\big)}.
\end{align*}
It equals $v$ at $\tilde p_h\star=\ph$, is strictly decreasing in $\tilde
p_h\star$, and is read through its continuation past $\ph$, see
Definition~\ref{def:expansive_continuation}.

\begin{definition}[Fast wavespeed estimate for NASG]
  The \emph{two-rarefaction approximation}
  \cite[Sect.~5.1]{ClaytonGuermondPopov2022},
  \cite[Lem.~4.2]{GuermondPopov2016WaveSpeed} furnishes a computable upper
  bound on the pressure $\tilde\ph\star$. Setting
  $z:=\frac{\gamma-1}{2\gamma}$,
  we define
  \begin{align}
    \label{eq:nasg_two_rarefaction}
    \tilde p_h\starRR\;:=\; \Bigg(\frac{\bar a(W\L)+\bar a(W\R)
    -\tfrac{\gamma-1}{2}\,\big(u\R-u\L\big)}
    {\bar a(W\L)\,\big(\ph\L+p_\infty\big)^{-z}
    +\bar a(W\R)\,\big(\ph\R+p_\infty\big)^{-z}}\Bigg)^{1/z}
    \,-\,p_\infty.
  \end{align}
  Assuming that $\gamma\in(1,\tfrac53]$ and under the nonvacuum condition
  stated above we have $\tilde p_h\star\le\tilde p_h\starRR$. Equation
  \eqref{eq:nasg_two_rarefaction} is thus a computable upper bound on the
  star pressure. For $\gamma>\tfrac53$ the two-rarefaction formulation has
  to be modified slightly, see \cite[Sect.~5,
  App.~A]{ClaytonGuermondPopov2022}.
\end{definition}

\subsection{The barotropic pressure component}
\label{sec:barotropic}
We continue by introducing a synthetic pressure component $\pb$ that we use
in our numerical tests. Rather than modeling a particular material, it is
constructed to exhibit all the complications the theory has to cope with:
the total pressure $p=\ph+\pb$ loses convexity so that composite waves
appear, while \eqref{A2} is maintained and the Riemann problem
\eqref{eq:riemann_problem} remains hyperbolic. Both requirements are met by
concentrating the barotropic speed of sound in a narrow bump of width
$\varepsilon$ centered at a reference density $\rho_0$.
\begin{definition}[Bumpy barotropic pressure]
  \label{def:bump}
  Let $\rho_0>0$, $c_0>0$ and $\varepsilon>0$ and define the barotropic
  constituent $\hatpb$ by fixing the barotropic speed of sound to
  \begin{align*}
    \hatab^2(v)\;=\;
    \frac{c_0^2}{\big((v^{-1}-\rho_0)^2+\varepsilon\big)^{3/2}}\,.
  \end{align*}
  Integrating $\ab^2=\partial_\rho\pb$ subject to $\pb(0)=0$, and then
  $\pb=\rho^2\,\partial_\rho\eb$, yields
  \begin{align*}
    \hatpb(v)\;&=\;\frac{c_0^2}{\varepsilon}\,
    \Bigg(
      \frac{v^{-1}-\rho_0}{\sqrt{(v^{-1}-\rho_0)^2+\varepsilon}}
      \,+\,\frac{\rho_0}{\sqrt{\rho_0^2+\varepsilon}}
    \Bigg),
    \\[0.75em]
    \hateb(v)\;&=\;
    \frac{c_0^2\,\rho_0\,v}{\varepsilon\,\big(\rho_0^2+\varepsilon\big)}\,
    \Big(\sqrt{(v^{-1}-\rho_0)^2+\varepsilon}-\sqrt{\rho_0^2+\varepsilon}\Big)
    \\[0.25em]\notag
    &\qquad\;+\;
    \frac{2\,c_0^2}{\big(\rho_0^2+\varepsilon\big)^{3/2}}\,
    \operatorname{artanh}\!
    \Bigg(
      \frac{v^{-1}-\sqrt{(v^{-1}-\rho_0)^2+\varepsilon}}
           {\sqrt{\rho_0^2+\varepsilon}}
    \Bigg)
    \;+\;e_0,
  \end{align*}
  with an arbitrary additive constant $e_0$.
\end{definition}
Taking $c_0$ large enough forces a composite wave whenever the two initial
densities are placed symmetrically around the bump, $\rho_0-\rho\L =
\rho\R-\rho_0$, at constant entropy.

\subsection{Algorithmic realization}
\label{sec:algorithm}
Theorems~\ref{thm:main} and~\ref{thm:main_iterative} are spelled out as
Algorithms~\ref{alg:general} to~\ref{alg:elementary} on
pages~\pageref{alg:general} to~\pageref{alg:elementary}, in the form in
which they are used for the tests below. Here,
$\xi_h\S(\,\cdot\,;W)$ denotes the solution of the reduced
Rankine--Hugoniot condition \eqref{eq:rankine_hugoniot_h} and
$\xi_h\R(\,\cdot\,;W)$ the solution of the isentrope
\eqref{eq:constant_entropy}; $f\S_{\mathrm h}$, $Q_{\mathrm h}$ and
$\xi_h\S$ are read through their continuation past $\ph$, see
Definition~\ref{def:expansive_continuation}. Should either admissibility check
of Algorithm~\ref{alg:elementary} fail, the pressure and wavespeed bounds
of Algorithm~\ref{alg:general} are returned unchanged.
For the Noble--Abel stiffened gas all hydrodynamical
ingredients are available in closed form, see Section~\ref{sec:nasg}; we
solve for the root of \eqref{eq:pressure_equation_h} with a simple
bisection. However, better techniques exist: Guermond and
Popov~\cite{GuermondPopov2016WaveSpeed} describe a third-order convergent
quadratic Newton iteration.

\subsection{Elementary waves test cases}
\label{sec:elementary_tests}
Throughout this section we take the hydrodynamical constituent
\eqref{eq:nasg_eos} to be an ideal gas, $\gamma=1.4$ and $b=q=p_\infty=0$,
and we place the barotropic bump of Definition~\ref{def:bump} at $\rho_0=1$
with width $\varepsilon=10^{-2}$, which leaves its strength $c_0$ as a
free parameter that we will be varying. Our first set of tests concerns a family of
configurations for which both outer waves of \eqref{eq:riemann_problem}
remain elementary. The ten initial states E1--E10 are collected in
Table~\ref{tab:elementary_configurations}. The third component of a state
is the hydrodynamical pressure $\ph$; the barotropic constituent is
superimposed on it and its strength $c_0$ is varied separately. The
hydrodynamic type records the wave pattern of the reduced hydrodynamical
Riemann problem \eqref{eq:pressure_equation_h}, the first letter naming the
left and the second the right wave, with \textup{R} for a rarefaction and
\textup{S} for a shock. The last column of
Table~\ref{tab:elementary_configurations} lists the cut-off $c_0^{\max}$,
the largest strength at which the prerequisites of
Theorem~\ref{thm:main_iterative}, namely the smallness conditions $4vm\le
d$ and $\Delta_{\mathrm b}^+<d$, the positivity of the lower volume bound
and the convexity criterion \eqref{eq:kappa_criterion}, hold true for the
initial bounds of Theorem~\ref{thm:main}. The value is obtained by
bisection and reported to three significant digits. All bump strengths used
below are stated as fractions of this configuration-dependent cut-off.
\begin{table}[t]
  \centering
  \setlength{\tabcolsep}{5.5pt}
  \begin{tabular}{clllr}
    \toprule
    & $W\L=[\rho\L,u\L,\ph\L]$ & $W\R=[\rho\R,u\R,\ph\R]$ & type & $c_0^{\max}$ \\
    \midrule
    E1   & $[1,\,0,\,0.0666667]$ & $[0.001,\,0,\,6.66667\cdot10^{-11}]$ & RS & $8.19\cdot10^{-7}$ \\
    E2   & $[1,\,0,\,1]$ & $[0.125,\,0,\,0.1]$ & RS & $2.24\cdot10^{-2}$ \\
    E3   & $[0.445,\,0.698,\,3.528]$ & $[0.5,\,0,\,0.571]$ & RS & $7.29\cdot10^{-3}$ \\
    E4   & $[1,\,10,\,1000]$ & $[1,\,10,\,0.01]$ & RS & $1.82\cdot10^{-3}$ \\
    E5   & $[1,\,0.02,\,1]$ & $[1,\,-0.02,\,1]$ & SS & $2.18\cdot10^{-2}$ \\
    E6   & $[2,\,1,\,1]$ & $[1,\,-1,\,10]$ & SS & $3.73\cdot10^{-3}$ \\
    E7   & $[10,\,0.5,\,0.1]$ & $[2,\,-0.5,\,10]$ & SS & $4.55\cdot10^{-4}$ \\
    E8   & $[5,\,0.25,\,0.1]$ & $[2,\,-0.25,\,1]$ & SS & $2.36\cdot10^{-3}$ \\
    E9   & $[0.02,\,-0.25,\,10]$ & $[0.02,\,0.25,\,10]$ & RR & $15.5$ \\
    E10  & $[1,\,-0.25,\,10]$ & $[10,\,0.25,\,10]$ & RR & $6.15\cdot10^{-2}$ \\
    \bottomrule
  \end{tabular}
  \caption{The ten elementary-wave configurations E1--E10, with the wave
  pattern of the underlying hydrodynamical Riemann problem and the largest
  bump strength $c_0^{\max}$ admitted by the
  prerequisites of Theorem~\ref{thm:main_iterative}.}
  \label{tab:elementary_configurations}
\end{table}
\paragraph{Hydrodynamical approximation}
We vary the bump strength $c_0$ over $0\%$, $10\%$, $50\%$ and $99\%$
of the cut-off $c_0^{\max}$ and report in
Table~\ref{tab:elementary_wavespeeds}
(page~\pageref{tab:elementary_wavespeeds}) the two star pressures
$\ph\starL$ and $\ph\starR$ and the extremal wavespeeds $\lambda^-_1$,
$\lambda^+_3$ of the exact Riemann problem \eqref{eq:riemann_problem},
together with the star pressure $\tilde p_h\star$ and the wavespeeds
$\lambda^-_{1,\mathrm h}$, $\lambda^+_{3,\mathrm h}$ of the reduced
hydrodynamical problem \eqref{eq:riemann_problem_h}. The last column of
each subtable records the relative deviation $\Delta\lambda_{\max}$ of
$\max\{|\lambda^-_{1,\mathrm h}|,|\lambda^+_{3,\mathrm h}|\}$ from its
exact counterpart $\max\{|\lambda^-_1|,|\lambda^+_3|\}$, in percent. At
$c_0=0$ the two problems coincide. As the bump is turned up, the two exact
star pressures separate, and $\Delta\lambda_{\max}$ turns negative
throughout: the hydrodynamical wavespeeds fall short of the exact ones and
are thus no upper bound for them. The deficit grows with $c_0$; at
$c_0=0.99\,c_0^{\max}$ it reaches up to $13.8\%$.

\paragraph{Full wavespeed bounds}
Tables~\ref{tab:initial_bounds} (page~\pageref{tab:initial_bounds}) and
\ref{tab:bootstrapped_bounds} (page~\pageref{tab:bootstrapped_bounds})
report the guaranteed estimates themselves, for the same configurations and
bump strengths. Table~\ref{tab:initial_bounds}
(page~\pageref{tab:initial_bounds}) lists the one-sided pressure bounds
\eqref{eq:main_pressure_bound} of Theorem~\ref{thm:main} and the wavespeed
bounds \eqref{eq:main_wavespeed} they imply;
Table~\ref{tab:bootstrapped_bounds}
(page~\pageref{tab:bootstrapped_bounds}) lists the same quantities after
$N$ steps of the iteration \eqref{eq:main_bootstrap} of
Theorem~\ref{thm:main_iterative}, run to a tolerance of $10^{-6}$. In both,
$\Delta\lambda_{\max}$ is positive throughout: unlike the hydrodynamical
wavespeeds of Table~\ref{tab:elementary_wavespeeds}
(page~\pageref{tab:elementary_wavespeeds}), the estimates never fall below
the exact maximal wavespeed.

The two estimates differ markedly in how much room they leave. Without the
iteration, the pressure bounds \eqref{eq:main_pressure_bound} approximate
the star pressure by the larger of the two initial pressures, which for the
rarefaction-shock configurations E1--E4 is off by up to two orders of
magnitude, and the resulting overshoot of the
wavespeed bound ranges from $16.9\%$ to $909\%$ for E1--E4 and E6--E8,
almost independently of the bump strength. The iteration removes this
deficiency almost entirely. At $c_0=0$
the bracket collapses onto $\tilde p_h\star$ within two steps and the
bounds coincide with the exact wavespeeds in all ten configurations. At a
tenth of the admissible bump strength the overshoot stays below $0.2\%$
throughout, and in eight of the ten configurations
below $0.01\%$. Even at $c_0=0.99\,c_0^{\max}$, where the barotropic
contribution is as strong as the theory permits, the
overshoot remains below $0.1\%$ for eight of the ten configurations and
reaches $12.1\%$ only for the symmetric shock-shock configuration E5, which
is also one of the two cases that require the most iterations. For the
rarefaction-rarefaction configurations E9 and E10 the iterated pressure bracket
falls below $\ph$ on both sides, so that Corollary~\ref{cor:main_closing}
returns the exact head speed of the fan and the overshoot vanishes.

\begin{remark}[Comparison with $\tilde p_{\mathrm h}\starRR$]
  The two-rarefaction approximation
  \eqref{eq:nasg_two_rarefaction} is the customary estimate of the star
  pressure in practical wavespeed estimators
  \cite{GuermondPopov2016WaveSpeed}. It is instructive to measure the
  uniterated initial bounds of Table~\ref{tab:initial_bounds} against it.
  For the rarefaction-shock configurations E1--E4 of
  Table~\ref{tab:elementary_configurations} at $c_0=0$, the two yield the
  following star pressures and maximal wavespeeds
  $\lambda_{\max}:=\max\{|\lambda^-_1|,|\lambda^+_3|\}$:
  \\[-0.5em]
  \begin{center}
    \small
    \begin{tabular}{lrrrrrr}
      \toprule
      & exact & \multicolumn{3}{c}{two-rarefaction \eqref{eq:nasg_two_rarefaction}}
      & \multicolumn{2}{c}{bounds, Table~\ref{tab:initial_bounds}} \\
      \cmidrule(lr){2-2} \cmidrule(lr){3-5} \cmidrule(lr){6-7}
      & $\lambda_{\max}$ & $\tilde p_{\mathrm h}\starRR$ & $\lambda_{\max}$
      & $\Delta\lambda_{\max}$
      & $\lambda_{\max}$ & $\Delta\lambda_{\max}$ \\[0.25em]
      E1 & 0.886139 & 0.058724 & 8.39454 & $+847\%$   & 8.94427 & $+909\%$ \\
      E2 & 1.75216  & 0.306767 & 1.76209 & $+0.567\%$ & 3.1241  & $+78.3\%$ \\
      E3 & 2.63357  & 2.50966  & 2.63357 & $+0\%$     & 3.07858 & $+16.9\%$ \\
      E4 & 33.5175  & 912.449  & 43.0899 & $+28.6\%$  & 44.641  & $+33.2\%$ \\
      \bottomrule
    \end{tabular}
  \end{center}
  \vspace{0.75em}
  Where the two-rarefaction approximation overshoots,
  as for E1 and E4, the initial bounds overshoot by a comparable margin;
  where it is nearly exact, as for E2 and E3, they are less tight. The
  initial bounds thus perform on the level of an estimate that is used
  routinely in practice, at the cost of two root solves of the continued
  shock branch, and with the guarantee that they never fall short, also
  for the composite equation of state. They would serve well in real
  computations on their own, and the iteration of
  Theorem~\ref{thm:main_iterative} is available wherever tighter bounds pay
  off.
\end{remark}
\subsection{Composite and exotic wave test cases}
\label{sec:composite_tests}
For the composite tests we fix the bump strength at $c_0=0.1$, keeping $\rho_0=1$ and
$\varepsilon=10^{-2}$, so that the total pressure $p=\ph+\pb$ is nonconvex
over the whole band $\rho\approx1.008\ldots1.32$ at $\ph\approx1$. This
value exceeds the cut-off $c_0^{\max}$ of every configuration considered
below, so the iteration of Theorem~\ref{thm:main_iterative} does not start
and only the bounds of Theorem~\ref{thm:main}, which make no use of the
wave structure, remain available.

Riemann data whose outer waves are composite are hard to guess, so we
construct the waves backwards: both outer waves are glued together from
elementary pieces, and only afterwards do we read off the initial data
they connect. Which pieces are available at a given state is decided by
the fundamental derivative \eqref{A3}. Where $\mathcal G>0$ a compression
is a shock and an expansion a fan; where $\mathcal G<0$ the speed of sound
drops as the density grows and the two roles are exchanged, see
Remark~\ref{rem:composite_wave}. We mark the latter, exotic waves with a
dagger, $\mathrm S^\dagger$ and $\mathrm R^\dagger$. The wave curves
depend on $\rho$ and $\ph$ alone, so a branch can be shifted in velocity
after the fact; we use this to place the contact at rest, $u\star=0$, on
both sides, which leaves the star pressure as the only remaining
condition. It is met by bisecting a free parameter of one branch, or,
where that branch is a single elementary wave, by solving for its star
volume directly.

Table~\ref{tab:composite_configurations}
(page~\pageref{tab:composite_configurations}) lists the twelve
configurations C1--C12 obtained in this way, wave by wave. In C1--C6 both
outer waves are single elementary waves, but at least one of them runs
inside the nonconvex band and is exotic; the four types are paired in the
six combinations $\mathrm S\,\mathrm S^\dagger$, $\mathrm S\,\mathrm
R^\dagger$, $\mathrm R\,\mathrm S^\dagger$, $\mathrm R\,\mathrm R^\dagger$,
$\mathrm S^\dagger\mathrm S^\dagger$ and $\mathrm R^\dagger\mathrm
R^\dagger$. In C7--C12 at least one of the two outer waves is composite, of
two or of three pieces. Tables~\ref{tab:composite_wavespeeds}
(page~\pageref{tab:composite_wavespeeds}) and \ref{tab:composite_bounds}
(page~\pageref{tab:composite_bounds}) report the exact solution, the
hydrodynamical approximation and the guaranteed bounds for these
configurations.

The hydrodynamical wavespeeds now fall short by $25\%$ to $68\%$, several
times the deficit of Table~\ref{tab:elementary_wavespeeds}
(page~\pageref{tab:elementary_wavespeeds}), where the smallness condition
kept the bump weak. The bounds of Theorem~\ref{thm:main} remain upper
bounds on all twelve configurations and overestimate by $44\%$ to $94\%$,
apart from C1 and C11, where the overshoot reaches $197\%$ and $178\%$.


\subsection{1D numerical tests for composite wave configurations}
\label{sec:composite_numerics}
The configurations of Section~\ref{sec:composite_tests} were constructed
backwards from their wave structure satisfying \eqref{A3} to \eqref{A5}. It
is thus worth checking that they are indeed the solutions of the Riemann
problems whose data they define. We have implemented the composite equation
of state $p=\ph+\pb$ of Sections~\ref{sec:nasg} and \ref{sec:barotropic}
and our wavespeed estimator (Theorem~\ref{thm:main}) in the finite element
code \texttt{ryujin} \cite{ryujin-2021-1} and solved three of the
configurations as one-dimensional Riemann problems on $[-1,1]$ discretized
with $2^{15}$ cells. The final time was chosen such that the outer waves
did not reach the boundaries. We select C10, whose left wave is the
three-piece composite $\mathrm S{+}\mathrm R^\dagger{+}\mathrm S$; C11,
whose right wave is led by an expansive shock; and C12, a configuration in
which both outer waves are composite and of different type.

Figures~\ref{fig:composite_C10}--\ref{fig:composite_C12} show the density
at the final time against the exact solution, together with the split
$a^2=\ah^2+\ab^2$ of the speed of sound along the isentropes that carry the
waves. The computed profiles reproduce the constructed wave structures:
each fan appears between the two wave speeds quoted in
Table~\ref{tab:composite_configurations}, and the shock that bounds it sits
at the speed of the adjacent fan edge, which is the defining feature of a
composite wave and is visible as the coincidence of a fan endpoint with a
shock in every one of the three figures.

The lower panels show why the hydrodynamical wavespeeds of
Table~\ref{tab:composite_wavespeeds} fall so far short. The barotropic
contribution $\ab$ dominates $\ah$ in a narrow band around $\rho_0$ and is
negligible outside it, so a wave is distorted precisely when the states it
connects lie on the bump: the left waves of C10 and C12 cross it and run
several times faster than the hydrodynamical estimate, whereas the right wave
of C12 sits far out on the tail and is matched to within a percent. The bound
of Theorem~\ref{thm:main} is by contrast an upper bound in all three cases;
for C11 it falls outside the computational domain at the final time and is
only indicated in Figure~\ref{fig:composite_C11}.

Maintaining a strict upper bound $\lambda_{\max}$ on the maximal wavespeed
matters: Figure~\ref{fig:composite_C11_bounds} recomputes C11 three times,
changing only the estimate that enters the graph viscosity. Taking the
bound of Theorem~\ref{thm:main} for the hydrodynamical constituent alone,
which is what a closure that ignores the barotropic contribution delivers,
underestimates $\lambda^+_3$ by $37\%$ and fails outright: the tail of the
fan collapses onto a plateau and the shock closing the composite wave lags
visibly behind. The surrogate construction of
\cite{ClaytonGuermondPopov2022} comes much closer, and it does keep the
update in a (surrogate) admissible set of the original equation of state;
but it is also not free of defects as it underestimates the true wavespeed:
Its fan flattens over the last stretch before $\mathrm S^\dagger$ and
carries oscillations an order of magnitude larger than those of the
guaranteed bound.


\section{Conclusion}
\label{sec:conclusion}
We have derived guaranteed upper bounds on the extremal wave speeds of the
Riemann problem for the compressible Euler equations with a composite
equation of state $p=\ph+\pb$, in which a barotropic contribution
$\pb(\rho)$ is added to a hydrodynamical closure $\ph$ with the classical
convexity properties. The approach rests on parametrizing the wave curves
of the complete system by the hydrodynamical star pressure and comparing
them with the wave curves of the auxiliary problem for $\ph$ alone. The
comparison yields efficiently computable wave-speed bounds that require
little more than the solution of the hydrodynamical Riemann problem
(Theorems~\ref{thm:main} and~\ref{thm:main_iterative}).

A number of questions remain open. The standing assumption \eqref{A2} of a
real barotropic speed of sound excludes closures with a spinodal region,
such as the van der Waals equation of state~\cite{vanderWaals1873}, for
which the Riemann problem loses hyperbolicity and its resolution requires a
kinetic relation or a regularization. Thus, the natural question arises how
the comparison with the hydrodynamical problem could possibly extend to
such a configuration. A second question concerns
composite waves. The bounds of Theorem~\ref{thm:main} hold irrespective of
the wave structure, but the iteration of Theorem~\ref{thm:main_iterative},
which tightens them to within a fraction of a percent for elementary
waves, requires elementary outer waves: the volume bounds of
Lemma~\ref{lem:matching_volume_bound} and the deviation bounds of
Lemma~\ref{lem:deviation_bounds} compare a single shock or fan with its
hydrodynamical counterpart, and the star pressure bracket of
Corollary~\ref{cor:deviation_star_pressures} rests on this comparison.
Once an outer wave is composite, as in the configurations of
Section~\ref{sec:composite_tests}, we have no analogue of these
ingredients, and the initial bounds are all that Theorem~\ref{thm:main}
offers. How to construct an iteration for composite waves is an open
problem.


\section*{Acknowledgments}
NF gratefully acknowledges support by the Mobil Chair in Computational
Science held by Jean-Luc Guermond, which funded a six-month research stay
at Texas A\&M University in 2025. MM acknowledges partial support by the
National Science Foundation under grant DMS-2045636 and by the Air Force
Office of Scientific Research, USAF, under grant/contract number
FA9550-23-1-0007. We thank our colleagues Jean-Luc Guermond, Bojan Popov
and Ignacio Tomas for fruitful discussions. We would also like to thank
Sergey Gavrilyuk for discussions and for making us aware of the work of
Smith~\cite{Smith1979}.

\clearpage
\newpage
\begin{algorithm2e}[p]
  \Fn{\Hdr{jump\_pressure}$(W,\Delta)$}{
    \tcp{root of the continued shock branch \eqref{eq:f_shock_h_continued}; $\Delta=0$ returns $\ph$}
    \Return $q$ with $f\S_{\mathrm h}(q;W)=\Delta$\;
  }
  \vspace{1em}
  \Fn{\Hdr{wavespeed\_bound}$(W,\underbar P,\overbar P,s)$}{
    \tcp{volume bracket from \eqref{eq:rankine_hugoniot_h}, $\xi_h\S$ continued past $\ph$}
    $\bar v \;\leftarrow\; \xi_h\S(\overbar P;W)$;\quad
    $\underbar v \;\leftarrow\; \xi_h\S(\underbar P;W)$\;
    \tcp{hydrodynamical wavespeed \eqref{eq:main_lambda} and barotropic correction \eqref{eq:main_wavespeed}}
    $\bar\lambda \;\leftarrow\; u + s\,v\,Q_{\mathrm h}(\overbar P;W)$\;
    $M \;\leftarrow\; \max_{\bar v\le\tau\le\underbar v}\hatab^2(\tau)/\tau^2$\;
    \Return $u + s\sqrt{(\bar\lambda-u)^2+2\,v^2 M}$\;
  }
  \vspace{1em}
  \Fn{\Hdr{general\_bounds}$(W\L,W\R)$}{
    \tcp{Step 1: one-sided jump estimates \eqref{eq:main_onesided}}
    $\Delta u \;\leftarrow\; u\L-u\R$\;
    $\overbar q\L \;\leftarrow\;
      \FuncSty{jump\_pressure}\big(W\L,\max(\Delta u,0)\big)$;\quad
    $\overbar q\R \;\leftarrow\;
      \FuncSty{jump\_pressure}\big(W\R,\max(\Delta u,0)\big)$\;
    $\underbar q\L \;\leftarrow\;
      \FuncSty{jump\_pressure}\big(W\L,\min(\Delta u,0)\big)$;\quad
    $\underbar q\R \;\leftarrow\;
      \FuncSty{jump\_pressure}\big(W\R,\min(\Delta u,0)\big)$\;
    \tcp{Step 2: pressure bounds \eqref{eq:main_pressure_bound}}
    $\barph\starL \;\leftarrow\; \max\big(\overbar q\L,\,p\R-\hatpb(v\L)\big)$;\quad
    $\barph\starR \;\leftarrow\; \max\big(\overbar q\R,\,p\L-\hatpb(v\R)\big)$\;
    $\underph\starL \;\leftarrow\; \min\big(\underbar q\L,\,p\R-\hatpb(v\L)\big)$;\quad
    $\underph\starR \;\leftarrow\; \min\big(\underbar q\R,\,p\L-\hatpb(v\R)\big)$\;
    \tcp{Step 3: wavespeed bounds \eqref{eq:main_wavespeed}}
    $\lambda^-_1 \;\leftarrow\;
      \FuncSty{wavespeed\_bound}\big(W\L,\underph\starL,\barph\starL,-1\big)$;\quad
    $\lambda^+_3 \;\leftarrow\;
      \FuncSty{wavespeed\_bound}\big(W\R,\underph\starR,\barph\starR,+1\big)$\;
    \Return $\big(\underph\starL,\barph\starL,\underph\starR,\barph\starR\big)$
      and $\lambda_{\max}=\max\big(|\lambda^-_1|,|\lambda^+_3|\big)$\;
  }\vspace{1.75em}
  \caption{Wavespeed bounds of Theorem~\ref{thm:main}.
  $W=[\rho,u,\ph]$ denotes a state with specific volume $v=1/\rho$ and full
  pressure $p=\ph+\hatpb(v)$.}
  \label{alg:general}
\end{algorithm2e}
\begin{algorithm2e}[p]
  \Fn{\Hdr{admissible}$(W,\underbar P,\overbar P)$}{
    \tcp{smallness condition of Lemma~\ref{lem:matching_volume_bound}}
    $d \;\leftarrow\; 2\,(p_\infty+\ph)$;\quad
    $m \;\leftarrow\; \max_{\xi_h\S(\overbar P;W)\le\tau\le v}\hatab^2(\tau)/\tau^2$\;
    \If{$4\,v\,m>d$}{\Return false\;}
    $(\underbar V,\overbar V) \;\leftarrow\; \FuncSty{volume\_bounds}(W,\underbar P,\overbar P)$\;
    \tcp{nontrivial lower volume bound, Lemma~\ref{lem:deviation_bounds}}
    \If{$\underbar V\le0$}{\Return false\;}
    \tcp{smallness condition of Lemma~\ref{lem:deviation_bounds}}
    $\Delta_{\mathrm b}^+ \;\leftarrow\; \max\big(\hatpb(\underbar V)-\hatpb(v),\,0\big)$\;
    \If{$\Delta_{\mathrm b}^+\ge d$}{\Return false\;}
    \tcp{convexity criterion \eqref{eq:kappa_criterion} of Lemma~\ref{lem:elementary_criterion}}
    $I \;\leftarrow\; \big[\min(v,\underbar V),\,\max(v,\overbar V)\big]$\;
    $\kappa \;\leftarrow\; \min_{\tau\in I}\partial_v\partial_v\hatph(\tau,\sh)\big|_{\sh}
      +\min_{\tau\in I}\partial_v\partial_v\hatpb(\tau)$\;
    \Return $\kappa>0$\;
  }
  \vspace{1em}
  \Fn{\Hdr{volume\_bounds}$(W,\underbar P,\overbar P)$}{
    \tcp{Lemma~\ref{lem:matching_volume_bound}: shock branch above $\ph$, isentrope below}
    $d \;\leftarrow\; 2\,(p_\infty+\ph)$;\quad
    $m \;\leftarrow\; \max_{\xi_h\S(\overbar P;W)\le\tau\le v}\hatab^2(\tau)/\tau^2$\;
    \eIf{$\overbar P\ge\ph$}{
      $\overbar c \;\leftarrow\; v-\xi_h\S(\overbar P;W)$;\quad
      $\underbar V \;\leftarrow\; v-2\,\overbar c\big/\big(1+\sqrt{1-4\,m\,\overbar c/d}\,\big)$
      \tcp*{\eqref{eq:matching_volume_bound}}
    }{
      $\underbar V \;\leftarrow\; \xi_h\R(\overbar P;W)$
      \tcp*{\eqref{eq:matching_volume_bound_rarefaction}}
    }
    \eIf{$\underbar P\ge\ph$}{
      $\underbar c \;\leftarrow\; v-\xi_h\S(\underbar P;W)$;\quad
      $\overbar V \;\leftarrow\; v-2\,\underbar c\big/\big(1+\sqrt{1+4\,m\,\underbar c/d}\,\big)$
      \tcp*{\eqref{eq:matching_volume_bound_upper}}
    }{
      $\overbar V \;\leftarrow\; \xi_h\R(\underbar P;W)$
      \tcp*{\eqref{eq:matching_volume_bound_rarefaction}}
    }
    \Return $(\underbar V,\overbar V)$\;
  }
  \vspace{1em}
  \Fn{\Hdr{deviation\_bounds}$(W,\underbar P,\overbar P,\underbar V,\overbar V)$}{
    \tcp{Lemma~\ref{lem:deviation_bounds}; $\eta\S$ vanishes for $\overbar P\le\ph$ and $\eta\R$ for $\underbar P\ge\ph$}
    $d \;\leftarrow\; 2\,(p_\infty+\ph)$\;
    $\Delta_{\mathrm b}^+ \;\leftarrow\; \max\big(\hatpb(\underbar V)-\hatpb(v),\,0\big)$;\quad
    $\Delta_{\mathrm b}^- \;\leftarrow\; \max\big(\hatpb(v)-\hatpb(\overbar V),\,0\big)$
    \tcp*{\eqref{eq:deviation_delta_b}}
    $\eta\S \;\leftarrow\; \dfrac{v}{2\,\ah(W)}\,
      \dfrac{d+\overbar P-\ph}{d-\Delta_{\mathrm b}^+}\,\Delta_{\mathrm b}^+$;\quad
    $\eta\R \;\leftarrow\; \dfrac{\overbar V}{2\,\ah(\overbar V,\underbar P)}\,\Delta_{\mathrm b}^-$
    \tcp*{\eqref{eq:deviation_eta}}
    \Return $(\eta\S,\eta\R)$\;
  }
  \vspace{1em}
  \Fn{\Hdr{shifted\_pressure}$(W\L,W\R,\eta)$}{
    \tcp{root of the shifted pressure equation \eqref{eq:pressure_equation_shifted}, see Remark~\ref{rem:shifted_no_solution}}
    \Return $\Phi$ with
      $f_{\mathrm h}(\Phi;W\L)+f_{\mathrm h}(\Phi;W\R)+u\R-u\L-\eta=0$,
      or $\emptyset$ if no root exists\;
  }
  \vspace{1.75em}
  \caption{Auxiliary routines for
  Algorithm~\ref{alg:elementary}. $\sh$ is the specific entropy of $W$.}
  \label{alg:elementary_aux}
\end{algorithm2e}
\begin{algorithm2e}[p]
  \Fn{\Hdr{improved\_wavespeed\_bound}$(W,\underbar P,\overbar P,s)$}{
    \tcp{Corollary~\ref{cor:main_closing}: shock if the bracket lies above $\ph$}
    \If{$\ph<\underbar P$}{
      $\bar v \;\leftarrow\; \xi_h\S(\overbar P;W)$;\quad
      $\bar\lambda \;\leftarrow\; u + s\,v\,Q_{\mathrm h}(\overbar P;W)$\;
      $\theta \;\leftarrow\; \big(\hatpb(\bar v)-\hatpb(v)\big)\big/\big(\underbar P-\ph\big)$;\quad
      $M \;\leftarrow\; \max_{\bar v\le\tau\le v}\hatab^2(\tau)/\tau^2$\;
      \tcp{tighter of Corollary~\ref{cor:main_closing} and the clipped bound of Theorem~\ref{thm:main_iterative}}
      \Return $u + s\sqrt{(\bar\lambda-u)^2+v^2 M+\min\big(\theta\,(\bar\lambda-u)^2,\,v^2 M\big)}$\;
    }
    \tcp{rarefaction if the bracket lies below $\ph$, head speed \eqref{eq:lambda_rarefaction}}
    \If{$\overbar P<\ph$}{
      \Return $u + s\sqrt{\ah^2(W)+\hatab^2(v)}$\;
    }
    \tcp{otherwise the bracket contains $\ph$, return bounds of Theorem~\ref{thm:main_iterative}:}
    \Return \FuncSty{wavespeed\_bound}$\big(W,\min(\underbar P,\ph),\max(\overbar P,\ph),s\big)$\;
  }
  \vspace{1em}
  \Fn{\Hdr{regular\_bounds}$(W\L,W\R,\,N_{\max},\,\mathrm{tol})$}{
    \tcp{Step 0: initial bracket, Theorem~\ref{thm:main}}
    $\big(\underbar P_{\mathrm L},\overbar P_{\mathrm L},
      \underbar P_{\mathrm R},\overbar P_{\mathrm R}\big)
      \;\leftarrow\;\FuncSty{general\_bounds}(W\L,W\R)$\;
    \tcp{Step 1: admissibility; on failure fall back to Theorem~\ref{thm:main}}
    \If{\Not \FuncSty{admissible}$(W\L,\underbar P_{\mathrm L},\overbar P_{\mathrm L})$
        \Or \Not \FuncSty{admissible}$(W\R,\underbar P_{\mathrm R},\overbar P_{\mathrm R})$}{
      \Return the bracket and $\lambda_{\max}$ of \FuncSty{general\_bounds}\;
    }
    \tcp{Step 2: hydrodynamical star pressure \eqref{eq:pressure_equation_h} and bracket \eqref{eq:main_pistar}}
    $\tilde p_h\star \;\leftarrow\; p$ with
      $f_{\mathrm h}(p;W\L)+f_{\mathrm h}(p;W\R)+u\R-u\L=0$\;
    $\underbar\pi\star \;\leftarrow\; \min\big(\tilde p_h\star,\ph\L,\ph\R\big)$;\quad
    $\bar\pi\star \;\leftarrow\; \max\big(\tilde p_h\star,\ph\L,\ph\R\big)$\;
    \tcp{Step 3: iteration \eqref{eq:main_bootstrap}}
    \For{$n=1,\ldots,N_{\max}$}{
      $\big(\underbar V_{\mathrm L},\overbar V_{\mathrm L}\big) \;\leftarrow\;
        \FuncSty{volume\_bounds}(W\L,\underbar P_{\mathrm L},\overbar P_{\mathrm L})$\;
      $\big(\underbar V_{\mathrm R},\overbar V_{\mathrm R}\big) \;\leftarrow\;
        \FuncSty{volume\_bounds}(W\R,\underbar P_{\mathrm R},\overbar P_{\mathrm R})$\;
      $\underbar\delta \;\leftarrow\; \hatpb(\underbar V_{\mathrm L})-\hatpb(\overbar V_{\mathrm R})$;\quad
      $\bar\delta \;\leftarrow\; \hatpb(\underbar V_{\mathrm R})-\hatpb(\overbar V_{\mathrm L})$\;
      \tcp{deviation bounds \eqref{eq:deviation_eta_sum} and star pressure bracket \eqref{eq:deviation_star_pressures}}
      $\big(\eta\S_{\mathrm L},\eta\R_{\mathrm L}\big) \;\leftarrow\;
        \FuncSty{deviation\_bounds}(W\L,\underbar P_{\mathrm L},\overbar P_{\mathrm L},
        \underbar V_{\mathrm L},\overbar V_{\mathrm L})$\;
      $\big(\eta\S_{\mathrm R},\eta\R_{\mathrm R}\big) \;\leftarrow\;
        \FuncSty{deviation\_bounds}(W\R,\underbar P_{\mathrm R},\overbar P_{\mathrm R},
        \underbar V_{\mathrm R},\overbar V_{\mathrm R})$\;
      $\underbar\eta \;\leftarrow\; -\eta\S_{\mathrm L}-\eta\S_{\mathrm R}$;\quad
      $\bar\eta \;\leftarrow\; \eta\R_{\mathrm L}+\eta\R_{\mathrm R}$\;
      $\underbar\pi \;\leftarrow\; \max\big(\underbar\pi\star,\,
        \FuncSty{shifted\_pressure}(W\L,W\R,\underbar\eta)\big)$\;
      $\bar\pi \;\leftarrow\; \min\big(\bar\pi\star,\,
        \FuncSty{shifted\_pressure}(W\L,W\R,\bar\eta)\big)$\;
      $\overbar P_{\mathrm L}' \;\leftarrow\; \min\big(\overbar P_{\mathrm L},\,\bar\pi+\max(\bar\delta,0)\big)$;\quad
      $\overbar P_{\mathrm R}' \;\leftarrow\; \min\big(\overbar P_{\mathrm R},\,\bar\pi+\max(\underbar\delta,0)\big)$\;
      $\underbar P_{\mathrm L}' \;\leftarrow\; \max\big(\underbar P_{\mathrm L},\,\underbar\pi-\max(\underbar\delta,0)\big)$;\quad
      $\underbar P_{\mathrm R}' \;\leftarrow\; \max\big(\underbar P_{\mathrm R},\,\underbar\pi-\max(\bar\delta,0)\big)$\;
      $\big(\underbar P_{\mathrm L},\overbar P_{\mathrm L},
        \underbar P_{\mathrm R},\overbar P_{\mathrm R}\big)
        \;\leftarrow\;\big(\underbar P_{\mathrm L}',\overbar P_{\mathrm L}',
        \underbar P_{\mathrm R}',\overbar P_{\mathrm R}'\big)$\;
      \If{relative change of all four bounds $<\mathrm{tol}$}{\Break\;}
    }
    \tcp{Step 4: wavespeed bounds, Corollary~\ref{cor:main_closing}}
    $\lambda^-_1 \;\leftarrow\;
      \FuncSty{improved\_wavespeed\_bound}\big(W\L,\underbar P_{\mathrm L},\overbar P_{\mathrm L},-1\big)$\;
    $\lambda^+_3 \;\leftarrow\;
      \FuncSty{improved\_wavespeed\_bound}\big(W\R,\underbar P_{\mathrm R},\overbar P_{\mathrm R},+1\big)$\;
    \Return $\big(\underbar P_{\mathrm L},\overbar P_{\mathrm L},
      \underbar P_{\mathrm R},\overbar P_{\mathrm R}\big)$
      and $\lambda_{\max}=\max\big(|\lambda^-_1|,|\lambda^+_3|\big)$\;
  }\vspace{1.75em}
  \caption{Iterated wavespeed bounds of Theorem~\ref{thm:main_iterative}
  and Corollary~\ref{cor:main_closing}.}
  \label{alg:elementary}
\end{algorithm2e}


\begin{table}[p]
  \centering
  \setlength{\tabcolsep}{5.39pt}
  \captionsetup[subfloat]{captionskip=3pt,farskip=7.5pt}
  \subfloat[$c_0=0$]{%
    \scriptsize
    \begin{tabular}{l*{8}{r}}
      \toprule
      & \multicolumn{4}{c}{exact \eqref{eq:riemann_problem}} & \multicolumn{4}{c}{hydrodynamical \eqref{eq:riemann_problem_h}} \\
      \cmidrule(lr){2-5} \cmidrule(lr){6-9}
      & $\ph\starL$ & $\ph\starR$ & $\lambda^-_1$ & $\lambda^+_3$ & $\tilde p_h\star$ & $\lambda^-_{1,\mathrm h}$ & $\lambda^+_{3,\mathrm h}$ & $\Delta\lambda_{\max}$ \\[0.25em]
      E1   & 6.544e-4 & 6.544e-4 & -0.305505 & \textbf{0.886139} & 6.544e-4 & -0.305505 & \textbf{0.886139} & +0\% \\
      E2   & 0.30313 & 0.30313 & -1.18322 & \textbf{1.75216} & 0.30313 & -1.18322 & \textbf{1.75216} & +0\% \\
      E3   & 2.4661 & 2.4661 & \textbf{-2.63357} & 2.47932 & 2.4661 & \textbf{-2.63357} & 2.47932 & +0\% \\
      E4   & 460.894 & 460.894 & -27.4166 & \textbf{33.5175} & 460.894 & -27.4166 & \textbf{33.5175} & +0\% \\
      E5   & 1.02391 & 1.02391 & \textbf{-1.17528} & 1.17528 & 1.02391 & \textbf{-1.17528} & 1.17528 & +0\% \\
      E6   & 10.4769 & 10.4769 & -1.52708 & \textbf{2.81737} & 10.4769 & -1.52708 & \textbf{2.81737} & +0\% \\
      E7   & 10.4194 & 10.4194 & -0.619073 & \textbf{2.19288} & 10.4194 & -0.619073 & \textbf{2.19288} & +0\% \\
      E8   & 1.17562 & 1.17562 & -0.284928 & \textbf{0.647425} & 1.17562 & -0.284928 & \textbf{0.647425} & +0\% \\
      E9   & 9.86846 & 9.86846 & \textbf{-26.7075} & 26.7075 & 9.86846 & \textbf{-26.7075} & 26.7075 & +0\% \\
      E10  & 8.66235 & 8.66235 & \textbf{-3.99166} & 1.43322 & 8.66235 & \textbf{-3.99166} & 1.43322 & +0\% \\
      \bottomrule
    \end{tabular}}
  \\[0.75em]
  \subfloat[$c_0=0.1\,c_0^{\max}$]{%
    \scriptsize
    \begin{tabular}{l*{8}{r}}
      \toprule
      & \multicolumn{4}{c}{exact \eqref{eq:riemann_problem}} & \multicolumn{4}{c}{hydrodynamical \eqref{eq:riemann_problem_h}} \\
      \cmidrule(lr){2-5} \cmidrule(lr){6-9}
      & $\ph\starL$ & $\ph\starR$ & $\lambda^-_1$ & $\lambda^+_3$ & $\tilde p_h\star$ & $\lambda^-_{1,\mathrm h}$ & $\lambda^+_{3,\mathrm h}$ & $\Delta\lambda_{\max}$ \\[0.25em]
      E1   & 6.544e-4 & 6.544e-4 & -0.305505 & \textbf{0.886139} & \textcolor{red}{\textbf{6.544e-4}} & -0.305505 & \textbf{0.886139} & +0\% \\
      E2   & 0.303171 & 0.303174 & -1.18533 & \textbf{1.75228} & \textcolor{red}{\textbf{0.30313}} & -1.18322 & \textbf{1.75216} & \textcolor{red}{\textbf{\boldmath$<0.01\%$}} \\
      E3   & 2.46611 & 2.466 & \textbf{-2.63357} & 2.47935 & \textcolor{red}{\textbf{2.4661}} & \textbf{-2.63357} & 2.47932 & \textcolor{red}{\textbf{\boldmath$<0.01\%$}} \\
      E4   & 460.894 & 460.894 & -27.4166 & \textbf{33.5175} & \textcolor{red}{\textbf{460.894}} & -27.4166 & \textbf{33.5175} & $<0.01\%$ \\
      E5   & 1.02387 & 1.02387 & \textbf{-1.17724} & 1.17724 & 1.02391 & \textbf{-1.17528} & 1.17528 & \textcolor{red}{\textbf{-0.17\%}} \\
      E6   & 10.4769 & 10.4769 & -1.52708 & \textbf{2.81738} & \textcolor{red}{\textbf{10.4769}} & -1.52708 & \textbf{2.81737} & \textcolor{red}{\textbf{\boldmath$<0.01\%$}} \\
      E7   & 10.4194 & 10.4194 & -0.619073 & \textbf{2.19288} & \textcolor{red}{\textbf{10.4194}} & -0.619073 & \textbf{2.19288} & \textcolor{red}{\textbf{\boldmath$<0.01\%$}} \\
      E8   & 1.17562 & 1.17562 & -0.284928 & \textbf{0.647425} & \textcolor{red}{\textbf{1.17562}} & -0.284928 & \textbf{0.647425} & \textcolor{red}{\textbf{\boldmath$<0.01\%$}} \\
      E9   & 9.86869 & 9.86869 & \textbf{-26.7547} & 26.7547 & \textcolor{red}{\textbf{9.86846}} & \textbf{-26.7075} & 26.7075 & \textcolor{red}{\textbf{-0.18\%}} \\
      E10  & 8.66483 & 8.65841 & \textbf{-3.99671} & 1.43322 & \textcolor{red}{\textbf{8.66235}} & \textbf{-3.99166} & 1.43322 & \textcolor{red}{\textbf{-0.13\%}} \\
      \bottomrule
    \end{tabular}}
  \\[0.75em]
  \subfloat[$c_0=0.5\,c_0^{\max}$]{%
    \scriptsize
    \begin{tabular}{l*{8}{r}}
      \toprule
      & \multicolumn{4}{c}{exact \eqref{eq:riemann_problem}} & \multicolumn{4}{c}{hydrodynamical \eqref{eq:riemann_problem_h}} \\
      \cmidrule(lr){2-5} \cmidrule(lr){6-9}
      & $\ph\starL$ & $\ph\starR$ & $\lambda^-_1$ & $\lambda^+_3$ & $\tilde p_h\star$ & $\lambda^-_{1,\mathrm h}$ & $\lambda^+_{3,\mathrm h}$ & $\Delta\lambda_{\max}$ \\[0.25em]
      E1   & 6.544e-4 & 6.544e-4 & -0.305505 & \textbf{0.886139} & \textcolor{red}{\textbf{6.544e-4}} & -0.305505 & \textbf{0.886139} & +0\% \\
      E2   & 0.304139 & 0.304211 & -1.23491 & \textbf{1.75526} & \textcolor{red}{\textbf{0.30313}} & -1.18322 & \textbf{1.75216} & \textcolor{red}{\textbf{-0.18\%}} \\
      E3   & 2.46634 & 2.46377 & \textbf{-2.63358} & 2.47999 & \textcolor{red}{\textbf{2.4661}} & \textbf{-2.63357} & 2.47932 & \textcolor{red}{\textbf{\boldmath$<0.01\%$}} \\
      E4   & 460.894 & 460.894 & -27.4166 & \textbf{33.5175} & \textcolor{red}{\textbf{460.894}} & -27.4166 & \textbf{33.5175} & $<0.01\%$ \\
      E5   & 1.02296 & 1.02296 & \textbf{-1.22356} & 1.22356 & 1.02391 & \textbf{-1.17528} & 1.17528 & \textcolor{red}{\textbf{-3.95\%}} \\
      E6   & 10.4767 & 10.4769 & -1.52706 & \textbf{2.81782} & \textcolor{red}{\textbf{10.4769}} & -1.52708 & \textbf{2.81737} & \textcolor{red}{\textbf{-0.02\%}} \\
      E7   & 10.4194 & 10.4194 & -0.619073 & \textbf{2.19288} & \textcolor{red}{\textbf{10.4194}} & -0.619073 & \textbf{2.19288} & \textcolor{red}{\textbf{\boldmath$<0.01\%$}} \\
      E8   & 1.17562 & 1.17562 & -0.284928 & \textbf{0.647425} & \textcolor{red}{\textbf{1.17562}} & -0.284928 & \textbf{0.647425} & \textcolor{red}{\textbf{\boldmath$<0.01\%$}} \\
      E9   & 9.87394 & 9.87394 & \textbf{-27.8634} & 27.8634 & \textcolor{red}{\textbf{9.86846}} & \textbf{-26.7075} & 26.7075 & \textcolor{red}{\textbf{-4.15\%}} \\
      E10  & 8.72319 & 8.56431 & \textbf{-4.11591} & 1.43322 & \textcolor{red}{\textbf{8.66235}} & \textbf{-3.99166} & 1.43322 & \textcolor{red}{\textbf{-3.02\%}} \\
      \bottomrule
    \end{tabular}}
  \\[0.75em]
  \subfloat[$c_0=0.99\,c_0^{\max}$]{%
    \scriptsize
    \begin{tabular}{l*{8}{r}}
      \toprule
      & \multicolumn{4}{c}{exact \eqref{eq:riemann_problem}} & \multicolumn{4}{c}{hydrodynamical \eqref{eq:riemann_problem_h}} \\
      \cmidrule(lr){2-5} \cmidrule(lr){6-9}
      & $\ph\starL$ & $\ph\starR$ & $\lambda^-_1$ & $\lambda^+_3$ & $\tilde p_h\star$ & $\lambda^-_{1,\mathrm h}$ & $\lambda^+_{3,\mathrm h}$ & $\Delta\lambda_{\max}$ \\[0.25em]
      E1   & 6.544e-4 & 6.544e-4 & -0.305505 & \textbf{0.886139} & \textcolor{red}{\textbf{6.544e-4}} & -0.305505 & \textbf{0.886139} & \textcolor{red}{\textbf{\boldmath$<0.01\%$}} \\
      E2   & 0.306962 & 0.307249 & -1.37479 & \textbf{1.76399} & \textcolor{red}{\textbf{0.30313}} & -1.18322 & \textbf{1.75216} & \textcolor{red}{\textbf{-0.67\%}} \\
      E3   & 2.46707 & 2.45699 & \textbf{-2.63361} & 2.48197 & \textcolor{red}{\textbf{2.4661}} & \textbf{-2.63357} & 2.47932 & \textcolor{red}{\textbf{\boldmath$<0.01\%$}} \\
      E4   & 460.894 & 460.893 & -27.4166 & \textbf{33.5175} & \textcolor{red}{\textbf{460.894}} & -27.4166 & \textbf{33.5175} & $<0.01\%$ \\
      E5   & 1.02072 & 1.02072 & \textbf{-1.35528} & 1.35528 & 1.02391 & \textbf{-1.17528} & 1.17528 & \textcolor{red}{\textbf{-13.3\%}} \\
      E6   & 10.4761 & 10.477 & -1.52699 & \textbf{2.81913} & \textcolor{red}{\textbf{10.4769}} & -1.52708 & \textbf{2.81737} & \textcolor{red}{\textbf{-0.06\%}} \\
      E7   & 10.4194 & 10.4194 & -0.619073 & \textbf{2.19288} & \textcolor{red}{\textbf{10.4194}} & -0.619073 & \textbf{2.19288} & \textcolor{red}{\textbf{\boldmath$<0.01\%$}} \\
      E8   & 1.17562 & 1.17562 & -0.284928 & \textbf{0.647427} & \textcolor{red}{\textbf{1.17562}} & -0.284928 & \textbf{0.647425} & \textcolor{red}{\textbf{\boldmath$<0.01\%$}} \\
      E9   & 9.88672 & 9.88672 & \textbf{-30.9913} & 30.9913 & \textcolor{red}{\textbf{9.86846}} & \textbf{-26.7075} & 26.7075 & \textcolor{red}{\textbf{-13.8\%}} \\
      E10  & 8.88861 & 8.28532 & \textbf{-4.45782} & 1.43322 & \textcolor{red}{\textbf{8.66235}} & \textbf{-3.99166} & 1.43322 & \textcolor{red}{\textbf{-10.5\%}} \\
      \bottomrule
    \end{tabular}}
  \caption{Star pressures and extremal wavespeeds of the exact Riemann
  problem and of the reduced hydrodynamical problem for the configurations
  E1--E10 of Table~\ref{tab:elementary_configurations}. In each pair of
  wavespeeds the entry of larger modulus, printed in bold, determines
  $\lambda_{\max}$; a hydrodynamical star pressure that falls below the
  larger of the two exact ones is flagged in red. $\Delta\lambda_{\max}$ is
  the relative deviation of the hydrodynamical from the exact maximal
  wavespeed.}
  \label{tab:elementary_wavespeeds}
\end{table}

\begin{table}[p]
  \centering
  \setlength{\tabcolsep}{5.5pt}
  \captionsetup[subfloat]{captionskip=3pt,farskip=7.5pt}
  \subfloat[$c_0=0$]{%
    \scriptsize
    \begin{tabular}{l*{7}{r}}
      \toprule
      & \multicolumn{4}{c}{pressure bounds \eqref{eq:main_pressure_bound}} & \multicolumn{3}{c}{wavespeed bounds \eqref{eq:main_wavespeed}} \\
      \cmidrule(lr){2-5} \cmidrule(lr){6-8}
      & $\underph\starL$ & $\barph\starL$ & $\underph\starR$ & $\barph\starR$ & $\lambda^-_1$ & $\lambda^+_3$ & $\Delta\lambda_{\max}$ \\[0.25em]
      E1   & 6.67e-11 & 0.066667 & 6.67e-11 & 0.066667 & -0.305505 & \textbf{8.94427} & +909\% \\
      E2   & 0.1 & 1 & 0.1 & 1 & -1.18322 & \textbf{3.1241} & +78.3\% \\
      E3   & 0.571 & 4.70105 & 0.571 & 3.528 & \textbf{-3.07858} & 2.94883 & +16.9\% \\
      E4   & 0.01 & 1e3 & 0.01 & 1e3 & -27.4166 & \textbf{44.641} & +33.2\% \\
      E5   & 1 & 1.0483 & 1 & 1.0483 & \textbf{-1.18746} & 1.18746 & +1.04\% \\
      E6   & 1 & 11.6515 & 1 & 20.2588 & -1.66287 & \textbf{4.12938} & +46.6\% \\
      E7   & 0.1 & 12.2156 & 0.1 & 16.6259 & -0.711555 & \textbf{2.81293} & +28.3\% \\
      E8   & 0.1 & 1.70878 & 0.1 & 2.18882 & -0.393511 & \textbf{0.938819} & +45\% \\
      E9   & 9.73841 & 10 & 9.73841 & 10 & \textbf{-26.7075} & 26.7075 & +0\% \\
      E10  & 8.27317 & 10 & 5.39672 & 10 & \textbf{-3.99166} & 1.43322 & +0\% \\
      \bottomrule
    \end{tabular}}
  \\[0.75em]
  \subfloat[$c_0=0.1\,c_0^{\max}$]{%
    \scriptsize
    \begin{tabular}{l*{7}{r}}
      \toprule
      & \multicolumn{4}{c}{pressure bounds \eqref{eq:main_pressure_bound}} & \multicolumn{3}{c}{wavespeed bounds \eqref{eq:main_wavespeed}} \\
      \cmidrule(lr){2-5} \cmidrule(lr){6-8}
      & $\underph\starL$ & $\barph\starL$ & $\underph\starR$ & $\barph\starR$ & $\lambda^-_1$ & $\lambda^+_3$ & $\Delta\lambda_{\max}$ \\[0.25em]
      E1   & 6.6e-11 & 0.066667 & 6.67e-11 & 0.066667 & -0.305505 & \textbf{8.94427} & +909\% \\
      E2   & 0.099503 & 1 & 0.1 & 1.0005 & -1.18743 & \textbf{3.12502} & +78.3\% \\
      E3   & 0.571 & 4.70105 & 0.571 & 3.528 & \textbf{-3.07859} & 2.94956 & +16.9\% \\
      E4   & 0.01 & 1e3 & 0.01 & 1e3 & -27.4166 & \textbf{44.641} & +33.2\% \\
      E5   & 1 & 1.0483 & 1 & 1.0483 & \textbf{-1.19141} & 1.19141 & +1.2\% \\
      E6   & 1 & 11.6515 & 1.00001 & 20.2588 & -1.66287 & \textbf{4.1294} & +46.6\% \\
      E7   & 0.1 & 12.2156 & 0.1 & 16.6259 & -0.711555 & \textbf{2.81293} & +28.3\% \\
      E8   & 0.1 & 1.70878 & 0.1 & 2.18882 & -0.393511 & \textbf{0.938831} & +45\% \\
      E9   & 9.73841 & 10 & 9.73841 & 10 & \textbf{-26.8018} & 26.8018 & +0.18\% \\
      E10  & 8.27317 & 10.0038 & 5.39672 & 10 & \textbf{-4.00236} & 1.43322 & +0.15\% \\
      \bottomrule
    \end{tabular}}
  \\[0.75em]
  \subfloat[$c_0=0.5\,c_0^{\max}$]{%
    \scriptsize
    \begin{tabular}{l*{7}{r}}
      \toprule
      & \multicolumn{4}{c}{pressure bounds \eqref{eq:main_pressure_bound}} & \multicolumn{3}{c}{wavespeed bounds \eqref{eq:main_wavespeed}} \\
      \cmidrule(lr){2-5} \cmidrule(lr){6-8}
      & $\underph\starL$ & $\barph\starL$ & $\underph\starR$ & $\barph\starR$ & $\lambda^-_1$ & $\lambda^+_3$ & $\Delta\lambda_{\max}$ \\[0.25em]
      E1   & 5e-11 & 0.066667 & 6.67e-11 & 0.066667 & -0.305505 & \textbf{8.94427} & +909\% \\
      E2   & 0.087581 & 1 & 0.1 & 1.01242 & -1.28452 & \textbf{3.14702} & +79.3\% \\
      E3   & 0.571005 & 4.70105 & 0.571 & 3.528 & \textbf{-3.07864} & 2.9669 & +16.9\% \\
      E4   & 0.01 & 1e3 & 0.01 & 1e3 & -27.4166 & \textbf{44.6411} & +33.2\% \\
      E5   & 1 & 1.0483 & 1 & 1.0483 & \textbf{-1.28254} & 1.28254 & +4.82\% \\
      E6   & 1 & 11.6515 & 1.00035 & 20.2588 & -1.66288 & \textbf{4.13006} & +46.6\% \\
      E7   & 0.1 & 12.2156 & 0.1 & 16.6259 & -0.711555 & \textbf{2.81294} & +28.3\% \\
      E8   & 0.1 & 1.70878 & 0.100001 & 2.18882 & -0.393511 & \textbf{0.939113} & +45.1\% \\
      E9   & 9.73841 & 10 & 9.73841 & 10 & \textbf{-28.9728} & 28.9728 & +3.98\% \\
      E10  & 8.27317 & 10.0945 & 5.39672 & 10 & \textbf{-4.25207} & 1.43322 & +3.31\% \\
      \bottomrule
    \end{tabular}}
  \\[0.75em]
  \subfloat[$c_0=0.99\,c_0^{\max}$]{%
    \scriptsize
    \begin{tabular}{l*{7}{r}}
      \toprule
      & \multicolumn{4}{c}{pressure bounds \eqref{eq:main_pressure_bound}} & \multicolumn{3}{c}{wavespeed bounds \eqref{eq:main_wavespeed}} \\
      \cmidrule(lr){2-5} \cmidrule(lr){6-8}
      & $\underph\starL$ & $\barph\starL$ & $\underph\starR$ & $\barph\starR$ & $\lambda^-_1$ & $\lambda^+_3$ & $\Delta\lambda_{\max}$ \\[0.25em]
      E1   & 1.33e-12 & 0.066667 & 6.67e-11 & 0.066667 & -0.305505 & \textbf{8.94427} & +909\% \\
      E2   & 0.051312 & 1 & 0.1 & 1.04869 & -1.54276 & \textbf{3.21391} & +82.2\% \\
      E3   & 0.571019 & 4.70105 & 0.571 & 3.52798 & \textbf{-3.07879} & 3.01905 & +16.9\% \\
      E4   & 0.01 & 1e3 & 0.01 & 1e3 & -27.4167 & \textbf{44.6411} & +33.2\% \\
      E5   & 1 & 1.0483 & 1 & 1.0483 & \textbf{-1.52713} & 1.52713 & +12.7\% \\
      E6   & 1 & 11.6515 & 1.00136 & 20.2588 & -1.66288 & \textbf{4.13205} & +46.6\% \\
      E7   & 0.1 & 12.2156 & 0.1 & 16.6259 & -0.711555 & \textbf{2.81295} & +28.3\% \\
      E8   & 0.1 & 1.70878 & 0.100003 & 2.18882 & -0.393511 & \textbf{0.93997} & +45.2\% \\
      E9   & 9.73841 & 10 & 9.73841 & 10 & \textbf{-34.7471} & 34.7471 & +12.1\% \\
      E10  & 8.27317 & 10.3706 & 5.39672 & 10 & \textbf{-4.93034} & 1.43322 & +10.6\% \\
      \bottomrule
    \end{tabular}}
  \caption{Initial one-sided bounds \eqref{eq:main_pressure_bound} on the star
  pressures and the guaranteed wavespeed estimates \eqref{eq:main_wavespeed}
  they imply, for the configurations E1--E10 of
  Table~\ref{tab:elementary_configurations}. In each pair of wavespeeds the
  entry of larger modulus, printed in bold, determines $\lambda_{\max}$;
  $\Delta\lambda_{\max}$ is its relative deviation from the exact maximal
  wavespeed of Table~\ref{tab:elementary_wavespeeds}.}
  \label{tab:initial_bounds}
\end{table}

\begin{table}[p]
  \centering
  \setlength{\tabcolsep}{5.5pt}
  \captionsetup[subfloat]{captionskip=3pt,farskip=7.5pt}
  \subfloat[$c_0=0$]{%
    \scriptsize
    \begin{tabular}{l*{8}{r}}
      \toprule
      & \multicolumn{4}{c}{pressure bounds \eqref{eq:main_bootstrap}} & \multicolumn{4}{c}{wavespeed bounds \eqref{eq:main_wavespeed}} \\
      \cmidrule(lr){2-5} \cmidrule(lr){6-9}
      & $\underbar P^{(N)}_{\mathrm L}$ & $\bar P^{(N)}_{\mathrm L}$ & $\underbar P^{(N)}_{\mathrm R}$ & $\bar P^{(N)}_{\mathrm R}$ & $\lambda^-_1$ & $\lambda^+_3$ & $\Delta\lambda_{\max}$ & $N$ \\[0.25em]
      E1   & 6.544e-4 & 6.544e-4 & 6.544e-4 & 6.544e-4 & -0.305505 & \textbf{0.886139} & +0\% & 2 \\
      E2   & 0.30313 & 0.30313 & 0.30313 & 0.30313 & -1.18322 & \textbf{1.75216} & +0\% & 2 \\
      E3   & 2.4661 & 2.4661 & 2.4661 & 2.4661 & \textbf{-2.63357} & 2.47932 & +0\% & 2 \\
      E4   & 460.894 & 460.894 & 460.894 & 460.894 & -27.4166 & \textbf{33.5175} & +0\% & 2 \\
      E5   & 1.02391 & 1.02391 & 1.02391 & 1.02391 & \textbf{-1.17528} & 1.17528 & +0\% & 2 \\
      E6   & 10.4769 & 10.4769 & 10.4769 & 10.4769 & -1.52708 & \textbf{2.81737} & +0\% & 2 \\
      E7   & 10.4194 & 10.4194 & 10.4194 & 10.4194 & -0.619073 & \textbf{2.19288} & +0\% & 2 \\
      E8   & 1.17562 & 1.17562 & 1.17562 & 1.17562 & -0.284928 & \textbf{0.647425} & +0\% & 2 \\
      E9   & 9.86846 & 9.86846 & 9.86846 & 9.86846 & \textbf{-26.7075} & 26.7075 & +0\% & 2 \\
      E10  & 8.66235 & 8.66235 & 8.66235 & 8.66235 & \textbf{-3.99166} & 1.43322 & +0\% & 2 \\
      \bottomrule
    \end{tabular}}
  \\[0.75em]
  \subfloat[$c_0=0.1\,c_0^{\max}$]{%
    \scriptsize
    \begin{tabular}{l*{8}{r}}
      \toprule
      & \multicolumn{4}{c}{pressure bounds \eqref{eq:main_bootstrap}} & \multicolumn{4}{c}{wavespeed bounds \eqref{eq:main_wavespeed}} \\
      \cmidrule(lr){2-5} \cmidrule(lr){6-9}
      & $\underbar P^{(N)}_{\mathrm L}$ & $\bar P^{(N)}_{\mathrm L}$ & $\underbar P^{(N)}_{\mathrm R}$ & $\bar P^{(N)}_{\mathrm R}$ & $\lambda^-_1$ & $\lambda^+_3$ & $\Delta\lambda_{\max}$ & $N$ \\[0.25em]
      E1   & 6.54e-4 & 6.544e-4 & 6.54e-4 & 6.544e-4 & -0.305505 & \textbf{0.886139} & +0\% & 3 \\
      E2   & 0.303125 & 0.303236 & 0.303128 & 0.303239 & -1.18533 & \textbf{1.75248} & +0.02\% & 3 \\
      E3   & 2.46595 & 2.4662 & 2.46584 & 2.4661 & \textbf{-2.63357} & 2.47982 & +0\% & 3 \\
      E4   & 456.503 & 460.894 & 456.503 & 460.894 & -27.4166 & \textbf{33.5175} & $<0.01\%$ & 3 \\
      E5   & 1.02386 & 1.02391 & 1.02386 & 1.02391 & \textbf{-1.17926} & 1.17926 & +0.18\% & 4 \\
      E6   & 10.4769 & 10.4769 & 10.4769 & 10.4769 & -1.52708 & \textbf{2.8174} & $<0.01\%$ & 3 \\
      E7   & 10.4194 & 10.4194 & 10.4194 & 10.4194 & -0.619073 & \textbf{2.19288} & $<0.01\%$ & 2 \\
      E8   & 1.17562 & 1.17562 & 1.17562 & 1.17562 & -0.284928 & \textbf{0.647425} & $<0.01\%$ & 3 \\
      E9   & 9.86846 & 9.8687 & 9.86846 & 9.8687 & \textbf{-26.7547} & 26.7547 & +0\% & 3 \\
      E10  & 8.66235 & 8.66978 & 8.65593 & 8.66336 & \textbf{-3.99671} & 1.43322 & +0\% & 3 \\
      \bottomrule
    \end{tabular}}
  \\[0.75em]
  \subfloat[$c_0=0.5\,c_0^{\max}$]{%
    \scriptsize
    \begin{tabular}{l*{8}{r}}
      \toprule
      & \multicolumn{4}{c}{pressure bounds \eqref{eq:main_bootstrap}} & \multicolumn{4}{c}{wavespeed bounds \eqref{eq:main_wavespeed}} \\
      \cmidrule(lr){2-5} \cmidrule(lr){6-9}
      & $\underbar P^{(N)}_{\mathrm L}$ & $\bar P^{(N)}_{\mathrm L}$ & $\underbar P^{(N)}_{\mathrm R}$ & $\bar P^{(N)}_{\mathrm R}$ & $\lambda^-_1$ & $\lambda^+_3$ & $\Delta\lambda_{\max}$ & $N$ \\[0.25em]
      E1   & 6.453e-4 & 6.544e-4 & 6.453e-4 & 6.544e-4 & -0.305505 & \textbf{0.886139} & $<0.01\%$ & 4 \\
      E2   & 0.303009 & 0.305788 & 0.303083 & 0.305861 & -1.23491 & \textbf{1.76017} & +0.28\% & 4 \\
      E3   & 2.46229 & 2.46868 & 2.45971 & 2.4661 & \textbf{-2.63358} & 2.49175 & +0\% & 3 \\
      E4   & 359.899 & 460.894 & 359.899 & 460.894 & -27.4166 & \textbf{33.5176} & $<0.01\%$ & 4 \\
      E5   & 1.02277 & 1.02401 & 1.02277 & 1.02401 & \textbf{-1.27131} & 1.27131 & +3.9\% & 7 \\
      E6   & 10.4766 & 10.4769 & 10.4769 & 10.4771 & -1.52708 & \textbf{2.81831} & +0.02\% & 3 \\
      E7   & 10.4194 & 10.4194 & 10.4194 & 10.4194 & -0.619073 & \textbf{2.19288} & $<0.01\%$ & 2 \\
      E8   & 1.17562 & 1.17562 & 1.17562 & 1.17562 & -0.284928 & \textbf{0.647426} & $<0.01\%$ & 3 \\
      E9   & 9.86781 & 9.87502 & 9.86781 & 9.87502 & \textbf{-27.8634} & 27.8634 & +0\% & 7 \\
      E10  & 8.66235 & 8.84795 & 8.50185 & 8.68745 & \textbf{-4.11591} & 1.43322 & +0\% & 3 \\
      \bottomrule
    \end{tabular}}
  \\[0.75em]
  \subfloat[$c_0=0.99\,c_0^{\max}$]{%
    \scriptsize
    \begin{tabular}{l*{8}{r}}
      \toprule
      & \multicolumn{4}{c}{pressure bounds \eqref{eq:main_bootstrap}} & \multicolumn{4}{c}{wavespeed bounds \eqref{eq:main_wavespeed}} \\
      \cmidrule(lr){2-5} \cmidrule(lr){6-9}
      & $\underbar P^{(N)}_{\mathrm L}$ & $\bar P^{(N)}_{\mathrm L}$ & $\underbar P^{(N)}_{\mathrm R}$ & $\bar P^{(N)}_{\mathrm R}$ & $\lambda^-_1$ & $\lambda^+_3$ & $\Delta\lambda_{\max}$ & $N$ \\[0.25em]
      E1   & 6.193e-4 & 6.544e-4 & 6.193e-4 & 6.544e-4 & -0.305505 & \textbf{0.886139} & $<0.01\%$ & 5 \\
      E2   & 0.302624 & 0.313667 & 0.302931 & 0.313974 & -1.37479 & \textbf{1.78385} & +1.13\% & 5 \\
      E3   & 2.45097 & 2.47633 & 2.44074 & 2.46611 & \textbf{-2.63361} & 2.52777 & +0\% & 3 \\
      E4   & 156.49 & 460.895 & 156.49 & 460.894 & -27.4166 & \textbf{33.5176} & $<0.01\%$ & 5 \\
      E5   & 1.01481 & 1.02823 & 1.01481 & 1.02823 & \textbf{-1.51933} & 1.51933 & +12.1\% & 27 \\
      E6   & 10.4758 & 10.4769 & 10.4767 & 10.4778 & -1.52709 & \textbf{2.82106} & +0.07\% & 3 \\
      E7   & 10.4194 & 10.4194 & 10.4194 & 10.4194 & -0.619073 & \textbf{2.19288} & $<0.01\%$ & 3 \\
      E8   & 1.17562 & 1.17562 & 1.17562 & 1.17562 & -0.284929 & \textbf{0.647431} & $<0.01\%$ & 3 \\
      E9   & 9.83399 & 9.93217 & 9.83399 & 9.93217 & \textbf{-30.9913} & 30.9913 & +0\% & 32 \\
      E10  & 8.66235 & 9.39033 & 8.03312 & 8.7611 & \textbf{-4.45782} & 1.43322 & +0\% & 3 \\
      \bottomrule
    \end{tabular}}
  \caption{Bounds on the star pressures after $N$ steps of the iteration
  \eqref{eq:main_bootstrap}, run to a tolerance of $10^{-6}$, and the
  wavespeed estimates \eqref{eq:main_wavespeed} they imply, for the
  configurations E1--E10 of Table~\ref{tab:elementary_configurations}.
  Notation as in Table~\ref{tab:initial_bounds}.}
  \label{tab:bootstrapped_bounds}
\end{table}

\begin{table}[p]
  \centering
  \setlength{\tabcolsep}{5.5pt}
  \captionsetup[subfloat]{captionskip=3pt,farskip=7.5pt}
  \subfloat[both outer waves elementary, at least one of them exotic]{%
    \scriptsize
    \begin{tabular}{lrrrrrcl}
      \toprule
      state & $\rho$ & $u$ & $p$ & $\ph$ & $\mathcal G$ & wave & $\lambda$ \\
      \midrule
      \multicolumn{8}{l}{\textbf{C1}\quad compressive--expansive;\quad left: elementary $\mathrm S$;\quad right: elementary $\mathrm S^\dagger$} \\[0.2em]
      $W^{(0)}{=}W\L$ & 0.7 & 0.759793 & 0.846354 & 0.8 & 1.687 & $\mathrm S$ & -1.84285 \\
      $W^{(1)}{=}W\starL$ & 0.988605 & 0 & 2.23058 & 1.34876 & 2.429 &  &  \\
      \cmidrule(lr){1-1}
      $W^{(1)}{=}W\starR$ & 1.05 & 0 & 2.23058 & 0.78833 & -4.467 &  &  \\
      $W^{(0)}{=}W\R$ & 1.25 & 0.324946 & 2.92351 & 1 & -0.8919 & $\mathrm S^\dagger$ & 2.03091 \\
      \midrule
      \multicolumn{8}{l}{\textbf{C2}\quad compressive--compressive;\quad left: elementary $\mathrm S$;\quad right: elementary $\mathrm R^\dagger$} \\[0.2em]
      $W^{(0)}{=}W\L$ & 1.5 & 0.139109 & 2.97562 & 1 & 0.8615 & $\mathrm S$ & -0.93612 \\
      $W^{(1)}{=}W\starL$ & 1.7229 & 0 & 3.19998 & 1.21437 & 1.106 &  &  \\
      \cmidrule(lr){1-1}
      $W^{(1)}{=}W\starR$ & 1.25 & 0 & 3.19998 & 1.27647 & -0.5583 &  &  \\
      $W^{(0)}{=}W\R$ & 1.05 & -0.334366 & 2.44225 & 1 & -4.279 & $\mathrm R^\dagger$ & 2.57918$\,\to\,$1.39352 \\
      \midrule
      \multicolumn{8}{l}{\textbf{C3}\quad expansive--expansive;\quad left: elementary $\mathrm R$;\quad right: elementary $\mathrm S^\dagger$} \\[0.2em]
      $W^{(0)}{=}W\L$ & 0.9 & -0.27341 & 2.78793 & 2.5 & 4.319 & $\mathrm R$ & -2.99819$\,\to\,$-2.15222 \\
      $W^{(1)}{=}W\starL$ & 0.802169 & 0 & 2.23058 & 2.12801 & 2.121 &  &  \\
      \cmidrule(lr){1-1}
      $W^{(1)}{=}W\starR$ & 1.05 & 0 & 2.23058 & 0.78833 & -4.467 &  &  \\
      $W^{(0)}{=}W\R$ & 1.25 & 0.324946 & 2.92351 & 1 & -0.8919 & $\mathrm S^\dagger$ & 2.03091 \\
      \midrule
      \multicolumn{8}{l}{\textbf{C4}\quad expansive--compressive;\quad left: elementary $\mathrm R$;\quad right: elementary $\mathrm R^\dagger$} \\[0.2em]
      $W^{(0)}{=}W\L$ & 1.8 & -0.289923 & 3.73732 & 1.75 & 1.151 & $\mathrm R$ & -1.46474$\,\to\,$-1.17124 \\
      $W^{(1)}{=}W\starL$ & 1.40281 & 0 & 3.19998 & 1.2344 & 0.6779 &  &  \\
      \cmidrule(lr){1-1}
      $W^{(1)}{=}W\starR$ & 1.25 & 0 & 3.19998 & 1.27647 & -0.5583 &  &  \\
      $W^{(0)}{=}W\R$ & 1.05 & -0.334366 & 2.44225 & 1 & -4.279 & $\mathrm R^\dagger$ & 2.57918$\,\to\,$1.39352 \\
      \midrule
      \multicolumn{8}{l}{\textbf{C5}\quad expansive--expansive;\quad left: elementary $\mathrm S^\dagger$;\quad right: elementary $\mathrm S^\dagger$} \\[0.2em]
      $W^{(0)}{=}W\L$ & 1.2 & -0.268028 & 2.88946 & 1 & -2.011 & $\mathrm S^\dagger$ & -2.15697 \\
      $W^{(1)}{=}W\starL$ & 1.05089 & 0 & 2.28192 & 0.833355 & -4.481 &  &  \\
      \cmidrule(lr){1-1}
      $W^{(1)}{=}W\starR$ & 1.05 & 0 & 2.28192 & 0.839667 & -4.421 &  &  \\
      $W^{(0)}{=}W\R$ & 1.28 & 0.359416 & 3.03678 & 1.1 & -0.3092 & $\mathrm S^\dagger$ & 2.00023 \\
      \midrule
      \multicolumn{8}{l}{\textbf{C6}\quad compressive--compressive;\quad left: elementary $\mathrm R^\dagger$;\quad right: elementary $\mathrm R^\dagger$} \\[0.2em]
      $W^{(0)}{=}W\L$ & 1.03 & 0.399106 & 2.18239 & 0.9 & -2.708 & $\mathrm R^\dagger$ & -2.76486$\,\to\,$-1.33639 \\
      $W^{(1)}{=}W\starL$ & 1.26063 & 0 & 3.12292 & 1.19425 & -0.4792 &  &  \\
      \cmidrule(lr){1-1}
      $W^{(1)}{=}W\starR$ & 1.22 & 0 & 3.12292 & 1.21752 & -1.187 &  &  \\
      $W^{(0)}{=}W\R$ & 1.06 & -0.272185 & 2.50953 & 1 & -4.765 & $\mathrm R^\dagger$ & 2.48931$\,\to\,$1.45111 \\
      \bottomrule
    \end{tabular}}
  \caption{The twelve configurations C1--C12 at the fixed bump strength
  $c_0=0.1$, wave by wave: for each of them the left wave
  $W\L=W^{(0)},\ldots,W^{(n)}=W\starL$ is listed first, the right wave
  $W\starR=W^{(n)},\ldots,W^{(0)}=W\R$ below it. Here $p$ is the total and
  $\ph$ the hydrodynamical pressure, $\mathcal G$ the fundamental
  derivative \eqref{A3}, and $\lambda$ the speed of the elementary wave
  leaving the state, quoted from head to tail for a fan. A dagger marks an
  exotic wave, that is, an expansive shock or a compressive fan.}
  \label{tab:composite_configurations}
\end{table}

\begin{table}[p]
  \ContinuedFloat
  \centering
  \setlength{\tabcolsep}{5.5pt}
  \captionsetup[subfloat]{captionskip=3pt,farskip=7.5pt}
  \subfloat[at least one outer wave composite]{%
    \scriptsize
    \begin{tabular}{lrrrrrcl}
      \toprule
      state & $\rho$ & $u$ & $p$ & $\ph$ & $\mathcal G$ & wave & $\lambda$ \\
      \midrule
      \multicolumn{8}{l}{\textbf{C7}\quad compressive--compressive;\quad left: composite $\mathrm S{+}\mathrm R^\dagger$;\quad right: elementary $\mathrm S$} \\[0.2em]
      $W^{(0)}{=}W\L$ & 0.95 & 0.60589 & 1.54782 & 1 & 5.761 & $\mathrm S$ & -2.80201 \\
      $W^{(1)}$ & 1.03749 & 0.318515 & 2.4782 & 1.13215 & -3.281 & $\mathrm R^\dagger$ & -2.80201$\,\to\,$-1.58547 \\
      $W^{(2)}{=}W\starL$ & 1.2 & 0 & 3.27743 & 1.38797 & -1.433 &  &  \\
      \cmidrule(lr){1-1}
      $W^{(1)}{=}W\starR$ & 2.46794 & 0 & 3.27743 & 1.28471 & 1.188 &  &  \\
      $W^{(0)}{=}W\R$ & 1.45 & -0.448205 & 2.57122 & 0.6 & 0.48 & $\mathrm S$ & 0.638444 \\
      \midrule
      \multicolumn{8}{l}{\textbf{C8}\quad compressive--compressive;\quad left: composite $\mathrm R^\dagger{+}\mathrm S$;\quad right: elementary $\mathrm S$} \\[0.2em]
      $W^{(0)}{=}W\L$ & 1.05 & 0.650388 & 2.44225 & 1 & -4.279 & $\mathrm R^\dagger$ & -2.26316$\,\to\,$-1.14171 \\
      $W^{(1)}$ & 1.2 & 0.375166 & 3.09502 & 1.20556 & -1.677 & $\mathrm S$ & -1.14171 \\
      $W^{(2)}{=}W\starL$ & 1.59432 & 0 & 3.77792 & 1.79674 & 1.084 &  &  \\
      \cmidrule(lr){1-1}
      $W^{(1)}{=}W\starR$ & 2.62274 & 0 & 3.77792 & 1.78478 & 1.194 &  &  \\
      $W^{(0)}{=}W\R$ & 1.5 & -0.534831 & 2.77562 & 0.8 & 0.7846 & $\mathrm S$ & 0.71454 \\
      \midrule
      \multicolumn{8}{l}{\textbf{C9}\quad compressive--compressive;\quad left: composite $\mathrm S{+}\mathrm R^\dagger$;\quad right: composite $\mathrm S{+}\mathrm R^\dagger$} \\[0.2em]
      $W^{(0)}{=}W\L$ & 0.95 & 0.534021 & 1.54782 & 1 & 5.761 & $\mathrm S$ & -2.87388 \\
      $W^{(1)}$ & 1.03749 & 0.246646 & 2.4782 & 1.13215 & -3.281 & $\mathrm R^\dagger$ & -2.87388$\,\to\,$-1.81624 \\
      $W^{(2)}{=}W\starL$ & 1.15 & 0 & 3.13477 & 1.30769 & -3.023 &  &  \\
      \cmidrule(lr){1-1}
      $W^{(2)}{=}W\starR$ & 1.15 & 0 & 3.13477 & 1.30769 & -3.023 &  &  \\
      $W^{(1)}$ & 1.03749 & -0.246646 & 2.4782 & 1.13215 & -3.281 & $\mathrm R^\dagger$ & 2.87388$\,\to\,$1.81624 \\
      $W^{(0)}{=}W\R$ & 0.95 & -0.534021 & 1.54782 & 1 & 5.761 & $\mathrm S$ & 2.87388 \\
      \midrule
      \multicolumn{8}{l}{\textbf{C10}\quad compressive--compressive;\quad left: composite $\mathrm S{+}\mathrm R^\dagger{+}\mathrm S$;\quad right: elementary $\mathrm S$} \\[0.2em]
      $W^{(0)}{=}W\L$ & 0.95 & 0.944406 & 1.54782 & 1 & 5.761 & $\mathrm S$ & -2.46349 \\
      $W^{(1)}$ & 1.03749 & 0.657031 & 2.4782 & 1.13215 & -3.281 & $\mathrm R^\dagger$ & -2.46349$\,\to\,$-1.24696 \\
      $W^{(2)}$ & 1.2 & 0.338516 & 3.27743 & 1.38797 & -1.433 & $\mathrm S$ & -1.24696 \\
      $W^{(3)}{=}W\starL$ & 1.52577 & 0 & 3.92148 & 1.94406 & 1.045 &  &  \\
      \cmidrule(lr){1-1}
      $W^{(1)}{=}W\starR$ & 2.6382 & 0 & 3.92148 & 1.9283 & 1.194 &  &  \\
      $W^{(0)}{=}W\R$ & 1.55 & -0.52673 & 2.87891 & 0.9 & 0.9176 & $\mathrm S$ & 0.750262 \\
      \midrule
      \multicolumn{8}{l}{\textbf{C11}\quad compressive--expansive;\quad left: elementary $\mathrm S$;\quad right: composite $\mathrm S^\dagger{+}\mathrm R$} \\[0.2em]
      $W^{(0)}{=}W\L$ & 0.5 & 0.517362 & 0.314457 & 0.3 & 1.302 & $\mathrm S$ & -0.885178 \\
      $W^{(1)}{=}W\starL$ & 0.792236 & 0 & 0.677267 & 0.58329 & 3.163 &  &  \\
      \cmidrule(lr){1-1}
      $W^{(2)}{=}W\starR$ & 0.8 & 0 & 0.677267 & 0.576657 & 3.361 &  &  \\
      $W^{(1)}$ & 0.948929 & 0.329757 & 1.27256 & 0.73235 & 6.027 & $\mathrm R$ & 3.18352$\,\to\,$1.3797 \\
      $W^{(0)}{=}W\R$ & 1.2 & 0.926837 & 2.88946 & 1 & -2.011 & $\mathrm S^\dagger$ & 3.18352 \\
      \midrule
      \multicolumn{8}{l}{\textbf{C12}\quad compressive--expansive;\quad left: composite $\mathrm S{+}\mathrm R^\dagger$;\quad right: composite $\mathrm R{+}\mathrm S^\dagger$} \\[0.2em]
      $W^{(0)}{=}W\L$ & 0.95 & 0.522434 & 1.24782 & 0.7 & 6.007 & $\mathrm S$ & -2.81255 \\
      $W^{(1)}$ & 1.03675 & 0.243373 & 2.13195 & 0.791955 & -3.436 & $\mathrm R^\dagger$ & -2.81255$\,\to\,$-1.65563 \\
      $W^{(2)}{=}W\starL$ & 1.15356 & 0 & 2.75265 & 0.919628 & -3.609 &  &  \\
      \cmidrule(lr){1-1}
      $W^{(2)}{=}W\starR$ & 1.20175 & 0 & 2.75265 & 0.861629 & -2.235 &  &  \\
      $W^{(1)}$ & 1.4 & 0.181419 & 3.03197 & 1.06679 & 0.5935 & $\mathrm S^\dagger$ & 1.28117 \\
      $W^{(0)}{=}W\R$ & 1.7 & 0.39206 & 3.38499 & 1.4 & 1.11 & $\mathrm R$ & 1.4789$\,\to\,$1.28117 \\
      \bottomrule
    \end{tabular}}
  \caption{\emph{cont'd.}}
\end{table}

\begin{table}[p]
  \centering
  \setlength{\tabcolsep}{5.5pt}
  \scriptsize
  \begin{tabular}{l*{8}{r}}
    \toprule
    & \multicolumn{4}{c}{exact \eqref{eq:riemann_problem}} & \multicolumn{4}{c}{hydrodynamical \eqref{eq:riemann_problem_h}} \\
    \cmidrule(lr){2-5} \cmidrule(lr){6-9}
    & $\ph\starL$ & $\ph\starR$ & $\lambda^-_1$ & $\lambda^+_3$ & $\tilde p_h\star$ & $\lambda^-_{1,\mathrm h}$ & $\lambda^+_{3,\mathrm h}$ & $\Delta\lambda_{\max}$ \\[0.25em]
      C1   & 1.34876 & 0.78833 & -1.84285 & \textbf{2.03091} & 1.14403 & -0.719994 & \textbf{1.44667} & \textcolor{red}{\textbf{-28.8\%}} \\
      C2   & 1.21437 & 1.27647 & -0.93612 & \textbf{2.57918} & 1.35721 & -0.965022 & \textbf{0.985323} & \textcolor{red}{\textbf{-61.8\%}} \\
      C3   & 2.12801 & 0.78833 & \textbf{-2.99819} & 2.03091 & 1.35029 & \textbf{-2.24544} & 1.53171 & \textcolor{red}{\textbf{-25.1\%}} \\
      C4   & 1.2344 & 1.27647 & -1.46474 & \textbf{2.57918} & 1.35288 & \textbf{-1.45659} & 0.983446 & \textcolor{red}{\textbf{-43.5\%}} \\
      C5   & 0.833355 & 0.839667 & \textbf{-2.15697} & 2.00023 & 0.692298 & -1.34815 & \textbf{1.45629} & \textcolor{red}{\textbf{-32.5\%}} \\
      C6   & 1.19425 & 1.21752 & \textbf{-2.76486} & 2.48931 & 1.42195 & -0.954187 & \textbf{1.06887} & \textcolor{red}{\textbf{-61.3\%}} \\
      C7   & 1.38797 & 1.28471 & \textbf{-2.80201} & 0.638444 & 1.65072 & \textbf{-0.90925} & 0.755487 & \textcolor{red}{\textbf{-67.6\%}} \\
      C8   & 1.79674 & 1.78478 & \textbf{-2.26316} & 0.71454 & 1.96139 & \textbf{-0.909119} & 0.759687 & \textcolor{red}{\textbf{-59.8\%}} \\
      C9   & 1.30769 & 1.30769 & \textbf{-2.87388} & 2.87388 & 1.79951 & \textbf{-1.04192} & 1.04192 & \textcolor{red}{\textbf{-63.7\%}} \\
      C10  & 1.94406 & 1.9283 & \textbf{-2.46349} & 0.750262 & 2.35223 & -0.839343 & \textbf{0.865107} & \textcolor{red}{\textbf{-64.9\%}} \\
      C11  & 0.58329 & 0.576657 & -0.885178 & \textbf{3.18352} & 0.418176 & -0.542648 & \textbf{2.00696} & \textcolor{red}{\textbf{-37.0\%}} \\
      C12  & 0.919628 & 0.861629 & \textbf{-2.81255} & 1.4789 & 1.0803 & -0.707183 & \textbf{1.46581} & \textcolor{red}{\textbf{-47.9\%}} \\
    \bottomrule
  \end{tabular}
  \caption{Star pressures and extremal wavespeeds of the exact solution of
  \eqref{eq:riemann_problem} and of the reduced hydrodynamical problem
  \eqref{eq:riemann_problem_h} for the configurations C1--C12 of
    Table~\ref{tab:composite_configurations}. Notation as in
    Table~\ref{tab:elementary_wavespeeds}.}
  \label{tab:composite_wavespeeds}
\end{table}

\begin{table}[p]
  \centering
  \setlength{\tabcolsep}{5.5pt}
  \scriptsize
  \begin{tabular}{l*{9}{r}}
    \toprule
    & \multicolumn{4}{c}{pressure bounds \eqref{eq:main_pressure_bound}} & \multicolumn{3}{c}{wavespeed bounds \eqref{eq:main_wavespeed}} & & \\
    \cmidrule(lr){2-5} \cmidrule(lr){6-8}
    & $\underbar P^{(N)}_{\mathrm L}$ & $\bar P^{(N)}_{\mathrm L}$ & $\underbar P^{(N)}_{\mathrm R}$ & $\bar P^{(N)}_{\mathrm R}$ & $\lambda^-_1$ & $\lambda^+_3$ & $\Delta\lambda_{\max}$ & $c_0^{\max}$ & $N$ \\[0.25em]
      C1   & 0.8 & 2.87716 & -1.07716 & 1.73429 & \textbf{-6.04087} & 4.16033 & +197\% & 0.0169 & 0 \\
      C2   & 0.466633 & 1.91694 & 1 & 1.73241 & -3.11895 & \textbf{3.72534} & +44.4\% & 0.0203 & 0 \\
      C3   & 1.6139 & 2.63558 & 0.432666 & 1 & \textbf{-4.36137} & 4.0673 & +45.5\% & 0.00872 & 0 \\
      C4   & 0.454936 & 1.84549 & 1 & 2.29506 & -3.05375 & \textbf{3.80378} & +47.5\% & 0.0203 & 0 \\
      C5   & 0.422211 & 1.14731 & 0.470981 & 1.1 & \textbf{-4.17896} & 4.03249 & +93.7\% & 0.00812 & 0 \\
      C6   & 0.9 & 1.99236 & 0.672852 & 2.15313 & -4.08093 & \textbf{4.26044} & +54.1\% & 0.0178 & 0 \\
      C7   & 1 & 3.00407 & -0.4234 & 3.07922 & \textbf{-4.52377} & 3.04563 & +61.4\% & 0.0101 & 0 \\
      C8   & 1 & 3.57264 & 0.466633 & 4.05383 & \textbf{-3.66057} & 2.90976 & +61.7\% & 0.0158 & 0 \\
      C9   & 1 & 3.04302 & 1 & 3.04302 & \textbf{-4.60043} & 4.60043 & +60.1\% & 0.0183 & 0 \\
      C10  & 1 & 4.3313 & -0.43108 & 5.78988 & \textbf{-4.34614} & 3.07587 & +76.4\% & 0.017 & 0 \\
      C11  & 0.156031 & 2.87501 & -1.57501 & 1 & \textbf{-8.8396} & 4.81889 & +178\% & 0.00239 & 0 \\
      C12  & 0.7 & 2.83716 & -0.73716 & 1.65595 & \textbf{-4.58045} & 3.27305 & +62.9\% & 0.0153 & 0 \\
    \bottomrule
  \end{tabular}
  \caption{Bounds \eqref{eq:main_pressure_bound} on the star pressures and the
    wavespeed estimates \eqref{eq:main_wavespeed} they imply, for the
    configurations C1--C12 of Table~\ref{tab:composite_configurations}. The
    bump strength $c_0=0.1$ exceeds the cut-off $c_0^{\max}$ of every
    configuration, so the iteration \eqref{eq:main_bootstrap} never starts,
    $N=0$, and the bounds are those of Theorem~\ref{thm:main}. Notation as
    in Table~\ref{tab:bootstrapped_bounds}.}
    \label{tab:composite_bounds}
\end{table}

\begin{figure}[p]
  \centering
  \resizebox{\textwidth}{!}{\input{C10.tex}}
  \caption{Configuration C10, left wave composite $\mathrm S{+}\mathrm
    R^\dagger{+}\mathrm S$, right wave an elementary shock. Top: density at
    $t=0.24$. Bottom: the split $a^2=\ah^2+\ab^2$ over density; the states
    of the composite fans (Table~\ref{tab:composite_configurations}) are
    marked in the diagram.}
  \label{fig:composite_C10}
\end{figure}

\begin{figure}[p]
  \centering
  \resizebox{\textwidth}{!}{\input{C11.tex}}
  \caption{Configuration C11, left wave an elementary shock, right wave the
    composite $\mathrm S^\dagger{+}\mathrm R$ led by an expansive shock.
    Top: density at $t=0.18$; the guaranteed lower bound
    $\lambda^-_1=-8.8396$ falls outside the plotted range. Bottom: as in
    Figure~\ref{fig:composite_C10}.}
  \label{fig:composite_C11}
\end{figure}

\begin{figure}[p]
  \centering
  \resizebox{\textwidth}{!}{\input{C12.tex}}
  \caption{Configuration C12, both outer waves composite and of different
    type: $\mathrm S{+}\mathrm R^\dagger$ on the left and $\mathrm
    R{+}\mathrm S^\dagger$ on the right. Top: density at $t=0.20$. Bottom:
    as in Figure~\ref{fig:composite_C10}; the left wave sits on the
    barotropic bump, the right one far out on its tail.}
  \label{fig:composite_C12}
\end{figure}

\begin{figure}[p]
  \centering
  \resizebox{\textwidth}{!}{\input{C11-comparison.tex}}
  \caption{Configuration C11 computed three times with the same
    discretization and limiter, changing only the estimate of
    $\lambda_{\max}$ that enters the graph viscosity: the bounds of
    Theorem~\ref{thm:main}; the surrogate estimate of
    \cite{ClaytonGuermondPopov2022}; and the bounds of
    Theorem~\ref{thm:main} applied to the hydrodynamical constituent alone,
    that is, with $c_0=0$. Density at $t=0.18$; the inset blows up the tail
    of the fan running into the right shock.}
  \label{fig:composite_C11_bounds}
\end{figure}


\clearpage
\newpage
\bibliographystyle{abbrvnat}

\end{document}

%% file: C10.tex
\definecolor{wavered}{rgb}{0.55,0.05,0.05}
\begin{tikzpicture}

  \begin{axis}[
      name=density,
      width=150mm, height=88mm,
      xmin=-1.08, xmax=0.82, ymin=0.85, ymax=3.45,
      xlabel={$x$}, ylabel={$\rho$},
      axis lines=left, tick align=outside,
      grid=major, grid style={gray!20},
      legend cell align=left, legend columns=2, legend pos=north west,
      legend style={draw=gray!50, fill=white, font=\scriptsize,
                    /tikz/every even column/.append style={column sep=6pt}},
    ]

    \addplot[red!55, line width=2.0pt, no marks] coordinates {
        (-1.08000, 0.950000)
        (-0.59124, 0.950000)
        (-0.59124, 1.037490)
        (-0.57808, 1.042785)
        (-0.56277, 1.048487)
        (-0.48544, 1.075368)
        (-0.46633, 1.082292)
        (-0.44917, 1.088809)
        (-0.43101, 1.096140)
        (-0.41414, 1.103472)
        (-0.39863, 1.110803)
        (-0.38447, 1.118134)
        (-0.36962, 1.126687)
        (-0.35646, 1.135240)
        (-0.35021, 1.139721)
        (-0.34437, 1.144201)
        (-0.33893, 1.148681)
        (-0.33344, 1.153569)
        (-0.32836, 1.158456)
        (-0.32369, 1.163344)
        (-0.31940, 1.168231)
        (-0.31547, 1.173119)
        (-0.31158, 1.178413)
        (-0.30806, 1.183708)
        (-0.30489, 1.189003)
        (-0.30182, 1.194705)
        (-0.29927, 1.200000)
        (-0.29927, 1.525770)
        (0.00000, 1.525770)
        (0.00000, 2.638200)
        (0.18006, 2.638200)
        (0.18006, 1.550000)
        (0.82000, 1.550000)
    };
    \addlegendentry{exact}

    \addplot[black, line width=0.7pt, no marks] coordinates {
        (-1.00000, 0.950000)
        (-0.59186, 0.950036)
        (-0.59167, 0.950751)
        (-0.59161, 0.952037)
        (-0.59155, 0.955441)
        (-0.59149, 0.964123)
        (-0.59143, 0.984424)
        (-0.59137, 1.015672)
        (-0.59131, 1.026658)
        (-0.59125, 1.027577)
        (-0.59119, 1.031195)
        (-0.59113, 1.032455)
        (-0.59106, 1.032546)
        (-0.59100, 1.033327)
        (-0.59094, 1.034699)
        (-0.59082, 1.034484)
        (-0.59076, 1.035568)
        (-0.59064, 1.035538)
        (-0.59058, 1.036082)
        (-0.59045, 1.036020)
        (-0.59033, 1.036559)
        (-0.58948, 1.037459)
        (-0.58234, 1.040959)
        (-0.56500, 1.047647)
        (-0.48248, 1.076368)
        (-0.44690, 1.089667)
        (-0.41034, 1.105214)
        (-0.39429, 1.112979)
        (-0.38049, 1.120325)
        (-0.36603, 1.128920)
        (-0.35284, 1.137806)
        (-0.34167, 1.146394)
        (-0.33173, 1.155206)
        (-0.32269, 1.164500)
        (-0.31427, 1.174775)
        (-0.31049, 1.180119)
        (-0.30682, 1.185887)
        (-0.30341, 1.192016)
        (-0.30090, 1.197511)
        (-0.29950, 1.198227)
        (-0.29944, 1.198851)
        (-0.29938, 1.203760)
        (-0.29932, 1.221190)
        (-0.29926, 1.271205)
        (-0.29919, 1.364292)
        (-0.29913, 1.473056)
        (-0.29907, 1.500769)
        (-0.29901, 1.503007)
        (-0.29895, 1.516758)
        (-0.29889, 1.521940)
        (-0.29883, 1.522220)
        (-0.29877, 1.524176)
        (-0.29871, 1.525196)
        (-0.29852, 1.525679)
        (-0.01947, 1.525757)
        (-0.00500, 1.525687)
        (-0.00037, 1.524893)
        (-0.00031, 1.525401)
        (-0.00024, 1.529144)
        (-0.00018, 1.556746)
        (-0.00012, 1.634611)
        (-0.00006, 1.757514)
        (0.00000, 1.918867)
        (0.00006, 2.103289)
        (0.00012, 2.292189)
        (0.00018, 2.457056)
        (0.00024, 2.569728)
        (0.00031, 2.623463)
        (0.00037, 2.635815)
        (0.00043, 2.637530)
        (0.00134, 2.638026)
        (0.17645, 2.638047)
        (0.17737, 2.638368)
        (0.17896, 2.638321)
        (0.17969, 2.637762)
        (0.17975, 2.637189)
        (0.17981, 2.634621)
        (0.17987, 2.634482)
        (0.17993, 2.616724)
        (0.17999, 2.602262)
        (0.18005, 2.585532)
        (0.18011, 2.112066)
        (0.18018, 1.686436)
        (0.18024, 1.570212)
        (0.18030, 1.551762)
        (0.18036, 1.550158)
        (0.18042, 1.550017)
        (0.81995, 1.550000)
    };
    \addlegendentry{ryujin}

    \addplot[black, dotted, line width=0.8pt, no marks, forget plot]
      coordinates {(-0.59124,0.85) (-0.59124,2.8)};
    \addplot[black, dotted, line width=0.8pt, no marks]
      coordinates {(0.18006,0.85) (0.18006,2.8)};
    \addlegendentry{$\lambda^\pm$ exact}
    \addplot[orange!85!black, dashdotted, line width=0.8pt, no marks, forget plot]
      coordinates {(-0.20144,0.85) (-0.20144,2.8)};
    \addplot[orange!85!black, dashdotted, line width=0.8pt, no marks]
      coordinates {(0.20763,0.85) (0.20763,2.8)};
    \addlegendentry{$\lambda^\pm$ hydrodynamical}
    \addplot[teal, dashed, line width=0.8pt, no marks, forget plot]
      coordinates {(-1.04307,0.85) (-1.04307,2.8)};
    \addplot[teal, dashed, line width=0.8pt, no marks]
      coordinates {(0.73821,0.85) (0.73821,2.8)};
    \addlegendentry{$\lambda^\pm$ bound (Thm.~2.1)}

    \node[font=\scriptsize\boldmath, wavered, anchor=south,
          fill=white, inner sep=1pt] at (axis cs:-0.59124, 1.9) {$\mathrm S$};
    \node[font=\scriptsize\boldmath, wavered, anchor=south,
          fill=white, inner sep=1pt] at (axis cs:-0.44525, 1.9) {$\mathrm R^\dagger$};
    \node[font=\scriptsize\boldmath, wavered, anchor=south,
          fill=white, inner sep=1pt] at (axis cs:-0.29927, 1.9) {$\mathrm S$};
    \node[font=\scriptsize\boldmath, wavered, anchor=south,
          fill=white, inner sep=1pt] at (axis cs:0.00000, 2.85) {$\mathrm C$};
    \node[font=\scriptsize\boldmath, wavered, anchor=south,
          fill=white, inner sep=1pt] at (axis cs:0.18006, 2.85) {$\mathrm S$};

    \draw[->, wavered, line width=0.8pt, shorten <=3.58751pt, shorten >=3.98612pt]
      (axis cs:-0.59124, 1.9) -- (axis cs:-0.59124, 1.03749);
    \draw[->, wavered, line width=0.8pt, shorten <=3.58751pt, shorten >=3.98612pt]
      (axis cs:-0.29927, 1.9) -- (axis cs:-0.29927, 1.52577);
    \draw[->, wavered, line width=0.8pt, shorten <=3.58751pt, shorten >=3.98612pt]
      (axis cs:0.00000, 2.85) -- (axis cs:0.00000, 2.63820);
    \draw[->, wavered, line width=0.8pt, shorten <=3.58751pt, shorten >=3.98612pt]
      (axis cs:0.18006, 2.85) -- (axis cs:0.18006, 2.63820);

    \draw[gray, opacity=0.5, line width=0.9pt, yshift=-3.58751pt]
      (axis cs:-0.59124, 1.9) -- (axis cs:-0.29927, 1.9);

  \end{axis}

  \begin{axis}[
      name=sound, at={(density.below south west)}, anchor=north west,
      yshift=-8mm,
      width=150mm, height=58mm,
      xmin=0.86, xmax=2.8, ymin=0, ymax=4.15,
      xlabel={$\rho$}, ylabel={sound speed},
      axis lines=left, tick align=outside,
      grid=major, grid style={gray!20},
      legend cell align=left, legend columns=4, legend pos=north east,
      legend style={draw=gray!50, fill=white, font=\scriptsize,
                    /tikz/every even column/.append style={column sep=6pt}},
    ]

    \addplot[draw=none, fill=gray!15, forget plot] coordinates {
        (1.03749,0) (1.20000,0) (1.20000,4.15) (1.03749,4.15)
    } \closedcycle;

    \addplot[gray, densely dotted, line width=0.6pt, no marks, forget plot]
      coordinates {(1.0, 0) (1.0, 4.15)};
    \node[gray!60!black, font=\scriptsize, anchor=south west]
      at (axis cs:1.015, 0.013) {$\rho_0$};

    \addplot[black, line width=0.9pt, no marks] coordinates {
        (0.86000, 1.838426)
        (0.87110, 1.930420)
        (0.87652, 1.979872)
        (0.88182, 2.031191)
        (0.88712, 2.085592)
        (0.89242, 2.143147)
        (0.89759, 2.202414)
        (0.90289, 2.266283)
        (0.91210, 2.384689)
        (0.92181, 2.518895)
        (0.93089, 2.651282)
        (0.94994, 2.936628)
        (0.95852, 3.059334)
        (0.96293, 3.118559)
        (0.96709, 3.170976)
        (0.97100, 3.216543)
        (0.97466, 3.255417)
        (0.97958, 3.301149)
        (0.98425, 3.336732)
        (0.98879, 3.363314)
        (0.99333, 3.381400)
        (0.99787, 3.390599)
        (1.00229, 3.390836)
        (1.00670, 3.382493)
        (1.01124, 3.365163)
    };
    \addplot[black, line width=0.9pt, no marks, forget plot] coordinates {
        (1.03749, 3.120299)
        (1.04264, 3.051725)
        (1.04847, 2.969825)
        (1.07490, 2.583760)
        (1.08127, 2.495125)
        (1.08723, 2.415528)
        (1.09523, 2.314667)
        (1.10309, 2.222577)
        (1.11095, 2.137694)
        (1.11881, 2.059960)
        (1.12776, 1.979891)
        (1.13697, 1.906190)
        (1.14646, 1.838915)
        (1.15622, 1.777962)
        (1.16639, 1.722414)
        (1.17709, 1.671635)
        (1.18821, 1.626236)
        (1.20000, 1.585074)
    };
    \addplot[black, line width=0.9pt, no marks, forget plot] coordinates {
        (1.23880, 1.488304)
        (1.26086, 1.452244)
        (1.28531, 1.423020)
        (1.31288, 1.399802)
        (1.34404, 1.382217)
        (1.37953, 1.369758)
        (1.42052, 1.361915)
        (1.46871, 1.358339)
        (1.52625, 1.358851)
    };
    \addplot[black, line width=0.9pt, no marks, forget plot] coordinates {
        (1.54953, 0.932840)
        (1.61912, 0.931534)
        (1.70520, 0.933921)
    };
    \addplot[black, line width=0.9pt, no marks, forget plot] coordinates {
        (2.48300, 1.000904)
        (2.80000, 1.024521)
    };
    \addlegendentry{$a=\sqrt{a_{\mathrm h}^2+a_{\mathrm b}^2}$}

    \addplot[blue, densely dashed, line width=0.7pt, no marks] coordinates {
        (0.86000, 1.190028)
        (1.01124, 1.229217)
    };
    \addplot[blue, densely dashed, line width=0.7pt, no marks, forget plot] coordinates {
        (1.03749, 1.235533)
        (1.20000, 1.272019)
    };
    \addplot[blue, densely dashed, line width=0.7pt, no marks, forget plot] coordinates {
        (1.23880, 1.280141)
        (1.52625, 1.334696)
    };
    \addplot[blue, densely dashed, line width=0.7pt, no marks, forget plot] coordinates {
        (1.54953, 0.901556)
        (1.70520, 0.918984)
    };
    \addplot[blue, densely dashed, line width=0.7pt, no marks, forget plot] coordinates {
        (2.48300, 0.999381)
        (2.80000, 1.023688)
    };
    \addlegendentry{$a_{\mathrm h}$}

    \addplot[red, densely dotted, line width=0.9pt, no marks] coordinates {
        (0.86000, 1.401300)
        (0.87035, 1.509307)
        (0.88069, 1.628096)
        (0.89104, 1.758245)
        (0.90139, 1.899958)
        (0.91044, 2.033134)
        (0.91949, 2.173982)
        (0.92790, 2.310210)
        (0.94665, 2.620761)
        (0.95441, 2.744731)
        (0.96217, 2.860493)
        (0.96605, 2.913899)
        (0.96929, 2.955551)
        (0.97381, 3.008822)
        (0.97769, 3.049179)
        (0.98222, 3.089317)
        (0.98610, 3.117215)
        (0.98998, 3.138673)
        (0.99386, 3.153366)
        (0.99774, 3.161067)
        (1.00162, 3.161655)
        (1.00485, 3.156703)
        (1.00809, 3.146856)
        (1.01132, 3.132223)
        (1.01520, 3.108567)
        (1.01843, 3.084014)
        (1.02231, 3.049114)
        (1.02555, 3.015833)
        (1.02943, 2.971323)
        (1.03331, 2.922362)
        (1.03719, 2.869537)
        (1.04171, 2.803812)
        (1.04624, 2.734551)
        (1.05465, 2.599483)
        (1.07599, 2.246695)
        (1.08827, 2.052712)
        (1.09862, 1.899863)
        (1.10897, 1.758158)
        (1.11543, 1.675467)
        (1.12255, 1.589698)
        (1.12966, 1.509234)
        (1.13677, 1.433870)
        (1.14389, 1.363360)
        (1.15165, 1.291656)
        (1.15941, 1.225042)
        (1.16717, 1.163153)
        (1.17622, 1.096447)
        (1.18527, 1.035153)
        (1.19433, 0.978774)
        (1.20403, 0.923301)
        (1.21437, 0.869191)
        (1.22472, 0.819763)
        (1.23571, 0.771822)
        (1.24671, 0.728081)
        (1.25835, 0.685837)
        (1.27063, 0.645270)
        (1.28357, 0.606509)
        (1.29650, 0.571311)
        (1.31073, 0.536206)
        (1.32495, 0.504425)
        (1.33983, 0.474311)
        (1.35599, 0.444733)
        (1.39091, 0.390151)
        (1.42971, 0.341235)
        (1.47239, 0.298034)
        (1.52025, 0.259339)
        (1.57263, 0.225636)
        (1.63147, 0.195614)
        (1.69679, 0.169321)
        (1.77051, 0.146014)
        (1.85199, 0.125862)
        (1.94381, 0.108152)
        (2.04663, 0.092757)
        (2.16239, 0.079355)
        (2.44110, 0.057596)
        (2.80000, 0.041313)
    };
    \addlegendentry{$a_{\mathrm b}$}

    \addplot[only marks, mark=*, mark size=1.4pt, black] coordinates {
        (0.95000, 2.937533)
        (1.03749, 3.120490)
        (1.20000, 1.585473)
        (1.52577, 1.359795)
        (2.63820, 1.012691)
        (1.55000, 0.932816)
    };
    \addlegendentry{states $W^{\mathrm{L/R},(k)}$}

    \node[font=\scriptsize, anchor=south, xshift=-1.99306pt]
      (state0) at (axis cs:0.95000, 3.45180) {$W^{\mathrm L,(0)}$};
    \draw[gray, line width=0.4pt] (state0) -- (axis cs:0.95000, 2.93753);
    \node[font=\scriptsize, anchor=south, xshift=19.93060pt]
      (state1) at (axis cs:1.03749, 3.45180) {$W^{\mathrm L,(1)}$};
    \draw[gray, line width=0.4pt] (state1) -- (axis cs:1.03749, 3.12049);
    \node[font=\scriptsize, anchor=south] at (axis cs:1.20000, 1.98872) {$W^{\mathrm L,(2)}$};
    \node[font=\scriptsize, anchor=south] at (axis cs:1.52577, 1.41980) {$W^{\mathrm L,(3)}$};
    \node[font=\scriptsize, anchor=south] at (axis cs:2.63820, 1.07769) {$W^{\mathrm R,(1)}$};
    \node[font=\scriptsize, anchor=south, xshift=27.90284pt]
      (state5) at (axis cs:1.55000, 1.41885) {$W^{\mathrm R,(0)}$};
    \draw[gray, line width=0.4pt] (state5) -- (axis cs:1.55000, 0.93282);

    \node[font=\tiny, blue!70!black, anchor=north, yshift=3.40282pt]
      at (axis cs:1.11875, 0.99169) {left rarefaction};
  \end{axis}

\end{tikzpicture}

%% file: C11.tex
\definecolor{wavered}{rgb}{0.55,0.05,0.05}
\begin{tikzpicture}

  \begin{axis}[
      name=density,
      width=150mm, height=88mm,
      xmin=-0.45, xmax=0.95, ymin=0.42, ymax=1.62,
      xlabel={$x$}, ylabel={$\rho$},
      axis lines=left, tick align=outside,
      grid=major, grid style={gray!20},
      legend cell align=left, legend columns=2, legend pos=north west,
      legend style={draw=gray!50, fill=white, font=\scriptsize,
                    /tikz/every even column/.append style={column sep=6pt}},
    ]

    \addplot[red!55, line width=2.0pt, no marks] coordinates {
        (-0.45000, 0.500000)
        (-0.15933, 0.500000)
        (-0.15933, 0.792236)
        (0.00000, 0.792236)
        (0.00000, 0.800000)
        (0.24835, 0.800000)
        (0.26061, 0.811198)
        (0.27322, 0.821649)
        (0.28711, 0.832100)
        (0.30131, 0.841805)
        (0.31629, 0.851136)
        (0.33203, 0.860094)
        (0.34924, 0.869052)
        (0.36721, 0.877637)
        (0.38585, 0.885849)
        (0.40503, 0.893687)
        (0.42558, 0.901526)
        (0.44857, 0.909737)
        (0.47069, 0.917202)
        (0.49621, 0.925414)
        (0.57303, 0.948929)
        (0.57303, 1.200000)
        (0.95000, 1.200000)
    };
    \addlegendentry{exact}

    \addplot[black, line width=0.7pt, no marks] coordinates {
        (-0.44995, 0.500000)
        (-0.15955, 0.500004)
        (-0.15948, 0.500467)
        (-0.15942, 0.515827)
        (-0.15936, 0.578699)
        (-0.15930, 0.722554)
        (-0.15924, 0.781584)
        (-0.15918, 0.790304)
        (-0.15912, 0.790317)
        (-0.15906, 0.792062)
        (-0.15887, 0.792197)
        (-0.00220, 0.792184)
        (-0.00037, 0.791660)
        (-0.00031, 0.791016)
        (-0.00024, 0.789439)
        (-0.00012, 0.783421)
        (0.00006, 0.782974)
        (0.00018, 0.783076)
        (0.00024, 0.784786)
        (0.00037, 0.791641)
        (0.00043, 0.793763)
        (0.00049, 0.794989)
        (0.00073, 0.797073)
        (0.00092, 0.797913)
        (0.00238, 0.799337)
        (0.00525, 0.799819)
        (0.24725, 0.799934)
        (0.24841, 0.800424)
        (0.26257, 0.813061)
        (0.27557, 0.823597)
        (0.28778, 0.832677)
        (0.30286, 0.842910)
        (0.31885, 0.852730)
        (0.33344, 0.860921)
        (0.35022, 0.869613)
        (0.36774, 0.877952)
        (0.38794, 0.886808)
        (0.40796, 0.894917)
        (0.44586, 0.908852)
        (0.48425, 0.921691)
        (0.56445, 0.946406)
        (0.57117, 0.948862)
        (0.57178, 0.949382)
        (0.57239, 0.950990)
        (0.57251, 0.951740)
        (0.57269, 0.954076)
        (0.57275, 0.955722)
        (0.57281, 0.956755)
        (0.57288, 0.959301)
        (0.57294, 0.968082)
        (0.57300, 0.976243)
        (0.57306, 0.991341)
        (0.57312, 1.026888)
        (0.57318, 1.087115)
        (0.57324, 1.156252)
        (0.57330, 1.190066)
        (0.57336, 1.199389)
        (0.57343, 1.199974)
        (0.94995, 1.200000)
    };
    \addlegendentry{ryujin}

    \addplot[black, dotted, line width=0.8pt, no marks, forget plot]
      coordinates {(-0.15933,0.42) (-0.15933,1.3)};
    \addplot[black, dotted, line width=0.8pt, no marks]
      coordinates {(0.57303,0.42) (0.57303,1.3)};
    \addlegendentry{$\lambda^\pm$ exact}
    \addplot[orange!85!black, dashdotted, line width=0.8pt, no marks, forget plot]
      coordinates {(-0.09768,0.42) (-0.09768,1.3)};
    \addplot[orange!85!black, dashdotted, line width=0.8pt, no marks]
      coordinates {(0.36125,0.42) (0.36125,1.3)};
    \addlegendentry{$\lambda^\pm$ hydrodynamical}
    \addplot[teal, dashed, line width=0.8pt, no marks]
      coordinates {(0.86740,0.42) (0.86740,1.3)};
    \addlegendentry{$\lambda^\pm$ bound (Thm.~2.1)}

    \node[font=\scriptsize\boldmath, wavered, anchor=south,
          fill=white, inner sep=1pt] at (axis cs:-0.15933, 1.3009) {$\mathrm S$};
    \node[font=\scriptsize\boldmath, wavered, anchor=south,
          fill=white, inner sep=1pt] at (axis cs:0.00000, 1.3009) {$\mathrm C$};
    \node[font=\scriptsize\boldmath, wavered, anchor=south,
          fill=white, inner sep=1pt] at (axis cs:0.41069, 1.3009) {$\mathrm R$};
    \node[font=\scriptsize\boldmath, wavered, anchor=south,
          fill=white, inner sep=1pt] at (axis cs:0.57303, 1.3009) {$\mathrm S^\dagger$};

    \draw[->, wavered, line width=0.8pt, shorten <=3.58751pt, shorten >=3.98612pt]
      (axis cs:-0.15933, 1.3009) -- (axis cs:-0.15933, 0.79224);
    \draw[->, wavered, line width=0.8pt, shorten <=3.58751pt, shorten >=3.98612pt]
      (axis cs:0.00000, 1.3009) -- (axis cs:0.00000, 0.80000);
    \draw[->, wavered, line width=0.8pt, shorten <=3.58751pt, shorten >=3.98612pt]
      (axis cs:0.24835, 1.3009) -- (axis cs:0.24835, 0.80000);
    \draw[->, wavered, line width=0.8pt, shorten <=3.58751pt, shorten >=3.98612pt]
      (axis cs:0.57303, 1.3009) -- (axis cs:0.57303, 1.20000);

    \draw[gray, opacity=0.5, line width=0.9pt, yshift=-3.58751pt]
      (axis cs:0.24835, 1.3009) -- (axis cs:0.57303, 1.3009);
    \node[font=\tiny, teal, anchor=west]
      at (axis cs:-0.44, 1.22) {$\lambda_1^-$ bound $=-1.591$};
  \end{axis}

  \begin{axis}[
      name=sound, at={(density.below south west)}, anchor=north west,
      yshift=-8mm,
      width=150mm, height=58mm,
      xmin=0.45, xmax=1.32, ymin=0, ymax=3.6,
      xlabel={$\rho$}, ylabel={sound speed},
      axis lines=left, tick align=outside,
      grid=major, grid style={gray!20},
      legend cell align=left, legend columns=4, legend pos=south east,
      legend style={draw=gray!50, fill=white, font=\scriptsize,
                    /tikz/every even column/.append style={column sep=6pt}},
    ]

    \addplot[draw=none, fill=gray!15, forget plot] coordinates {
        (0.80000,0) (0.94893,0) (0.94893,3.6) (0.80000,3.6)
    } \closedcycle;

    \addplot[gray, densely dotted, line width=0.6pt, no marks, forget plot]
      coordinates {(1.0, 0) (1.0, 3.6)};
    \node[gray!60!black, font=\scriptsize, anchor=south west]
      at (axis cs:1.015, 0.011) {$\rho_0$};

    \addplot[black, line width=0.9pt, no marks] coordinates {
        (0.45000, 0.928751)
        (0.51863, 0.967743)
        (0.56960, 1.000432)
    };
    \addplot[black, line width=0.9pt, no marks, forget plot] coordinates {
        (0.72264, 1.176305)
        (0.74214, 1.215266)
        (0.76008, 1.258269)
        (0.77674, 1.306018)
        (0.79224, 1.358859)
    };
    \addplot[black, line width=0.9pt, no marks, forget plot] coordinates {
        (0.80000, 1.379702)
        (0.81166, 1.430513)
        (0.82278, 1.485739)
        (0.83354, 1.546421)
        (0.84374, 1.611546)
        (0.85358, 1.682165)
        (0.86306, 1.758235)
        (0.87236, 1.841269)
        (0.88129, 1.929523)
        (0.88785, 1.999937)
        (0.89441, 2.075216)
        (0.90097, 2.155396)
        (0.90772, 2.242829)
        (0.91519, 2.345359)
        (0.92321, 2.461253)
        (0.93104, 2.579109)
        (0.94963, 2.864414)
        (0.95948, 3.007653)
        (0.96494, 3.080896)
        (0.97005, 3.143391)
        (0.97497, 3.197028)
        (0.97953, 3.239888)
        (0.98463, 3.279074)
        (0.98955, 3.307134)
        (0.99429, 3.324471)
        (0.99903, 3.331894)
        (1.00377, 3.329233)
        (1.00869, 3.315879)
        (1.01361, 3.292093)
        (1.01853, 3.258491)
    };
    \addplot[black, line width=0.9pt, no marks, forget plot] coordinates {
        (1.13040, 1.841900)
        (1.13878, 1.772128)
        (1.14748, 1.706922)
        (1.15649, 1.646447)
        (1.16566, 1.591643)
        (1.17531, 1.540584)
        (1.18527, 1.494149)
        (1.19571, 1.451553)
        (1.20662, 1.412803)
        (1.21816, 1.377378)
        (1.23018, 1.345720)
        (1.24299, 1.316970)
        (1.25643, 1.291471)
        (1.27066, 1.268830)
        (1.28600, 1.248586)
        (1.30245, 1.230809)
        (1.32000, 1.215472)
    };
    \addlegendentry{$a=\sqrt{a_{\mathrm h}^2+a_{\mathrm b}^2}$}

    \addplot[blue, densely dashed, line width=0.7pt, no marks] coordinates {
        (0.45000, 0.897404)
        (0.56960, 0.940718)
    };
    \addplot[blue, densely dashed, line width=0.7pt, no marks, forget plot] coordinates {
        (0.72264, 0.996763)
        (0.79224, 1.015264)
    };
    \addplot[blue, densely dashed, line width=0.7pt, no marks, forget plot] coordinates {
        (0.80000, 1.004564)
        (1.01853, 1.054276)
    };
    \addplot[blue, densely dashed, line width=0.7pt, no marks, forget plot] coordinates {
        (1.13040, 1.067293)
        (1.32000, 1.100910)
    };
    \addlegendentry{$a_{\mathrm h}$}

    \addplot[red, densely dotted, line width=0.9pt, no marks] coordinates {
        (0.45000, 0.239256)
        (0.48306, 0.261742)
        (0.51380, 0.285945)
        (0.54251, 0.312051)
        (0.56948, 0.340325)
        (0.59471, 0.370768)
        (0.61849, 0.403736)
        (0.64082, 0.439253)
        (0.66170, 0.477268)
        (0.68142, 0.518279)
        (0.70027, 0.563025)
        (0.71796, 0.610890)
        (0.73449, 0.661715)
        (0.75015, 0.716281)
        (0.76523, 0.775764)
        (0.77944, 0.839107)
        (0.79278, 0.906069)
        (0.80380, 0.967678)
        (0.81424, 1.032002)
        (0.82439, 1.100765)
        (0.83396, 1.171859)
        (0.84324, 1.247213)
        (0.85252, 1.329501)
        (0.86151, 1.416419)
        (0.87021, 1.507812)
        (0.87920, 1.610260)
        (0.88790, 1.717525)
        (0.89660, 1.832969)
        (0.90530, 1.956511)
        (0.91313, 2.074240)
        (0.92125, 2.202076)
        (0.92908, 2.329638)
        (0.94561, 2.603692)
        (0.95344, 2.729544)
        (0.96127, 2.847582)
        (0.96794, 2.938543)
        (0.97229, 2.991599)
        (0.97635, 3.035799)
        (0.98041, 3.074212)
        (0.98418, 3.104191)
        (0.98795, 3.128272)
        (0.99172, 3.146115)
        (0.99549, 3.157462)
        (0.99926, 3.162148)
        (1.00274, 3.160498)
        (1.00651, 3.152263)
        (1.00999, 3.138813)
        (1.01376, 3.118104)
        (1.01753, 3.091301)
        (1.02130, 3.058777)
        (1.02536, 3.017852)
        (1.02942, 2.971403)
        (1.03290, 2.927685)
        (1.03638, 2.880808)
        (1.04392, 2.770435)
        (1.05233, 2.637322)
        (1.07350, 2.287241)
        (1.08394, 2.119750)
        (1.08974, 2.030407)
        (1.09525, 1.948468)
        (1.10076, 1.869614)
        (1.10627, 1.793980)
        (1.11265, 1.710503)
        (1.11903, 1.631427)
        (1.12541, 1.556681)
        (1.13208, 1.483034)
        (1.13875, 1.413801)
        (1.14571, 1.346035)
        (1.15267, 1.282588)
        (1.15963, 1.223197)
        (1.16746, 1.160901)
        (1.17529, 1.103041)
        (1.18370, 1.045439)
        (1.19211, 0.992149)
        (1.20081, 0.941162)
        (1.20980, 0.892499)
        (1.21908, 0.846158)
        (1.22865, 0.802111)
        (1.23851, 0.760315)
        (1.24895, 0.719629)
        (1.25968, 0.681244)
        (1.27099, 0.644149)
        (1.29448, 0.576591)
        (1.32000, 0.515140)
    };
    \addlegendentry{$a_{\mathrm b}$}

    \addplot[only marks, mark=*, mark size=1.4pt, black] coordinates {
        (0.50000, 0.956781)
        (0.79224, 1.358859)
        (0.80000, 1.379702)
        (0.94893, 2.853759)
        (1.20000, 1.435651)
    };
    \addlegendentry{states $W^{\mathrm{L/R},(k)}$}

    \node[font=\scriptsize, anchor=south] at (axis cs:0.50000, 1.03439) {$W^{\mathrm L,(0)}$};
    \node[font=\scriptsize, anchor=south, xshift=-11.95836pt]
      (state1) at (axis cs:0.79224, 1.54065) {$W^{\mathrm L,(1)}$};
    \draw[gray, line width=0.4pt] (state1) -- (axis cs:0.79224, 1.35886);
    \node[font=\scriptsize, anchor=south, xshift=19.93060pt]
      (state2) at (axis cs:0.80000, 1.88781) {$W^{\mathrm R,(2)}$};
    \draw[gray, line width=0.4pt] (state2) -- (axis cs:0.80000, 1.37970);
    \node[font=\scriptsize, anchor=south] at (axis cs:0.94893, 3.29721) {$W^{\mathrm R,(1)}$};
    \node[font=\scriptsize, anchor=south] at (axis cs:1.20000, 1.62943) {$W^{\mathrm R,(0)}$};

    \node[font=\tiny, blue!70!black, anchor=north, yshift=3.40282pt]
      at (axis cs:0.87446, 0.86026) {right rarefaction};
  \end{axis}

\end{tikzpicture}

%% file: C12.tex
\definecolor{wavered}{rgb}{0.55,0.05,0.05}
\begin{tikzpicture}

  \begin{axis}[
      name=density,
      width=150mm, height=88mm,
      xmin=-0.96, xmax=0.72, ymin=0.9, ymax=1.99,
      xlabel={$x$}, ylabel={$\rho$},
      axis lines=left, tick align=outside,
      grid=major, grid style={gray!20},
      legend cell align=left, legend columns=2, legend pos=north west,
      legend style={draw=gray!50, fill=white, font=\scriptsize,
                    /tikz/every even column/.append style={column sep=6pt}},
    ]

    \addplot[red!55, line width=2.0pt, no marks] coordinates {
        (-0.96000, 0.950000)
        (-0.56251, 0.950000)
        (-0.56251, 1.036750)
        (-0.54932, 1.042898)
        (-0.53352, 1.049631)
        (-0.47232, 1.073930)
        (-0.44745, 1.084177)
        (-0.42490, 1.094130)
        (-0.40543, 1.103499)
        (-0.39418, 1.109354)
        (-0.38314, 1.115502)
        (-0.37288, 1.121649)
        (-0.36337, 1.127797)
        (-0.35461, 1.133945)
        (-0.34617, 1.140386)
        (-0.33846, 1.146827)
        (-0.33113, 1.153560)
        (0.00000, 1.153560)
        (0.00000, 1.201750)
        (0.25623, 1.201750)
        (0.25623, 1.400000)
        (0.25733, 1.411278)
        (0.25849, 1.422556)
        (0.25980, 1.434586)
        (0.26118, 1.446617)
        (0.26270, 1.459398)
        (0.26446, 1.473684)
        (0.26627, 1.487970)
        (0.26833, 1.503759)
        (0.27268, 1.536090)
        (0.27788, 1.573684)
        (0.28371, 1.615038)
        (0.29578, 1.700000)
        (0.72000, 1.700000)
    };
    \addlegendentry{exact}

    \addplot[black, line width=0.7pt, no marks] coordinates {
        (-0.95996, 0.950000)
        (-0.56305, 0.950014)
        (-0.56287, 0.950403)
        (-0.56281, 0.951231)
        (-0.56274, 0.953679)
        (-0.56268, 0.960669)
        (-0.56262, 0.978598)
        (-0.56256, 1.012388)
        (-0.56250, 1.030309)
        (-0.56244, 1.032520)
        (-0.56219, 1.033539)
        (-0.56213, 1.034277)
        (-0.56189, 1.035695)
        (-0.56085, 1.037033)
        (-0.55524, 1.040175)
        (-0.53516, 1.048941)
        (-0.44757, 1.084086)
        (-0.42603, 1.093562)
        (-0.40552, 1.103401)
        (-0.38330, 1.115351)
        (-0.36182, 1.128794)
        (-0.34540, 1.140961)
        (-0.33789, 1.147287)
        (-0.33112, 1.153589)
        (-0.00885, 1.153556)
        (-0.00031, 1.153156)
        (-0.00012, 1.153706)
        (-0.00006, 1.156062)
        (0.00000, 1.163566)
        (0.00012, 1.185756)
        (0.00018, 1.193321)
        (0.00024, 1.197936)
        (0.00037, 1.200359)
        (0.00043, 1.200789)
        (0.00055, 1.201344)
        (0.00092, 1.201624)
        (0.25555, 1.201856)
        (0.25574, 1.202461)
        (0.25592, 1.204749)
        (0.25604, 1.216434)
        (0.25610, 1.219697)
        (0.25616, 1.252956)
        (0.25623, 1.304463)
        (0.25629, 1.347357)
        (0.25635, 1.375647)
        (0.25641, 1.392758)
        (0.25647, 1.402719)
        (0.25653, 1.408390)
        (0.25659, 1.411596)
        (0.25665, 1.413386)
        (0.25671, 1.414383)
        (0.25690, 1.415365)
        (0.25757, 1.415926)
        (0.25806, 1.419045)
        (0.26196, 1.452909)
        (0.26526, 1.479685)
        (0.26996, 1.515685)
        (0.27527, 1.554461)
        (0.28296, 1.609148)
        (0.29333, 1.681672)
        (0.29559, 1.696882)
        (0.29602, 1.699130)
        (0.29633, 1.699924)
        (0.71997, 1.700000)
    };
    \addlegendentry{ryujin}

    \addplot[black, dotted, line width=0.8pt, no marks, forget plot]
      coordinates {(-0.56251,0.9) (-0.56251,1.78)};
    \addplot[black, dotted, line width=0.8pt, no marks]
      coordinates {(0.29578,0.9) (0.29578,1.78)};
    \addlegendentry{$\lambda^\pm$ exact}
    \addplot[orange!85!black, dashdotted, line width=0.8pt, no marks, forget plot]
      coordinates {(-0.14144,0.9) (-0.14144,1.78)};
    \addplot[orange!85!black, dashdotted, line width=0.8pt, no marks]
      coordinates {(0.29316,0.9) (0.29316,1.78)};
    \addlegendentry{$\lambda^\pm$ hydrodynamical}
    \addplot[teal, dashed, line width=0.8pt, no marks, forget plot]
      coordinates {(-0.91609,0.9) (-0.91609,1.78)};
    \addplot[teal, dashed, line width=0.8pt, no marks]
      coordinates {(0.65461,0.9) (0.65461,1.78)};
    \addlegendentry{$\lambda^\pm$ bound (Thm.~2.1)}

    \node[font=\scriptsize\boldmath, wavered, anchor=south,
          fill=white, inner sep=1pt] at (axis cs:-0.56251, 1.3) {$\mathrm S$};
    \node[font=\scriptsize\boldmath, wavered, anchor=south,
          fill=white, inner sep=1pt] at (axis cs:-0.44682, 1.3) {$\mathrm R^\dagger$};
    \node[font=\scriptsize\boldmath, wavered, anchor=south,
          fill=white, inner sep=1pt] at (axis cs:0.00000, 1.3) {$\mathrm C$};
    \node[font=\scriptsize\boldmath, wavered, anchor=south,
          fill=white, inner sep=1pt] at (axis cs:0.25623, 1.8) {$\mathrm S^\dagger$};
    \node[font=\scriptsize\boldmath, wavered, anchor=south,
          fill=white, inner sep=1pt, xshift=11.95836pt, yshift=11.95836pt]
      (offlabel) at (axis cs:0.27601, 1.8) {$\mathrm R$};
    \draw[gray, line width=0.4pt] (offlabel) -- (axis cs:0.27601, 1.8);

    \draw[->, wavered, line width=0.8pt, shorten <=3.58751pt, shorten >=3.98612pt]
      (axis cs:-0.56251, 1.3) -- (axis cs:-0.56251, 1.03675);
    \draw[->, wavered, line width=0.8pt, shorten <=3.58751pt, shorten >=3.98612pt]
      (axis cs:-0.33113, 1.3) -- (axis cs:-0.33113, 1.15356);
    \draw[->, wavered, line width=0.8pt, shorten <=3.58751pt, shorten >=3.98612pt]
      (axis cs:0.00000, 1.3) -- (axis cs:0.00000, 1.20175);
    \draw[->, wavered, line width=0.8pt, shorten <=3.58751pt, shorten >=3.98612pt]
      (axis cs:0.25623, 1.8) -- (axis cs:0.25623, 1.40000);
    \draw[->, wavered, line width=0.8pt, shorten <=3.58751pt, shorten >=3.98612pt]
      (axis cs:0.29578, 1.8) -- (axis cs:0.29578, 1.70000);

    \draw[gray, opacity=0.5, line width=0.9pt, yshift=-3.58751pt]
      (axis cs:-0.56251, 1.3) -- (axis cs:-0.33113, 1.3);
    \draw[gray, opacity=0.5, line width=0.9pt, yshift=-3.58751pt]
      (axis cs:0.25623, 1.8) -- (axis cs:0.29578, 1.8);

  \end{axis}

  \begin{axis}[
      name=sound, at={(density.below south west)}, anchor=north west,
      yshift=-8mm,
      width=150mm, height=58mm,
      xmin=0.88, xmax=1.75, ymin=0, ymax=3.85,
      xlabel={$\rho$}, ylabel={sound speed},
      axis lines=left, tick align=outside,
      grid=major, grid style={gray!20},
      legend cell align=left, legend columns=4, legend pos=north east,
      legend style={draw=gray!50, fill=white, font=\scriptsize,
                    /tikz/every even column/.append style={column sep=6pt}},
    ]

    \addplot[draw=none, fill=gray!15, forget plot] coordinates {
        (1.03675,0) (1.15356,0) (1.15356,3.85) (1.03675,3.85)
    } \closedcycle;
    \addplot[draw=none, fill=gray!15, forget plot] coordinates {
        (1.40000,0) (1.70000,0) (1.70000,3.85) (1.40000,3.85)
    } \closedcycle;

    \addplot[gray, densely dotted, line width=0.6pt, no marks, forget plot]
      coordinates {(1.0, 0) (1.0, 3.85)};
    \node[gray!60!black, font=\scriptsize, anchor=south west]
      at (axis cs:1.015, 0.012) {$\rho_0$};

    \addplot[black, line width=0.9pt, no marks] coordinates {
        (0.88000, 1.903728)
        (0.88709, 1.979440)
        (0.89418, 2.060818)
        (0.90127, 2.147913)
        (0.90847, 2.242190)
        (0.91580, 2.343592)
        (0.92358, 2.456897)
        (0.93125, 2.572801)
        (0.94904, 2.846716)
        (0.95822, 2.981725)
        (0.96333, 3.051762)
        (0.96798, 3.110850)
        (0.97240, 3.162000)
        (0.97670, 3.206321)
        (0.98228, 3.254544)
        (0.98774, 3.290349)
        (0.99297, 3.313045)
        (0.99564, 3.320043)
        (0.99820, 3.323753)
        (1.00331, 3.322338)
        (1.00587, 3.317227)
        (1.00854, 3.308793)
        (1.01121, 3.297263)
        (1.01389, 3.282729)
        (1.01935, 3.244167)
    };
    \addplot[black, line width=0.9pt, no marks, forget plot] coordinates {
        (1.03675, 3.055761)
        (1.04196, 2.985207)
        (1.04773, 2.902286)
        (1.07217, 2.532672)
        (1.08405, 2.360795)
        (1.09062, 2.271371)
        (1.09695, 2.189399)
        (1.10318, 2.113320)
        (1.10940, 2.041736)
        (1.11676, 1.962929)
        (1.12411, 1.890265)
        (1.13170, 1.821543)
        (1.13939, 1.757864)
        (1.14731, 1.698285)
        (1.15546, 1.642849)
        (1.16383, 1.591533)
        (1.17244, 1.544256)
    };
    \addplot[black, line width=0.9pt, no marks, forget plot] coordinates {
        (1.18288, 1.449761)
        (1.19154, 1.411265)
        (1.20037, 1.376162)
        (1.20969, 1.343116)
        (1.21936, 1.312776)
        (1.22935, 1.285058)
        (1.23984, 1.259477)
        (1.25084, 1.236050)
        (1.26250, 1.214469)
        (1.28748, 1.177468)
        (1.31530, 1.147599)
        (1.34679, 1.123959)
        (1.38260, 1.106059)
    };
    \addplot[black, line width=0.9pt, no marks, forget plot] coordinates {
        (1.40000, 1.099844)
        (1.45867, 1.086882)
        (1.53136, 1.081307)
        (1.62419, 1.082539)
        (1.75000, 1.090729)
    };
    \addlegendentry{$a=\sqrt{a_{\mathrm h}^2+a_{\mathrm b}^2}$}

    \addplot[blue, densely dashed, line width=0.7pt, no marks] coordinates {
        (0.88000, 1.000237)
        (1.01935, 1.030081)
    };
    \addplot[blue, densely dashed, line width=0.7pt, no marks, forget plot] coordinates {
        (1.03675, 1.033573)
        (1.17244, 1.059313)
    };
    \addplot[blue, densely dashed, line width=0.7pt, no marks, forget plot] coordinates {
        (1.18288, 0.998716)
        (1.38260, 1.030371)
    };
    \addplot[blue, densely dashed, line width=0.7pt, no marks, forget plot] coordinates {
        (1.40000, 1.032951)
        (1.75000, 1.080095)
    };
    \addlegendentry{$a_{\mathrm h}$}

    \addplot[red, densely dotted, line width=0.9pt, no marks] coordinates {
        (0.88000, 1.619786)
        (0.88725, 1.709230)
        (0.89421, 1.800439)
        (0.90117, 1.896873)
        (0.90842, 2.002713)
        (0.91538, 2.109122)
        (0.92263, 2.224297)
        (0.94554, 2.602545)
        (0.95366, 2.732987)
        (0.96207, 2.859025)
        (0.96584, 2.911055)
        (0.96932, 2.955966)
        (0.97396, 3.010441)
        (0.97860, 3.057840)
        (0.98295, 3.095040)
        (0.98701, 3.122839)
        (0.99136, 3.144688)
        (0.99542, 3.157312)
        (0.99948, 3.162214)
        (1.00354, 3.159309)
        (1.00702, 3.150640)
        (1.01050, 3.136379)
        (1.01398, 3.116704)
        (1.01746, 3.091852)
        (1.02094, 3.062118)
        (1.02471, 3.024792)
        (1.02848, 2.982616)
        (1.03254, 2.932361)
        (1.03921, 2.840654)
        (1.04646, 2.731110)
        (1.05400, 2.610081)
        (1.07169, 2.316953)
        (1.08010, 2.180466)
        (1.08909, 2.040268)
        (1.09779, 1.911725)
        (1.10765, 1.775550)
        (1.11751, 1.649870)
        (1.12737, 1.534570)
        (1.13752, 1.426245)
        (1.14854, 1.319731)
        (1.15985, 1.221383)
        (1.17174, 1.128747)
        (1.18392, 1.043991)
        (1.19668, 0.964866)
        (1.21031, 0.889854)
        (1.22452, 0.820677)
        (1.23960, 0.755902)
        (1.25729, 0.689512)
        (1.27643, 0.627428)
        (1.29673, 0.570714)
        (1.31819, 0.519147)
        (1.34139, 0.471314)
        (1.36604, 0.427820)
        (1.39272, 0.387626)
        (1.42114, 0.351151)
        (1.45188, 0.317609)
        (1.48523, 0.286768)
        (1.52090, 0.258869)
        (1.55976, 0.233219)
        (1.60152, 0.210012)
        (1.64705, 0.188758)
        (1.69635, 0.169477)
        (1.75000, 0.151939)
    };
    \addlegendentry{$a_{\mathrm b}$}

    \addplot[only marks, mark=*, mark size=1.4pt, black] coordinates {
        (0.95000, 2.861293)
        (1.03675, 3.055951)
        (1.15356, 1.655639)
        (1.20175, 1.371006)
        (1.40000, 1.099754)
        (1.70000, 1.086842)
    };
    \addlegendentry{states $W^{\mathrm{L/R},(k)}$}

    \node[font=\scriptsize, anchor=south] at (axis cs:0.95000, 3.29263) {$W^{\mathrm L,(0)}$};
    \node[font=\scriptsize, anchor=south] at (axis cs:1.03675, 3.37351) {$W^{\mathrm L,(1)}$};
    \node[font=\scriptsize, anchor=south] at (axis cs:1.15356, 1.95153) {$W^{\mathrm L,(2)}$};
    \node[font=\scriptsize, anchor=south] at (axis cs:1.20175, 1.60934) {$W^{\mathrm R,(2)}$};
    \node[font=\scriptsize, anchor=south] at (axis cs:1.40000, 1.17162) {$W^{\mathrm R,(1)}$};
    \node[font=\scriptsize, anchor=south] at (axis cs:1.70000, 1.14917) {$W^{\mathrm R,(0)}$};

    \node[font=\tiny, blue!70!black, anchor=north, yshift=3.40282pt]
      at (axis cs:1.09516, 0.92000) {left rarefaction};
    \node[font=\tiny, blue!70!black, anchor=north, yshift=3.40282pt]
      at (axis cs:1.55000, 0.92000) {right rarefaction};
  \end{axis}

\end{tikzpicture}